\theoremstyle{plain}
\newtheorem{theorem}{Theorem}
\newtheorem{lemma}[theorem]{Lemma}
\newtheorem{claim}{Claim}
\newtheorem{prop}[theorem]{Proposition}
\newtheorem{corollary}[theorem]{Corollary}
\theoremstyle{remark}
\newtheorem{remark}[theorem]{Remark}
\newtheorem*{remark*}{Remark}
\numberwithin{equation}{section}
\theoremstyle{definition}
\renewcommand{\d}{\mathop{}\mathopen{}\mathrm{d}}
\newcommand{\ve}{\varepsilon}
\newcommand{\deb}{\rightharpoonup}
\newcommand{\R}{\mathbb{R}}
\newcommand{\W}{\mathcal{W}}
\newcommand\gd{\mathrm{geo}d}
\DeclareMathOperator{\Lip}{Lip}
\begin{document}
\author{Antonin Monteil, Filippo Santambrogio}
\title{Metric methods for heteroclinic connections\\ in infinite dimensional spaces}
\address{{\bf A.M.} Institut de Recherche en Math\'ematiques et Physique,
Universit\'e Catholique de Louvain,
Chemin du Cyclotron 2 Bte L7.01.01, 
1348 Louvain-la-Neuve, Belgium, 
{\tt antonin.monteil@uclouvain.be}\\
{\bf F.S.} Laboratoire de Math\'ematiques d'Orsay, 
Univ. Paris-Sud, CNRS, Universit\'e Paris-Saclay, 
91405 Orsay Cedex, France, 
{\tt  filippo.santambrogio@math.u-psud.fr}}

\begin{abstract}We consider the minimal action problem $\min\int_\R \frac{1}{2}|\dot{\gamma}|^2+W(\gamma)\d t $ among curves lying in a non-locally-compact  metric space and connecting two given zeros of $W\geq 0$. For this problem, the optimal curves are usually called heteroclinic connections. % (between the two wells)
We reduce it, following a standard method, to a geodesic problem of the form $\min\int_0^1 K(\gamma)|\dot{\gamma}|\d t$ with $K=\sqrt{2W}$. We then prove existence of curves minimizing this new action under some suitable compactness assumptions on $K$, which are minimal. The method allows to solve some PDE problems in unbounded domains, in particular in two variables $x,y$, when $y=t$ and when the metric space is an $L^2$ space in the first variable $x$, and the potential $W$ includes a Dirichlet energy in the same variable. We then apply this technique to the problem of connecting, in a functional space, two different heteroclinic connections between two points of the Euclidean space, as it was previously studied by Alama-Bronsard-Gui and by Schatzman more than fifteen years ago. With a very different technique, we are able to recover the same results, and to weaken some assumptions.
\end{abstract}

\maketitle

\noindent{\sc Keywords:} geodesics, double-well potentials, lack of compactness, elliptic PDEs

\noindent{\sc MSC:} 	35J50, 49J45, 49J27, 54E35

\section{Introduction}

The minimal action problem, directly coming from Newtonian mechanics, is, in its most classical form, a variational problem where an energy of the form
\begin{equation}\label{het1}
\left(\gamma:I\to\R^d\right)\mapsto \mathfrak{E}_{W}(\gamma):= \int_I\left( \frac{1}{2}|\dot{\gamma}|^2(t)+W(\gamma(t))\right)\d t 
\end{equation}
is minimized among curves connecting two given points. In a smooth setting the corresponding Euler-Lagrange equation is $\gamma''=\nabla W(\gamma)$, which is one of the simplest and most studied second-order differential equation. When the time interval $I$ is the whole line $\R$ and $W\geq 0$, solutions of this equation (or of the minimization problem) connecting at $\pm\infty$ two points $a^\pm$ where $W=0$ (which is a necessary condition for the action to be finite) are called heteroclinic connections. 

The existence of a heteroclinic connection is a very delicate problem, because of the lack of compactness of the set $H^1(\R)$ and of the invariance by translations of the action to be minimized. We cite \cite{SteRocky,Ali Fus,AntSmy,Sourdis,Ali Fus2015} among the many papers dealing with this and related question. In a previous paper \cite{nous} we analyzed the same question via a purely metric method. 

The idea behind the method was classical: reduce the problem to a geodesic problem for a weighted metric with a cost given by $K(x):=\sqrt{2 W(x)}$, i.e., instead of minimizing \eqref{het1}, solving
$$
\min\quad \mathfrak{L}_{K}(\gamma):=\int_0^1 K(\gamma(t))\, |\gamma'(t)|\d t.
$$
The connection between the two problems comes from the Young inequality, which gives
$$\frac12 |\gamma'|^2+W(\gamma)\geq \sqrt{2W(\gamma)}\, |\gamma'|,\quad\mbox{ with equality if and only if }\sqrt{2W(\gamma)}=|\gamma'|.$$
The weighted length $ \mathfrak{L}_{K}$ is invariant under parametrization, which is why one can reduce the problem to the interval $[0,1]$. Then, if one is able to find a minimizer for $ \mathfrak{L}_{K}$, it is enough to choose a suitable reparametrization of it on $\R$ which satisfies $\sqrt{2W(\gamma)}=|\gamma'|$, and this will be a minimizer for $\mathfrak{E}_{W}$.

If in this way we get rid of the difficulty given by the non-compactness of $\R$, we face now a new difficulty, the fact that obtaining Sobolev bounds for a minimizing sequence requires lower bounds on $K$, while $K=\sqrt{2 W}$ exactly vanishes at the two wells $a^\pm$. 

However, in \cite{nous}, we managed to overcome this difficulty in the case where the curves $\gamma$ lie in the Euclidean space $\R^n$ and the weight $K\geq 0$ is continuous. To do so, we studied the space $(\R^n,d_K)$, i.e. the same Euclidean space endowed with the geodesic distance induced by $K$. This distance is the one defined by $d_K(x,y):=\min   \mathfrak{L}_{K}(\gamma)$, the minimum being taken among curves connecting $x$ to $y$. We proved that such a space is a {\it proper space}, i.e. bounded sets are pre-compact, which guarantees the existence of geodesic curves.

The main point which motivated \cite{nous}, besides recovering classical results on the heteroclinic problem in $\R^n$, was the fact that all the study was done in a more general metric space (with $|\gamma'|$ which is defined as the metric derivative, see \cite{AmbTil}), which allowed for many generalizations. The most interesting one is the following. Consider a higher-dimensional problem, as for instance
\begin{equation}\label{genOmega}
\min \int_{\R\times \Omega}\left( \frac{1}{2}|\nabla u|^2(x)+W(u(x))\right)\d x,
\end{equation}
where $x=(x_1,x')$, and boundary data are fixed as $x_1\to\pm\infty$. This can be interpreted in our framework using $x_1$ as $t$ and $X$ to be $L^2(\Omega)$, with an effective potential of the form ${v}\in L^2(\Omega)\mapsto \mathcal W({v}):=\int_\Omega \frac{1}{2}|\nabla_{x'} {v}|(x')^2+W({v}(x'))\d x'$. This obviously raises extra difficulties due to the lack of compactness in infinite dimensions, for which but the key point to overcome it will be the fact that the sublevels of $\mathcal W$ have extra compactness properties.

The goal of the present paper is exactly to develop this project: studying heteroclinc connections in functional spaces as a particular case of  a metric setting, finding sharp conditions for the existence of weighted geodesics. The title of the paper underlines the fact that we mainly deal with metric spaces which are infinite-dimensional (actually, the main feature is that they are no more locally compact, differently from the framework of \cite{nous}).

The content of the paper is the following. After recalling in Section \ref{sectionPrelim} the main notions concerning curves and geodesics in metric spaces, in Section \ref{mainthm} we consider the abstract problem of minimizing a weighted length in a metric space, with a lower-semicontinuous weight $K$ which can possibly vanish. Of course, it is necessary that it does not vanish too much, and we require the vanishing set $\Sigma=\{K=0\}$ to be finite; some conditions on the behavior of $K$ ``at infinity'' are also typically required. While many authors require a lower bound of the form $\liminf_{d(x,\Sigma)\to\infty}K(x)>0$, in \cite{nous} we required a non-integrability condition $K(x)\geq k(d(x,\Sigma))$ with $\int_0^\infty k(s)\d s=+\infty$. This allowed to reduce the problem to bounded balls but, in the case of a non-proper space $X$, this is in general not enough, and we require a more general (and abstract) condition. However, in the Appendix A we discuss this lower bound assumption by means of a counter-example. In Section \ref{mainthm}, several equivalent definitions of the weighted length $\mathfrak{L}_{K}$ are introduced, as we need to prove its semicontinuity; this is one of the main difficulties, together with the proof of a suitable equicontinuity of minimizing sequences.

In Section \ref{sectionHeteroclinic} we use the existence results of Section \ref{mainthm} (which deal with weighted geodesics, i.e. minimizers of $\mathfrak{L}_{K}$) to provide existence of heteroclinic connections (minimizers of $\mathfrak{E}_{W}$) by detailing the reparametrization procedure (which is delicate because $K$ is not supposed to be continuous). We also present a first easy application of these results to problems of the form \eqref{genOmega}, in the case of bounded $\Omega$. 

Then come sections \ref{sectionDouble}, \ref{sectionAlama} and \ref{sectionSchatzman}, which take most of the paper. These sections are devoted to a very natural and very interesting problem: given a double-well potential $W$ on $\R^n$, consider the heteroclinic connection problem between its two wells, and suppose that it admits two distinct solutions. These two solutions are curves, belonging to a functional space included in $L^2_{loc}(\R,\R^n)$; in this space we want to connect these two curves. Essentially this amounts to finding a solution of 
\begin{equation}
\label{localReactionDiffusion}
\begin{cases}
-\Delta u+\nabla W(u)=0&\text{over $\R^2$ \big(or we can require $u$ to be a local minimizer of $\int \frac 12 |\nabla u|^2+W(u)$\big)};\\
u(x_1,x_2)\to a^-&\text{when }x_1\to -\infty,\text{ uniformly w.r.t. }x_2;\\
u(x_1,x_2)\to a^+&\text{when }x_1\to +\infty,\text{ uniformly w.r.t. }x_2;\\
u(x_1,x_2)\to z^-(x_1)&\text{when }x_2\to -\infty,\text{ uniformly w.r.t. }x_1;\\
u(x_1,x_2)\to z^+(x_1)&\text{when }x_2\to +\infty,\text{ uniformly w.r.t. }x_1;
\end{cases}
\end{equation}
where $z^\pm$ are the two curves that we need to connect and $a^\pm=z^\pm(\pm\infty)$ are the two wells of $W$ ; the existence of a solution has been established with various assumptions in \cite{Alama:1997,Alessio2012} for symmetric solutions and in \cite{Schatzman:2002} (see also the recent paper \cite{fusco2017layered} for an alternate proof) in the non-symmetric case. The preceding system arises, for instance, in the study of the local behavior of solutions to the reaction-diffusion system,
\[
\partial_t u(t,x)-\varepsilon^2\Delta u(t,x)+\nabla W(u(t,x))=0,\quad x\in\Omega\subset\R^2,\, t>0,
\]
in the asymptotic regime $\varepsilon\to 0$. As $\varepsilon$ tends to $0$, solutions converge almost everywhere to minima of $W$, thus revealing sharp interfaces separating distinct phases. As it holds for the scalar Allen-Cahn equation, one might expect that, near a point of the interface between the two phases $a^-$, $a^+$ and in a first order approximation, solutions only depend on the orthogonal (to the interface) variable, and that the dependance on this variable corresponds to a {\it stationary wave} (or heteroclinic connection), i.e. a 1D solution of the first three equations of \eqref{localReactionDiffusion}. However, for vector-valued equations, several distinct heteroclinic connections between $a^-$ and $a^+$ might exist and a solution to the full system \eqref{localReactionDiffusion} (with two distinct heteroclinic solutions $z^-$ and $z^+$) is in particular not 1D. This makes a significative difference with the scalar framework, since the De Giorgi conjecture \cite{de1979convergence} (see \cite{Ghoussoub:1998,AmbrosioCabre:2000,ghoussoub2003giorgi,Savin:2009} for the proofs in dimensions $d=2$, $d=3$, $d=4,5$ and $d=6,7,8$ respectively) claims that the equation $-\Delta u+\nabla W(u)=0$ has a unique (up to translation) non-trivial solution $u:\R^d\to\R$, monotone in $x_1$-direction, and it thus corresponds to the unique stationary wave, i.e. $u$ only depends on $x_1$. This brings to light a more complex local behavior of vector-valued reaction-diffusion systems near a point of the interface, where solutions can depend on the tangential variable (corresponding to $x_2$ in the rescaled system \eqref{localReactionDiffusion}), in such a way that it connects two distinct stationary waves.

A main difficulty in the study of the system \eqref{localReactionDiffusion} is the fact that minimizers of $\mathfrak{E}_{W}$ on $I=\R$ are always defined up to translations, so that they are not really finite in number. In \cite{Alama:1997} a particular case is considered: the case where $W$ has some symmetries and we look for symmetric connections $z^\pm$. This rules out the translation invariance and allows to study a case where it is reasonable to assume that the number of heteroclinic connections between $a^-$ and $a^+$ is two. By using a metric space $X$ of symmetric curves defined on $\R$ endowed with the $L^2$ distance, we recover via our metric approach, in Section \ref{sectionAlama}, the existence result of \cite{Alama:1997}. To prepare for this result, in Section \ref{sectionPrelim}, we present some preliminaries, including an original and very useful Lemma \ref{projection} which claims that the energy $\mathfrak{E}_{W}$ of a curve $z$ decreases if we project $z$ onto the set of curves satisfying an inequality of the form $|z(x_1)-a^\pm|\leq E(x_1)$ for $|x_1|\geq M$, the shape of the profile $E$ being chosen in a suitable way according to the degeneracy of the potential $W$ around $a^\pm$; we call these profiles, which can decrease algebraically or exponentially to $0$ as $x_1\to \infty$, {\it funnels} because of their shape. Then, in Section \ref{sectionSchatzman}, we consider a much more difficult case, the case where the symmetry condition is removed, and the heteroclinic connections between $a^-$ and $a^+$ are considered up to translations. In this case $X$ is a quotient of a linear space, and the difficulty is to provide a solution $u$ which admits a true limit (and not only up to translations) as $x_1\to\pm\infty$. This case was succesfully studied in \cite{Schatzman:2002}, and we recover the very same result with our technique. In both cases (that of \cite{Alama:1997} and that of \cite{Schatzman:2002}), the uniform convergence as $x_1\to\pm\infty$ is obtained by using the funnel lemma, while the uniform convergence as $x_2\to\pm\infty$ requires an extra argument, as our technique only provides $L^2$ convergence. This improvement of the convergence (and also an improvement of the regularity of the solutions, which will finally belong to $C^{2,\alpha}$, or to better spaces according to the regularity of the potential $W$) is obtained by means of a general fact, presented in Appendix B, about minimal action curves for $\lambda$-convex functionals in Hilbert spaces. The result presented in the appendix allows indeed to obtain a bound on $\mathcal W(u(x_1,\cdot))$ independent of $x_1$, which provides $ H^1\cap L^\infty$ bounds. This  allows both to transform the $L^2$ convergence into $L^\infty$ and to bound the right-hand side of the elliptic PDE $\Delta u=\nabla W(u)$.

It is interesting to compare the assumptions on the potential $W$ which are used in \cite{Alama:1997,Schatzman:2002} and in other papers on heteroclinic connections to those that we use in the present paper. We already pointed out that assumptions of the form $\liminf_{|x|\to\infty} W(x)>0$ can be easily replaced with $W(x)\geq k^2(|x|)$, $\int_0^\infty k=+\infty$, which we do. Another typical assumption in the literature is $\nabla W(x)\cdot x\geq 0$ for $|x|\geq R$ (or, more generally, the fact that $W(x)\geq W(Rx/|x|)$ for $|x|\geq R$): this guarantees that projecting onto a ball containing the wells $a^\pm$ decreases the energy, and allows to obtain boundedness of the competitors. Yet, this simplifying assumption is not compatible with a potential $W$ tending to $0$ at infinity, and we preferred not to use it. Our $L^\infty$ bounds are obtained a posteriori using the above lower bound $W(x)\geq k^2(|x|)$, together with a $\lambda$-convexity argument presented in Appendix \ref{AppB}. In order to apply this argument, and also to apply a regularization procedure in another part of the proof,  we need a global on the Hessian $\nabla^2 W$, that we suppose bounded from below (i.e. $W$ is supposed to be $\lambda$-convex for some negative $\lambda$).

We can summarize the paper by saying that its main contributions are the following:
\begin{itemize}
\item a general and abstract existence result for geodesics in weighted metric spaces, under suitable compactness conditions (Theorem \ref{exist_Kgeod});
\item the application of the above result to the existence of heteroclinic connections in general metric spaces (Theorem \ref{heteroclinic});
\item the application of these techniques to the existence result originally presented in  \cite{Alama:1997}, under slightly weaker assumptions (Theorem \ref{ABGthm}, where more degenerate potentials $W$, compared to \cite{Alama:1997}, are admitted);
\item the application of the same techniques to the existence result originally presented in  \cite{Schatzman:2002}, essentially under the same assumptions (Theorem \ref{Schatzman_thm}).
\end{itemize}

\noindent {\bf Acknowledgments.} The authors warmly acknowledge Nick Alikakos for pointing out to them the problems studied by Alama-Bronsard-Gui and Schatzmann, for his constant interest into this project, and for the warm hospitality in Athens.

\section{Minimal length problem in metric spaces\label{sectionPrelim}}

Let $(X,d)$ be a metric space: $X$ is a set and $d:X\times X\to [0,+\infty]$ is a metric, i.e. $d$ is symmetric, subadditive and vanishes on the diagonal, and only on the diagonal, of $X\times X$. Note that in our definition of a distance, we do not assume $d$ to be finite. This is more convenient for our purpose as we will consider a distance $d_K$ which needs not be finite everywhere. In the sequel, we will use the notation $B_d(x,r)$ (resp. $\overline{B}_d(x,r)$) for the open (resp. closed) ball centered at $x$ of radius $r\geq 0$:
\[
B_d(x,r)=\{y\in X\;:\; d(y,x)< r\}\quad \text{and}\quad \overline{B}_d(x,r)=\{y\in X\;:\; d(y,x)\leq r\}.
\]

\paragraph{\bf Curves in $(X,d)$} A \emph{curve} is a continuous map $\gamma:I\to X$, where $I\subset \R$ is a non-empty interval. The set of all curves $\gamma:I\to X$, denoted by $\mathcal{C}(I,X)$, is endowed with the topology of uniform convergence on compact subsets of $I$. 
We denote the set of Lipschitz maps (resp. locally Lipschitz maps) from $I$ to $X$ by $\Lip(I,X)$ (resp. $\Lip_{loc}(I,X)$). We also need to introduce the set of \emph{piecewise locally Lipschitz maps}:
\[
\Lip_{ploc}(I,X):=\big\{\gamma\in \mathcal{C}(I,X)\;:\;\exists t_0=\inf I<t_1<\dots <t_n=\sup I,\, \forall i,\,\gamma\in\Lip_{loc}( (t_i,t_{i+1}), X)\big\}.
\]

We will also need to consider less regular maps, namely absolutely continuous curves. We denote the set of absolutely continuous maps (resp. locally absolutely continuous maps) from $I$ to $X$ by $AC(I,X)$ (resp. $AC_{loc}(I,X)$). We remind that $\gamma\in AC(I,X)$ if and only if for all $\epsilon>0$, there exists $\delta>0$ such that for all sequences $t_0<t_1<\dots<t_n$ in $I$, one has
\[
\sum_{i=0}^{n-1} |t_{i+1}-t_i|<\delta\quad\Longrightarrow\quad\sum_{i=0}^{n-1}d(\gamma(t_{i+1}),\gamma(t_i))<\varepsilon.
\]
As before, we also need to introduce the set of \emph{piecewise locally absolutely continuous maps}:
\[
AC_{ploc}(I,X):=\big\{\gamma\in\mathcal{C}(I,X)\;:\;\exists t_0=\inf I<t_1<\dots <t_n=\sup I,\, \forall i,\,\gamma\in AC_{loc}((t_i,t_{i+1}),X)\big\}.
\]

\paragraph{\bf Length of a curve}
For every map $\gamma:I\to X$, we define the length of $\gamma$ by the usual formula
$$L_d(\gamma):=\sup \sum_{i=0}^{N-1} d(\gamma(t_i),\gamma(t_{i+1}))\in[0,+\infty],$$
where the supremum is taken over all $N\geq 1$ and all sequences $t_0\leq \dots\leq t_N$ in $I$. A map $\gamma$ is said to be \emph{rectifiable} if it is a curve (i.e. $\gamma$ is continuous) and $L(\gamma)<\infty$.

\medskip
\paragraph{\bf Length of absolutely continuous curves} For piecewise locally absolutely continuous maps we have the following representation formula for the length:
\begin{prop}\label{length_lip}
Given $\gamma\in AC_{ploc}(I,X)$, the following quantity,
\[
|\dot{\gamma}|(t)=\lim\limits_{s\to t}\frac{d(\gamma(t),\gamma(s))}{|t-s|},
\]
is well defined for a.e. $t\in I$ and $|\dot{\gamma}|(\cdot)$, called \emph{metric derivative} of $\gamma$, is measurable. Moreover, one has
\[
L_d(\gamma)=\int_I |\dot{\gamma}|(t)\d t .
\]
\end{prop}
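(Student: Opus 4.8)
The plan is to reduce the statement to a genuinely absolutely continuous curve on a compact interval, and there to identify the metric derivative with the a.e.\ derivative of the length function. So assume first $\gamma\in AC(I,X)$ with $I=[a,b]$ compact. The $\varepsilon$--$\delta$ characterization shows at once that $\gamma$ is rectifiable (choosing $\delta$ associated to $\varepsilon=1$, each subinterval of length $<\delta$ contributes length $\le 1$, so $L_d(\gamma)<\infty$), hence one may introduce the length function $V(t):=L_d(\gamma|_{[a,t]})$, which is real-valued and nondecreasing. Concatenating fine partitions of finitely many disjoint subintervals whose total length is $<\delta$ shows that $V$ is absolutely continuous in the usual scalar sense, hence differentiable a.e.\ with $V'\in L^1(a,b)$ and $V(t)-V(s)=\int_s^t V'(r)\,\d r$ for $s\le t$. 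Since $d(\gamma(s),\gamma(t))\le V(t)-V(s)$, at every differentiability point of $V$ one gets $\limsup_{s\to t}d(\gamma(t),\gamma(s))/|t-s|\le V'(t)$.

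For the matching lower bound I would pick a countable dense subset $\{x_n\}$ of the (compact, hence separable) set $\gamma([a,b])$ and put $\varphi_n(t):=d(\gamma(t),x_n)$. Then $|\varphi_n(t)-\varphi_n(s)|\le d(\gamma(s),\gamma(t))\le V(t)-V(s)$, so each $\varphi_n$ is scalar absolutely continuous with $|\varphi_n'|\le V'$ a.e., while by density $d(\gamma(s),\gamma(t))=\sup_n|\varphi_n(t)-\varphi_n(s)|$ for all $s,t$. Hence for $s<t$ one has $d(\gamma(s),\gamma(t))\le\int_s^t\sup_n|\varphi_n'|\,\d r$, and taking the supremum over partitions gives $V(t)-V(a)\le\int_a^t\sup_n|\varphi_n'|\,\d r$; combined with $\sup_n|\varphi_n'|\le V'$ a.e., this forces $V'(t)=\sup_n|\varphi_n'(t)|$ for a.e.\ $t$. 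Since $d(\gamma(t),\gamma(s))/|t-s|\ge|\varphi_n(t)-\varphi_n(s)|/|t-s|\to|\varphi_n'(t)|$ at differentiability points of $\varphi_n$, we obtain $\liminf_{s\to t}d(\gamma(t),\gamma(s))/|t-s|\ge\sup_n|\varphi_n'(t)|=V'(t)$ a.e. Together with the previous paragraph, the limit defining $|\dot\gamma|(t)$ exists and equals $V'(t)$ for a.e.\ $t$; thus $|\dot\gamma|$ is measurable and $L_d(\gamma)=V(b)-V(a)=\int_a^b V'(r)\,\d r=\int_a^b|\dot\gamma|(r)\,\d r$.

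For a general $\gamma\in AC_{ploc}(I,X)$, I would split $I$ at the breakpoints $t_0<\dots<t_n$. Both $L_d$ and $\int_I|\dot\gamma|$ are additive over this decomposition, and on each piece $(t_i,t_{i+1})$ they equal the increasing limits, as $\epsilon\to0$, of the corresponding quantities on the compact subintervals $[t_i+\epsilon,t_{i+1}-\epsilon]$, on which $\gamma$ is genuinely absolutely continuous; the finitely many endpoints $t_i$ form a Lebesgue-null set, so they affect neither the a.e.\ definition of $|\dot\gamma|$ nor the value of the integral. Applying the compact case on each $[t_i+\epsilon,t_{i+1}-\epsilon]$ and passing to the limit by monotone convergence (with both sides allowed to be $+\infty$, e.g.\ when $I$ is unbounded) yields the stated identity.

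The technical heart is the lower bound: one must produce enough scalar absolutely continuous ``test functions'' $\varphi_n$ to recover the full metric speed, and verify that their a.e.\ derivatives reconstruct $V'$. This is precisely where separability of $\gamma([a,b])$ and the representation $d(\gamma(s),\gamma(t))=\sup_n|\varphi_n(t)-\varphi_n(s)|$ are used; the remaining ingredients are the classical theory of scalar absolutely continuous functions and elementary manipulations with lengths.
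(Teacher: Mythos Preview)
The paper does not actually prove this proposition: immediately after stating it, the authors write ``We refer for instance to \cite{AmbTil} for the notion of metric derivative\ldots'', so there is no in-paper proof to compare against. Your argument is correct and is essentially the standard proof found in Ambrosio--Tilli: reduce to a compact interval, show the length function $V$ is scalar-AC, get the upper bound on the metric derivative from $d(\gamma(s),\gamma(t))\le V(t)-V(s)$, and recover the matching lower bound via the countable family of $1$-Lipschitz test functions $\varphi_n(t)=d(\gamma(t),x_n)$ with $\{x_n\}$ dense in the image. The reduction from $AC_{ploc}(I,X)$ to $AC$ on compact subintervals by additivity and monotone convergence is also carried out correctly, using continuity of $\gamma$ at the finitely many breakpoints to ensure no length is lost at the seams.
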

We refer for instance to \cite{AmbTil} for the notion of metric derivative and for many other notions on the analysis of metric spaces.

\medskip
\paragraph{\bf Parametrization}
If $\gamma:I\to X$ is any map, and $\varphi:I'\to I$ is a non-decreasing surjective (and thus continuous) mapping, called \emph{parametrization}, then the curve $\sigma=\gamma\circ\varphi :I'\to X$ satisfies $L_d(\sigma)=L_d(\gamma)$. The map $\gamma$ is said to have \emph{constant speed} if there exists $\lambda\in\R_+$ such that for all $t,t'\in I$ such that $t<t'$, $L_d(\gamma_{|(t,t')})=\lambda |t-t'|$. Then $\lambda$ is said to be the \emph{speed} of the curve $\gamma$. Note that $\gamma$ has constant speed $\lambda$ if and only if $\gamma$ is Lipschitz and $|\dot{\gamma}(t)|=\lambda$ a.e. The curve $\gamma$ is said to be {\it parametrized by arc length} if $\lambda=1$. 

Assume that a curve $\gamma$ satisfies $L_d(\gamma_{|J})<\infty$ for all compact subset $J\subset I$. Then there exists a paramatrization of $\gamma$ by arc length, obtained as follows. Let us fix $t_0\in I$ and $\varphi(t):=\pm L_d(\gamma_{|(t_0,t)})$ for $t\in I$ s.t. $\pm (t-t_0)\geq 0$. Then $\varphi$ is continuous, non-decreasing and the curve
\[
\sigma: \varphi(I) \to X\ ,\quad \sigma(\varphi(t))=\gamma(t)
\]
is well defined, continuous and parametrized by arc length. Indeed, for $t,t'\in I$ such that $t\leq t'$, we have $\varphi(t')-\varphi(t)=L_d(\gamma_{|(t,t')})=L_d(\sigma_{|(\varphi(t),\varphi(t'))})$. 

Up to renormalization, it is always possible to consider curves defined on $I=[0,1]$.

\medskip
\paragraph{\bf Minimal length problem} We define the intrinsic pseudo-metric $\gd$ (called {\it geodesic distance}) by minimizing the length of all curves $\gamma$ connecting two points $x^\pm\in X$:
\begin{equation}
\label{intrinsic}\gd(x^-,x^+):=\inf\{L_d(\gamma)\;:\; \gamma:x^-\mapsto x^+\}\in [0,+\infty],
\end{equation}
where the notation $\gamma:x^-\mapsto x^+$ means that $\gamma$ is a \emph{path} from $x^-$ to $x^+$: there exists an interval $I\subset\R$ s.t. $\gamma\in\mathcal{C}(I,X)$ with $\gamma(a^\pm)=x^\pm$, where $a^-=\inf I$ and $a^+=\sup I$. Here, if $a^+$ or $a^-$ does not belong to $I$, the notation $\gamma(a^\pm)=x^\pm$ means $\lim_{t\in I\to a^\pm}\gamma (t)=x^\pm$.

When $(X,d)$ is a normed vector space, $\gd=d$ and the infimum value in \eqref{intrinsic} is achieved by the segment $[x^-,x^+]$. In general, a metric space such that $\gd=d$ is called a $\emph{length space}$.

The minimal length problem consists in finding a curve $\gamma:x^-\mapsto x^+$ s.t. $L_d(\gamma)=\gd(x^-,x^+)$. When $X$ is proper, the existence of such a curve, called \emph{minimizing geodesic}, is given by the classical theorem (see \cite{AmbTil}, for instance):
\begin{theorem}\label{exist_geod}
Assume that $(X,d)$ is proper, i.e. every bounded closed subset of $(X,d)$ is compact. Then, for any two points $x^\pm$ s.t. $\gd(x^+,x^-)<+\infty$, there exists a minimizing geodesic joining $x^-$ and $x^+$.
\end{theorem}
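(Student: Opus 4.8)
The plan is to use the direct method of the calculus of variations: take a minimizing sequence of paths, reparametrize them to obtain a uniform Lipschitz bound, extract a uniformly convergent subsequence via Arzel\`a--Ascoli (this is where properness is used), and pass to the limit using the lower semicontinuity of the length.

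First I would set $\ell:=\gd(x^-,x^+)<+\infty$ and, since $\ell<\infty$, pick paths $\gamma_n:x^-\mapsto x^+$ with $L_d(\gamma_n)\leq \ell+1/n$; in particular each $\gamma_n$ is rectifiable, so $L_d(\gamma_{n|J})<\infty$ on every compact subinterval $J$ of its domain. Applying the arc-length reparametrization recalled above, I may assume $\gamma_n$ is defined on $[0,L_d(\gamma_n)]$ and parametrized by arc length; being $1$-Lipschitz there, it extends continuously to the closed endpoints, so genuinely $\gamma_n(0)=x^-$ and $\gamma_n(L_d(\gamma_n))=x^+$. Composing with the affine rescaling of $[0,1]$ onto $[0,L_d(\gamma_n)]$, I get curves, still denoted $\gamma_n$, defined on $[0,1]$, of constant speed $L_d(\gamma_n)\leq \ell+1$, hence all $(\ell+1)$-Lipschitz, with $\gamma_n(0)=x^-$ and $\gamma_n(1)=x^+$. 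Then $d(\gamma_n(t),x^-)\leq(\ell+1)t\leq \ell+1$, so every $\gamma_n$ takes values in the closed ball $\overline{B}_d(x^-,\ell+1)$, which is compact by properness.

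Next I would invoke the Arzel\`a--Ascoli theorem for maps from the compact interval $[0,1]$ into the compact metric space $\overline{B}_d(x^-,\ell+1)$: the family $\{\gamma_n\}$ is equi-Lipschitz, hence equicontinuous, and pointwise relatively compact, so up to a subsequence $\gamma_n\to\gamma$ uniformly on $[0,1]$, with $\gamma\in\mathcal{C}([0,1],X)$. Passing to the limit in $d(\gamma_n(t),\gamma_n(s))\leq(\ell+1)|t-s|$ shows $\gamma$ is $(\ell+1)$-Lipschitz, and $\gamma(0)=\lim_n\gamma_n(0)=x^-$, $\gamma(1)=\lim_n\gamma_n(1)=x^+$, so $\gamma$ is a path from $x^-$ to $x^+$. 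Finally I would use the lower semicontinuity of the length under pointwise convergence: for any partition $0\leq t_0\leq\dots\leq t_N\leq 1$, continuity of $d$ gives $\sum_i d(\gamma(t_i),\gamma(t_{i+1}))=\lim_n\sum_i d(\gamma_n(t_i),\gamma_n(t_{i+1}))\leq\liminf_n L_d(\gamma_n)=\ell$, and taking the supremum over partitions yields $L_d(\gamma)\leq\ell$. Since $L_d(\gamma)\geq\gd(x^-,x^+)=\ell$ by definition of $\gd$, we get $L_d(\gamma)=\ell$, so $\gamma$ is a minimizing geodesic.

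This is a classical statement and no step is genuinely hard; the only points deserving a little care are the reduction to equi-Lipschitz curves on $[0,1]$ that honestly attain the endpoints, handled by the arc-length reparametrization together with the fact that a minimizing sequence may be taken rectifiable because $\gd<\infty$, and the observation that properness is used precisely once, to supply the relative compactness needed for Arzel\`a--Ascoli. The lower semicontinuity of $L_d$ is elementary, $L_d$ being a supremum of functionals $\gamma\mapsto\sum_i d(\gamma(t_i),\gamma(t_{i+1}))$ that are continuous under pointwise convergence.
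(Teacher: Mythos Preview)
Your proof is correct and is precisely the classical direct-method argument (reparametrize to equi-Lipschitz curves, apply Arzel\`a--Ascoli via properness, use lower semicontinuity of $L_d$). Note that the paper does not actually prove this theorem: it is stated as a classical result with a reference to \cite{AmbTil}, so there is no ``paper's own proof'' to compare against---your argument is the standard one that reference would give.
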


\section{Minimal length problem in weighted metric spaces\label{mainthm}}

Let $(X,d)$ be a metric space and $K: X\to [0,+\infty]$ be a nonnegative measurable function, called \emph{weight function}. Our aim is to investigate the existence of a curve $\gamma$ minimizing the $K$-length, defined by
\[
\mathfrak{L}_{K}(\gamma):=\int_I K(\gamma(t))\, |\dot{\gamma}|(t)\d t,\quad \gamma\in AC_{ploc}(I,X).
\]
There is an ambiguity in the definition when $K(\gamma(t))=+\infty$ and $|\dot{\gamma}|(t)=0$. We will use the convention $+\infty\times 0=+\infty$. Minimizing $\mathfrak{L}_{K}$ along curves $\gamma:x^-\mapsto x^+$ allows to define the \textit{$K$-distance} between given points $x^-,x^+\in X$:
\begin{equation}\label{Kdistance}
d_K(x^-,x^+):=\inf\left\{\mathfrak{L}_{K}(\gamma)\;:\; \gamma\in AC_{ploc}(I,X)\text{ s.t. }\gamma:x^-\mapsto x^+\right\}\in [0,+\infty].
\end{equation}
In order to prove existence of a minimizing curve, we will need the following conditions, that are assumed to be satisfied in the whole paper, unless otherwise specified. These assumptions concern the space $(X,d)$, the cost $K$, and the points $x^\pm$ to be connected:
\begin{description}
	\item[(H1)]
	$(X,d)$ is complete and is a length space;
	\item[(H2)]
	$K:X\to [0,+\infty]$ is lower semicontinuous and $\Sigma:=\{K=0\}$ is finite;
	\item[(H3)]
there exists a subset $F$ of $X$ such that all the intersections $F \cap \{K\leq \ell\}$ for $\ell<+\infty$ are compact sets, and
\[
d_K(x^-,x^+)=\inf\left\{\mathfrak{L}_{K}(\gamma)\;:\; \gamma\in AC_{ploc}([0,1],X)\text{ s.t. }\gamma:x^-\mapsto x^+\text{ and }\mathrm{Im}(\gamma)\subset F\right\}.
\]
\end{description}
Assumption {\bf (H1)} is satisfied in particular by any Banach space. Assumption \textbf{(H3)} is for instance satisfied when the two following conditions are fulfilled:
\begin{description}
\item[(H3a)] for all $x\in X$, $K(x)\geq k(d(x,\Sigma))$ for some function $k\in C^0(\R^+,\R^+)$ with $\int_0^\infty k(t)\d t=+\infty$;
\item[(H3b)] for all $R,\ell\in (0,+\infty)$ and $x_0\in X$, $\overline{B(x_0,R)}\cap \{K\leq\ell\}$ is compact in $(X,d)$. 
\end{description}
Indeed, the first of these two facts implies \cite[Proof of Proposition 2, Step 3]{nous} that curves with bounded $\mathfrak{L}_{K}$-length stay in a bounded set, say a ball $\overline{B(x_0,R)}$, and allows to use $F=\overline{B(x_0,R)}$ in Assumption \textbf{(H3)}. Moreover, if $X$ is a proper space, then condition {\bf (H3b)} is automatically satisfied, since $\{K\leq\ell\}$ is closed for every $\ell$ (because $K$ is l.s.c.) and closed balls in $X$ are compact. This explains the importance of condition {\bf (H3a)} when we face proper spaces. The case where $(X,d)$ is proper and $K:X\to [0,+\infty)$ is continuous was treated in \cite{nous}. 
In Appendix A we will see a counterexample when $X$ is proper but condition {\bf (H3a)} fails, showing that this is a natural assumption.

On the contrary, the present paper will be mainly concerned with infinite dimensional settings, where $X$ is in general not proper. In some cases intersecting balls with sublevel sets $\{K\leq\ell\}$ is enough to obtain compactness. In particular, this is the case when $X=L^2(\Omega)$ with $\Omega\subset\R^N$ open and bounded, and we define $K^2(u)=\|\nabla u\|^2_{L^2(\Omega)}+\int_\Omega f(x,u(x))dx$ if $u\in H^1(\Omega)$ and $+\infty$ otherwise, where $f:\Omega\times\R\to\R$ is a continuous and bounded function (see Section \ref{sectionHeteroclinic}).

In other cases (which will be of interest in Sections 5, \ref{sectionAlama} and \ref{sectionSchatzman}), intersecting balls with sublevel sets $\{K\leq\ell\}$ will not be enough to obtain compactness, which explains why we introduced the more general assumption  {\bf (H3)}.\bigskip

We are going to prove that the \emph{K-distance} $d_K$ is a metric on $X$ (possibly infinite), and that $\mathfrak{L}_{K}=L_{d_K}$ (see Proposition \ref{Klength} below). The main result of this section is that $(X,d_K)$ is a geodesic space. More precisely, one has the following theorem:
\begin{theorem}\label{exist_Kgeod}
Given $x^\pm\in X$ with $d_K(x^+,x^-)<\infty$, there exists a curve $\gamma\in \Lip_{ploc}(I,X)$ s.t. $\mathfrak{L}_{K}(\gamma)=d_K(x^+,x^-)$ and $\gamma:x^+\mapsto x^-$.
\end{theorem}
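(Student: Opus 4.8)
The plan is to run a direct method on the functional $\mathfrak{L}_K$, restricted to curves whose image lies in the set $F$ from assumption \textbf{(H3)}. First I would reduce, via \textbf{(H3)}, to a minimizing sequence $(\gamma_n)$ of curves $\gamma_n:[0,1]\to X$ with $\mathrm{Im}(\gamma_n)\subset F$, $\gamma_n:x^+\mapsto x^-$, and $\mathfrak{L}_K(\gamma_n)\to d_K(x^+,x^-)=:L<\infty$. Since the $K$-length is parametrization-invariant, I would reparametrize each $\gamma_n$ so that it has constant $d_K$-speed on $[0,1]$; the point of this is to get a uniform Lipschitz bound for the ordinary length $L_d$ of the reparametrized curves. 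This is exactly where the weight degeneracy bites: one cannot directly bound $L_d$ by $\mathfrak{L}_K$ when $K$ is small near $\Sigma=\{K=0\}$. The standard trick is to handle separately the portions of the curve near $\Sigma$ and far from it. Because $\Sigma$ is finite (assumption \textbf{(H2)}) and $K$ is l.s.c. and strictly positive off $\Sigma$, away from small neighborhoods of $\Sigma$ we have $K\geq c>0$ so the ordinary length there is controlled by $L/c$; near each point of $\Sigma$, one absorbs the (possibly long, in $d$) excursions into a small $d_K$-cost by an argument using the strict positivity of $d_K(\cdot,\sigma)$ for $\sigma\in\Sigma$ combined with \textbf{(H3a)}-type non-integrability when that is the route, or more abstractly using that \textbf{(H3)} forces the curves to stay in $F$ where the sublevels $\{K\leq\ell\}$ are compact. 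I expect this equicontinuity/precompactness step to be the main obstacle, precisely as the introduction warns.

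Concretely, I would argue: split $[0,1]$ into the (relatively open) set $U_n=\{t:\ d(\gamma_n(t),\Sigma)<\delta\}$ and its complement. On the complement, $K(\gamma_n(t))\geq k(\delta)>0$ (using the lower bound or just l.s.c.\ plus compactness of $\{K\le\ell\}\cap F$), so the total $d$-length there is $\leq L/k(\delta)$. The set $U_n$ is a countable union of intervals; on each such interval the curve is trapped in the ball $B_d(\sigma,\delta+\text{(something small)})$ around a single $\sigma\in\Sigma$ (for $\delta$ small, the $\delta$-neighborhoods of distinct points of $\Sigma$ are disjoint), so after reparametrizing by $d_K$-arclength these pieces are uniformly small in $d_K$ and, since they live in a fixed compact sublevel set intersected with $F$, uniformly small in $d$ too — here one uses that on a compact set $C\subset X$ with $C\cap\Sigma=\emptyset$ the distances $d$ and $d_K$ are comparable, together with a covering argument near $\Sigma$. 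The upshot is a uniform modulus of continuity for the $d_K$-arclength parametrizations $\tilde\gamma_n:[0,1]\to F$, with values in a fixed set $F\cap\{K\leq\ell\}$ that is compact. By Arzelà–Ascoli (valid since the target is compact and the curves are equicontinuous) a subsequence converges uniformly to some $\gamma\in C([0,1],X)$ with $\gamma(0)=x^+$, $\gamma(1)=x^-$ and $\mathrm{Im}(\gamma)\subset F$.

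It remains to show lower semicontinuity: $\mathfrak{L}_K(\gamma)\leq\liminf_n\mathfrak{L}_K(\tilde\gamma_n)=L$. I would use the equivalent definition of the $K$-length as a supremum over finite partitions of Riemann-type sums built from the $d_K$-distance — this is one of the "several equivalent definitions'' the introduction promises in this section — so that $\mathfrak{L}_K=L_{d_K}$. Then $L_{d_K}$ is, as a length functional associated to the metric $d_K$, lower semicontinuous with respect to pointwise (hence uniform) convergence of curves, by the usual triangle-inequality argument: $\sum_i d_K(\gamma(t_i),\gamma(t_{i+1}))=\lim_n\sum_i d_K(\tilde\gamma_n(t_i),\tilde\gamma_n(t_{i+1}))\leq\liminf_n L_{d_K}(\tilde\gamma_n)$, and then take the sup over partitions. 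One subtlety: $d_K$ may be infinite between some pairs, and $K$ may take the value $+\infty$; but along the minimizing sequence $d_K$ is finite and uniformly bounded, and the convention $+\infty\times0=+\infty$ together with l.s.c.\ of $K$ keeps the argument intact (the limit curve inherits finite $K$-length). Finally, since $\gamma$ is a competitor with $\mathfrak{L}_K(\gamma)\leq L=d_K(x^+,x^-)$, it is a minimizer; reparametrizing it by $d_K$-arclength and using that on each compact sub-curve avoiding $\Sigma$ the curve is $d$-Lipschitz (again by comparability of $d$ and $d_K$ on compacts off $\Sigma$), while $\Sigma$ is finite so it is crossed finitely often, yields $\gamma\in\Lip_{ploc}(I,X)$, which is the asserted regularity.
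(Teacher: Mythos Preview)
Your overall architecture is right, but several steps have genuine gaps that the paper handles with specific devices you are missing.

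\textbf{Precompactness of the images.} You assert that the curves take values in a fixed compact set $F\cap\{K\le\ell\}$. This is not justified: nothing forces $K(\gamma_n(t))$ to stay below a fixed level (and near $\Sigma$ the lower semicontinuity of $K$ gives no upper bound at all). The paper instead proves (Proposition~\ref{Klength}) that any $d_K$-bounded subset of $F$ is $d$-precompact, by showing that each point $x$ with $d_K(x,x_0)\le r$ lies within $d$-distance $\varepsilon$ of a point $y$ with $K(y)\le 2r/\varepsilon$; this uses only \textbf{(H3)} and is what places all images in a common $d$-compact set $\bar Y$.

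\textbf{Equicontinuity.} Parametrizing by $d_K$-arclength does not give $d$-equicontinuity: $d_K$ may well be larger than $d$ (where $K>1$), so $d_K$-Lipschitz does not imply a $d$-modulus; and near $\Sigma$, being trapped in a small $d$-ball around $\sigma\in\Sigma$ says nothing about the $d$-oscillation of the parametrized curve there. Your comparability claim ``on a compact $C$ with $C\cap\Sigma=\emptyset$, $d$ and $d_K$ are comparable'' is exactly what fails near $\Sigma$, which is the only place where there is a problem. The paper's fix is to reparametrize by $\mathfrak{L}_{K\wedge 1}$-arclength instead: since $K\wedge 1\le 1$ and $(X,d)$ is a length space, one has $d_{K\wedge 1}\le d$, so the identity $(\bar Y,d)\to(\bar Y,d_{K\wedge 1})$ is a continuous bijection on a compact set, hence a homeomorphism, giving $d\le\omega(d_{K\wedge 1})$ on $\bar Y$. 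The $L$-Lipschitz bound for $d_{K\wedge 1}$ then yields a uniform $d$-modulus.

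\textbf{Lower semicontinuity.} Your argument via $L_{d_K}$ needs $d_K$ to be lower semicontinuous for the $d$-topology, which you have not shown and which is not obvious (as an infimum, $d_K$ is rather upper semicontinuous in spirit; and connecting $x$ to a nearby $x_n$ cheaply requires $K$ to be locally bounded, which is not assumed). The paper introduces the third expression $A_K(\gamma)=\sup\sum_i\big(\inf_{[t_i,t_{i+1}]}K\circ\gamma\big)\,d(\gamma(t_i),\gamma(t_{i+1}))$, proves $A_K=\mathfrak{L}_K=L_{d_K}$ on $AC_{ploc}$, and checks directly that $A_K$ is l.s.c.\ for uniform $d$-convergence (each summand is l.s.c.\ because $K$ is l.s.c.\ and $d$ is continuous).

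\textbf{Regularity of the limit.} Finally, ``$\Sigma$ is finite so it is crossed finitely often'' is not automatic: a continuous curve can visit a single point of $\Sigma$ on a Cantor set of times. The paper removes, for each $\sigma\in\Sigma$, the loop between the first and last visits to $\sigma$; the resulting curve $\bar\gamma$ meets $\Sigma$ finitely often, is $d_{K\wedge 1}$-Lipschitz, and hence $d$-locally Lipschitz away from those finitely many times (since $K\wedge 1$ is bounded below there). Minimality of $A_K$ then forces $\bar\gamma=\gamma$.
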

Before attacking the proof of Theorem \ref{exist_Kgeod}, we need to prove a topological proposition which contains the corner stone of the proof. We will use the set $F$ evoked by Assumption ${\bf (H3)}$.
\begin{prop}\label{Klength}
The quantity $d_K$ defines a metric on $X$ and every $d_K$-bounded set contained in $F$ is precompact in $(X,d)$,
\end{prop}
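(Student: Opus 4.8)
The plan is to establish the two claims of Proposition \ref{Klength} in sequence: first that $d_K$ is a metric, then the precompactness of $d_K$-bounded subsets of $F$. For the first part, the only nontrivial point is that $d_K(x^-,x^+)=0$ implies $x^-=x^+$; symmetry and the triangle inequality follow routinely from the definition \eqref{Kdistance} by concatenating (and reversing) curves, noting that concatenation of $AC_{ploc}$ curves stays in $AC_{ploc}$. To rule out $d_K(x^-,x^+)=0$ with $x^-\neq x^+$, I would use assumption {\bf (H2)}: since $\Sigma=\{K=0\}$ is finite, pick a radius $r>0$ with $r<\tfrac12 d(x^-,x^+)$ and small enough that $\overline{B}_d(x^-,r)$ contains at most the point $x^-$ of $\Sigma$ (if $x^-\in\Sigma$) and no other zero of $K$; by lower semicontinuity of $K$ on the compact-free part, or more carefully by a covering argument, $K$ is bounded below by some $\delta>0$ on the annulus $\overline{B}_d(x^-,r)\setminus B_d(x^-,r/2)$. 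Any path $\gamma$ from $x^-$ to $x^+$ must cross this annulus, so $\mathfrak{L}_K(\gamma)\geq \delta\cdot\frac r2>0$, whence $d_K(x^-,x^+)>0$. One subtlety: if $x^-\in\Sigma$ we cannot bound $K$ below near $x^-$ itself, but the crossing argument on an annulus avoiding $\Sigma$ still works since $\Sigma$ is finite.

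For the precompactness statement, let $A\subset F$ be $d_K$-bounded, say $d_K(x,x_0)\leq R$ for all $x\in A$ and some fixed $x_0$. The key idea is to show $A$ is contained in a set of the form $F\cap\{K\leq\ell\}$ for some finite $\ell$, which is compact by {\bf (H3)}; then precompactness is immediate. Suppose not: then there is a sequence $(x_n)$ in $A$ with $K(x_n)\to+\infty$. For each $n$ take a path $\gamma_n\in AC_{ploc}([0,1],X)$ from $x_0$ to $x_n$, with image in $F$ (using the form of {\bf (H3)}), and with $\mathfrak{L}_K(\gamma_n)\leq R+1$. The heart of the argument is a lower bound: a curve that reaches a point where $K$ is large must have accumulated a definite amount of $K$-length along the way. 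Precisely, let $M_n:=\max_{t} K(\gamma_n(t))$; since $\gamma_n$ is continuous and $K$ is l.s.c., and $K(x_n)=M_n'$ is large, I want to say that on the portion of the curve where $K$ transitions from a moderate level up to near $M_n'$, the $K$-length is large. This is where {\bf (H3a)} would give it directly — the non-integrability of $k$ forces curves reaching far from $\Sigma$ to have large $K$-length — but under the abstract {\bf (H3)} alone this is exactly the delicate point, and I expect this to be the main obstacle.

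Let me reconsider: under {\bf (H3)} as stated, the compactness of $F\cap\{K\leq\ell\}$ is assumed, and I think the cleaner route is the following. Fix $\ell_0$ large. If $x\in A$ has $K(x)>\ell_0$, then along any competitor $\gamma$ from $x_0$ to $x$ with image in $F$ and $\mathfrak{L}_K(\gamma)\le R+1$, consider the last time $\tau$ before the endpoint at which $K(\gamma(\tau))\le \ell_0$ (such $\tau$ exists if $K(\gamma(0))=K(x_0)\le\ell_0$, which we may arrange by choosing $x_0\notin\Sigma$ with $K(x_0)$ finite). On $[\tau,1]$ the image of $\gamma$ lies in the compact set $\overline{\{K>\ell_0/2\}}\cap F \subset F\setminus(F\cap\{K\le \ell_0/2\})$... actually the correct statement is that $\gamma([\tau,1])$ stays in $F$ and leaves every sublevel $\{K\le\ell\}$ for $\ell<\ell_0$, so it lies in $F\cap\{K\ge\ell_0\}$ near the end. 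The distance (in $d$) from $x_0$ to such a region: since $F\cap\{K\le\ell_0\}$ is compact hence bounded, and... this still needs a quantitative separation. The honest fix, which I would adopt, is to strengthen the bookkeeping: show $\mathfrak{L}_K(\gamma)=L_{d_K}(\gamma)$ (Proposition \ref{Klength}'s companion, proven alongside) and that $d_K$-balls around $x_0$ have $K$ bounded, by iterating: if $K\le\ell$ on a $d_K$-ball of radius $\rho$, then on a ball of radius $\rho+\sqrt{\ell}\cdot(\text{diam})$... Given the intended application and the role of {\bf (H3)}, I expect the paper's actual proof pairs the two assertions of the proposition and uses the $\mathfrak{L}_K=L_{d_K}$ identity together with a clean covering/continuity argument; the genuinely hard part is ruling out $K(x_n)\to\infty$ on a $d_K$-bounded set, and that is where the full strength of {\bf (H3)} (via {\bf (H3a)}'s non-integrability, or the abstract compactness of $F\cap\{K\le\ell\}$) must be invoked.
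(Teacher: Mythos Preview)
Your treatment of the metric axioms is close to the paper's, but there is a real gap in the positivity argument that matters precisely in the non-locally-compact setting this paper targets. You claim that $K$ is bounded below by some $\delta>0$ on the annulus $\overline{B}_d(x^-,r)\setminus B_d(x^-,r/2)$, invoking lower semicontinuity. But in a non-proper space the annulus need not be compact, and a positive l.s.c.\ function on a non-compact set need not have a positive infimum. The paper avoids this by working not with the annulus itself but with the \emph{image of the curve} on a subinterval where $d(\gamma(t),x^-)$ stays between $\varepsilon$ and $2\varepsilon$: that image is compact (continuous image of a compact interval), misses $\Sigma$ for $\varepsilon$ small, and so $K$ is bounded below on it.

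For the precompactness part, your strategy is aimed at the wrong target. You try to show that a $d_K$-bounded set $A\subset F$ is contained in some sublevel $F\cap\{K\le\ell\}$, i.e.\ that $K$ is bounded on $d_K$-balls. This is false in general: $K$ can be $+\infty$ at isolated points while $d_K$ remains finite (the integral $\int K(\gamma)|\dot\gamma|$ is insensitive to the value of $K$ at a single point). Your subsequent attempts to salvage this---tracking where $K$ crosses level $\ell_0$, or invoking the $\mathfrak{L}_K=L_{d_K}$ identity---do not recover a bound on $K$ because no such bound exists. The paper's argument is different and is the missing idea: rather than bounding $K$ on $A$, it shows that for every $\varepsilon>0$ there is $\ell=2r/\varepsilon$ such that every $x\in F\cap B$ lies within $d$-distance $\varepsilon$ of a point $y$ with $K(y)\le\ell$. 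Concretely, take a curve $\gamma:x_0\mapsto x$ in $F$ with $\mathfrak{L}_K(\gamma)\le 2r$, find $t_0$ with $L_d(\gamma_{|[t_0,1]})=\varepsilon$, and observe that since $\int_{t_0}^1 K(\gamma)|\dot\gamma|\le 2r$ while the $d$-length of this piece is $\varepsilon$, some point $y=\gamma(t)$ on it must satisfy $K(y)\le 2r/\varepsilon$. Thus $F\cap B$ is contained in the $\varepsilon$-neighborhood of the compact set $\{x_0\}\cup(F\cap\{K\le\ell\})$, which gives precompactness.
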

\begin{proof} 
It is clear that $d_K:X\times X\to [0,+\infty]$ is nonnegative and symmetric. Moreover $d_K$ satisfies the triangle inequality since the K-length is additive (the K-length of a curve obtained by concatenation is the sum of the K-lengths of each curve). Obviously, one has $d_K(x,x)=0$ whatever $x\in X$ since $d_K(x,x)\leq \mathfrak{L}_{K}(\gamma)=0$ if $\gamma:\{0\}\to X$ is the constant map given by $\gamma(0)=x$ (note that a constant map $\gamma:I\to X$ on a non-trivial interval $I$ needs not satisfy $\mathfrak{L}_{K}(\gamma)=0$ since $\mathfrak{L}_{K}(\gamma)=+\infty$ if $\gamma\equiv x$, with our convention $+\infty\times 0=+\infty$). The fact that $d_K(x,y)=0$ implies $x=y$ follows from Assumption {\bf (H2)}. Indeed, let $x\neq y$ be two distinct points in $X$ and let $\gamma:I\to X$ be a piecewise absolutely continuous curve joining $x$ to $y$. Then, by continuity of $t\mapsto d(x,\gamma(t))$, there exist $t_1< t_2$ s.t. $d(\gamma(t_1),x)=\varepsilon$, $d(\gamma(t_2),x)=2\varepsilon$ and $\varepsilon\leq d(\gamma(t),x)\leq 2\varepsilon$ for $t_1\leq t\leq t_2$. This implies that $\mathfrak{L}_{K}(\gamma)\geq \varepsilon\, \inf_C K$, where $C=\gamma([t_1,t_2])$. Yet, for $\varepsilon$ small enough, $C$ does not intersect the set $\{K=0\}$ so that $\inf_C K>0$ as $C$ is compact and $K$ is lower semicontinuous. In particular, $d_K(x,y)>0$.
\medskip

%\textsc{$\bullet$ $d_K$-bounded implies $d$-bounded.}
%
%This is a consequence of Hypotheses ${\bf (H1)}$ and ${\bf (H3a)}$. Let us take $x_0,x\in X$ with $d_K(x,x_0)\leq R$. By definition, there exists $\gamma\in AC_{ploc}(I,X)$ s.t. $\gamma :x_0\mapsto x$ and $\mathfrak{L}_{K}(\gamma)\leq d_K(x_0,x)+1$. Now, set $\phi(t):=d(\gamma(t),\Sigma)$: since the function $x\mapsto d(x,\Sigma)$ has Lipschitz constant equal to $1$, we have $\phi\in  AC_{ploc}(I,\R)$ and $|\phi'(t)|\leq |\dot{\gamma}|(t)$ a.e. Take $h:\R^+\to\R^+$ the antiderivative of $k$, i.e. $h'=k$ with $h(0)=0$, and compute $[h(\phi(t))]'=k(\phi(t))\phi'(t)$. Hence,
%$$|[h(\phi(t))]'|=k(\phi(t))|\phi'(t)|\leq K(\gamma(t))\, |\dot{\gamma}|(t)$$
%and $h(d(\gamma(t),\Sigma))\leq h(d(x_0,\Sigma))+\mathfrak{L}_{K}(\gamma)\leq h(d(x_0,\Sigma))+R+1.$ Since $\lim_{s\to\infty}h(s)=+\infty$, this provides a bound on $d(x,\Sigma)$ which means that the ball $\overline{B}_{d_K}(x_0,R)$ is $d$-bounded.
%
%\medskip

We now prove that $d_K$-bounded sets contained in $F$ are $d$-precompact. This means proving that for every ball $B:=\overline{B}_{d_K}(x_0,r)$, with $x_0\in X$ and $r>0$, $F\cap B$ is precompact in $(X,d)$. Thanks to Assumption {\bf (H3)}, the set $F_\ell:=\{x_0\}\cup\{x\in F\;:\;K(x)\leq \ell\}$ is compact for all $\ell>0$. Thus it is enough to prove that for all $\varepsilon>0$, there exists $\ell>0$ s.t. $F\cap B\subset (F_\ell)^\varepsilon:=\{x\in X\;:\; d(x,F_\ell)\leq \varepsilon\}$. This would allow to cover $F\cap B$ with a finite $\varepsilon$-net, which shows precompactness. We just need to prove that, given $\varepsilon>0$, any point $x\in F\cap B$ lies within $\varepsilon$-distance to a point $y$ s.t. $K(y)\leq \ell$, where $\ell$ only depends on $\varepsilon$, $x_0$ and $r$. As $x\in B$, there exists $\gamma\in AC_{ploc}([0,1],X)$ s.t. $\gamma: x_0\mapsto x$ and $\mathfrak{L}_{K}(\gamma)\leq 2r$. If $L_d(\gamma)\leq \varepsilon$, then $x\in \overline{B}_d(x_0,\varepsilon)\subset  (F_\ell)^\varepsilon$, and if $K(x)=0$, then $x\in F_0$. Otherwise, by continuity of $t\mapsto\varphi(t):=L_d(\gamma_{|[t,1]})$ and since $\varphi(t)<\infty$ for $t$ close to $1$ (because $K$ is bounded from below on $Y:=\gamma([1-\eta,1])$ for some $\eta>0$, and $L_d(\gamma_{|[1-\eta,1]})\inf_Y K\leq \mathfrak{L}_{K}(\gamma_{|[1-\eta,1]})$), there exists $t_0\in [0,1]$ s.t. $L_d(\gamma_{|[t_0,1]})=\varepsilon$. Now, as $\mathfrak{L}_{K}(\gamma_{|[t_0,1]})=\int_{t_0}^1 K(\gamma)|\dot{\gamma}| \leq 2r$, there exists $t\in [t_0,1]$ s.t. $K(\gamma(t))\leq \frac{2r}{\varepsilon}$. Thus $y:=\gamma(t)\in F_\ell$ with $\ell:=\frac{2r}{\varepsilon}$ and $d(y,x)\leq\varepsilon$ as required.
%\medskip
%
%\textsc{$\bullet$ $d_K$ is stronger than $d$.}
%
%We use the same hint as Step 1. Assume that there exists a sequence $(x_n)_n$ which $d_K$-converges to $x\in X$ and $\varepsilon>0$ s.t. $d(x_n,x)\geq 3\varepsilon$ for all $n\geq 0$. Let $\gamma \in AC_{ploc}(I,X)$ be a curve s.t. $\gamma:x\mapsto x_n$ and $\mathfrak{L}_{K}(\gamma)\leq 2 d_K(x,x_n)$. Since $\gamma$ has to cross the ring $C:=\{y\in X\;:\; \varepsilon\leq d(x,y)\leq 2\varepsilon\}$ and since $\gamma(I)\subset B:=\overline{B}_{d_K}(x,2 d_K(x,x_n))$, one has $\mathfrak{L}_{K}(\gamma)\geq \varepsilon \inf_{B\cap C} K$. In particular $d_K(x_n,x)\geq \frac{\varepsilon}{2}\inf_{\overline{B\cap C}} K$ which yields a contradiction in the limit $n\to \infty$. Indeed, by Step 3, the set $B\cap C$ is precompact. But, as $(X,d)$ is complete, this implies that $\overline{B\cap C}$ is compact and $\inf_{\overline{B\cap C}} K>0$ as $K$ is l.s.c.
\end{proof}

We also need the following metric identities:
\begin{prop}\label{metric}
For all curves $\gamma\in AC_{ploc}(I,X)$, one has $\mathfrak{L}_{K}(\gamma)=L_{d_K}(\gamma)=A_K(\gamma)$, where $A_K$ is defined by
\[
A_K(\gamma):=\sup \;\sum_{i=0}^{N-1}\; \left(\inf\limits_{t_i\leq t\leq t_{i+1}} K(\gamma(t))\right)\ d(\gamma(t_i),\gamma(t_{i+1}))\in [0,+\infty],
\]
where the supremum is taken over all $N\geq 1$ and all sequences $t_0\leq \dots\leq t_N$ in $I$. Moreover, $A_K$ is invariant under reparametrization (surjective and non decreasing maps), and lower semicontinuous on $\mathcal{C}(I,X)$ endowed with the topology of uniform convergence on compact subsets of $I$.
\end{prop}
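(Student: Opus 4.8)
The plan is to prove the equalities in two stages, $\mathfrak{L}_{K}(\gamma)=A_K(\gamma)$ and then $\mathfrak{L}_{K}(\gamma)=L_{d_K}(\gamma)$, and to treat the two extra properties of $A_K$ at the end; I describe everything in the generic situation, the value $+\infty$ of $K$ being handled throughout by the convention $+\infty\times 0=+\infty$ with no real extra difficulty. Two inequalities are immediate. For $A_K(\gamma)\le\mathfrak{L}_{K}(\gamma)$: for any subdivision $t_0\le\dots\le t_N$ one has $d(\gamma(t_i),\gamma(t_{i+1}))\le L_d(\gamma_{|[t_i,t_{i+1}]})=\int_{t_i}^{t_{i+1}}|\dot\gamma|$, hence $(\inf_{[t_i,t_{i+1}]}K\circ\gamma)\,d(\gamma(t_i),\gamma(t_{i+1}))\le\int_{t_i}^{t_{i+1}}K(\gamma)|\dot\gamma|$, and one sums over $i$. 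For $L_{d_K}(\gamma)\le\mathfrak{L}_{K}(\gamma)$: each restriction $\gamma_{|[t_i,t_{i+1}]}$ is an admissible competitor for $d_K(\gamma(t_i),\gamma(t_{i+1}))$, so $d_K(\gamma(t_i),\gamma(t_{i+1}))\le\mathfrak{L}_{K}(\gamma_{|[t_i,t_{i+1}]})$, and additivity of $\mathfrak{L}_{K}$ under concatenation gives $\sum_i d_K(\gamma(t_i),\gamma(t_{i+1}))\le\mathfrak{L}_{K}(\gamma)$.

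The core point is the reverse inequality $\mathfrak{L}_{K}(\gamma)\le A_K(\gamma)$. Since $t\mapsto K(\gamma(t))$ is lower semicontinuous on $I$, it is the increasing limit of Lipschitz functions $g_n\uparrow K\circ\gamma$ (inf-convolutions), and by monotone convergence it suffices to show $\int_I g\,|\dot\gamma|\le A_K(\gamma)$ for every continuous $g$ with $0\le g\le K\circ\gamma$. Working on a compact interval $[a,b]$ inside one of the finitely many intervals where $\gamma$ is locally absolutely continuous (so $L_d(\gamma_{|[a,b]})<\infty$), one uses uniform continuity of $g$ to pick a subdivision with small oscillation, so that $\int_{t_i}^{t_{i+1}}g|\dot\gamma|\le(\inf_{[t_i,t_{i+1}]}g+\varepsilon)L_d(\gamma_{|[t_i,t_{i+1}]})$, then replaces each length $L_d(\gamma_{|[t_i,t_{i+1}]})$ by a near-optimal inner partition and uses $\inf_{[t_i,t_{i+1}]}g\le\inf_{[s,s']}K\circ\gamma$ for subintervals $[s,s']\subset[t_i,t_{i+1}]$; the resulting double sum is an $A_K$-sum, which yields $\int_{[a,b]}g|\dot\gamma|\le A_K(\gamma)+\varepsilon\,(L_d(\gamma_{|[a,b]})+\sup_{[a,b]}g)$. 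Letting $\varepsilon\to0$, exhausting each interval of absolute continuity by such $[a,b]$, summing the finitely many pieces, and finally letting $g=g_n\uparrow K\circ\gamma$ gives $\mathfrak{L}_{K}(\gamma)\le A_K(\gamma)$, hence $\mathfrak{L}_{K}(\gamma)=A_K(\gamma)$.

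For $\mathfrak{L}_{K}(\gamma)\le L_{d_K}(\gamma)$, assume first $\mathfrak{L}_{K}(\gamma)<\infty$, so $K\circ\gamma\,|\dot\gamma|\in L^1(I)$. Then $d_K(\gamma(s),\gamma(t))\le\int_s^t K(\gamma)|\dot\gamma|$ shows that $\gamma$ is, on each of its intervals of regularity, absolutely continuous as a map into $(X,d_K)$, so Proposition \ref{length_lip} applied in $(X,d_K)$ gives $L_{d_K}(\gamma)=\int_I|\dot\gamma|_{d_K}$. One checks $|\dot\gamma|_{d_K}(t)=K(\gamma(t))|\dot\gamma|(t)$ a.e.: the bound $\le$ follows from dividing $d_K(\gamma(t),\gamma(s))\le\int K(\gamma)|\dot\gamma|$ by $|t-s|$ at Lebesgue points, and the bound $\ge$ (trivial where $K(\gamma(t))=0$) is exactly the ``ball'' argument from the proof of Proposition \ref{Klength}: lower semicontinuity gives $\rho>0$ with $K>K(\gamma(t))-\varepsilon$ on $\overline{B}_d(\gamma(t),\rho)$, and then for $s$ close to $t$ any competitor $\sigma$ from $\gamma(t)$ to $\gamma(s)$ satisfies $\mathfrak{L}_{K}(\sigma)\ge(K(\gamma(t))-\varepsilon)d(\gamma(t),\gamma(s))$, since it either stays in the ball or its initial arc up to the first exit has $d$-length $\ge\rho\ge d(\gamma(t),\gamma(s))$. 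Integrating gives $L_{d_K}(\gamma)=\mathfrak{L}_{K}(\gamma)$. If $\mathfrak{L}_{K}(\gamma)=+\infty$, apply the finite case to the truncations $K_R=\min(K,R)$ (still lower semicontinuous, and $\gamma$ is automatically $d_{K_R}$-absolutely continuous): then $L_{d_{K_R}}(\gamma)=\int_I K_R(\gamma)|\dot\gamma|\to+\infty$ as $R\to\infty$ by monotone convergence, and since $K_R\le K$ forces $d_{K_R}\le d_K$ and hence $L_{d_{K_R}}(\gamma)\le L_{d_K}(\gamma)$, we conclude $L_{d_K}(\gamma)=+\infty$.

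Finally, the two properties of $A_K$. Reparametrization invariance: for $\varphi:I'\to I$ non-decreasing and surjective and $\sigma=\gamma\circ\varphi$, subdivisions of $I'$ push forward, and subdivisions of $I$ lift (by surjectivity), to subdivisions of the other interval, and since $\varphi$ is continuous and non-decreasing one has $\varphi([s_j,s_{j+1}])=[\varphi(s_j),\varphi(s_{j+1})]$, so the corresponding $A_K$-sums coincide and $A_K(\sigma)=A_K(\gamma)$. Lower semicontinuity: $A_K$ is the supremum over all subdivisions of $\gamma\mapsto\sum_i(\inf_{[t_i,t_{i+1}]}K\circ\gamma)\,d(\gamma(t_i),\gamma(t_{i+1}))$, so it is enough that each such functional be l.s.c.\ on $\mathcal{C}(I,X)$; there $\gamma\mapsto d(\gamma(t_i),\gamma(t_{i+1}))$ is continuous (the $t_i$ lie in a compact set), while $\gamma\mapsto\inf_{[t_i,t_{i+1}]}K\circ\gamma$ is l.s.c.\ (if $\gamma_n\to\gamma$ uniformly on $[t_i,t_{i+1}]$ and $\tau_n\in[t_i,t_{i+1}]$ realizes the infimum of the l.s.c.\ function $K\circ\gamma_n$, a subsequence has $\tau_n\to\tau$, then $\gamma_n(\tau_n)\to\gamma(\tau)$ and l.s.c.\ of $K$ gives the claim), and a finite sum of products of nonnegative l.s.c.\ functions is l.s.c. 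The main obstacle is the reverse inequality $\mathfrak{L}_{K}(\gamma)\le A_K(\gamma)$ together with the a.e.\ identity $|\dot\gamma|_{d_K}=K(\gamma)|\dot\gamma|$: both express that a merely lower semicontinuous weight is correctly recovered by these infimum-type sums, and both rely on the approximation of $K\circ\gamma$ from below and on the local ``escape-from-a-ball'' estimate.
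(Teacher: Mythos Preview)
Your proof is correct and complete, but it follows a different path than the paper's for the two nontrivial inequalities. For $\mathfrak{L}_{K}\le A_K$ the paper uses an averaging trick: it takes regular subdivisions $t_i^\delta=\delta+i/n$, averages the $A_K$-sum over $\delta\in[0,1/n]$, and lets $n\to\infty$ via Fatou and the lower semicontinuity of $K\circ\gamma$; you instead approximate $K\circ\gamma$ from below by Lipschitz functions and exploit their uniform continuity to refine partitions. For $\mathfrak{L}_{K}\le L_{d_K}$ the paper repeats the same averaging trick, this time combined with Lemma~\ref{est_dK}; you take a more intrinsic route, showing that $\gamma$ is absolutely continuous as a curve in $(X,d_K)$, invoking Proposition~\ref{length_lip} there, and identifying the $d_K$-metric derivative pointwise as $K(\gamma)|\dot\gamma|$ via the same ball-escape estimate underlying Lemma~\ref{est_dK}. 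The paper's method is more unified (one device handles both hard inequalities) and avoids any approximation of $K$, while your argument is closer to standard techniques and makes the role of the metric derivative in $(X,d_K)$ explicit. One minor caution: your truncation step for the case $\mathfrak{L}_K(\gamma)=+\infty$ tacitly assumes $\mathfrak{L}_{K_R}(\gamma)<\infty$, which may fail if $I$ is unbounded; this is harmless since (as the paper also notes) all three quantities are suprema over compact subintervals, so one should first reduce to compact $I$.
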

\begin{remark}
$\mathfrak{L}_{K}$ is defined on $AC_{ploc}(I,(X,d))$, while $L_{d_K}$ and $A_K$ make sense for any map valued in $X$. Proposition 2 states that all these quantities coincide on the set $AC_{ploc}(I,(X,d))$. An important observation in order to prove that $A_K\leq \mathfrak{L}_{K}$  is the fact that adding a point in the subdivision increases the quantity of which $A_K(\gamma)$ is the supremum. This could have failed if we had defined $A_K$ using $K(\gamma(t))$ instead of $\inf\limits_{t_i\leq t\leq t_{i+1}} K(\gamma(t))$.\end{remark}
\begin{remark}
Proposition \ref{metric} implies in particular that any curve $\gamma\in AC_{ploc}(I,X)$ such that $\mathfrak{L}_{K}(\gamma)<+\infty$ is a continuous function from $I$ onto $(X,d_K)$ which is not obvious since the metric $d$ needs not be stronger than the metric $d_K$.\end{remark}
In order to prove Proposition \ref{metric}, we will need the following elementary estimate:
\begin{lemma}\label{est_dK}
For all $x,y\in X$, one has
\(
K_{d(x,y)}(x)\, d(x,y)\leq d_K(x,y),
\)
where we have set for every $r\geq 0$ and $x\in X$,
\[
K_r(x):=\inf\{K(y)\;:\; d(x,y)\leq r\}.
\]
\end{lemma}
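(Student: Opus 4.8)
The plan is to prove the estimate $K_{d(x,y)}(x)\,d(x,y)\leq d_K(x,y)$ directly from the definition of $d_K$ as an infimum of $\mathfrak{L}_{K}$ over paths. Fix $x,y\in X$; we may assume $d(x,y)<+\infty$ (otherwise $d_K(x,y)\geq d(x,y)$ will follow easily, but actually if $d(x,y)=+\infty$ there is nothing to prove since $K_{+\infty}(x)=\inf_X K$ could be $0$ — more carefully, one notes that any path $\gamma$ from $x$ to $y$ has $L_d(\gamma)=+\infty$, so we should really reduce to the finite case). So assume $r:=d(x,y)<+\infty$ and let $\gamma\in AC_{ploc}(I,X)$ be any path from $x$ to $y$ with $\mathfrak{L}_{K}(\gamma)<+\infty$ (if no such path exists, $d_K(x,y)=+\infty$ and we are done). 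I would then estimate $\mathfrak{L}_{K}(\gamma)$ from below.

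First I would observe that along $\gamma$ the distance to $x$ passes continuously from $0$ (at the initial endpoint $a^-=\inf I$, in the limiting sense if $a^-\notin I$) to $d(x,y)=r$ (at the final endpoint). Hence, by the intermediate value theorem applied to the continuous function $t\mapsto d(x,\gamma(t))$, the curve stays, on an initial portion, inside the closed ball $\overline{B}_d(x,r)$, until the first time it reaches distance $r$ from $x$. On that initial portion $\gamma([a^-,t_0])$, every point $z=\gamma(t)$ satisfies $d(x,z)\leq r$, hence $K(\gamma(t))\geq K_r(x)$ by the very definition of $K_r(x)$. Therefore
\[
\mathfrak{L}_{K}(\gamma)\;\geq\;\int_{a^-}^{t_0} K(\gamma(t))\,|\dot\gamma|(t)\,\d t\;\geq\; K_r(x)\int_{a^-}^{t_0}|\dot\gamma|(t)\,\d t\;=\;K_r(x)\,L_d(\gamma_{|[a^-,t_0]})\;\geq\;K_r(x)\,d(\gamma(a^-),\gamma(t_0)),
\]
and since $d(\gamma(a^-),\gamma(t_0))=d(x,\gamma(t_0))=r$ by the choice of $t_0$, this gives $\mathfrak{L}_{K}(\gamma)\geq K_r(x)\,r=K_{d(x,y)}(x)\,d(x,y)$. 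Taking the infimum over all such $\gamma$ yields the claim. (In the display above I used Proposition \ref{length_lip} to identify $\int|\dot\gamma|$ with the length $L_d$ of the restricted curve.)

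A few technical points need care. The choice of $t_0$: set $t_0:=\inf\{t\in I: d(x,\gamma(t))\geq r\}$; if $d(x,\gamma(t))<r$ for all $t$, then since $\gamma(a^+)=y$ with $d(x,y)=r$ we get $d(x,\gamma(t))\to r$, so we may instead take $t_0$ as any point with $d(x,\gamma(t_0))$ arbitrarily close to $r$, run the argument, and let $t_0\to a^+$; this produces $\mathfrak{L}_{K}(\gamma)\geq K_r(x)(r-\epsilon)$ for all $\epsilon>0$, hence the result. Also one must handle the convention $+\infty\times 0=+\infty$: if $K_r(x)=+\infty$ then $K\equiv+\infty$ on $\overline{B}_d(x,r)$, and the inequality $\int_{a^-}^{t_0}K(\gamma)|\dot\gamma|\geq K_r(x)L_d(\gamma_{|[a^-,t_0]})$ still holds with both sides possibly $+\infty$ — one should just note that if $L_d(\gamma_{|[a^-,t_0]})>0$ then the left side is $+\infty$, forcing $d_K(x,y)=+\infty\geq K_r(x)r$; and $L_d(\gamma_{|[a^-,t_0]})\geq r>0$ always. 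Finally, if $x=y$ the statement is trivial since $d(x,y)=0$.

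I do not expect any serious obstacle here: the estimate is genuinely elementary, and the only mildly delicate point is bookkeeping around endpoints that may not belong to $I$ and around the $\pm\infty$ conventions for $K$. The one idea that makes it work is the standard ``exit time'' trick — stopping the curve the first time it leaves the ball $\overline{B}_d(x,r)$ — so that on the stopped portion $K$ is bounded below by $K_r(x)$ while the portion still has $d$-length at least $r$.
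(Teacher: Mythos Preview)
Your proof is correct and follows essentially the same approach as the paper's: both argue that any admissible curve $\gamma:x\mapsto y$ must spend at least $d$-length $r=d(x,y)$ inside the ball $\overline{B}_d(x,r)$, where $K$ is bounded below by $K_r(x)$, giving $\mathfrak{L}_{K}(\gamma)\geq K_r(x)\,r$. You are simply more explicit about the exit time $t_0$ and the various edge cases (endpoints not in $I$, $K_r(x)=+\infty$, $x=y$), whereas the paper compresses the argument into two lines.
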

\begin{proof}
Set $r:=d(x,y)$. Since any piecewise locally absolutely continuous curve $\gamma:x\mapsto y$ has to get out of the open ball $B:=B_d(x,r)$, it is clear that its restriction to the part where it stays inside such a ball has at least $d$-length equal to $r$, which gives
\[
\mathfrak{L}_{K}(\gamma)=\int_I K(\gamma(t))\, |\dot{\gamma}|(t)\d t \geq r \inf_B K=rK_r(x).
\]
Taking the infimum over the set of curves $\gamma\in AC_{ploc}$ joining $x$ and $y$ yields the claim. 
\end{proof}

\begin{proof}[Proof of Proposition \ref{metric}] It is quite straightforward that $L_{d_K}$ and $A_K$ are invariant by (surjective and non decreasing) reparametrization and that $\mathfrak{L}_{K}$ is invariant by Lipshitz reparametrizations. Moreover, $\mathfrak{L}_{K}$, $L_{d_K}$ and $A_K$ have the common property that their value on an interval is the supremum of the same value restricted to compact subintervals. Thus one can assume that $I$ is compact and, by affine reparametrization, that $I=[0,1]$ (except if $I$ is reduced to a single point in which case the proposition is trivial). We divide the rest of the proof into five steps.

\medskip
\textsc{$\bullet$ Step 1: $L_{d_K}(\gamma)\leq \mathfrak{L}_{K}(\gamma)$.} This follows from the definitions of $L_{d_K}$ and $d_K$, and from the additivity property of $\mathfrak{L}_{K}$.

\medskip
\textsc{$\bullet$ Step 2: $A_K(\gamma)\leq \mathfrak{L}_{K}(\gamma)$.}
Indeed, by additivity of $\mathfrak{L}_{K}$, the claimed inequality follows from the elementary one,
\[
\left(\inf\limits_{a\leq t\leq b} K(\gamma(t))\right)\ d(\gamma(a),\gamma(b))\leq \mathfrak{L}_{K}(\gamma),
\]
for every curve which is absolutely continuous on the interval $[a,b]$.

\medskip
\textsc{$\bullet$ Step 3: $\mathfrak{L}_{K}(\gamma)\leq A_K(\gamma)$.}
 In order to estimate $A_K(\gamma)$ from below, we use the following subdivision in the interval $[0,1]$: given $n\geq 2$ and $\delta\in [0,\frac 1n]$, define $t^\delta_i=\delta+\frac in$ for $0\leq i\leq n-1$. By definition of $A_K$, one has
\[
A_K(\gamma)\geq \sum_{i=0}^{n-2}\left(\inf\limits_{s\in [t^\delta_i,t^\delta_{i+1}]}K(\gamma(s))\right)\ d\left(\gamma(t^\delta_i),\gamma(t^\delta_{i+1})\right).
\]
Taking the average over $\delta\in [0,\frac 1n]$ yields
\[
A_K(\gamma)\geq\int_0^{\frac 1n} \sum_{i=0}^{n-2}\left(\inf\limits_{s\in [t^\delta_i,t^\delta_{i+1}]}K(\gamma(s))\right)\ \frac{d\left(\gamma(t^\delta_i),\gamma(t^\delta_{i+1})\right)}{\frac 1n}\d \delta.
\]
Since $[0,1-\frac 1n)$ is the disjoint union of the sets $\{t_i^\delta \;:\; 0\leq\delta< 1/n\}$ with $0\le i\le n-2$, we have proved
\[
A_K(\gamma)\geq \int_0^{1-\frac 1n}\left(\inf\limits_{s\in [t,t+\frac 1n]}K(\gamma(s))\right)\ \frac{d\left(\gamma(t),\gamma(t+\frac 1n)\right)}{\frac 1n}\d t.
\]
Now fix $t_0<1$. Since $1-1/n>t_0$ for large $n$, thanks to the lower semicontinuity of $K\circ\gamma$ and Fatou's Lemma, this yields in the $\liminf$ as $n\to \infty$
\[
A_K(\gamma)\geq \int_0^{t_0} K(\gamma(t)) |\dot{\gamma}|(t)\d t
\]
and then it it enough to use the arbitrariness of $t_0$.
\medskip

\textsc{$\bullet$ Step 4: $\mathfrak{L}_{K}(\gamma)\leq L_{d_K}(\gamma)$.} We use the same subdivision $(t_i^\delta)$ of Step 3. We deduce, by Lemma \ref{est_dK} and by definition of $L_{d_K}$, that
\[
L_{d_K}(\gamma)\geq \sum_{i=0}^{n-2} K_{\omega(1/n)}(\gamma(t_i^\delta))\, d(\gamma(t^\delta_i),\gamma(t^\delta_{i+1})),
\]
where $\omega$ is the modulus of continuity of $\gamma$, i.e. $\omega(r)=\sup \{d(\gamma(t),\gamma(s))\;:\; |t-s|\leq r\}$. As in Step 3, taking the average over $\delta\in (0,1/n)$ yields
\[
L_{d_K}(\gamma)\geq \int_0^{1-1/n} K_{\omega(1/n)}(\gamma(t))\frac{d\left(\gamma(t),\gamma(t+\frac 1n)\right)}{\frac 1n}\d t,
\]
and the conclusion follows by taking the $\liminf$ as $n\to\infty$. We use $\liminf_{r\to 0} K_r(x)\geq K(x)$ (since $K$ is l.s.c.) and $\lim_{n\to\infty}\omega(1/n)=0$ (since $\gamma$ is uniformly continuous on $[0,1]$).
\medskip

\textsc{$\bullet$ Step 5: $A_K$ is lower semicontinuous on $\mathcal{C}(I,X)$.} Since the supremum of lower semicontinuous functions is lower semicontinuous and since $\gamma\mapsto d(\gamma(a),\gamma(b))$ is continuous, it is enough to prove that $\gamma\mapsto \inf_{[a,b]} K\circ\gamma$ is lower semicontinuous on the set of curves $\gamma :[a,b]\to X$. Let $(\gamma_n)_n$ be a sequence of curves uniformly converging to a curve $\gamma$ and let $t_n\in [a,b]$ be a point where the infimum of $K\circ\gamma_n$ is achieved. Up to extraction, one can assume that $\inf (K\circ\gamma_n)$ converges to $\liminf_{n\to\infty}(\inf K\circ\gamma_n)$ and that $t_n$ converges to some $t\in [a,b]$ as $n\to\infty$. Then one has $\inf_{[a,b]} K\circ\gamma\leq K(\gamma(t))\leq \liminf_n K(\gamma_n(t_n))$ since $K$ is l.s.c and $\gamma_n(t_n)\to \gamma(t)$.
\end{proof}

We are now ready to prove Theorem \ref{exist_Kgeod}.

\begin{proof}[Proof of Theorem \ref{exist_Kgeod}]
We shall apply Ascoli's Theorem for maps from $[0,1]$ to $(X,d)$. We first explain how to get compactness and then, how to get equicontinuity of a minimizing sequence.

Let $(\gamma_n)_{n\geq 1}\subset AC_{ploc}([0,1],X)$ be a minimizing sequence for the geodesic problem from $x^+$ to $x^-$, i.e. $\gamma_n:x^+\mapsto x^-$ and $\mathfrak{L}_{K}(\gamma_n)\to d_K(x^+,x^-)$ as $n\to\infty$. By Assumption  {\bf (H3)}, we may assume $\gamma_n(t)\in F$ for every $n $ and every $t$. Let $L>0$ be an upper bound for $\mathfrak{L}_{K}(\gamma_n)$, i.e. $\mathfrak{L}_{K}(\gamma_n)\leq L$ for all $n$. Since $Y:=\cup_n \rm{Im}(\gamma_n)$ is included in a set $B\cap F$, where $B$ is a ball for $d_K$, it is $d$-precompact by Proposition \ref{Klength}. Let $\bar Y$ be a $d$-compact set containing $Y$.

In order to get $d$-equicontinuity, one has to reparametrize the curves $\gamma_n$. It may not be possible to parametrize $\gamma_n$ by $L_d$-arc length since $L_d(\gamma_n)$ could be infinite. We rather parametrize by $\mathfrak{L}_{K\wedge 1}$-arc length, where $K\wedge 1$ is the infimum between $K$ and the function identically equal to $1$ (we use $\wedge$ for the minimum operator, and $\vee$ for the maximum). Note that $K\wedge 1$ is still a lower semicontinuous and non negative function vanishing on a finite set so that $d_{K\wedge 1}$ still defines a metric on $X$. Moreover, the curves $\gamma_n$ are continuous from $[0,1]$ to $(X,d_{K\wedge 1})$ since $d_{K\wedge 1}$ is weaker than $d$. Actually, we can prove more: $d_{K\wedge 1}$ and $d$ are topologically equivalent on $\bar Y$. Indeed the identity map, defined from $(\bar Y,d)$ to $(\bar Y,d_{K\wedge 1})$ is a bijective and continuous map defined on a compact set. It is thus a homeomorphism and the two metrics satisfy 
\begin{equation}\label{ddKw1}
d_{K\wedge 1}\leq d\leq \omega(d_{K\wedge 1})
\end{equation}
 for a suitable modulus of continuity $\omega$ (i.e. $\omega:\R_+\to\R_+$ with $\lim_{s\to 0}\omega(s)=0$).

Since $\mathfrak{L}_{K\wedge 1}(\gamma_n)\leq \mathfrak{L}_{K}(\gamma_n)\leq L$, one can reparametrize the curves $\gamma_n$ by constant speed for the distance $d_{K\wedge 1}$. We will call them $\gamma_n:[0,1]\to X$ again. These curves are $L$-Lipschitz w.r.t. $d_{K\wedge 1}$. In particular, the sequence $(\gamma_n)_n$ is equicontinuous w.r.t. the metric $d_{K\wedge 1}$ and for the metric $d$ as well since $d_{K\wedge 1}$ and $d$ are equivalent in the sense of \eqref{ddKw1}. Note that these curves are equicontinuous for the distance $d$, but not equi-Lipschitz, as the modulus of continuity $\omega$ appears.

By Ascoli's Theorem, one can extract a subsequence of $(\gamma_n)_n$ uniformly converging (for the distance $d$) to some continuous map $\gamma:[0,1]\to X$. In particular, $\gamma:[0,1]\to (X,d_{K\wedge 1})$ is $L$-Lipschitz as a pointwise limit of $L$-Lipschitz maps. We have to prove that $\gamma\in\Lip_{ploc}(I,X)$. Note that if there exists $c\in (0,1)$ with $K\geq c$ around a point $x$, then $d\geq d_{K\wedge 1}\geq cd$ on a neighborhood $V$ of $x$ (see Lemma \ref{est_dK}), i.e. $d$ and $d_{K\wedge 1}$ are two equivalent metrics on $V$. As $K\wedge 1$ is l.s.c., it is bounded from below by a positive constant on every compact interval where $K>0$, and it is enough to prove that the set $\mathrm{Im}(\gamma)\cap\Sigma$ is finite. We shall prove that $\gamma$ meets each point of $\Sigma$ at most one time. To this aim, let us consider the curve $\overline{\gamma}$ obtained by inductively withdrawing all loops around a point in the set $\Sigma$: if $\gamma$ meets $x_0\in\Sigma$ at a first time $t_1$ and at a last time $t_2\neq t_1$, remove $\gamma_{(t_1,t_2)}$ and rescale (using an affine change of variables) for it to be still defined on $[0,1]$. Repeating this operation inductively for each point in the set $\mathrm{Im}(\gamma)\cap\Sigma$ (which is finite), we obtain a curve $\overline{\gamma}$ which is $d_{K\wedge 1}$-Lipschitz and which meets $\Sigma$ a finite number of times. By the argument above, this implies that $\overline{\gamma}\in \Lip_{ploc}([0,1],X)$. Moreover, Proposition \ref{metric} implies that
\[
\mathfrak{L}_{K}(\overline{\gamma})=A_K(\overline{\gamma})\leq A_K(\gamma)\leq\liminf\limits_{n\to\infty} A_K(\gamma_n)=\liminf\limits_{n\to\infty} \mathfrak{L}_{K}(\gamma_n).
\]
As $(\gamma_n)_n$ is a minimizing sequence, all these inequalities are in fact equalities and $\overline{\gamma}$ minimizes the $K$-length between $x$ and $y$. Note that the saturation of the first inequality above also implies that $\overline{\gamma}=\gamma$.
\end{proof}

\section{Existence of heteroclinic connections\label{sectionHeteroclinic}}
Our aim is to investigate the existence of a global minimizer (called heteroclinic connection in the sequel) of the energy
\[
\mathfrak{E}_{W}(\gamma)=\int_\R \bigg(\frac{1}{2}|\dot{\gamma}|^2(t)+W(\gamma(t))\bigg)\d t ,
\]
among piecewise locally absolutely continuous curves $\gamma: x^-\mapsto x^+$ valued in a metric space $(X,d)$. Here $W: X\to\R^+$ is a lower semicontinuous function, called \emph{potential} in all the sequel, and $x^\pm\in X$ are two wells, i.e. $W(x^\pm)=0$. We recall that $W(x^\pm)=0$ is a necessary condition for the energy of $\gamma$ to be finite. The main result of this section is the following theorem:
\begin{theorem}\label{heteroclinic}
Let $(X,d)$ be a metric space, $W:X\to\R^+$ be a lower semicontinuous function and $x^-,x^+$ be two points in $\Sigma:=\{W=0\}\subset X$ such that:
\begin{description}
\item[(H)]
$(X,d,K)$ satisfies hypotheses ${\bf H1-3}$ of the previous section, where $K:=\sqrt{2W}$;
\item[(STI)]
the metric $d_K$ satisfies the following strict triangular inequality on $\Sigma:=\{W=0\}$:  for all $x\in \Sigma\setminus\{x^-,x^+\}$, we have $d_K(x^-,x^+)<d_K(x^-,x)+d_K(x,x^+)$.
\end{description}
Then, if $d_K(x^-,x^+)<\infty$, there exists $\gamma\in \Lip(\R,X)$ such that $\gamma:x^-\mapsto x^+$ and
\[
\mathfrak{E}_{W}(\gamma)=\inf\{\mathfrak{E}_{W}(\sigma)\;:\; \sigma\in AC_{ploc}(\R,X),\, \sigma:x^-\mapsto x^+\}=d_K(x^-,x^+).
\]
\end{theorem}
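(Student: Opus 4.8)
The plan is to exploit the Young inequality $\tfrac12|\dot\gamma|^2 + W(\gamma) \ge K(\gamma)|\dot\gamma|$ which shows $\mathfrak{E}_{W}(\gamma) \ge \mathfrak{L}_{K}(\gamma) \ge d_K(x^-,x^+)$ for every admissible $\gamma$, so $d_K(x^-,x^+)$ is a lower bound for the infimum; the whole work is to exhibit a curve achieving it. First I would invoke Theorem \ref{exist_Kgeod} to produce a $K$-geodesic $\eta\in\Lip_{ploc}([0,1],X)$ with $\mathfrak{L}_{K}(\eta)=d_K(x^-,x^+)$ and $\eta:x^-\mapsto x^+$. The first structural step is to use the strict triangle inequality \textbf{(STI)} to show that $\eta$ does not pass through any well of $\Sigma$ other than its endpoints: if it met some $x\in\Sigma\setminus\{x^-,x^+\}$ at time $s$, then $d_K(x^-,x^+) = \mathfrak{L}_{K}(\eta_{|[0,s]}) + \mathfrak{L}_{K}(\eta_{|[s,1]}) \ge d_K(x^-,x) + d_K(x,x^+) > d_K(x^-,x^+)$, a contradiction. (I would also argue, using the argument already seen in the proof of Theorem \ref{exist_Kgeod}, that $\eta$ does not pass through $x^\pm$ except at the endpoints, possibly by restricting to a sub-arc.) Hence $K\circ\eta>0$ on the open interval, and on every compact subinterval of $(0,1)$ the l.s.c. function $K\circ\eta$ is bounded below by a positive constant.

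The heart of the matter is the reparametrization. I want to find a non-decreasing surjective change of variables $\varphi:\R\to(0,1)$ and set $\gamma=\eta\circ\varphi$ so that the new curve satisfies the equipartition $|\dot\gamma(t)| = K(\gamma(t))$ a.e., which turns the Young inequality into an equality and gives $\mathfrak{E}_{W}(\gamma) = \mathfrak{L}_{K}(\gamma) = \mathfrak{L}_{K}(\eta) = d_K(x^-,x^+)$. Concretely, after first reparametrizing $\eta$ by $d_K$-arc length on an interval $[0,\ell]$ with $\ell=d_K(x^-,x^+)$ (so $|\dot\eta|_{d_K}\equiv 1$), one has $\mathfrak{L}_{K}(\eta_{|[0,s]})=s$; then I would define $t(s)$ by the ODE $\tfrac{dt}{ds} = 1/K(\eta(s))$, i.e. $t(s) = \int_{s_0}^s \frac{d\sigma}{K(\eta(\sigma))}$, and let $\varphi=t^{-1}$. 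The key point to check is that $t(s)\to\pm\infty$ as $s\to \ell^\mp$ wait — as $s$ runs over $(0,\ell)$, $t$ runs over all of $\R$: this is exactly where the hypothesis $W(x^\pm)=0$, i.e. $K\to 0$ at the endpoints, is used. Indeed near $s=0$ one has $\gamma\to x^-$ so $K(\eta(s))\to 0$ (by continuity of $d$ from $x^-$ and the fact that $\{K=0\}\cap\mathrm{Im}(\eta)=\{x^-,x^+\}$... more carefully, since $\eta$ is injective near its ends modulo the loop-removal, $\eta(s)\ne x^-$ for $s>0$ but $\eta(s)\to x^-$); the divergence of the integral $\int_0 \frac{d\sigma}{K(\eta(\sigma))}$ near each endpoint follows because $K(\eta(\sigma))\le K_{d(\eta(\sigma),x^\pm)}$-type bounds force $K(\eta(\sigma))$ small, and one can make this quantitative using Lemma \ref{est_dK} together with $d_K(x^\pm,\eta(\sigma))\le\sigma$; I would spell out that $\int_0^{\ell} \frac{d\sigma}{K(\eta(\sigma))}=+\infty$ at \emph{both} ends, which gives a homeomorphism $\varphi:\R\to(0,1)$ (or onto the relevant open interval). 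With this $\varphi$, $\gamma=\eta\circ\varphi$ is locally Lipschitz on $\R$, satisfies $|\dot\gamma(t)|=K(\gamma(t))$ a.e. by the chain rule for metric derivatives, and extends continuously to $\pm\infty$ with values $x^\pm$.

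Finally I would assemble: $\mathfrak{E}_{W}(\gamma) = \int_\R(\tfrac12|\dot\gamma|^2 + W(\gamma)) = \int_\R K(\gamma)|\dot\gamma| = \mathfrak{L}_{K}(\gamma) = \mathfrak{L}_{K}(\eta) = d_K(x^-,x^+)$, and combined with the universal lower bound $\mathfrak{E}_{W}\ge d_K(x^-,x^+)$ for all competitors, $\gamma$ is a minimizer realizing the claimed value. One should also note $\gamma\in\Lip(\R,X)$ (not merely $\Lip_{loc}$) because $|\dot\gamma|=K(\gamma)$ is bounded: $K\circ\gamma$ is bounded on $\mathrm{Im}(\eta)$, which is $d$-compact and on which $K$ is l.s.c.— hmm, l.s.c. does not give an upper bound, so here I would instead argue $\mathrm{Im}(\gamma)=\mathrm{Im}(\eta)$ lies in a $d_K$-bounded subset of $F$ hence is $d$-precompact, and the metric derivative in $d$ is controlled via \eqref{ddKw1}, giving at least uniform continuity; to get genuine Lipschitz regularity one uses that along a minimizer the energy density is finite and in fact the equipartition forces $|\dot\gamma|$ essentially bounded. \textbf{The main obstacle} I anticipate is precisely the endpoint analysis of the reparametrization: proving that the time-change integral diverges at both ends (so that the rescaled curve is genuinely defined on all of $\R$ and attains the limits $x^\pm$), since $K$ is only lower semicontinuous and not continuous, so the naive estimate ``$K$ small near $x^\pm$'' must be extracted from the geodesic structure via Lemma \ref{est_dK} rather than from continuity of $K$ at the wells.
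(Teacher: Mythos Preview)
Your overall strategy matches the paper's: invoke Theorem~\ref{exist_Kgeod}, use \textbf{(STI)} to keep the geodesic away from the other wells, then reparametrize for equipartition $|\dot\gamma|=K(\gamma)$. Two points need correction.

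First, the time change is miscomputed. If $\eta$ is parametrized by $d_K$-arc length then $K(\eta)\,|\dot\eta|_d=1$, so $|\dot\eta|_d=1/K(\eta)$; with your choice $\varphi'=K(\eta(\varphi))$ (i.e.\ $dt/ds=1/K(\eta(s))$) the composed curve has $|\dot\gamma|_d=|\dot\eta|_d(\varphi)\,\varphi'=1$, which is constant $d$-speed, not equipartition. Starting from $d_K$-arc length you would need $\varphi'=K(\eta(\varphi))^2$. The paper instead first passes to $L_d$-arc length (so $|\dot\gamma_0|_d=1$) and \emph{then} solves $\varphi'=K(\gamma_0(\varphi))$, packaging the existence of $\varphi$ as a separate ODE lemma (Lemma~\ref{ode}) whose whole point is that $F=K\circ\gamma_0$ is only l.s.c., not continuous.

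Second, the ``main obstacle'' you anticipate---proving that $\int d\sigma/K(\eta(\sigma))$ diverges at both endpoints so the new domain is all of $\R$---is a red herring, and your proposed argument for it cannot work anyway: lower semicontinuity of $K$ yields \emph{lower} bounds near $x^\pm$, never upper bounds, so there is no way to force $K(\eta(\sigma))\to 0$ along the geodesic, and Lemma~\ref{est_dK} points in the wrong direction for this. The issue evaporates once you notice that if the reparametrized domain $J$ turns out to be bounded, you simply extend $\gamma$ by the constants $x^\pm$ outside $J$; since $W(x^\pm)=0$ the action is unchanged and the extension lies in $AC_{ploc}(\R,X)$ with the required limits. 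The paper never discusses divergence: Lemma~\ref{ode} produces $\varphi:J\to I$ surjective for \emph{some} interval $J$, and the proof proceeds with $\gamma=\gamma_0\circ\varphi$ on $J$ without further comment.
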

\begin{proof}
This theorem will follow from Theorem \ref{exist_Kgeod} and the following consequence of Young's inequality (keep in mind that $K=\sqrt{2W}$):
\begin{equation}\label{young}
\forall\gamma\in AC_{ploc}(\R,X),\quad \frac{1}{2}|\dot{\gamma}|^2(t)+W(\gamma(t))\geq K(\gamma(t))|\dot{\gamma}|\,\Rightarrow\,\ \mathfrak{E}_{W}(\gamma)\geq \mathfrak{L}_{K}(\gamma).
\end{equation}
The idea is to build the curve $\gamma$ by reparametrization of a $\mathfrak{L}_{K}$-minimizing curve in such a way that the preceding inequality is an equality. Thanks to the set of assumptions {\bf (H)}, Theorem \ref{exist_Kgeod} provides a $\mathfrak{L}_{K}$-minimizing curve $\gamma_0 : I=(t^-,t^+)\to X$, that one can assume to be injective and parametrized by $\mathfrak{L}_{K}$-arc length, and such that $\gamma_0(t^\pm )=x^\pm$ with $-\infty<t^-\leq t^+<+\infty$. Thanks to assumption ${\bf (STI)}$, it is clear that the curve $\gamma_0$ cannot meet the set $\{W=0\}$ at a third point $x\neq x^\pm$: in other words $K(\gamma_0(t))>0$ on the interior of $I$. Thus $\gamma_0$ is also $d$-locally Lipschitz on $I$ (and not only piecewise locally Lipschitz). In particular, one can reparametrize the curve $\gamma_0$ by $L_d$-arc length, so that $|\dot{\gamma_0}|=1$ a.e. 

It is now enough to prove that $\gamma_0$ can be reparametrized into a curve $\gamma$ satisfying $|\dot{\gamma}|=K\circ\gamma$ a.e., so that the two equalities in \eqref{young} become equalities. Namely, we look for an admissible curve $\gamma:\R\to X$ of the form $\gamma(t)=\gamma_0(\varphi(t))$, where $\varphi:\R\to I$ is absolutely continuous, increasing and surjective. For $\gamma$ to satisfy the equipartition condition, i.e. $|\dot{\gamma}|(t)=K(\gamma(t))$ a.e., we need $\varphi$ to solve the ODE
\begin{equation}\label{ode_geod}
\varphi'(t)=F(\varphi(t)),
\end{equation}
where $F:I\to\R$ is defined by $F=K\circ\gamma_0$. Here, one has to be careful since $F$ is not continuous and the existence of such a curve does not follow from Peano-Arzela's theorem. Actually, the best regularity on $\varphi$ that can be expected is absolute continuity. However, the situation is very simple here and one can explicitly solve this scalar ODE, at least formally. Indeed, \ref{ode_geod} is equivalent to $[G(\varphi(t))]'=1$ where $G$ is an antiderivative of $1/F$. Thus a solution is given by $\varphi=G^{-1}$. A rigorous statement about the existence of a solution is given in Lemma \ref{ode}, which is presented a the end of this proof.
It is easy to check that $F=K\circ\gamma_0$ satisfies all the assumptions of Lemma \ref{ode}. The condition $F<\infty$ a.e. is a consequence of $\mathfrak{L}_{K}(\gamma_0)=\int K(\gamma_0)|\dot{\gamma_0}|=\int F<\infty$. 

Now, let us define $\gamma:=\gamma_0\circ\varphi$, where $\varphi$ is given by Lemma \ref{ode}. Since $\varphi$ is increasing and surjective, the curve $\gamma$ satisfies $\gamma(\pm\infty)=x^\pm$. Moreover, $\gamma$ is absolutely continuous, by composition of a Lipschitz function with an absolutely continuous function, and its pointwise derivative is given by $|\dot{\gamma}|=|\dot{\gamma_0}|\, \varphi'=F(\varphi)=K\circ\gamma$ a.e. As explained before, this implies that
\[
\mathfrak{E}_{W}(\gamma)=\mathfrak{L}_{K}(\gamma)=\mathfrak{L}_{K}(\gamma_0)=d_K(x^-,x^+)\leq\inf\{\mathfrak{E}_{W}(\sigma)\;:\; \sigma\in AC_{ploc}(\R,X),\,\gamma:x^-\mapsto x^+\}.
\]
In other words, $\gamma$ minimizes $\mathfrak{E}_{W}$ over all admissible connections between $x^-$ and $x^+$. 
\end{proof}

In the proof of Theorem \ref{heteroclinic} we needed this very technical lemma.
\begin{lemma}\label{ode}
Let $F:I\to (0,+\infty]$ be a l.s.c. function defined on a nonempty open interval $I\subset\R$, with $F<+\infty$ a.e. Then there exists an interval $J\subset\R$ and a surjective function $\varphi\in AC(J,I)$ solving the system
\[
\varphi'(t)=F(\varphi(t))\quad\text{a.e. on }J.
\]
Moreover, $1/F$ is locally bounded and a solution of the above equation is given by $\varphi=G^{-1}$, where $G$ is an antiderivative of $1/F$.
\end{lemma}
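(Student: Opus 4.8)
The plan is to make rigorous the recipe sketched just before the statement: fix $t_0\in I$, let $G(x):=\int_{t_0}^x \frac{\d s}{F(s)}$ be a primitive of $1/F$, check that $G$ is an increasing homeomorphism of $I$ onto an interval $J:=G(I)$, and verify that $\varphi:=G^{-1}$ has all the required properties. The elementary facts come first. On any compact $[a,b]\subset I$, the l.s.c.\ function $F$ attains its infimum $m:=\min_{[a,b]}F$; since $F>0$ everywhere, $m>0$, and since $F<+\infty$ a.e.\ the value $m$ must be finite (otherwise $F\equiv+\infty$ on $[a,b]$, contradicting $F<+\infty$ a.e.). Hence $0\le 1/F\le 1/m$ on $[a,b]$: this is exactly the ``moreover'' statement that $1/F$ is locally bounded, and in particular $1/F\in L^1_{loc}(I)$ (it is Borel, being the reciprocal of a positive l.s.c.\ function). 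Consequently $G$ is well defined, locally Lipschitz, nondecreasing with $G'=1/F$ a.e., and in fact \emph{strictly} increasing since $1/F>0$ a.e.\ (because $F<+\infty$ a.e.). Thus $G$ is a continuous strictly increasing bijection from $I$ onto the nondegenerate interval $J:=G(I)$, and $\varphi:=G^{-1}:J\to I$ is continuous, strictly increasing and surjective. It remains to obtain the absolute continuity of $\varphi$ and the differential equation.

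The core of the proof is the change of variables $r=G(x)$. Fix $s<t$ in $J$ and set $a:=\varphi(s)$, $b:=\varphi(t)$, so that $G(a)=s$, $G(b)=t$ and $a<b$. The function $h:=F\circ\varphi=F\circ G^{-1}$ is nonnegative and Borel measurable, being the composition of the Borel function $F$ with the continuous map $G^{-1}$. Applying to $h$ the change-of-variables formula for the absolutely continuous nondecreasing function $G$ on $[a,b]$, namely $\int_{G(a)}^{G(b)}h(r)\,\d r=\int_a^b h(G(x))\,G'(x)\,\d x$, and then using $G'=1/F$ a.e.\ together with $F(x)\in(0,+\infty)$ for a.e.\ $x$, one gets
\[
\int_s^t F(\varphi(r))\,\d r=\int_a^b F(x)\cdot\frac{1}{F(x)}\,\d x=\int_a^b 1\,\d x=b-a=\varphi(t)-\varphi(s).
\]
Since this holds for every $s<t$ in $J$ and the right-hand side is finite, $F\circ\varphi\in L^1_{loc}(J)$ and $\varphi$ equals the indefinite integral of $F\circ\varphi$; hence $\varphi\in AC(J,I)$ and, by Lebesgue's differentiation theorem, $\varphi'(r)=F(\varphi(r))$ for a.e.\ $r\in J$, which is the claimed equation.

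I expect the only genuinely delicate point to be the rigorous use of the monotone change-of-variables formula: $F$ is merely Borel and may equal $+\infty$ on a null set, so one must be careful to argue the measurability of $F\circ\varphi$ and to use the cancellation $F\cdot(1/F)=1$ only where it is legitimate, i.e.\ almost everywhere. Invoking this formula is precisely what allows one to bypass a direct chain rule for $\varphi$, which is not available \emph{a priori} since the absolute continuity of $\varphi$ is only obtained \emph{a posteriori}, from the very identity $\varphi(t)-\varphi(s)=\int_s^t F(\varphi)$.
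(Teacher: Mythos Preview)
Your proof is correct and reaches the same key identity as the paper, namely
\[
\int_s^t F(\varphi(r))\,\d r=\varphi(t)-\varphi(s),
\]
from which absolute continuity of $\varphi$ and the equation $\varphi'=F\circ\varphi$ a.e.\ follow immediately. The difference lies only in how this identity is obtained. You invoke the change-of-variables formula for monotone absolutely continuous maps (applied to $G$ with integrand $h=F\circ G^{-1}$), which is a clean and legitimate shortcut: $G$ is locally Lipschitz and strictly increasing, $h$ is Borel, and the cancellation $F\cdot(1/F)=1$ is valid on the full-measure set where $F$ is finite and $G'=1/F$. The paper instead proves this identity \emph{from scratch}: it approximates the l.s.c.\ function $F$ from below by an increasing sequence of step functions $F_n$ built on dyadic intervals, computes $\int_a^b F_n(\varphi(t))\,\d t$ explicitly in terms of $G$ on each piece, and passes to the limit by monotone convergence. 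In effect the paper re-derives the special case of the change-of-variables formula it needs, trading an appeal to a standard theorem for a self-contained argument. Your route is shorter; the paper's is more elementary and avoids any worry about the precise hypotheses under which the change-of-variables formula holds for unbounded Borel integrands.

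One very minor remark: your parenthetical that the minimum $m$ of $F$ on a compact interval must be finite is not needed for the local boundedness of $1/F$ (only $m>0$ matters there); the fact that $F<+\infty$ a.e.\ is used later, exactly where you use it, to ensure $1/F>0$ a.e.\ and hence that $G$ is strictly increasing.
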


\begin{proof}
As $F$ is l.s.c. and $F>0$ on $I$, it is bounded from below by a positive constant on every compact subset of $I$. Thus $1/F$ is locally bounded and positive a.e. (as $F<\infty$ a.e.). In particular, $G:I\to J:=G(I)$ is locally Lipschitz and strictly increasing. Thus $\varphi:=G^{-1}:J\to I$ is well defined, strictly increasing, surjective, and thus continuous. One has to prove that $\varphi$ is absolutely continuous and that $\varphi'(t)=F(\varphi(t))$ a.e. To this aim, we first approximate $F$ as an increasing limit of step functions $(F_n)_n$. This is possible because $F$ is l.s.c., taking for instance a dyadic subdivision on intervals of the form $(k2^{-n},(k+1)2^{-n}]$ and defining $F_n$ as the infimum of $F$ on each of these intervals. Let us call $t^k_n$ the endpoints of these intervals and $\lambda^k_n$ the value of $F_n$ on $(t^k_n,t^{k+1}_n]$. Let us take $a\leq b$ in $J$, and compute
\[
\int_a^b F_n(\varphi(t))\d t=\sum_k \lambda^k_n \mathcal{L}^1\big(\{t\,:\, a\leq t\leq b, \, t^k_n\leq \varphi(t)\leq t^{k+1}_n\}\big)=\sum_k \lambda^k_n\big|(G(t^{k+1}_n)\wedge b)-(G(t^k_n)\vee a)\big|.
\]
Using $b=G(\varphi(b))$, $a=G(\varphi(a))$ and $G'=1/F$, we can go on and obtain
\[\int_a^b F_n(\varphi(t))\d t=\sum_k \lambda^k_n\int_{t^k_n\vee \varphi(a)}^{t^{k+1}_n\wedge \varphi(b) }\frac{1}{F(s)}\d s=\int_{\varphi(a)}^{\varphi(b) }\frac{F_n(s)}{F(s)}\d s.
\]
By the monotone convergence Theorem, passing to the limit $n\to\infty$, we get
\[\int_a^b F(\varphi(t))\d t=\varphi(b)- \varphi(a).\]
This tells us that $F\circ\varphi$ is integrable, $\varphi$ is absolutely continuous, and that its derivative is given by $\varphi'=F\circ\varphi$ a.e., which is the claim.
\end{proof}

\begin{remark}
It is classical and easy to see that the equirepartition of the energy, i.e. the identity $|\dot{\gamma}|^2(t)=2W(\gamma(t))$, is a necessary condition for critical points of $\mathfrak{E}_{W}$.
\end{remark}
\begin{remark}
The assumption {\bf (STI)} is not optimal but cannot be removed, and is quite standard in the literature. Without this assumption, it could happen that a geodesic $\gamma_0$ would meet the set $\{W=0\}$ at a third point $x\neq x^\pm$. In this case, it is not always possible to reparametrize $\gamma_0$ in a new curve $\gamma$ such that $|\dot{\gamma}|(t)=K(\gamma(t))$. 

%However, if $\liminf_{y\to x}K(y)/|y|>0$, then, there exist a heteroclinic connection $\gamma^-:x^-\mapsto x$ which reaches $x$ in finite time (say, $\gamma^-(t)=x$ for $t\geq 0$), and a heteroclinic connection $\gamma^+:x\mapsto x^+$ such that $\gamma^+(t)=z$ for $t\leq 0$. Thus, there exists a heteroclinic connection between $x^-$ and $x^+$ obtained by matching $\gamma^-$ and $\gamma^+$.
 
However, there is an easy generalization of Lemma \ref{ode} when $F$ is not positive everywhere but only almost everywhere and when $1/F$ is locally integrable on $I$. Thus, if we assume that $1/K(\gamma_0(\cdot))$ is locally integrable (which is an assumption on the way $K$ vanishes around its wells), we still have existence of an $\mathfrak{E}_W$-minimizing curve between $x^-$ and $x^+$ obtained by reparametrization of $\gamma_0$.
\end{remark}
We now give a first example of application of Theorem \ref{heteroclinic} in infinite dimension:
\begin{corollary}\label{pde}
Let $m,n\ge 1$ be integers, $\Omega\subset\R^m$ be a bounded open set, $F:\Omega\times\R^n\to\R$ be a lower semicontinuous function and $g$ be a function in $H^1(\Omega;\R^n)$ such that the problem
$$\min\left\{\W(u):=\int_\Omega \left(\frac 12|\nabla u|^2+F(x,u(x))\right)dx \;:\;u-g\in H^1_0(\Omega,\R^n)\right\}$$
admits exactly two distinct solutions $u^\pm\in H^1(\Omega;\R^n)$ and the minimal value is $0$. Also suppose that there exist three constants $c_0,c_1,c_2,c_3>0$ such that
\begin{itemize}
\item for every $x\in\Omega$ and $u\in\R^n$, $F(x,u)\geq -c_0-c_1|u|^2$,
\item for every $u\in H^1(\Omega;\R^n)$ with $\|u\|_{L^2(\Omega)}\geq c_2$, we have $\W(u)\geq c_3$.
\end{itemize}
Then the following minimization problem has a solution:
$$
\min \left\{\int_{\R\times\Omega}\left( \frac{1}{2}|\nabla u|^2(x)+F(x',u(x))\right)\d x\;:\; u\in H^1_{loc}(\R\times\Omega,\R^n)\text{ with }u(\pm\infty,\cdot)=u^\pm,\, u=g\;\mbox{ on }\partial\Omega\right\},
$$
where we write $x=(x_1,x')\in\Omega$ and the boundary condition, $u(\pm\infty,\cdot)=u^\pm$, means that $u(x_1,\cdot)\to u^\pm$ in $L^2(\Omega)$ as $x_1\to\pm\infty$ and the lateral boundary condition $u=g$ means that $u(x_1,\cdot)-g(\cdot)\in H^1_0(\Omega)$ for a.e. $x_1\in \R$.\end{corollary}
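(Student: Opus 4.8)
The plan is to deduce the statement from Theorem~\ref{heteroclinic}, applied in the Hilbert space $X=L^2(\Omega;\R^n)$ with the $L^2$ distance $d$, with the (possibly $+\infty$-valued, as allowed in Section~\ref{mainthm}) potential $W(u):=\W(u)$ if $u\in H^1(\Omega;\R^n)$ with $u-g\in H^1_0(\Omega;\R^n)$, and $W(u):=+\infty$ otherwise, with $K=\sqrt{2W}$ and $x^\pm=u^\pm$. Since $\min\W=0$ on the admissible affine class we have $W\geq0$ on $X$, and since $u^\pm$ are the only two minimizers of $\W$ we get $\Sigma=\{K=0\}=\{W=0\}=\{u^+,u^-\}$; in particular $\Sigma$ is finite (half of {\bf (H2)}) and {\bf (STI)} holds vacuously. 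Assumption {\bf (H1)} is immediate, a Hilbert space being a complete length space. So everything reduces to checking lower semicontinuity of $W$ and assumption {\bf (H3)}, and then translating the conclusion of Theorem~\ref{heteroclinic} back into the language of the corollary.

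Both {\bf (H2)} and {\bf (H3)} rest on the Rellich--Kondrachov compact embedding $H^1(\Omega)\hookrightarrow L^2(\Omega)$, $\Omega$ being bounded. For lower semicontinuity: if $u_j\to u$ in $L^2$ with $\W(u_j)\leq C$, then the lower bound $F(x,u)\geq-c_0-c_1|u|^2$ and the boundedness of $\|u_j\|_{L^2}$ force $\sup_j\|\nabla u_j\|_{L^2}<\infty$, hence (up to a subsequence) $u_j\rightharpoonup u$ in $H^1$ with $u-g\in H^1_0$; combining weak-$H^1$ lower semicontinuity of the Dirichlet term with Fatou's lemma for $F(x,u_j)+c_0+c_1|u_j|^2\geq0$ (using a.e.\ convergence and $L^1$-domination of $|u_j|^2$ along the subsequence, and l.s.c.\ of $F(x,\cdot)$) gives $\W(u)\leq\liminf_j\W(u_j)$, while off the admissible class the value $+\infty$ is respected; thus $W$, hence $K$, is l.s.c. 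For {\bf (H3)} I will verify the sufficient conditions {\bf (H3a)}--{\bf (H3b)}. Condition {\bf (H3b)} holds because any $\overline{B}_d(u_0,R)\cap\{K\leq\ell\}$ is $L^2$-bounded and, via $\W\leq\ell^2/2$ together with the lower bound on $F$, also $H^1$-bounded, hence $L^2$-precompact by Rellich and $L^2$-closed since $K$ is l.s.c. For {\bf (H3a)} let $R_0:=c_2+\max(\|u^+\|_{L^2},\|u^-\|_{L^2})$: the second bulleted hypothesis gives $W\geq c_3$ (so $K\geq\sqrt{2c_3}$) whenever $d(u,\Sigma)\geq R_0$, whereas for every $\delta\in(0,R_0]$ one has $\inf\{W(u):\delta\leq d(u,\Sigma)\leq R_0\}>0$ (a sequence with $W\to0$ on this set would be $H^1$-bounded, hence $L^2$-precompact with limit in $\{W=0\}=\Sigma$, contradicting $d(\cdot,\Sigma)\geq\delta$). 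From these positive lower bounds one builds a continuous $k:\R^+\to\R^+$ with $\int_0^\infty k=+\infty$ and $K(u)\geq k(d(u,\Sigma))$ for all $u$; this is {\bf (H3a)}, and as explained right after the statement of {\bf (H3)} (see \cite{nous}), {\bf (H3a)}--{\bf (H3b)} imply {\bf (H3)} with a suitable closed ball in the role of $F$.

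Finally one needs $d_K(u^-,u^+)<\infty$: by Young's inequality this amounts to the existence of a finite-energy competitor for the $\R\times\Omega$ problem, which is provided by linear interpolation between $u^-$ and $u^+$ on a bounded time interval (the only point being integrability of $t\mapsto\W((1-t)u^-+tu^+)$, which is straightforward). Theorem~\ref{heteroclinic} then gives $\gamma\in\Lip(\R;L^2(\Omega;\R^n))$ with $\gamma(\pm\infty)=u^\pm$ and $\mathfrak{E}_{W}(\gamma)=\inf\{\mathfrak{E}_{W}(\sigma):\sigma\in AC_{ploc}(\R,X),\ \sigma:u^-\mapsto u^+\}=d_K(u^-,u^+)<\infty$. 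To conclude, set $u(x_1,x'):=\gamma(x_1)(x')$: since $\gamma$ is Lipschitz into $L^2(\Omega)$ and $\int_\R\W(\gamma(x_1))\,dx_1<\infty$, the usual identification gives $u\in H^1_{loc}(\R\times\Omega;\R^n)$ with $\partial_{x_1}u(x_1,\cdot)=\gamma'(x_1)$ and $|\dot\gamma|(x_1)=\|\gamma'(x_1)\|_{L^2(\Omega)}$; finiteness of $\mathfrak{E}_{W}(\gamma)$ forces $\gamma(x_1)-g\in H^1_0(\Omega)$ for a.e.\ $x_1$, i.e.\ $u=g$ on $\partial\Omega$ in the stated sense, and $\gamma(\pm\infty)=u^\pm$ gives the conditions at $x_1=\pm\infty$. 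Splitting $|\nabla u|^2=|\partial_{x_1}u|^2+|\nabla_{x'}u|^2$ and using Fubini,
\[
\int_{\R\times\Omega}\Big(\tfrac12|\nabla u|^2(x)+F(x',u(x))\Big)dx=\int_\R\Big(\tfrac12\|\gamma'(x_1)\|_{L^2(\Omega)}^2+\W(\gamma(x_1))\Big)dx_1=\mathfrak{E}_{W}(\gamma),
\]
and conversely any finite-energy competitor $v$ of the $\R\times\Omega$ problem gives, via $\gamma_v(x_1):=v(x_1,\cdot)$, an admissible curve for $\mathfrak{E}_{W}$ with the same value; hence the two minimization problems have the same infimum and $u$ realizes it. The main obstacle in the whole scheme is {\bf (H3)} --- equivalently the compactness of the sublevels $F\cap\{K\leq\ell\}$ --- which is exactly where the failure of (local) compactness of $L^2(\Omega;\R^n)$ is defeated, by combining the two structural hypotheses on $F$ with Rellich--Kondrachov; the remaining steps are essentially bookkeeping.
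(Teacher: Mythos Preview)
Your proof is correct and follows essentially the same approach as the paper: both cast the problem as a heteroclinic connection in $X=L^2(\Omega;\R^n)$ with effective potential $\W$, verify the hypotheses of Theorem~\ref{heteroclinic} via the Rellich--Kondrachov embedding (for lower semicontinuity and for {\bf (H3b)}) together with the coercivity bound $\W\geq c_3$ on $\{\|u\|_{L^2}\geq c_2\}$ (for {\bf (H3a)}), and note that {\bf (STI)} is vacuous since $\Sigma=\{u^-,u^+\}$. You are more explicit than the paper in several places---the compactness argument giving the positive lower bound for $K$ on annuli $\{\delta\leq d(\cdot,\Sigma)\leq R_0\}$, the finiteness of $d_K(u^-,u^+)$, and the identification between $\Lip(\R;L^2(\Omega))$ curves and $H^1_{loc}(\R\times\Omega)$ functions---but the structure is identical.
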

\begin{remark} The lower bound $\W(u)\geq c_3$ for $\|u\|_{L^2(\Omega)}\geq c_2$ is a strong assumption, corresponding to taking a constant function $k$ in Assumption \textbf{(H3a)}. It could be weakened, but it is enough for many applications. It is satisfied whenever $F$ is bounded from below, or if $c_0,\, c_1$ are small enough, for instance.\end{remark}
\begin{remark} The assumption $\min \W=0$ can always be enforced by subtracting a suitable constant to the function $F$.\end{remark}

\begin{remark}
		 Our boundary constraint is not equivalent to the pointwise boundary condition $u(x_1,x_2)\to u_\pm$ as $x_1\to\pm\infty$, for a.e. $x_2$. Indeed, $L^2$-convergence only implies convergence a.e. of a subsequence, and pointwise convergence needs not imply $L^2$-convergence without additional assumption. However, if $u\in H^1_{loc}$ has finite energy, then these two notions can be proven to be equivalent.
\end{remark}
\begin{proof}
	The main idea is to rewrite the total energy $E(u):=\int_{\R\times\Omega}\left( \frac{1}{2}|\nabla u|^2(x)+F(x',u(x))\right)\d x$ by separating the derivative in the first variable $x_1$ from the derivatives in $x'\in\Omega$:
	\[
	E(u)=\int_\R \left(\frac 12\|\partial_{x_1} u\|^2_{L^2(\Omega)}+\int_{\Omega} \left(\frac 12 |\nabla_{x'}u|^2+F(x',u)\right)\d x'\right)\d x_1.
	\]
	Thus this problem fits into our framework using $x_1$ as time (or parameter) variable $t$, $X=L^2(\Omega,\R^n)$ for the metric space (endowed with the $L^2$-distance), with the effective potential $\W:L^2(\Omega,\R^n)\to \R$
	 defined by
	 \[
	 \W(v):=
	 \begin{cases}
	 \int_\Omega \left(\frac 12 |\nabla v|^2 +F(x',v)\right)\d x'&\text{if }v-g\in H^1_0(\Omega,\R^n),\\
	 +\infty&\text{otherwise}.
	 \end{cases}
	 \]
	 Indeed, there is a canonical one-to-one correspondence between $L^2_{loc}(\R\times\Omega,\R^n)$ and $L^2_{loc}(\R,L^2(\Omega,\R^n))$: any function $(x_1,x')\mapsto u(x_1,x')$ is associated with the curve $x_1\mapsto u(x_1,\cdot)$ and vice-versa. In this correspondence (we will not distinguish these two objects in the sequel), $H^1_{loc}(\R\times\Omega,\R^n)$ is contained in $AC_{loc}(\R,L^2(\Omega,\R^n))$. Moreover, for every $u\in H^1_{loc}(\R\times\Omega,\R^n)$, the metric derivative of $t\mapsto u(t):=u(t,\cdot)\in L^2(\Omega,\R^n)$ is given by
	 \[
	 |\dot{u}|_{L^2(\Omega,\R^n)}(t)=\|\partial_{x_1}u(t,\cdot)\|_{L^2(\Omega,\R^n)}\text{ a.e.}
	 \]	 
	 We have to check that $(L^2(\Omega,\R^n),\W)$ satisfies all the assumptions of Theorem \ref{heteroclinic}. It is easy to see that $\W$ is lower semicontinuous for the strong convergence of $L^2(\Omega,\R^n)$: the first term, $\|\nabla v\|^2_{L^2}/2$,  is a standard convex functional of calculus of variations, and the second term, $\int F(x,u)$, can be dealt with Fatou's Lemma, after adding a term of the form $c_0+c_1|u|^2$ so as to make it positive (and this quadratic term is continuous for the strong $L^2$ convergence). Moreover, we exactly assumed that $\W$ has only two wells corresponding to the optimal functions $u^\pm$. In particular, Assumption \textbf{(STI)} is empty.
	 
	 Assumption \textbf{(H3b)} is clearly satisfied since sublevel sets of $\W$ intersected with a ball in $L^2(\Omega,\R^n)$ are bounded in $H^1(\Omega,\R^n)$ (using the quadratic lower bound on $F$) and hence compact in $L^2(\Omega,\R^n)$. Assumption \textbf{(H3a)} is a consequence of the lower bound $\W(u)\geq c_3$ for $\|u\|_{L^2}\geq c_2$. Corollary \ref{pde} is now a consequence of Theorem \ref{heteroclinic}.
\end{proof}

We can consider for instance the following, non-trivial example. We look for a solution of 
$$
\min \left\{\int_{\R\times [0,\pi]}\left( \frac{1}{2}|\nabla u|^2(x,y)-\frac{1}{2}|u|^2(x,y)+(u^2(x,y)-\sin^2(y))^2\right)\d x\d y\;:\; \begin{array}{l} u\in H^1_{loc}(\R\times[0,\pi];\R),\\u(\pm\infty,y)=\pm \sin(y),\\ u(x,0)=u(x,\pi)=0.\end{array}\right\}.
$$
The problem is non-trivial and provides a solution to the PDE
$$\Delta u(x,y)=-u(x,y)+4u(x,y)(u^2(x,y)-\sin^2(y)),$$
which does not seem easy to solve with the prescribed boundary condition (in particular, the solution is not of the form $u(x,y)=a(x)\sin (y)$). This example can be dealt with the above formalism, noting that $u^\pm(y)=\pm\sin(y)$ are the only two functions for which 
$$\W(v):=\int_0^\pi \left(\frac 12 (|v'(y)|^2-v^2(y))+(v^2(y)-\sin^2(y))^2\right)\d y=0$$
(indeed, the first term is non-negative by using the optimal constant in the Poincar\'e inequality in $H^1_0([0,\pi])$, but vanishes for functions of the form $v(y)=a\sin(y)$, and the second only vanishes if $a=\pm 1$). Moreover, $F(y,u):=-\frac 12 u^2+(u^2-\sin^2(y))^2\geq u^4-C(1+u^2)$ satisfies the required lower bounds so as to apply Corollary \ref{pde}.

\section{Stationary layered solutions for the Allen-Cahn system in two dimensions \label{sectionDouble}}

\subsection{Introduction}

Let $n\ge 1$, $W:\R^n\to\R^+$ be a potential and $a^\pm\in\R^n$ such that $W(a^\pm)=0$. Assume that
\begin{description}
	\item[(A1)]
	$W\in\mathcal{C}^2(\R^n,\R^+)$ and the Hessian $\nabla^2W$ is bounded from below (i.e. $W$ is semi-convex) on $\R^n$;
	\item[(A2)]
	 $\Sigma:=\{W=0\}$ is finite;
	\item[(A3)]
	for all $x\in\R^n$, $W(x)\geq k^2(d(x,\Sigma))$ for some function $k\in C^0(\R^+,\R^+)$ with $\int_0^\infty k(t)\d t=+\infty$;
	\item[(A4)]
	there exist $r_0>0$, $c_0>0$ and $p_0\in [2,6)$ such that for all $x\in B(a^\pm,r_0)$, $$\nabla W(x)\cdot (x-a^\pm)\geq c_0|x-a^\pm|^{p_0}.$$
\end{description}
As before, we also need the following assumption that avoids heteroclinic connections to meet a third point of the potential $W$:
\begin{description}
\item[(STI)]
for all $a\in \Sigma\setminus\{a^-,a^+\}$,\, $d_K(a^-,a^+)<d_K(a^-,a)+d_K(a,a^+)$,
\end{description}
where $K:=\sqrt{2W}$ and $d_K$ was defined in \eqref{Kdistance}. We investigate the following problem: assuming that there exist exactly two (up to translation) heteroclinic connections $z^-$ and $z^+$  between $a^-$ and $a^+$, does there exist a solution $u\in C^{2}(\R^2,\R^n)$ of the following system
\begin{equation}\label{Double_connections}
\begin{cases}
-\Delta u+\nabla W(u)=0&\text{over }\R^2;\\
u(x_1,x_2)\to a^-&\text{when }x_1\to -\infty,\text{ uniformly w.r.t. }x_2;\\
u(x_1,x_2)\to a^+&\text{when }x_1\to \infty,\text{ uniformly w.r.t. }x_2;\\
u(x_1,x_2)\to z^-(x_1-c^-)&\text{when }x_2\to -\infty,\text{ uniformly w.r.t. }x_1;\\
u(x_1,x_2)\to z^+(x_1-c^+)&\text{when }x_2\to \infty,\text{ uniformly w.r.t. }x_1;
\end{cases}
\end{equation}
where $c^-$ and $c^+$ are part of the unknown. We will see that our method allows to treat quite easily the problem of the existence of a solution in two known situations: the symmetric case (we add a symmetry condition on $z^-$, $z^+$ and $u$ which imposes, in particular, $c^-=c^+=0$), studied by Stanley Alama, Lia Bronsard and Changfeng Gui \cite{Alama:1997} (we will deal with this problem in Section \ref{sectionAlama} of the present paper), and the asymmetric case due to Michelle Schatzman \cite{Schatzman:2002} (in Section \ref{sectionSchatzman}). We start with some preliminary results valid in a general context (without the symmetry condition of \cite{Alama:1997} or the spectral condition of \cite{Schatzman:2002}).

\subsection{Action functional and heteroclinic connections}

As before, define the energy functional $\mathfrak{E}_{W}$ for all interval $I\subset \R$ and for all ${v}\in H^1_{loc}(I,\R^n)$ by
\[
\mathfrak{E}_{W}({v},I)=\int_I \left(\frac 12 |\dot{{v}}(t)|^2+W({v}(t))\right)\d t.
\]
If $I=\R$, we just write $\mathfrak{E}_{W}({v},\R)=:\mathfrak{E}_{W}({v})$. The set of minimizing heteroclinic connections between $a^-$ and $a^+$ is given by
\[
\mathcal{Z}:=\{{z}\in\mathcal{S}(a^-,a^+)\;:\; \forall {v}\in\mathcal{S}(a^-,a^+),\, \mathfrak{E}_{W}({z})\leq \mathfrak{E}_{W}({v})\},
\]
where $\mathcal{S}(a^-,a^+)$ stands for the set of all connections between $a^-$ and $a^+$: in general, $\mathcal{S}(b^-,b^+)$ is defined for two given points $b^\pm\in\R^n$ by
\[
\mathcal{S}(b^-,b^+)=\left\{{v}\in H^1_{loc}(\R,\R^n)\;:\; \lim\limits_{t\to\pm\infty}{v}(t)=b^\pm\right\}.
\]
As observed in Theorem \ref{heteroclinic}, one can identify the minimal value of $\mathfrak{E}_{W}$ on $\mathcal{S}(a^-,a^+)$:
\[
\inf\{\mathfrak{E}_{W}({v})\;:\;{v}\in\mathcal{S}(a^-,a^+)\}=d_K(a^-,a^+),
\]

\subsection{Variational formulation of \eqref{Double_connections}} In \cite{Alama:1997,Schatzman:2002}, it is shown that solutions of \eqref{Double_connections} can be found by minimizing the renormalized two-dimensional energy
\[
\mathcal{E}({u})=
\begin{cases}
\int_\R\left[\int_\R\left(\frac 12|\nabla {u}(x_1,x_2)|^2+W({u}(x_1,x_2))\right)\d x_1-d_K(a^-,a^+)\right]\d x_2&\text{if }{u}\in H^1_{loc}(\R^2,\R^n),\\
+\infty &\text{otherwise.}
\end{cases}
\]
Since $d_K(a^-,a^+)$ is the minimal value of $\mathfrak{E}_{W}$ on $\mathcal{S}(a^-,a^+)$, it is clear that $\mathcal{E}$ is nonnegative. We recall the following well-known fact:
\begin{prop}
Assume that ${u}\in H^1_{loc}\cap L^\infty_{loc}(\R^2,\R^n)$ has finite energy and locally minimizes $\mathcal{E}$ in the following sense
\[
\mathcal{E}({u})\leq \mathcal{E}({u}+{w})\quad\text{for every smooth compactly supported function }{w}:\R^2\to\R^n.
\]
Then ${u}$ solves the Euler-Lagrange equation $-\Delta {u}+\nabla W({u})=0$ (in the weak sense).
\end{prop}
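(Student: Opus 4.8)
The plan is to carry out the standard first-variation argument, the only care being the bookkeeping needed because $\mathcal{E}$ carries the renormalization constant $d_K(a^-,a^+)$ and is a priori defined only as an iterated (possibly $\sigma$-finite) integral.

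First I fix a smooth compactly supported $w:\R^2\to\R^n$, say with $\operatorname{supp}(w)\subset B_R$, and I compare $\mathcal{E}(u+\varepsilon w)$ with $\mathcal{E}(u)$ for $\varepsilon\in\R$. Writing $\mathcal{E}(u)=\int_\R g(x_2)\,\d x_2$ with $g(x_2):=\int_\R\big(\tfrac12|\nabla u|^2+W(u)\big)\,\d x_1-d_K(a^-,a^+)\geq 0$, the hypothesis $\mathcal{E}(u)<\infty$ together with $g\geq 0$ forces $g(x_2)<\infty$ for a.e.\ $x_2$; moreover, since $u\in H^1_{loc}$, Fubini gives that $\int_{\{x_1:(x_1,x_2)\in B_R\}}\big(\tfrac12|\nabla u|^2+W(u)\big)\,\d x_1<\infty$ for a.e.\ $x_2$ as well. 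As $u+\varepsilon w$ coincides with $u$ outside $B_R$, and on $B_R$ one has $u\in H^1\cap L^\infty$ while $W\in C^2$ is bounded on bounded subsets of $\R^n$, the constant $d_K(a^-,a^+)$ and the contributions from $\{x_2\}\times\R$ with $(x_1,x_2)\notin B_R$ cancel in the difference, leaving the genuinely finite quantity
\[
j(\varepsilon):=\mathcal{E}(u+\varepsilon w)-\mathcal{E}(u)=\int_{B_R}\Big[\tfrac12|\nabla(u+\varepsilon w)|^2-\tfrac12|\nabla u|^2+W(u+\varepsilon w)-W(u)\Big]\,\d x .
\]

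Next I expand $j(\varepsilon)=\varepsilon\int_{B_R}\nabla u:\nabla w\,\d x+\tfrac{\varepsilon^2}{2}\int_{B_R}|\nabla w|^2\,\d x+\int_{B_R}\big(W(u+\varepsilon w)-W(u)\big)\d x$ and differentiate at $\varepsilon=0$. The first two terms form a polynomial in $\varepsilon$ with finite coefficients, since $\nabla u\in L^2(B_R)$ and $w\in C^\infty_c$. For the potential term, because $u\in L^\infty(B_R)$ and $w$ is bounded, the values $u(x)+\varepsilon w(x)$ stay in a fixed ball of $\R^n$ for $|\varepsilon|\leq 1$, on which $\nabla W$ is bounded (as $W\in C^1$); hence $\big|\tfrac{\d}{\d\varepsilon}W(u+\varepsilon w)\big|=|\nabla W(u+\varepsilon w)\cdot w|\leq C\,|w|\in L^1(B_R)$, so differentiation under the integral sign (dominated convergence) is legitimate and yields
\[
j'(0)=\int_{B_R}\big(\nabla u:\nabla w+\nabla W(u)\cdot w\big)\,\d x .
\]

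Finally, local minimality means exactly $j(\varepsilon)\geq 0=j(0)$ for all $\varepsilon\in\R$, so $\varepsilon=0$ is an interior minimum of the differentiable function $j$ and $j'(0)=0$; that is, $\int_{\R^2}\big(\nabla u:\nabla w+\nabla W(u)\cdot w\big)\,\d x=0$ for every $w\in C^\infty_c(\R^2,\R^n)$ (note $\nabla W(u)\in L^\infty_{loc}$ since $u\in L^\infty_{loc}$ and $W\in C^1$, so the weak formulation makes sense), which is precisely $-\Delta u+\nabla W(u)=0$ in the weak sense. There is no real obstacle: the whole content of the hypotheses $\mathcal{E}(u)<\infty$, $u\in H^1_{loc}\cap L^\infty_{loc}$ and the nonnegativity of $\mathcal{E}$ is used only to make the subtraction in the first paragraph produce a bona fide finite integral over $B_R$ and to justify the differentiation under the integral sign in the second.
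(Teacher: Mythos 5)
Your argument is correct and is precisely the standard first-variation computation that the paper invokes without proof (the statement is introduced there as a ``well-known fact''), so there is no alternative route in the paper to compare against. One small quibble: the inequality $g(x_2)\geq 0$ is not automatic for an arbitrary $u\in H^1_{loc}$ of finite energy (it relies on each slice being an admissible connection between $a^-$ and $a^+$), but your argument only needs $g(x_2)>-\infty$, which follows from $W\geq 0$ giving $g\geq -d_K(a^-,a^+)$; together with $\mathcal{E}(u)<\infty$ this yields $g(x_2)<\infty$ a.e., and the rest of your proof goes through unchanged.
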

A crucial observation is that we can interpret $\mathcal{E}$ as the action of a curve $x_2\mapsto {u}(\cdot,x_2)$, plotted on a subset of $L^2_{loc}(\R,\R^n)$. Indeed, the energy of ${u}$ rewrites
\begin{equation}
\label{energy_length}
\mathcal{E}({u})=\int_\R \left(\frac 12\|\partial_{x_2}{u}(\cdot,x_2)\|_{L^2(\R)}^2 +\mathcal{K}({u}(\cdot,x_2))^2\right)\d x_2, \quad \text{where }\mathcal{K}({v}):=\sqrt{\mathfrak{E}_{W}({v})-d_K(a^-,a^+)}.
\end{equation}
Moreover, the first term, $\|\partial_{x_2}{u}(\cdot,x_2)\|_{L^2(\R)}$, is nothing but the metric derivative (for the $L^2$-distance) of the curve $x_2\mapsto {u}(\cdot,x_2)$. Indeed, the following fact can be easily proven:
\begin{lemma}\label{l2curve}
Let $X$ be a subspace of $L^2_{loc}(\R,\R^n)$ endowed with the $L^2$-distance: $d_X(v_1,v_1):=\|v_1-v_2\|_{L^2(\R)}$. Let $\gamma:I\to X$ be a curve in $X$: for all $t\in I$, $\gamma(t)$ is a function defined on $\R$ and we use the notation $\gamma(t,s):=\gamma(t)(s)$ for every $t\in I$ and $s\in\R$. Then the function $\gamma(\cdot,\cdot)$ is measurable and belongs to $L^2_{loc}(\R^2,\R^n)$. 

Moreover, if $\gamma\in AC_{ploc}(I,X)$, then $\partial_t\gamma(t,\cdot)\in L^2(\R,\R^n)$ for a.e. $t\in I$ and the metric derivative of $t\mapsto \gamma(t)=\gamma(t,\cdot)$ in $X$ is given by
\[
|\dot{\gamma}|(t)=\|\partial_t \gamma(t,\cdot)\|_{L^2(\R)}\quad a.e.
\]
\end{lemma}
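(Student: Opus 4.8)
The plan is to treat the two assertions separately: first joint measurability (together with local square integrability) of the two-variable function, then the differentiability identity; both are classical facts about curves with values in the Hilbert space $H:=L^2(\R,\R^n)$, and the only genuine work is to reconcile the ``curve'' viewpoint with the ``function of two variables'' viewpoint. For the measurability statement I would fix $t_0\in I$ and set $\sigma(t):=\gamma(t)-\gamma(t_0)$. Since $\gamma$ is continuous for $d_X$, which is the $L^2(\R)$-norm of differences, $\sigma$ is a continuous map from $I$ to $H$, hence uniformly continuous and bounded on each compact subinterval $[a,b]\subset I$. Approximating $\sigma$ by the curves $\sigma_k$ that equal $\sigma(t_i)$ on the intervals of a subdivision of $[a,b]$ with mesh going to $0$, each $\sigma_k$ has an obvious jointly measurable representative and $\sigma_k(t)\to\sigma(t)$ in $H$ for every $t$; by dominated convergence $(\sigma_k)$ is Cauchy in $L^2([a,b]\times[-R,R],\R^n)$ for every $R$, so it converges there, and passing to an a.e.\ convergent subsequence and applying Fubini one checks that the limit is a jointly measurable representative of $(t,s)\mapsto\sigma(t)(s)$. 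Adding back the function $(t,s)\mapsto\gamma(t_0)(s)$, which lies in $L^2_{loc}(\R^2,\R^n)$, yields the claim, with the bound $\int_a^b\|\gamma(t)\|_{L^2([-R,R])}^2\,\d t<\infty$ coming from the local boundedness of $t\mapsto\|\sigma(t)\|_H$.

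For the differentiability statement I would work on one of the finitely many subintervals on which $\gamma\in AC_{loc}(I,X)$ and, translating by $\gamma(t_0)$, regard it as an absolutely continuous curve with values in $H$. I then invoke the classical fact (see \cite{AmbTil}; alternatively one reconstructs it using an orthonormal basis $(e_j)$ of $H$: each $t\mapsto\langle\gamma(t),e_j\rangle$ is real absolutely continuous, $\sum_j|\tfrac{\d}{\d t}\langle\gamma(t),e_j\rangle|^2\le|\dot\gamma|^2(t)$ a.e., and the associated $H$-valued map is the strong derivative) that such a curve is strongly differentiable at a.e.\ $t$, that $\gamma(t)-\gamma(t_0)=\int_{t_0}^t g(\tau)\,\d\tau$ as a Bochner integral in $H$ with $g(t):=\gamma'(t)\in H$, and that $\|g(t)\|_{H}=|\dot\gamma|(t)$ for a.e.\ $t$. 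It then remains to identify $g(t)$ with the slice $\partial_t\gamma(t,\cdot)$: testing the Bochner identity against $\psi\in C^\infty_c(\R)$ in the $s$-variable and against $\chi\in C^\infty_c$ in the $t$-variable, and using Fubini, shows that $g$, viewed as $(t,s)\mapsto g(t)(s)\in L^2_{loc}(\R^2,\R^n)$, is precisely the distributional partial derivative $\partial_t\gamma$ of the two-variable function. Hence $\partial_t\gamma(t,\cdot)=g(t)\in H$ for a.e.\ $t$ and $\|\partial_t\gamma(t,\cdot)\|_{L^2(\R)}=|\dot\gamma|(t)$; gluing the finitely many subintervals gives the identity on all of $I$.

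The measurability bookkeeping is routine. The step that deserves real care is the last one, namely the identification of the strong (equivalently, metric) derivative of the curve with the weak partial derivative $\partial_t\gamma(t,\cdot)$ of the two-variable function — that is, making the Fubini argument clean and correctly invoking (or reproving) the a.e.\ differentiability of absolutely continuous curves valued in $H=L^2(\R,\R^n)$. Beyond that I do not anticipate any serious difficulty.
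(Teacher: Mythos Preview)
The paper does not actually prove this lemma: it introduces it with the phrase ``the following fact can be easily proven'' and then moves on. Your proposal therefore fills in a gap the authors deliberately left, and the approach you outline is the standard one---step-function approximation for joint measurability, and the Radon--Nikod\'ym property of the Hilbert space $H=L^2(\R,\R^n)$ for the a.e.\ strong differentiability, followed by identifying the Bochner derivative with the distributional $\partial_t$. There is nothing to compare with on the paper's side, and your argument is correct.

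One place where your write-up could be tightened: in the measurability step you say ``passing to an a.e.\ convergent subsequence and applying Fubini one checks that the limit is a jointly measurable representative of $(t,s)\mapsto\sigma(t)(s)$''. The cleanest way to finish that sentence is not via a.e.\ subsequences but directly: you already have $\int_a^b\|\sigma_k(t,\cdot)-\sigma(t,\cdot)\|_{L^2([-R,R])}^2\,\d t\to 0$ by dominated convergence, and $\int_a^b\|\sigma_k(t,\cdot)-\tilde\sigma(t,\cdot)\|_{L^2([-R,R])}^2\,\d t\to 0$ by the $L^2$ convergence $\sigma_k\to\tilde\sigma$ plus Fubini, so the triangle inequality gives $\tilde\sigma(t,\cdot)=\sigma(t)$ in $L^2([-R,R])$ for a.e.\ $t$. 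This avoids juggling $t$-dependent subsequences. The identification of the strong derivative $g$ with the distributional $\partial_t\gamma$ is exactly the point that deserves care, as you say, and your testing argument against tensor products $\chi(t)\psi(s)$ is the right way to do it.
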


\subsection{Projecting on ``decreasing funnels'' reduces the energy} It is well known that every heteroclinic connection converges exponentially fast to its limits at $\pm\infty$, at least with a non degeneracy assumption on $W$, i.e. $p_0=2$ in $\mathbf{(A4)}$. This can be easily proved by a maximum principle. In order to get enough compactness to prove the existence of double heteroclinic connections, we need a more precise result. Namely, we prove that the energy is reduced when $u$ is projected on a set of functions -- that we call funnel because of its shape -- of the form $\{v:\R\times\R^2\;:\;|v(t)-a^+|\leq E(t)\}$ with $E(t)\to 0$ as $t\to +\infty$. In paritcular, this will be general enough to handle the case of a degeneracy fir $p_0\neq 2$ in {\bf (A4)}.
 
\begin{lemma}\label{projection}
There exists $\varepsilon_0\in (0,1)$ only depending on $W$, $c_0$ and $p_0$ (given in $\mathbf{(A4)}$) such that the following holds true. Let ${v}\in H^1_{loc}(\R,\R^n)$ satisfy $\mathfrak{E}_{W}({v})<\infty$ and ${v}(s_0)\in B(a^+,\varepsilon_0)$ for some $s_0\in \R$. Take $c\in (0,c_0)$. Then one has $\mathfrak{E}_{W}(P[{v}])\leq \mathfrak{E}_{W}({v})$, where
\[
P[{v}](s):=
\begin{dcases}
a^++E(s)\, \frac{{v}(s)-a^+}{|{v}(s)-a^+|}&\text{if }s>s_0\text{ and }|{v}(s)-a^+|>E(s),\\
{v}(s)&\text{otherwise,}
\end{dcases}
\]
and where $E:[s_0,+\infty)\to \R^+$ is solution of the following system:
\[
\begin{cases}
E''=cE^{p_0-1},\\
E(s_0)=\varepsilon_0,\\
\lim\limits_{s\to +\infty}E(s)=0.
\end{cases}
\]
\end{lemma}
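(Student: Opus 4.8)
The plan is to pass to polar coordinates around $a^+$ and to prove the energy decrease separately on each connected component of the open set $U:=\{s>s_0:\ |v(s)-a^+|>E(s)\}$, on the complement of which $P[v]=v$. First I would record the properties of the funnel profile: the system $E''=cE^{p_0-1}$, $E(s_0)=\varepsilon_0$, $E(+\infty)=0$ has a solution (by a shooting argument; explicitly $E(s)=\varepsilon_0e^{-\sqrt c(s-s_0)}$ when $p_0=2$) which is positive, convex and strictly decreasing, and which obeys the first integral $\tfrac12(E')^2=\tfrac{c}{p_0}E^{p_0}$ (the constant being $0$ since $E,E'\to0$). From this integral $E'=-\sqrt{2c/p_0}\,E^{p_0/2}$, so $E$ decays like $s^{-2/(p_0-2)}$ at $+\infty$ and $\int_{s_0}^{+\infty}E^2<\infty$ precisely because $p_0<6$; together with $W(x)\le\tfrac M2|x-a^+|^2$ on $\overline{B(a^+,r_0)}$ (valid by $\mathbf{(A1)}$, as $a^+$ is a minimum of $W\ge0$, with $M$ bounding $\nabla^2W$) this shows $\mathfrak{E}_W(P[v])<\infty$ whenever $\mathfrak{E}_W(v)<\infty$, so there is something to prove. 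On a component $(s_1,s_2)\subset U$ write $v=a^++\rho\,\omega$ with $\rho=|v-a^+|>E>0$ and $\omega=(v-a^+)/|v-a^+|\in S^{n-1}$, so that $|\dot v|^2=\dot\rho^2+\rho^2|\dot\omega|^2$, while $P[v]=a^++E\omega$ gives $|\dot{P[v]}|^2=(E')^2+E^2|\dot\omega|^2$; the angular kinetic term already decreases pointwise since $E\le\rho$, and it remains to compare the radial-kinetic-plus-potential parts.

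On any component along which $\rho\le r_0$ (so $v$ stays in $B(a^+,r_0)$), I claim $\int_{s_1}^{s_2}\big(\tfrac12(E')^2+W(a^++E\omega)\big)\le\int_{s_1}^{s_2}\big(\tfrac12\dot\rho^2+W(a^++\rho\omega)\big)$. By convexity of $t\mapsto\tfrac12t^2$ one has $\tfrac12\dot\rho^2\ge\tfrac12(E')^2+E'(\dot\rho-E')$ a.e.; integrating the cross-term by parts, the boundary terms $[E'(\rho-E)]$ vanish at both ends (at a finite endpoint because $\rho=E$ there by continuity — note $s_1>s_0$ since $\rho(s_0)<E(s_0)$; at $+\infty$, on the possibly unbounded component, because $E'\to0$ and $\rho$ is bounded), and the ODE $E''=cE^{p_0-1}$ turns the remaining integral into $-\int_{s_1}^{s_2}cE^{p_0-1}(\rho-E)$. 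Thus the claim reduces to the pointwise inequality $W(a^++\rho\omega)-W(a^++E\omega)\ge cE^{p_0-1}(\rho-E)$ for $0<E\le\rho\le r_0$, which is where $\mathbf{(A4)}$ enters: writing the left-hand side as $\int_E^\rho\tfrac1t\big(\nabla W(a^++t\omega)\cdot(t\omega)\big)\,dt\ge c_0\int_E^\rho t^{p_0-1}\,dt=\tfrac{c_0}{p_0}(\rho^{p_0}-E^{p_0})$, then using convexity of $t\mapsto t^{p_0}$ (here $p_0\ge2$) in the form $\rho^{p_0}-E^{p_0}\ge p_0E^{p_0-1}(\rho-E)$, and finally $c<c_0$, the inequality follows. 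Summing over all components contained in $B(a^+,r_0)$, both the angular and the radial-plus-potential contributions decrease.

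It remains to control the parts of $U$ on which $v$ leaves $\overline{B(a^+,r_0)}$; here the radial monotonicity of $W$ is lost and one must instead use the smallness of $\varepsilon_0$. Since $\mathfrak{E}_W(v)<\infty$ forces $v(s)$ to converge as $s\to\pm\infty$ to a point of $\Sigma$ (uniform continuity from the $H^1$ bound together with integrability of $W(v)$ rule out oscillations and nonzero limits of $W$), any maximal interval $J$ with $|v-a^+|\ge r_0$ is of one of three types: $v$ remains in a fixed compact annulus (which, for $r_0$ small enough to meet $\Sigma$ only at $a^+$, forces $W\ge\delta>0$ on it), or $v$ wanders far from every well (where $\mathbf{(A3)}$ forces a large $K$-length), or $v$ converges on an unbounded such interval to another well $a\ne a^+$ (where one uses $d_K(a^+,a)>0$). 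On the same interval the funnel's radial-kinetic-plus-potential density is at most $\tfrac{c}{p_0}E^{p_0}+\tfrac M2E^2\le\tfrac{c_0}{p_0}\varepsilon_0^{p_0}+\tfrac M2\varepsilon_0^2$, a bound uniform in $c<c_0$, while its angular density is dominated by $v$'s as before; choosing $\varepsilon_0$ small, depending only on $W$, $c_0$, $p_0$ (through $r_0$, $M$, $\delta$, the modulus $k$ of $\mathbf{(A3)}$ and $\min_{a\in\Sigma\setminus\{a^+\}}d_K(a^+,a)$), these funnel contributions are absorbed into those of $v$ on $J$. Combined with the previous paragraph this yields $\mathfrak{E}_W(P[v])\le\mathfrak{E}_W(v)$. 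The hard part is exactly this last step — matching, with a single $c$-independent $\varepsilon_0$, the energy the funnel spends outside $B(a^+,r_0)$ against the several distinct mechanisms by which $v$ pays there — and it is here that the restriction $p_0\in[2,6)$ is essential, guaranteeing that the funnel itself carries finite, controllably small, energy. The remaining points (existence and monotonicity of $E$, the polar identity for $|\dot v|$, absolute continuity of $P[v]$ and its continuous matching at the endpoints of the components) are routine.
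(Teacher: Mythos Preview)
Your approach is correct and genuinely different from the paper's. Where the paper argues by contradiction --- assuming the projection increases energy on some component $(s^-,s^+)$, it introduces the auxiliary minimization $\min\{\mathfrak{E}_W(f\sigma,I):f\ge E,\ f(s^\pm)=E(s^\pm)\}$, shows the minimizer $f$ stays below $r_0$ (because its energy is at most that of $E$, which is small when $\varepsilon_0$ is small), and then uses the Euler--Lagrange equation $f''\ge c_0 f^{p_0-1}$ together with a maximum/first-integral comparison against $E$ to force $f\equiv E$ --- you instead work directly with $\rho=|v-a^+|$ and bypass any auxiliary problem. Your convexity/integration-by-parts device on ``good'' components (those with $\rho\le r_0$ throughout), reducing the question to the pointwise bound $W(a^++\rho\omega)-W(a^++E\omega)\ge cE^{p_0-1}(\rho-E)$ via $\mathbf{(A4)}$ and convexity of $t\mapsto t^{p_0}$, is clean and elementary; the paper's route, by contrast, handles good and bad components uniformly (once $f<r_0$ is secured) at the cost of the extra minimization step.

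Your treatment of ``bad'' components, however, is more tangled than necessary, and the ``three types'' taxonomy (compact annulus / wandering far / converging to another well) with its appeals to $\mathbf{(A3)}$ and to $d_K(a^+,a)$ does not quite deliver the pointwise or interval-wise comparison you need. A single observation suffices: on any component where $\rho$ exceeds $r_0$, the curve $v$ must cross the annulus $\{r_0/2\le |x-a^+|\le r_0\}$ (on which $W\ge\delta>0$, for $r_0$ chosen small enough that this annulus avoids $\Sigma\setminus\{a^+\}$), so its radial-plus-potential energy on that component is at least $\eta=\tfrac{r_0}{2}\sqrt{2\delta}$, independently of $\varepsilon_0$; meanwhile the funnel's radial-plus-potential energy on \emph{any} subinterval of $[s_0,\infty)$ is at most $A(\varepsilon_0):=\int_{s_0}^{\infty}\big(\tfrac12(E')^2+\tfrac{M}{2}E^2\big)\to 0$ as $\varepsilon_0\to 0$ (this is exactly where $p_0<6$ enters). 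Choosing $\varepsilon_0$ so that $A(\varepsilon_0)<\eta$ closes the bad-component case without invoking $\mathbf{(A3)}$ or the $K$-distance between wells at all.
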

\begin{remark}
$P[{v}]$ is a projection of ${v}$ in $L^2_{loc}(\R,\R^n)$ endowed with the $L^2$-distance (which of course is not finite everywhere in $L^2_{loc}$) onto the convex set $\mathcal{C}^+\subset L^2_{loc}(\R,\R^n)$ defined by
\[
\mathcal{C}^+:=\{{v}\in L^2_{loc}(\R,\R^n)\;:\; \text{for a.e. } s\geq s_0,\, |{v}(s)-a^+|\leq E(s)\}.
\]
In other words, one has $P[{v}]\in \mathcal{C}^+$ and $\|{v}-P[{v}]\|_{L^2}\leq \|{v}-\overline{v}\|_{L^2}$ for all $\overline{v}\in \mathcal{C}^+$. By convexity, $P[{v}]$ is the unique projection of ${v}$ in the preceding sense, at least if $\|{v}-P[{v}]\|_{L^2}<\infty$.
\end{remark}
\begin{remark}\label{explicit}
There is an explicit expression for $E$:
\[
E(s)=
\begin{dcases}
\varepsilon_0\exp{\left(-\sqrt{c}\, (s-s_0)\right)}&\text{if }p_0=2,\\
\frac{c(s-s_\ast)^{-\alpha}}{\alpha(\alpha+1)}&\text{if }p_0\in (2,6),
\end{dcases}
\]
where $\alpha>1/2$ is determined by $p_0=2+\frac{2}{\alpha}$, and $s_\ast<s_0$ is determined by $(s_0-s_\ast)^{-\alpha}=c^{-1}\varepsilon_0\alpha(\alpha+1)$.
It will be useful for the next computations to observe that we have $E\in L^2$ for $\alpha>1/2$ (i.e. $p_0<6$, which explains why we limited ourselves to such a case) and
$$|E'(s_0)|=C(c,p_0)\varepsilon_0^{p_0/2},\qquad \int_{s_0}^\infty |E(s)|^2\d s=C(c,p_0)\varepsilon_0^{3-p_0/2}.$$
\end{remark} 
\begin{proof}
One can assume that $a^+=0$. If the conclusion of the lemma holds false, then there exists at least one connected component $I$ in $(s_0,+\infty)\cap\{|{v}|>E\}$ such that $\mathfrak{E}_{W}(P[{v}],I)>\mathfrak{E}_{W}({v},I)$. Let us write $I=(s^-,s^+)$, with $s_0\leq s^-<s^+\leq +\infty$. Since $P[{v}](s_0)={v}(s_0)\in B(a^+,\varepsilon_0)$ and $E(s_0)=\varepsilon_0$, one has actually $s^->s_0$. Moreover, by construction, one has
\[
\begin{cases}
|{v}(s^-)|=E(s^-),&\\
|{v}(s^+)|=E(s^+)&\text{if }s^+<+\infty,\\
|{v}(s)|>E(s)&\text{for all }s\in I.
\end{cases}
\]
Define $\sigma$ by $\sigma(s)=|{v}(s)|^{-1}{v}(s)$ for all $s\in I$, so that $P[{v}](s)=E(s)\sigma(s)$ on $I$. We will have a contradiction if we prove that $f=E$ is solution of the following minimization problem:
\begin{equation}
\label{minf}
\min\left\{
\mathfrak{E}_{W}(f\sigma,I)\;:\; f\in H^1(I),\, f\geq E,\, f(s^-)=E(s^-),\text{ and }f(s^+)=E(s^+)\text{ if }s^+<+\infty\right\}.
\end{equation}
First note that, with our assumptions, the minimum of the energy is finite. Indeed, for all $f\in H^1(I)$, one has
\begin{align*}
\mathfrak{E}_{W}(f\sigma,I)= \int_I \frac 12(f')^2+W(f\sigma)+\frac 12|\sigma'|^2 f^2,
\end{align*}
which is finite in particular when $f=E$. Indeed, we have on the one hand 
\[
\int_I|\sigma'|^2 E^2=\int_I\frac{E^2}{|{v}|^2}\left({v}'-({v}'\cdot \frac{{v}}{|{v}|}) \frac{{v}}{|{v}|}\right)^2\leq 4\int_I|{v}'|^2\leq 8\mathfrak{E}_{W}({v})<+\infty. 
\]
On the other hand, since $E''E\ge 0$ and $W(z)\leq C_0 |z|^2$ whenever $|z|\leq \varepsilon_0\leq 1$ (this comes from the fact that $W$ is $\mathcal{C}^2$ and $W$, $\nabla W$ vanish at $z=a^+=0$), we have
 \begin{align*}
 \int_{s_0}^{\infty} \frac 12(E')^2+W(E\sigma)&\leq -\frac 12E'(s_0)E(s_0)-\frac 12\int_{s_0}^{+\infty}E''E+\int_{s_0}^{+\infty}W(E\sigma)\\
 &\leq -\frac {\varepsilon_0}{2}E'(s_0)+C_0\int_{s_0}^{+\infty}E^2.
\end{align*}
Using the explicit formula for $E$ (see Remark \ref{explicit}), we see that the right hand side is finite. 

Now, by a standard application of the direct method in the Calculus of variations, there exists a minimizer $f\in H^1(I)$ of the problem \eqref{minf}. In particular, $f$ solves the Euler-Lagrange equation
\begin{equation}\label{euler_lagrange}
-f''+f|\sigma'|^2+\nabla W(f\sigma)\cdot\sigma=0\quad \text{on the open set }J:=\{f>E\}.
\end{equation}
Moreover, by minimality, we know that $\mathfrak{E}_{W}(f\sigma,I)\leq \mathfrak{E}_{W}(E\sigma,I)$. Together with the inequality $f\geq E$ and the identity $\sigma'\cdot\sigma=0$, this yields
\begin{align*}
 \int_I \frac 12(f')^2+W(f\sigma)= \mathfrak{E}_{W}(f\sigma,I)-\frac 12\int_I (\sigma'f)^2\leq \mathfrak{E}_{W}(E\sigma,I)-\frac 12\int_I |\sigma'|^2 E^2= \int_{s_0}^{+\infty} \frac 12(E')^2+W(E\sigma).
 \end{align*}
By the previous estimates, the right hand side is controlled by $-\frac {\varepsilon_0}{2}E'(s_0)+C_0\|E\|_{L^2((s_0,+\infty))}^2$ which tends to $0$ with $\varepsilon_0$ (see again the explicit computations of Remark \ref{explicit}). 

Now, we want to combine the Euler-Lagrange equation \eqref{euler_lagrange} and our assumption $\mathbf{(A4)}$. We need to prove that, for $\varepsilon_0$ small enough, one has $f<r_0$, the constant appearing in $\mathbf{(A4)}$. This is a consequence of the estimation of $\int_I \frac 12(f')^2+W(f\sigma)$ and the inequality
\[
\mathfrak{L}_{K}(f):=\int_I \sqrt{2W(f(s)\sigma(s))}|f'(s)|\d s\leq\int_I \frac 12(f'(s))^2+W(f\sigma(s))\d s,
\]
where $\mathfrak{L}_{K}$ is nothing but the $K$-length in $\R$ endowed with the weight function $K(f)=\sqrt{2W(f\sigma)}$. Thus, as in the proof of Proposition \ref{Klength}, one sees that $f(s)$ stays in a ball centered at $f(s^-)=E(s^-)<\varepsilon_0$, and whose radius tends to $0$ with $\mathfrak{L}_{K}(f)$. In particular, there exists a constant $\varepsilon_0^1>0$ depending on $p_0$, $c_0$, $r_0$ and $W$ such that for $\varepsilon_0<\varepsilon_0^1$, one has 
\[
f(s)<r_0\quad\text{for all }s\in I.
\]
Now, for such small values of $\varepsilon_0$, \eqref{euler_lagrange} and $\mathbf{(A4)}$ provide the estimate
\[
\frac{f''}{f}=|\sigma'|^2+\frac{\nabla W(f\sigma)\cdot f\sigma}{f^2}\geq c_0 f^{p_0-2}\quad \text{on }\{f>E\}.
\]
If $f=E$, there is nothing to prove. Otherwise, $\{f>E\}$ contains a non empty connected component $J\subset I$.

First assume that $J$ is bounded: then $f>E$ on $J$ and $f=E$ on the boundary of $J$. In particular, there exists a point $\bar s\in J$ where $f/E$ reaches its maximum. At this point, one has $f'E-fE'=0$ and $f''E-E''f\leq 0$. Thus
\[
c_0 f^{p_0-2}(\bar s)\leq\frac{f''(\bar s)}{f(\bar s)}\leq \frac{E''(\bar s)}{E(\bar s)}=c E^{p_0-2}(\bar s)\leq c f^{p_0-2}(\bar s),
\]
which contradicts the fact that $c<c_0$. 

Now, assume that $J$ is unbounded, i.e. $J=(r,+\infty)$ with $r\ge s^-$. For all $s\in J$, one has $f''(s)\geq c_0 f^{p_0-1}(s)$. In particular $f$ is convex. Moreover, $f$ is decreasing, since it is convex and $f(s_n)\to 0$ for a sequence $s_n\to \infty$. Indeed, since $\mathfrak{E}_{W}(f,J)<+\infty$, there exists a sequence $s_n\to\infty$ with $f'(s_n)\to 0$ and $W(f(s_n)\sigma(s_n))\to 0$ as $n\to \infty$. Up to reducing $\varepsilon_0$, one can assume that $f(s)\sigma(s)$ stays in a ball centered at $a^+=0$ which does not meet the set $\{W=0\}$ except at $a^+=0$ thus implying $f(s_n)\to 0$. From the inequalities $f'<0$ and $f''\ge c_0f^{p_0-1}$, we deduce that
\[
\left(\frac 12 |f'|^2-\frac{c_0}{p_0}f^{p_0}\right)'=f'(f''-c_0f^{p_0-1})\leq 0.
\]
Since $\frac 12 |f'(s_n)|^2-\frac{c_0}{p_0}f^{p_0}(s_n)\to 0$ as $n\to \infty$, one has also 
\[
\frac 12 |f'|^2-\frac{c_0}{p_0}f^{p_0}\geq 0.
\]
Moreover, by construction, one has $(\frac 12 |E'|^2-\frac{c}{p_0}E^{p_0})'= 0$, and so $\frac 12 |E'|^2-\frac{c}{p_0}E^{p_0}= 0$. Thus
\[
\frac{-f'}{f^{p_0/2}}\geq \sqrt{\frac{2c_0}{p_0}}>\sqrt{\frac{2c}{p_0}}=\frac{-E'}{E^{p_0/2}}.
\]
In particular, we have proved that the following inequalities hold on the interval $J$:
\[
\begin{cases}
(-\log f)'\geq (-\log E)'\text{ and so }-\log f> -\log E&\text{if }p_0=2;\\
(f^{1-p_0/2})'>(E^{1-p_0/2})'\text{ and so }f^{1-p_0/2}>E^{1-p_0/2}&\text{if }p_0>2.
\end{cases}
\]
In both cases, we deduce the inequality $f<E$ which is a contradiction.
\end{proof}

\section{Symmetric case: Alama-Bronsard-Gui connections\label{sectionAlama}}
As in \cite{Alama:1997}, we investigate the existence of solutions $u$ of the system \eqref{Double_connections}, which have the following symmetry property:
\begin{equation}
u\circ\mathcal R_2=\mathcal R_n\circ u,
\end{equation}
where we define $\mathcal R_m:\R^m\to\R^m$ for all integer $m\geq 2$ as the map given by $\mathcal R_m(x_1,x')=(-x_1,x')$ whatever $x_1\in\R$ and $x'\in\R^{m-1}$.

This is made possible only with a symmetry condition on $W$ and on the boundary data $a^\pm\in\Sigma$, namely:
\begin{description}
	\item[(Sym)] $W\circ\mathcal R_n=W$ and $a^+=\mathcal R_n(a^-)$.
\end{description}
With these conditions, the two constants $c^-$ and $c^+$ are actually fixed in the system \ref{Double_connections}: assuming that $z^+$ and $z^-$ are also symmetric, i.e. $z^\pm_1(-t)=-z^\pm_1(t)$, we have $c^-=c^+=0$. In addition to $\mathbf{(Sym)}$, we keep our assumptions $\mathbf{(A1-4)}$. Note that for every heteroclinic connection $z\in\mathcal{Z}$, $t\mapsto \mathcal R_n\circ z (-t)$ is also a heteroclinic connection between $a^-$ and $a^+=\mathcal R_n(a^-)$. Indeed, with our symmetry condition on $W$, one has $\mathcal R_n(z(\mp\infty))=\mathcal R_n(a^\mp)=a^\pm$, and both $t\mapsto z(t)$ and $t\mapsto (\mathcal R_n\circ z) (-t)$ solve the Euler-Lagrange equation $z''=\nabla W(z)$. 

We introduce the following set of symmetric connections between $a^-$ and $a^+$,
\[
\mathcal{S}_{sym}(a^-,a^+):=\{{v}\in\mathcal{S}(a^-,a^+)\;:\; \text{for a.e. }t\in\R,\, {v}_1(-t)=-{v}_1(t)\},
\]
and the set of all heteroclinic connections which minimize the action $\mathfrak{E}_{W}$ over $\mathcal{S}_{sym}(a^-,a^+)$:
\[
\mathcal{Z}_{sym}(a^-,a^+)=\{{z}\in\mathcal{S}_{sym}(a^-,a^+)\;:\;\forall {v}\in\mathcal{S}_{sym}(a^-,a^+),\, \mathfrak{E}_{W}({z})\leq \mathfrak{E}_{W}({v})\}.
\]
We can prove that the minimal energy over symmetric connections is not greater than the minimal energy over all connections:
\begin{lemma}
For every ${v}\in\mathcal{S}(a^-,a^+)$, there exists $v_{sym}\in\mathcal{S}_{sym}(a^-,a^+)$ with $\mathfrak{E}_{W}(v_{sym})\leq \mathfrak{E}_{W}({v})$. In particular, the infimum of $\mathfrak{E}_{W}$ over $\mathcal{S}_{sym}(a^-,a^+)$ is the same as the infimum over $\mathcal{S}(a^-,a^+)$ and thus
\[
\inf\{\mathfrak{E}_{W}({v})\;:\; {v}\in\mathcal{S}_{sym}(a^-,a^+)\}=d_K(u^-,u^+).
\]
\end{lemma}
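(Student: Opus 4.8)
The plan is to produce $v_{sym}$ from $v$ by a reflection-and-gluing (``folding'') construction, exploiting the symmetry $\mathbf{(Sym)}$. As a preliminary observation, for any $v\in\mathcal S(a^-,a^+)$ the curve $w(t):=\mathcal R_n(v(-t))$ again belongs to $\mathcal S(a^-,a^+)$, since $w(\mp\infty)=\mathcal R_n(v(\pm\infty))=\mathcal R_n(a^\pm)=a^\mp$ (using that $\mathcal R_n$ is an involution and $a^+=\mathcal R_n(a^-)$), and $\mathfrak{E}_{W}(w,(\alpha,\beta))=\mathfrak{E}_{W}(v,(-\beta,-\alpha))$ for every interval $(\alpha,\beta)$: the change of variables $t\mapsto-t$ preserves the kinetic term because $\mathcal R_n$ is orthogonal, and preserves the potential term because $W\circ\mathcal R_n=W$. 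The same invariance holds under time-translations.

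Next I would locate a splicing point. Since the two wells are distinct, the relation $a^+=\mathcal R_n(a^-)$ forces $a^-_1\neq 0$, so $a^+_1=-a^-_1$ has the opposite sign; as $v\in H^1_{loc}(\R,\R^n)$ is continuous with $v_1(\pm\infty)=a^\pm_1$, the intermediate value theorem yields $t_0\in\R$ with $v_1(t_0)=0$, i.e. $v(t_0)=\mathcal R_n(v(t_0))$. I then define two candidate symmetric connections by gluing at $t_0$:
$$
v^{+}(t):=\begin{cases} v(t_0+t)&\text{if }t\ge 0,\\ \mathcal R_n\big(v(t_0-t)\big)&\text{if }t<0,\end{cases}
\qquad
v^{-}(t):=\begin{cases} \mathcal R_n\big(v(t_0-t)\big)&\text{if }t\ge 0,\\ v(t_0+t)&\text{if }t<0.\end{cases}
$$
Both are continuous at $t=0$ precisely because $v_1(t_0)=0$; being $H^1_{loc}$ on each half-line and continuous across $0$, they lie in $H^1_{loc}(\R,\R^n)$. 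By construction $v^\pm_1$ is odd, and $v^{\pm}(\pm\infty)=a^{\pm}$ (using $v(+\infty)=a^+$, $v(-\infty)=a^-$ together with $\mathcal R_n(a^\pm)=a^\mp$), so $v^{\pm}\in\mathcal S_{sym}(a^-,a^+)$.

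It remains to compare energies. By the reflection identity applied to the left half of $v^{+}$ and to the right half of $v^{-}$ (each being a shifted copy of $w$), one gets $\mathfrak{E}_{W}(v^{+})=2\,\mathfrak{E}_{W}\big(v,(t_0,+\infty)\big)$ and $\mathfrak{E}_{W}(v^{-})=2\,\mathfrak{E}_{W}\big(v,(-\infty,t_0)\big)$. Since $\mathfrak{E}_{W}\big(v,(-\infty,t_0)\big)+\mathfrak{E}_{W}\big(v,(t_0,+\infty)\big)=\mathfrak{E}_{W}(v)$ by additivity of the integral, at least one of these two quantities is $\le\tfrac12\mathfrak{E}_{W}(v)$; choosing $v_{sym}$ to be the corresponding $v^{+}$ or $v^{-}$ gives $\mathfrak{E}_{W}(v_{sym})\le\mathfrak{E}_{W}(v)$. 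The ``in particular'' statement is then immediate: the inclusion $\mathcal S_{sym}(a^-,a^+)\subset\mathcal S(a^-,a^+)$ gives $\inf_{\mathcal S_{sym}}\mathfrak{E}_{W}\ge\inf_{\mathcal S}\mathfrak{E}_{W}$, the bound just proven gives the reverse inequality, and the common value equals $d_K(a^-,a^+)$ by the identification of $\inf_{\mathcal S(a^-,a^+)}\mathfrak{E}_{W}$ recorded after Theorem \ref{heteroclinic}.

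There is essentially no serious obstacle in this argument. The only points requiring a little care are verifying that the spliced curve genuinely belongs to $H^1_{loc}(\R)$ across $t_0$ (which is exactly where the choice $v_1(t_0)=0$, hence continuity, enters) and keeping track of the reflection $\mathcal R_n$ and the change of variables in the energy identities; no compactness or lower-semicontinuity input is needed here.
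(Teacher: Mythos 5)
Your proposal is correct and follows essentially the same route as the paper: locate $t_0$ with $v_1(t_0)=0$ by the intermediate value theorem, reflect the half-line of smaller energy via $\mathcal R_n$ and glue at $t_0$, and use the invariance of $\mathfrak{E}_W$ under $t\mapsto -t$ and $\mathcal R_n$ (from $\mathbf{(Sym)}$ and orthogonality) to get $\mathfrak{E}_W(v_{sym})\le\mathfrak{E}_W(v)$. The extra care you take with continuity of the spliced curve at $t=0$ and with the $H^1_{loc}$ matching is exactly the right justification, which the paper leaves implicit.
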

\begin{proof}
Since ${v}_1(\pm\infty)=a_1^\pm=\pm a_1^+$ and since ${v}_1$ is continuous, there exists $t_0\in\R$ such that ${v}_1(t_0)=0$. Then $\mathfrak{E}_{W}({v})=\mathfrak{E}_{W}({v},(-\infty,t_0])+\mathfrak{E}_{W}({v},[t_0,+\infty))$. If for instance $\mathfrak{E}_{W}({v},(-\infty,t_0])\geq \mathfrak{E}_{W}({v},[t_0,+\infty))$, then $\mathfrak{E}_{W}({v})\geq 2\mathfrak{E}_{W}({v},[t_0,+\infty))=\mathfrak{E}_{W}(v_{sym})$, where $v_{sym}\in\mathcal{S}(a^-,a^+)$ is defined by
\[
v_{sym}(t)=
\begin{cases}
{v}(t_0+t)&\text{if }t>0,\\
\mathcal R_n({v}(t_0-t))&\text{if }t\leq 0.
\end{cases}
\]
Since $v_{sym}$ is clearly symmetric, the lemma is proved.
\end{proof}
The main result of this section is
\begin{theorem}\label{ABGthm}
Assume $\mathbf{(A1-4)}$, $\mathbf{(STI)}$ and $\mathbf{(Sym)}$. Assume moreover that $\mathcal{Z}_{sym}(a^-,a^+)$ has exactly two elements $z^-$ and $z^+$. Then there exists a solution $u\in C^2(\R^2,\R^n)$ to the system  \eqref{Double_connections} (with $c^-=c^+=0$) which globally minimizes the energy $\mathcal{E}$ under the constraints:
\begin{equation}
\label{BC}
\begin{cases}
\int_\R (u(x_1,x_2)-z^+(x_1))^2\d x_1<+\infty&\text{for a.e. }x_2\in\R;\\
\int_\R (u(x_1,x_2)-z^\pm(x_1))^2\d x_1\to 0&\text{as }x_2\to\pm\infty.
\end{cases}
\end{equation}
\end{theorem}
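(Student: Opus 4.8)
The plan is to interpret the renormalized energy $\mathcal{E}$ as a weighted length in the metric space $X$ obtained as a suitable subset of $L^2_{loc}(\R,\R^n)$ consisting of \emph{symmetric} curves, equipped with the $L^2$-distance, and then to invoke the abstract machinery of Sections \ref{mainthm}--\ref{sectionHeteroclinic}. Concretely, following \eqref{energy_length}, a competitor $u(x_1,x_2)$ corresponds to a curve $x_2\mapsto u(\cdot,x_2)$ in $X$, with metric derivative $\|\partial_{x_2}u(\cdot,x_2)\|_{L^2(\R)}$ by Lemma \ref{l2curve}, and effective potential $\mathcal W(v):=\mathcal K(v)^2=\mathfrak{E}_{W}(v)-d_K(a^-,a^+)\geq 0$, whose zero set is exactly $\mathcal{Z}_{sym}(a^-,a^+)=\{z^-,z^+\}$ by the lemma preceding the theorem. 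The points to be connected are $z^-$ and $z^+$, and since the zero set has only two elements, assumption $\mathbf{(STI)}$ for the associated weight $\mathcal K$ is automatically vacuous. Thus, once hypotheses $\mathbf{(H1)}$--$\mathbf{(H3)}$ are checked for $(X,d_{L^2},\mathcal K)$, Theorem \ref{heteroclinic} produces a minimizer $u$ of $\mathcal{E}$ with $\mathcal{E}(u)=d_{\mathcal K}(z^-,z^+)$ satisfying $\|\partial_{x_2}u(\cdot,x_2)\|^2_{L^2}=2\mathcal K(u(\cdot,x_2))^2$ (equipartition) and the $L^2$ boundary conditions as $x_2\to\pm\infty$, which is exactly \eqref{BC}.

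The verification of the abstract hypotheses is where the real work lies. For $\mathbf{(H1)}$ one takes $X$ to be the closure, for the $L^2(\R)$-distance, of the set of finite-$\mathfrak{E}_W$ symmetric connections lying in appropriate funnels; one must check $X$ is complete and a length space (segments between nearby competitors stay admissible). For $\mathbf{(H2)}$ one needs lower semicontinuity of $\mathcal K$, i.e. of $v\mapsto \mathfrak{E}_W(v)$, under $L^2(\R)$-convergence: this follows by Fatou together with weak lower semicontinuity of the Dirichlet term, once one knows a limit of bounded-energy curves still has a representative in $H^1_{loc}$; and finiteness of the zero set is exactly the hypothesis that $\mathcal Z_{sym}=\{z^-,z^+\}$. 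The delicate hypothesis is $\mathbf{(H3)}$: we must exhibit a set $F\subset X$ whose intersections with $\{\mathcal K\le \ell\}$ are $L^2(\R)$-compact, and such that $d_{\mathcal K}$-geodesics can be assumed to live in $F$. This is precisely where the funnel lemma (Lemma \ref{projection}) enters: projecting a curve $u(\cdot,x_2)$ onto the convex sets $\mathcal C^\pm$ of functions trapped inside the decreasing profiles $E$ around $a^\pm$ does not increase $\mathfrak{E}_W$, hence does not increase $\mathcal{E}$, so we may take $F=\{v\in X:\ |v(s)-a^\pm|\le E(\pm s)\ \text{for }\pm s\ge M\}$. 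On such an $F$, a bound on $\mathfrak{E}_W(v)$ gives an $H^1$ bound on $v$ on compact intervals, while the funnel constraint controls the $L^2$-tails uniformly; a diagonal/Rellich argument then yields $L^2(\R)$-precompactness of $F\cap\{\mathcal K\le\ell\}$.

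Having obtained $u$ as an $\mathcal{E}$-minimizer with the equipartition identity and the $L^2$ convergence \eqref{BC}, the remaining task is to upgrade regularity and the mode of convergence to match the full conclusion of the theorem, namely $u\in C^2(\R^2,\R^n)$ solving \eqref{Double_connections} with uniform limits. Local minimality of $\mathcal{E}$ gives the weak Euler--Lagrange equation $-\Delta u+\nabla W(u)=0$ by the stated Proposition, provided $u\in H^1_{loc}\cap L^\infty_{loc}$. The $L^\infty$ bound and, crucially, a bound on $\mathcal W(u(\cdot,x_2))$ uniform in $x_2$ come from the $\lambda$-convexity argument of Appendix \ref{AppB} applied to the minimal-action curve $x_2\mapsto u(\cdot,x_2)$ in the Hilbert space $L^2(\R,\R^n)$; this yields $H^1\cap L^\infty$ bounds on each slice. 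These in turn promote the $L^2(\R)$-convergence $u(\cdot,x_2)\to z^\pm$ to uniform convergence (interpolation between $H^1$ and $L^2$, i.e. $L^2$-bound plus equicontinuity), giving the fourth and fifth lines of \eqref{Double_connections}; and the funnel lemma, applied now in the $x_1$-direction, forces $u(x_1,x_2)\to a^\pm$ uniformly in $x_2$ as $x_1\to\pm\infty$, giving the second and third lines. Finally the bounded right-hand side $\nabla W(u)\in L^\infty_{loc}$ together with elliptic Schauder estimates bootstraps $u$ to $C^{2,\alpha}$, and in particular $C^2$. I expect the main obstacle to be the compactness step $\mathbf{(H3)}$: the space $X$ is genuinely non-locally-compact, and extracting a limit curve requires simultaneously controlling the energy of each slice on compact sets (Rellich) and the $L^2$-tails uniformly along the whole curve, which is exactly what the construction of the funnel profiles $E$ with $E\in L^2$ (Remark \ref{explicit}, valid for $p_0<6$) is designed to achieve; keeping the funnel constants uniform along a minimizing sequence, and checking that the projection onto $F$ interacts correctly with the symmetry constraint, is the technically demanding part.
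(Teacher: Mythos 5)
Your overall strategy is the paper's strategy: recast $\mathcal{E}$ as an action on the symmetric $L^2$-space $X$, apply Theorem \ref{heteroclinic}, and upgrade the convergence and regularity via the funnel lemma and the $\lambda$-convexity result of Appendix \ref{AppB}. However, the step you yourself flag as ``the technically demanding part'' --- verifying $\mathbf{(H3)}$ --- is precisely where the proof lives, and your sketch does not contain the idea that makes it work. To project a curve $\gamma\in AC_{ploc}([0,1],X)$ onto a \emph{fixed} funnel set $F$ without increasing $\mathfrak{L}_{\mathcal{K}}$, Lemma \ref{projection} requires an anchor point $s_0$ with $|\gamma(t,s_0)-a^+|<\varepsilon_0$, and this $s_0$ must be the \emph{same for every} $t\in[0,1]$, otherwise the projected slices land in different funnels and $F$ is not fixed. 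Producing such a uniform $s^+$ is the content of Lemmas \ref{stability} and \ref{s0}: first a quantitative stability statement (small $\mathcal{K}({v})$ forces ${v}$ to be uniformly close to $z^+$ or $z^-$, proved via Claims \ref{centered}--\ref{claimOtherZero} and a compactness argument that itself already uses the funnel projection), then a Chebyshev/averaging argument over $s\in[S,2S]$ combined with a total-variation bound on $t\mapsto\gamma(t,s^+)$ over the exceptional set $\{t:\mathcal{K}(\gamma(t))>\delta\}$. Moreover, to even speak of the pointwise values $\gamma(t,s^+)$ and run the continuity argument, the curve must first be regularized so that $(t,s)\mapsto\gamma(t,s)$ is jointly continuous while increasing $\mathfrak{L}_{\mathcal{K}}$ by at most $\varepsilon$ (Lemma \ref{smoothing}); this regularization is nontrivial and is exactly where the semiconvexity hypothesis in $\mathbf{(A1)}$ is used, via the estimate \eqref{estimateWrho} controlling the effect of mollification on $\mathcal{W}$. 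None of these three ingredients appears in your proposal, and without them the reduction \eqref{compactness} to a fixed compact-sublevel set $F$ is not established.

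A second, smaller gap concerns the Euler--Lagrange equation. Your minimizer is obtained under the symmetry constraint $u_1(-x_1,x_2)=-u_1(x_1,x_2)$, so local minimality only yields stationarity against \emph{symmetric} perturbations; the Proposition you invoke gives, a priori, only $P(-\Delta u+\nabla W(u))=0$, i.e.\ that $-\Delta u_1+D_1W(u)$ is even while the other components vanish. One must then observe that, by $\mathbf{(Sym)}$ and the symmetry of $u$, the same quantity is also odd in the distributional sense, hence zero. This is a short but necessary argument that your appeal to the unconstrained Proposition skips over.
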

\noindent The proof relies on Theorem \ref{heteroclinic}, applied in the following setting. Consider the metric space
\[
X:=\{{v}=z^++{w}\;:\; {w}\in L^2(\R,\R^n)\text{ s.t. }{w}_1(t)=-{w}_1(-t)\text{ for a.e. }t\in\R\},
\]
endowed with the $L^2$-distance,
\[
d_X({v}_1,{v}_2)=\|{v}_1-{v}_2\|_{L^2(\R,\R^n)}.
\]
Define also a weight function $\mathcal{K}:X\to \R^+\cup\{+\infty\}$ by
\[
\mathcal{K}({v})=
\begin{cases}
\sqrt{\mathfrak{E}_{W}({v})-d_K(a^-,a^+)}&\text{if }{v}\in H^1_{loc}(\R,\R^n),\\
+\infty&\text{otherwise.}
\end{cases}
\]
The following statement is an easy consequence of what proven so far.
\begin{lemma}\label{propX}
The weighted metric space $(X,d_X,\mathcal{K})$ enjoys the following elementary properties:
\begin{itemize}
\item
$(X,d_X)$ is complete and is a length space;
\item
$\mathcal{K}({v})$ vanishes only when ${v}$ is a symmetric heteroclinic connection, i.e.
\[
\mathcal{K}({v})=0\Longleftrightarrow {v}\in\Sigma:=\{z^-,z^+\};
\]
\item
$\mathcal{K}$ is l.s.c. on $(X,d_X)$.
\end{itemize}
\end{lemma}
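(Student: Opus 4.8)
The plan is to verify the three bullet points of Lemma \ref{propX} one at a time, drawing on the structure already set up in the paper.

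For the first point, that $(X,d_X)$ is complete and a length space: completeness follows because $X$ is an affine subspace of $L^2(\R,\R^n)$, namely $z^+$ plus the closed linear subspace $\{w\in L^2 : w_1(t)=-w_1(-t)\text{ a.e.}\}$ (the antisymmetry constraint is preserved under $L^2$-limits, hence the subspace is closed), and a closed subset of a complete space is complete. The length space property is immediate: for any $v_0,v_1\in X$ the segment $v_s=(1-s)v_0+sv_1$ stays in $X$ (convexity of the affine subspace), is parametrized proportionally to arclength for $d_X$, and realizes $d_X(v_0,v_1)$, so $\gd=d_X$.

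For the second point, the characterization of the zero set of $\mathcal K$: by definition $\mathcal K(v)=0$ iff $v\in H^1_{loc}$ and $\mathfrak E_W(v)=d_K(a^-,a^+)$. Since $X\subset\mathcal S_{sym}(a^-,a^+)$ (any $v=z^++w$ with $w\in L^2$ and $w$ antisymmetric in the first coordinate still has $v(\pm\infty)=a^\pm$ and $v_1$ odd), and since by the preceding Lemma the infimum of $\mathfrak E_W$ over $\mathcal S_{sym}(a^-,a^+)$ equals $d_K(a^-,a^+)$, a function $v\in X$ with $\mathfrak E_W(v)=d_K(a^-,a^+)$ is exactly a minimizing symmetric heteroclinic connection, i.e. an element of $\mathcal Z_{sym}(a^-,a^+)=\{z^-,z^+\}$. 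Conversely $z^\pm$ both lie in $X$: we need $z^--z^+\in L^2$, which follows from the funnel/exponential decay of heteroclinic connections to $a^\pm$ (Lemma \ref{projection} and Remark \ref{explicit} give $z^\pm-a^\pm\in L^2$ near $\pm\infty$, hence $z^--z^+\in L^2$ on the whole line), and $z^-_1+z^+_1$ must be checked to be odd; but one should note that if $z^+$ is the chosen symmetric reference connection then $z^--z^+$ has odd first component automatically because both $z^\pm_1$ are odd. Thus $\Sigma=\{z^-,z^+\}$.

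For the third point, lower semicontinuity of $\mathcal K$ on $(X,d_X)$: it suffices to show $v\mapsto\mathfrak E_W(v)$ is l.s.c. for strong $L^2(\R,\R^n)$ convergence on $X$, since $\mathcal K=\sqrt{(\mathfrak E_W-d_K(a^-,a^+))^+}$ and $t\mapsto\sqrt{t^+}$ is continuous nondecreasing. Given $v_n\to v$ in $L^2$, extract a subsequence converging a.e.; the potential term $\int W(v)$ is handled by Fatou (after adding $c_0+c_1|v|^2$ to make the integrand nonnegative, using $\mathbf{(A3)}$ or rather the semiconvexity lower bound on $W$ — here $W\ge0$ already, so Fatou applies directly), and the Dirichlet term $\frac12\int|\dot v|^2$ is l.s.c. under $L^2$, in fact under distributional convergence, by the standard convexity/weak-lower-semicontinuity argument. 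The only mild subtlety is that convergence in $d_X$ is global $L^2(\R)$ convergence, which is stronger than $L^2_{loc}$, so both arguments apply verbatim; if $\mathfrak E_W(v_n)$ is unbounded there is nothing to prove, and otherwise the $v_n$ are bounded in $H^1_{loc}$ so the limit is in $H^1_{loc}$ and the bounds pass.

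The main obstacle — really the only point requiring care — is the membership $z^\pm\in X$, i.e. $z^--z^+\in L^2(\R,\R^n)$ with the correct antisymmetry; this is where the exponential (or algebraic, via the funnels of Lemma \ref{projection} and Remark \ref{explicit}) decay of heteroclinic connections toward the wells $a^\pm$ is genuinely used, together with the symmetry $\mathbf{(Sym)}$ that forces $z^\pm_1$ to be odd. Everything else is a direct citation of convexity, Fatou, and the structure of affine subspaces of $L^2$.
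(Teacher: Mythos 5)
Your proof is correct and supplies the details that the paper itself omits (the paper only remarks that Lemma \ref{propX} ``is an easy consequence of what proven so far''), following exactly the intended route: closed affine subspace of $L^2$ for completeness and the length property, the identification $\inf_{\mathcal S_{sym}}\mathfrak E_W=d_K(a^-,a^+)$ for the zero set, and convexity plus Fatou for lower semicontinuity. One imprecision to fix: the inclusion $X\subset\mathcal S_{sym}(a^-,a^+)$ is false as stated (a generic $w\in L^2$ need not vanish at infinity); what you need, and what does hold, is that any $v\in X$ with $\mathfrak E_W(v)<+\infty$ belongs to $\mathcal S_{sym}(a^-,a^+)$ --- finiteness of $\int W(v)$ together with \textbf{(A3)} and the finite total $d_K$-variation forces $v$ to have limits in $\{W=0\}$ at $\pm\infty$, and the constraint $v-z^+\in L^2$ then pins these limits to $a^\pm$ --- which suffices since $\mathcal K(v)=0$ entails finite energy.
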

In this infinite dimensional setting, the difficulty in applying Theorem \ref{heteroclinic} is the compactness assumption $\mathbf{(H3)}$, the only one which does not follow from the preceding lemma. Note that, in order to check this assumption, it is enough to find a subset $F\subset X$ such that $F\cap\{\mathcal K\leq \ell\}$ is compact for every $\ell\in\R$ and 
\begin{equation}
\label{compactness}
\forall\gamma\in AC_{ploc}([0,1],X),\,\forall\ve >0,\, \exists\tilde{\gamma}\in AC_{ploc}([0,1],F),\, \mathfrak{L}_{\mathcal{K}}(\tilde{\gamma})\leq \mathfrak{L}_{\mathcal{K}}(\gamma)+\ve.
\end{equation}
With every curve $\gamma\in AC_{ploc}([0,1],X)$ one can associate a function $\gamma(t,s)$ with two arguments such that for all $t\in[0,1]$, $\gamma(t)=(s\in\R\mapsto \gamma(t,s)) \in X$. In this context, the $\mathfrak{L}_{\mathcal{K}}$-length of $\gamma$ writes 
\[
\mathfrak{L}_{\mathcal{K}}(\gamma)=\int_0^1 \mathcal{K}(\gamma(t))|\dot{\gamma}|(t)\d t=\int_0^1 \mathcal{K}(\gamma(t,\cdot))\|\partial_t\gamma(t,\cdot)\|_{L^2(\R)}\d t.
\]
By Lemma \ref{projection}, replacing $\gamma(t)$ by $P[\gamma(t)]$ for all $t\in[0,1]$ decreases the cost $\mathcal{K}(\gamma(t))$ provided $s_0$ is such that $|\gamma(t,s_0)|<\varepsilon_0$ for every $t$. It is also clear that the metric derivative $|\dot{\gamma}|(t)$ decreases. We will arrive in several steps to prove the existence of a suitable $s_0$ in order to perform this projection. Before, we need the following lemma:
\begin{lemma}\label{stability}
For every $\varepsilon>0$, there exists $\delta>0$ such that for all ${v}\in X$ with $\mathcal{K}({v})<\delta$, one has $\|{v}-z^+\|_{L^\infty(\R)}<\varepsilon$ or $\|{v}-z^-\|_{L^\infty(\R)}<\varepsilon$.
\end{lemma}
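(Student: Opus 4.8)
The statement is a compactness-type dichotomy: functions with small $\mathcal{K}$-value must be $L^\infty$-close to one of the two minimal heteroclinic connections $z^\pm$. The plan is to argue by contradiction, using the compactness already established for sublevels of $\mathcal{K}$ together with the funnel projection of Lemma \ref{projection} to upgrade $L^2$ convergence to $L^\infty$ convergence. Suppose the lemma fails: then there exist $\varepsilon>0$ and a sequence $v_n\in X$ with $\mathcal{K}(v_n)\to 0$ but $\|v_n-z^+\|_{L^\infty(\R)}\geq\varepsilon$ and $\|v_n-z^-\|_{L^\infty(\R)}\geq\varepsilon$ for all $n$. By definition of $\mathcal{K}$ this means $\mathfrak{E}_{W}(v_n)\to d_K(a^-,a^+)$, so $(v_n)$ is a minimizing sequence for $\mathfrak{E}_{W}$ over $\mathcal{S}_{sym}(a^-,a^+)$, and in particular $\mathfrak{E}_{W}(v_n)$ is bounded; hence $\|\dot v_n\|_{L^2(\R)}$ is bounded and, via assumption $\mathbf{(A3)}$ (the non-integrability lower bound on $\sqrt{2W}$) as in the proof of Proposition \ref{Klength}, the curves $v_n$ stay in a fixed ball: $v_n(t)$ remains in some bounded set of $\R^n$ uniformly in $t$ and $n$.

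The heart of the argument is to show that, along a subsequence, $v_n$ converges in $L^2(\R,\R^n)$ (hence in $X$) to some $v_\infty\in X$, and that $v_\infty\in\mathcal{Z}_{sym}(a^-,a^+)=\{z^-,z^+\}$ by lower semicontinuity, giving the contradiction. The difficulty is the lack of compactness of $\R$ as the time interval: a priori the mass of $v_n$ could escape to $\pm\infty$ in $t$. To rule this out I would invoke the funnel lemma \ref{projection}. First, since $\mathfrak{E}_{W}(v_n)$ is bounded and $v_n(\pm\infty)=a^\pm$, for each $n$ there is $s_n^+$ with $|v_n(s_n^+)-a^+|<\varepsilon_0$ (the constant from Lemma \ref{projection}), and similarly $s_n^-$ for $a^-$; moreover $s_n^+$ can be chosen so that the portion of $v_n$ beyond $s_n^+$ already lies in the funnel $\{|v-a^+|\leq E(\cdot-s_n^+)\}$ after a single projection, which costs no energy. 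The key point is a uniform tail estimate: because the minimal energy is $d_K(a^-,a^+)$ and $\mathfrak{E}_{W}(v_n)\to d_K(a^-,a^+)$, the energy of $v_n$ on $\{|v_n-a^\pm|>\varepsilon_0\}$ is bounded away from something forcing the ``transition region'' to have uniformly bounded length — one uses that crossing from $\overline B(a^-,\varepsilon_0)$ to $\overline B(a^+,\varepsilon_0)$ costs at least $d_K(a^-,a^+)-o(1)$, and crossing it ``twice'' would cost more, contradicting minimality. Hence, after a translation in $t$ (allowed since everything is translation-invariant, and in the symmetric class the odd symmetry of the first component even pins the translation), the transition regions are confined to a fixed compact interval $[-M,M]$, uniformly in $n$.

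On $[-M,M]$ the bound on $\|\dot v_n\|_{L^2}$ plus the uniform $L^\infty$ bound gives, by Ascoli (the curves $t\mapsto v_n(t)$ are equi-Hölder-$1/2$ in $\R^n$), a subsequence converging locally uniformly on $[-M,M]$; outside $[-M,M]$ the funnel bounds $|v_n(t)-a^\pm|\leq E(|t|-M)$ with $E\in L^2$ control the $L^2$ tails uniformly, so the convergence is in fact in $L^2(\R,\R^n)$. Thus $v_n\to v_\infty$ in $X$; by lower semicontinuity of $\mathcal{K}$ (Lemma \ref{propX}), $\mathcal{K}(v_\infty)\leq\liminf\mathcal{K}(v_n)=0$, so $\mathcal{K}(v_\infty)=0$, i.e. $v_\infty\in\{z^-,z^+\}$. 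But $L^2$ convergence together with the uniform funnel bounds on the tails upgrades to $L^\infty$ convergence (on $[-M,M]$ it is uniform convergence; on the tails both $v_n$ and $z^\pm$ lie in shrinking funnels), so $\|v_n-v_\infty\|_{L^\infty(\R)}\to 0$, contradicting $\|v_n-z^\pm\|_{L^\infty}\geq\varepsilon$. The main obstacle is precisely the uniform confinement of the transition region — showing that $\mathcal{K}(v_n)\to 0$ (not just boundedness) prevents the profile from developing a second bump or from drifting to infinity; this is where the strict character of minimality, encoded through $d_K$ and the funnel lemma, is essential.
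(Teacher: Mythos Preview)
Your approach is essentially the same as the paper's: argue by contradiction with a sequence $v_n$ having $\mathcal K(v_n)\to 0$, use the funnel projection of Lemma~\ref{projection} to obtain compactness in $L^2$, identify the limit as $z^\pm$ by lower semicontinuity, and upgrade to $L^\infty$. However, two steps in your sketch are genuine gaps rather than routine details.

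First, the uniform confinement of the transition region (your $M$ independent of $n$) is exactly where the paper works hardest, and your ``crossing twice'' heuristic does not suffice as stated. The paper proves two separate claims: one (using the symmetry $v_1(-t)=-v_1(t)$, which by the way makes your mention of ``translation in $t$'' out of place) shows $|v(-t)-a^+|\geq\varepsilon_1$ for $t>0$ whenever $\mathcal K(v)$ is small, and another (using $\mathbf{(STI)}$, which you never invoke) shows $v$ stays away from any other well in $\Sigma\setminus\{a^-,a^+\}$. Together these force the existence of $s_n^+\in[S,2S]$ with $|v_n(s_n^+)-a^+|<\varepsilon_0$ for a \emph{fixed} $S$. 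You correctly flag this as the main obstacle, but you do not actually carry it out, and without $\mathbf{(STI)}$ the curve could linger near a third well without crossing twice.

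Second, and more seriously, you conflate $v_n$ with its projection $P[v_n]$ in the tail argument. The funnel bound $|\cdot-a^\pm|\leq E$ holds for $P[v_n]$, not for $v_n$; so your assertion ``on the tails both $v_n$ and $z^\pm$ lie in shrinking funnels'' is unjustified. Showing $P[v_n]\to z^\pm$ in $L^\infty$ only yields $v_n\to z^\pm$ on the compact set where $v_n=P[v_n]$. For the tails of $v_n$ itself, the paper uses a separate argument: from $\mathfrak E_W(v_n)\to d_K(a^-,a^+)$ and $\mathfrak E_W(v_n,[-S,S])\to\mathfrak E_W(z^\pm,[-S,S])$ one extracts $\limsup_n\mathfrak E_W(v_n,\R\setminus[-S,S])\to 0$ as $S\to\infty$, hence the $d_K$-oscillation of $v_n$ on the tails is small, which finally gives $|v_n(\pm s)-a^\pm|\leq\varepsilon_0/2$ for $s>S$ and $n$ large. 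Without this step your contradiction does not close.
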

\begin{proof}
This is the same as proving that for every sequence $({v}_n)_n\subset X$ such that $\mathcal{K}({v}_n)\to 0$, $\|{v}_n-z^\pm\|_{L^\infty}$ tends to $0$ as $n\to\infty$. Without loss of generality, one can assume that
\[
\|{v}_n-z^\pm\|_{L^\infty}\underset{n\to\infty}{\longrightarrow}\limsup\limits_{n\to\infty}\|{v}_n-z^\pm\|_{L^\infty},
\]
so that we are free to extract a subsequence whenever needed. We need to prove the two claims below:
\begin{claim}\label{centered}
There exist $\delta_1,\varepsilon_1>0$ depending on $W$, $a^-$ and $a^+$ only, such that for all ${v}\in X$ with $\mathcal{K}({v})<\delta_1$ and $t>0$, one has
\[
 |{v}(t)-a^-|=|{v}(-t)-a^+|\geq\varepsilon_1.
\]
\end{claim}
 To prove this, take $\delta_1$ and $\tilde\varepsilon_1$ such that $\delta_1^2+4\tilde\varepsilon_1<2d_K(a^-,a^+)$, then $\rho>0$ such that ${v}\in X$ and $\mathcal{K}({v})<\delta_1$ imply $v(\R)\subset B(a^+,\rho)$, and finally $\varepsilon_1$ such that  for every $x,y\in B(a^+,\rho)$ with $|x-y|<\varepsilon_1$, one has $d_K(x,y)< \tilde\varepsilon_1$ (this is possible thanks to the equivalence between the distance $d_K$ and the Euclidean distance on compact sets).
 
 Now, assume by contradiction that there exist ${v}\in X$ with $\mathcal{K}({v})<\delta_1$ and $t_0>0$ with $|{v}(-t_0)-a^+|<\varepsilon_1$, and thus $d_K({v}(-t_0),a^+)=d_K({v}(t_0),a^-)<\tilde\varepsilon_1$. Hence
\begin{eqnarray*}
\delta_1^2>\mathcal{K}({v})^2&=&\mathfrak{E}_{W}({v})-d_K(a^-,a^+)\geq d_K(a^-,{v}(-t_0))+[d_K({v}(-t_0),{v}(t_0))-d_K(a^-,a^+)]+d_K({v}(t_0),a^+)\\
&\geq&d_K(a^-,a^+)-\tilde\varepsilon_1+[d_K(a^+,a^-)-2\tilde\varepsilon_1-d_K(a^-,a^+)]+d_K(a^-,a^+)-\tilde\varepsilon_1\\
&=&2d_K(a^-,a^+)-4\tilde\varepsilon_1.
\end{eqnarray*}
With our choice of $\delta$ and $\tilde\varepsilon_1$ this is a contradiction.

\begin{claim}\label{claimOtherZero}
There exist $\delta_2,\varepsilon_2>0$ depending on $W$, $a^-$ and $a^+$ only, such that for all ${v}\in X$ with $\mathcal{K}({v})<\delta_2$, $a\in\Sigma\setminus\{a^-,a^+\}$ and $t\in\R$, one has 
$$|{v}(t)-a|\geq\varepsilon_2.$$
\end{claim}
Set $c:=\inf_{a\in\Sigma\setminus\{a^-,a^+\}}  d_K(a^-,a)+d_K(a,a^+)-d_K(a^-,a^+)$. We have $c>0$ by our assumption $(\mathbf{STI})$. Next, take $\delta_2 $ and $\tilde\varepsilon_2$ such that $\delta_2^2+2\tilde\varepsilon_2<c$, then $\rho>0$ such that ${v}\in X$ and $\mathcal{K}({v})<\delta_2$ imply $v(\R)\subset B(a^+,\rho)$, and finally $\varepsilon_2>0$ such that for every $x,y\in B(a^+,\rho)$ with $|x-y|<\varepsilon_2$, one has $d_K(x,y)< \tilde\varepsilon_2$.

 Now, assume by contradiction that there exist ${v}\in X$ with $\mathcal{K}({v})<\delta_2$, and $a_0\in\Sigma\setminus\{a^-,a^+\}$, $t_0\in\R$ with $|{v}(t_0)-a_0|<\varepsilon_2$ and hence $d_K({v}(t_0),a_0)<\tilde\varepsilon_2$.
Thus
\[
\delta_2^2>\mathcal{K}({v})^2\geq d_K(a^-,{v}(t_0))+d_K({v}(t_0),a^+)-d_K(a^-,a^+)\geq d_K(a^-,a_0)+d_K(a_0,a^+)-d_K(a^-,a^+)-2\tilde\varepsilon_2\geq c-2\tilde\varepsilon_2.
\]
Again, with our choice of $\delta_2$ and $\tilde\varepsilon_2$, this is a contradiction.
\medskip

Let us come back to the proof of Lemma \ref{stability}. Up to extracting a subsequence if necessary, one can assume that for all $n$, $\mathcal{K}({v}_n)<\inf\{\delta_1,\delta_2\}$. By the two preceding claims, if $\eta>0$ but $\eta<\varepsilon_1,\varepsilon_2,\varepsilon_0$ (where $\varepsilon_0>0$ is provided by Lemma \ref{projection}), then, for each $n$,
\[
\forall t>0,\, |{v}_n(t)-a^-|=|{v}_n(-t)-a^+|\geq \eta\quad \text{and}\quad \forall a\in\Sigma\setminus\{a^-,a^+\},\, \forall t\in\R,\, |{v}_n(t)-a|\geq\eta.
\]
Now, fix $S>0$ large enough for the following estimate to be satisfied
\[
\inf\big\{W(z)\;:\; |z|\leq C_0,\, \forall a\in\Sigma,\, |z-a|\geq\eta\big\}>\frac{C_0}{S},\quad\text{where}\quad C_0:=\sup_n\; (\|{v}_n\|_{L^\infty}+\mathfrak{E}_{W}({v}_n)).
\]
Since $C_0\geq \mathfrak{E}_{W}({v}_n,[S,2S])\geq \int_S^{2S}W({v}_n)$, this implies that for all $n\geq 0$, there exists $s^+_n\in [S,2S]$ such that $|{v}_n(s^+_n)-a^+|<\eta<\varepsilon_0$. By an application of Lemma \ref{projection}, one has $\mathfrak{E}_{W}(\overline{{v}_n})\leq \mathfrak{E}_{W}({v}_n)$, where $\overline{{v}_n}$ is defined by
\[
\overline{{v}_n}(s)=
\begin{cases}
P[{v}_n](s)&\text{if }s\geq 0,\\
\mathcal R_n(P[{v}_n](-s))&\text{if }s<0,
\end{cases}
\]
where $P[{v}_n]$ is defined as in Lemma \ref{projection} with $s_0=s^+_n$. Thus $(\overline{{v}_n})_n$ is still a minimizing sequence for $\mathfrak{E}_{W}$, i.e. $\mathcal{K}(\overline{{v}_n})\to 0$. Moreover, each $\overline{{v}_n}$ belongs to the following $L^2$-compact and convex subset of $\mathcal{S}(a^-,a^+)$ ($L^2$ compactness comes from the compact injection of $H^1$ into $L^2$ which is true on bounded domains, and from the behavior at infinity, which allows to handle the values outside bounded domains):
\[
\mathcal{C}:=\{{v}\in\mathcal{S}_{sym}(a^-,a^+)\;:\; \mathfrak{E}_{W}(v)\leq C_0 \mbox{ and }\forall t\geq 2S,\, |{v}(t)-a^+|=|{v}(-t)-a^-|\leq E(t)\}.
\]
Up to extraction, one can assume that $(\overline{{v}_n})_n$ converges in $L^2$, and hence, because of the $H^1$ bound, also uniformly on $\R$. In particular, the pointwise limit $\overline{{v}}$ of the sequence $(\overline{{v}_n})_n$ belongs to the set $\mathcal{C}\subset\mathcal{S}_{sym}(a^-,a^+)$. Moreover, as $\mathcal{K}$ is l.s.c. w.r.t. $L^2_{loc}$-convergence, one has 
$\mathcal{K}(\overline{{v}})\leq \liminf \mathcal{K}(\overline{{v}_n})=0$, i.e. $\overline{{v}}$ minimizes $\mathfrak{E}_{W}$. Thus
\begin{equation}\label{dingdong}
\overline{{v}}=z^+\quad\text{or}\quad \overline{{v}}=z^-.
\end{equation}
For the sake of simplicity, let say that $\overline{{v}}=z^+$ so that $z^+$ is the uniform limit of ${v}_n$ ($=\overline{{v}_n}$) on $[-S,S]$. Since this is valid for arbitrary large value of $S$ and by uniqueness of the limit, we have actually proved that ${v}_n$ converges  \emph{locally} uniformly to $z^+$ on $\R$. It remains to prove that the convergence is uniform on the whole space. We use the estimate
\[
\mathfrak{E}_{W}({v}_n)\geq \mathfrak{E}_{W}({v}_n,[-S,S])+\mathfrak{E}_{W}({v}_n,\R\setminus [-S,S]).
\]
Since $\mathcal{K}({v}_n)\to 0$, we get in the limit as $n\to\infty$,
\[
d_K(a^-,a^+)\geq \mathfrak{E}_{W}(z^+,[-S,S])+\limsup\limits_{n\to\infty}\mathfrak{E}_{W}({v}_n,\R\setminus [-S,S]).
\]
In particular, we have that
\[
\lim\limits_{S\to\infty}\limsup\limits_{n\to\infty}\mathfrak{E}_{W}({v}_n,\R\setminus [-S,S])=0.
\]

Now, fix a value $\ve_0$ for which we would like to prove $|v_n-z^+|\leq \ve_0$ on $\R$, and choose $S$ large enough, so that $|z^+(\pm s)-a^\pm|\leq \ve_0/2$ for $s>S$. Then fix $\ve_1$ such that $d_K(x,a^\pm)\leq \ve_1$ implies $|x-a^\pm|\leq \ve_0/2$. By possibly enlarging the value of $S$, also suppose that $\limsup_{n\to\infty}\mathfrak{E}_{W}({v}_n,\R\setminus [-S,S])\leq \ve_1/2$. This means that, for $n$ large enough, we have  $\mathfrak{E}_{W}({v}_n,\R\setminus [-S,S])\leq \ve_1$.
Since the total variation (for the metric $d_K$) of ${v}_n$ out of $[-S,S]$ is controlled by its energy $\mathfrak{E}_{W}({v}_n,\R\setminus [-S,S])$, we deduce $d_K(v_n(\pm s),a^\pm)\leq \ve_1$ for $s>S$. This implies $|v_n(\pm s)-a^\pm|\leq \ve_0/2$ and hence $|v_n(\pm s)-z^+(\pm s)|\leq \ve_0$. It is then enough to choose $n$ large enough to guarantee the same inequality on $[-S,S]$ (using local uniform convergence), and we have proven the desired result. 
\end{proof}
As a consequence of the preceding Lemma, one can prove the following:
\begin{lemma}\label{s0}
Let $\gamma\in AC_{ploc}([0,1],X)$ be a curve parametrized by $\gamma(t)=(s\in\R\mapsto \gamma(t,s))\in X$. Assume that $\gamma(0)=z^-$, $\gamma(1)=z^+$, $\mathfrak{L}_{\mathcal{K}}(\gamma)=\int \mathcal{K}(\gamma(t))|\dot{\gamma}|(t)\d t<\infty$ and that $(t,s)\mapsto\gamma(t,s)$ is continuous. 

Then, for all $\varepsilon_0>0$, there exist $s^-,s^+\in \R$ such that $s^-< s^+$ and for all $t\in [0,1]$ we have $|\gamma(t,s^\pm)-a^\pm|<\varepsilon_0$.
\end{lemma}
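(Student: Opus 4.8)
The plan is to localise the problem using Lemma~\ref{stability}. Fix $\varepsilon_0>0$; first I would choose $\delta>0$ from Lemma~\ref{stability} applied with $\varepsilon_0/4$ in place of $\varepsilon$, and an $s_*>0$ large enough that $|z^\pm(s)-a^+|<\varepsilon_0/4$ for $s\ge s_*$ and $|z^\pm(s)-a^-|<\varepsilon_0/4$ for $s\le-s_*$, which is possible since $z^\pm$ are heteroclinics. Because $\mathcal{K}$ is l.s.c.\ on $X$ and $\gamma$ is continuous, $\mathcal{K}\circ\gamma$ is l.s.c.\ on $[0,1]$, so $B:=\{t\in[0,1]:\mathcal{K}(\gamma(t))>\delta\}$ is open, hence a countable union of intervals $(a_i,b_i)$, and $0,1\notin B$ because $\mathcal{K}(\gamma(0))=\mathcal{K}(z^-)=0=\mathcal{K}(\gamma(1))$. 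On $[0,1]\setminus B$ the conclusion is for free: Lemma~\ref{stability} says $\gamma(t)$ is within $\varepsilon_0/4$ in $L^\infty$ of $z^+$ or of $z^-$, so $|\gamma(t,s)-a^\pm|<\varepsilon_0/2$ as soon as $\pm s\ge s_*$ — in particular this holds at all the endpoints $\gamma(a_i),\gamma(b_i)$, which lie in the complement of $B$.

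It then remains to control $\gamma(t,s)$ for $t\in B$ and $|s|$ large. The idea is that a curve with finite $\mathfrak{L}_{\mathcal{K}}$-length which stays inside $\{\mathcal{K}\circ\gamma>\delta\}$ has finite $L^2$-length there: by Proposition~\ref{length_lip}, $L_{d_X}(\gamma|_B)\le\delta^{-1}\mathfrak{L}_{\mathcal{K}}(\gamma)=:V<\infty$. So I would reparametrise each piece $\gamma|_{(a_i,b_i)}$ by $L^2$-arc length; this changes neither the image of $\gamma$ (and only the image enters the conclusion) nor the joint continuity of $(t,s)\mapsto\gamma(t,s)$, and produces a curve with $\int_B|\dot{\gamma}|(t)^2\,\d t=L_{d_X}(\gamma|_B)\le V$ and $|B|\le V$. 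Using Lemma~\ref{l2curve} and Fubini, for a.e.\ $s$ the ``vertical slice'' $t\mapsto\gamma(t,s)$ is absolutely continuous on each component of $B$ with $|\partial_t\gamma(t,s)|=|\dot{\gamma}(t)(s)|$ and $\int_\R\!\int_B|\partial_t\gamma(t,s)|^2\,\d t\,\d s=\int_B|\dot{\gamma}|^2\,\d t\le V$. Setting $\ell(s):=\int_B|\partial_t\gamma(t,s)|\,\d t$, Cauchy--Schwarz together with $|B|\le V$ give $\int_\R\ell(s)^2\,\d s\le V^2<\infty$, hence $\liminf_{s\to+\infty}\ell(s)=\liminf_{s\to-\infty}\ell(s)=0$. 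The elementary ``slice length'' bound $|\gamma(t,s)-\gamma(a_i,s)|\le\ell(s)$ for $t$ in the $i$-th component of $B$ then yields $|\gamma(t,s)-a^\pm|<\ell(s)+\varepsilon_0/2$ for $\pm s\ge s_*$; choosing $s^+\ge s_*$ and $s^-\le-s_*$ with $s^-<s^+$ and $\ell(s^\pm)<\varepsilon_0/2$ (possible by the $\liminf$, avoiding the relevant $s$-null set) finishes the argument.

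The hard part is precisely the passage from $L^2$-in-$s$ information — which is essentially all the energy/length bound directly provides, and which says nothing at a single height $s$ — to a pointwise statement uniform in $t$: this is what forces the arc-length reparametrisation on $B$ (to upgrade the $\mathcal{K}$-length control into a genuine $L^2$-length, hence $L^\infty$-in-$t$ control of the slices), with Lemma~\ref{stability} disposing of the complementary region and the heteroclinic decay of $z^\pm$ making the endpoint values $\gamma(a_i,s)$ close to $a^\pm$ for large $|s|$. Two routine but slightly delicate measure-theoretic points — that the a.e.-defined absolutely continuous slices agree with the given jointly continuous representative of $\gamma$, and that the reparametrisation can be carried out componentwise while keeping everything measurable — would need to be checked but pose no real difficulty.
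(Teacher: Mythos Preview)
Your argument is correct and follows the same global strategy as the paper: split $[0,1]$ into the closed set $\{\mathcal{K}\circ\gamma\le\delta\}$, handled by Lemma~\ref{stability} together with the heteroclinic decay of $z^\pm$, and the open set $B=\{\mathcal{K}\circ\gamma>\delta\}$, on which one controls the slice variation $\int_B|\partial_t\gamma(t,s)|\,\d t$ by an averaging-in-$s$ argument and then selects a good $s^\pm$.

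The execution on $B$ differs. You first reparametrise each component of $B$ by $L^2$-arc length, so that $|B|=\int_B|\dot\gamma|^2=L_{d_X}(\gamma|_B)\le\delta^{-1}\mathfrak{L}_{\mathcal{K}}(\gamma)$, and then Cauchy--Schwarz in $t$ gives $\ell\in L^2(\R)$, from which a good $s^\pm$ is extracted. The paper avoids any reparametrisation by working directly with the \emph{weighted} slice integral: from $\|\partial_t\gamma(t,\cdot)\|_{L^1([S,2S])}\le S^{1/2}\|\partial_t\gamma(t,\cdot)\|_{L^2(\R)}$ and Fubini one gets
\[
\frac{1}{S}\int_S^{2S}\!\int_0^1\mathcal{K}(\gamma(t))\,|\partial_t\gamma(t,s)|\,\d t\,\d s\le S^{-1/2}\mathfrak{L}_{\mathcal{K}}(\gamma),
\]
and a mean-value choice of $s^+\in[S,2S]$ (for $S$ large) yields $\int_B|\partial_t\gamma(t,s^+)|\,\d t<\varepsilon_0/3$ immediately. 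This buys a shorter proof with no component-by-component bookkeeping; your route trades that trick for the more hands-on arc-length step, at the cost of the gluing and measurability checks you flag at the end. Both arguments ultimately rely on the same slice-absolute-continuity fact (that for a good $s$ the continuous map $t\mapsto\gamma(t,s)$ satisfies $\gamma(t_2,s)-\gamma(t_1,s)=\int_{t_1}^{t_2}\partial_t\gamma(\tau,s)\,\d\tau$), which the paper also uses without comment.
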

\begin{proof}
We prove the existence of $s^+$ ; the existence of $s^-$ is similar. First observe that for all $S\geq 0$, by the Cauchy-Schwarz inequality, one has the estimate
\[
\|\partial_t\gamma(t,\cdot)\|_{L^2(\R)}\geq\|\partial_t\gamma(t,\cdot)\|_{L^2([S,2S])}\geq S^{-1/2}\|\partial_t\gamma(t,\cdot)\|_{L^1([S,2S])}.
\]
This allows to get the following estimate, by Fubini's theorem:
\begin{equation}\label{meanLK1}
\frac 1S\int_{S}^{2S}\int_0^1 \mathcal{K}(\gamma(t))|\partial_t\gamma(t,s)|\d t\d s\leq S^{-1/2}\mathfrak{L}_{\mathcal{K}}(\gamma).\end{equation}
Now, applying Lemma \ref{stability} to $\varepsilon=\varepsilon_0/3$, one gets a constant $\delta>0$ such that $\mathcal{K}({v})\leq\delta$ implies $\|{v}-z^\pm\|_{L^\infty(\R)}<\varepsilon_0/3$. Let us choose $S>0$ large enough so that
\[
S^{-1/2}\mathfrak{L}_{\mathcal{K}}(\gamma)<\frac{\delta\varepsilon_0}{3}\quad \text{and}\quad\forall s>S,\, |z^\pm (s)-a^+|<\frac{\varepsilon_0}{3}.
\]
By \eqref{meanLK1}, there exists $s^+\in [S,2S]$ such that 
\begin{equation}
\label{good_s1}
\int_0^1\mathcal{K}(\gamma(t))|\partial_t\gamma(t,s^+)|\d t< \frac{\delta\varepsilon_0}{3}.
\end{equation}
Let $I\subset [0,1]$ be the set of points $t\in [0,1]$ such that $\mathcal{K}(\gamma(t))>\delta$. Note that $I\subset (0,1)$ since $\mathcal{K}(\gamma(0))=\mathcal{K}(\gamma(1))=0$, and that $I$ is open as $\mathcal{K}\circ\gamma$ is l.s.c. If $t\in [0,1]\setminus I$, one has $\|\gamma(t)-z^\pm\|_{L^\infty(\R)}<\varepsilon_0/3$ and then
\[
|\gamma(t,s^+)-a^+|\leq |\gamma(t,s^+)-z^\pm(s^+)|+|z^\pm(s^+)-a^+|< \frac{2\varepsilon_0}{3}.
\]
For the points in $I$, note that, by \eqref{good_s1}, one can estimate the total variation of $t\mapsto\gamma(t,s^+)$ on $I$ as follows
\[
\int_I|\partial_t\gamma(t,s^+)|\d t\leq\frac{\varepsilon_0}{3}.
\]
Since $t\mapsto \gamma(t,s^+)$ is continuous and $|\gamma(t,s^+)-a^+|< \frac{2\varepsilon_0}{3}$ on the boundary of $I$, we have proven $|\gamma(t,s^+)-a^+|<\varepsilon_0$ which was the claim.
\end{proof}

In order to be able to use the previous Lemma, we need to establish the following useful regularization property.
\begin{lemma}\label{smoothing}
Given a curve $\gamma\in AC_{ploc}([0,1],X)$ with $\gamma(0)=z^-$, $\gamma(1)=z^+$, $\mathfrak{L}_{\mathcal{K}}(\gamma)=\int \mathcal{K}(\gamma(t))|\dot{\gamma}|(t)\d t<\infty$ and $\ve>0$, there exists a curve $\tilde\gamma\in AC_{ploc}([0,1],X)$ parametrized by $\tilde\gamma(t)=(s\in\R\mapsto\tilde \gamma(t,s))\in X$ such that $\tilde\gamma(\cdot,\cdot)$ is continuous on $\R^2$, $\tilde\gamma(0)=z^-$, $\tilde\gamma(1)=z^+$ and $\mathfrak{L}_{\mathcal{K}}(\tilde\gamma)<\mathfrak{L}_{\mathcal{K}}(\gamma)+\ve$.
\end{lemma}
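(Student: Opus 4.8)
The plan is to split $[0,1]$ into two short ``tails'' $[0,\eta]$, $[1-\eta,1]$ carrying an arbitrarily small part of the $\mathfrak{L}_{\mathcal{K}}$-length and a ``central'' part $[\eta,1-\eta]$ on which the curve is better behaved, and to regularise each piece before glueing them back. First, since $\mathcal{K}$ vanishes on $X$ only at $z^\pm$, we may remove from $\gamma$ every maximal subarc on which it is constant (reparametrising affinely, which leaves $\mathfrak{L}_{\mathcal{K}}$ unchanged), so that $\gamma(t)\notin\{z^-,z^+\}$ for $t\in(0,1)$; by lower semicontinuity of $\mathcal{K}\circ\gamma$ this forces $c_\eta:=\inf_{[\eta,1-\eta]}\mathcal{K}\circ\gamma>0$ for every $\eta\in(0,\tfrac12)$. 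By Proposition \ref{metric} we then get $c_\eta\,L_{d_X}(\gamma_{|[\eta,1-\eta]})\le\mathfrak{L}_{\mathcal{K}}(\gamma)<\infty$, so $\gamma$ is $d_X$-rectifiable on $[\eta,1-\eta]$ and, after a $d_X$-arc-length reparametrisation, may be assumed $\Lambda_\eta$-Lipschitz there; moreover, since $t\mapsto\mathcal{K}(\gamma(t))|\dot\gamma|(t)$ is integrable we may choose $\eta$ so that also $\mathcal{K}(\gamma(\eta)),\mathcal{K}(\gamma(1-\eta))<\infty$ (hence $\gamma(\eta),\gamma(1-\eta)\in H^1_{loc}(\R,\R^n)$, with continuous representatives in $s$) and $\mathfrak{L}_{\mathcal{K}}(\gamma_{|[0,\eta]})+\mathfrak{L}_{\mathcal{K}}(\gamma_{|[1-\eta,1]})<\ve/2$.

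On the central part, fix an even mollifier $\phi$ and a small $\rho>0$ and set $\tilde\gamma(t):=\gamma(t)*_s\phi_\rho$, the convolution being taken in the variable $s$ at fixed $t$. Convolution in $s$ by an even kernel commutes with the reflection $s\mapsto-s$ on the first component, hence maps $X$ into itself, so $\tilde\gamma(t)\in X$; since $t\mapsto\gamma(t)$ is $d_X$-continuous and $\phi_\rho\in C^\infty_c(\R)$, the function $(t,s)\mapsto\tilde\gamma(t,s)$ is jointly continuous. For the cost, the metric derivative does not increase, $|\dot{\tilde\gamma}|(t)=\|\partial_t\gamma(t,\cdot)*_s\phi_\rho\|_{L^2}\le|\dot\gamma|(t)$, and the Dirichlet part of $\mathfrak{E}_{W}(\tilde\gamma(t))$ does not increase either, by Jensen's inequality. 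Granting that the potential part changes by at most $\omega(\rho)\,(1+\|\partial_s\gamma(t)\|_{L^2})$ for some modulus $\omega$ with $\omega(0^+)=0$, and using $\|\partial_s\gamma(t)\|_{L^2}^2\le 2(\mathcal{K}(\gamma(t))^2+d_K(a^-,a^+))$, we obtain $\mathcal{K}(\tilde\gamma(t))\le\mathcal{K}(\gamma(t))+C\sqrt{\omega(\rho)}\,\sqrt{1+\mathcal{K}(\gamma(t))}$; integrating against $|\dot\gamma|(t)$ and applying Cauchy--Schwarz, with $\int_\eta^{1-\eta}(1+\mathcal{K}(\gamma(t)))\d t<\infty$ and $\int_\eta^{1-\eta}|\dot\gamma|(t)^2\d t=\Lambda_\eta^2(1-2\eta)<\infty$, this gives $\mathfrak{L}_{\mathcal{K}}(\tilde\gamma_{|[\eta,1-\eta]})\le\mathfrak{L}_{\mathcal{K}}(\gamma_{|[\eta,1-\eta]})+\ve/4$ as soon as $\rho$ is small enough.

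For the tails I would join $z^-$ to $v^-:=\gamma(\eta)*_s\phi_\rho$ (the left endpoint of the regularised central part) by the $d_X$-segment $r\mapsto(1-r)z^-+rv^-$, and symmetrically at the right end with $v^+:=\gamma(1-\eta)*_s\phi_\rho$. These segments stay in the convex set $X$, are jointly continuous in $(r,s)$ because $z^\pm$ and $v^\pm$ are continuous in $s$, and match the central part at the junctions. Their $\mathfrak{L}_{\mathcal{K}}$-length is $\|v^\mp-z^\mp\|_{L^2}$ times the mean of $\mathcal{K}$ along the segment; the mean is bounded since the segment lies in a fixed sublevel of $\mathfrak{E}_{W}$ (convex combinations of two finite-energy profiles with limits $a^\pm$), while $\|v^\mp-z^\mp\|_{L^2}\le\|\gamma(\eta)-z^-\|_{L^2}+\rho\|\partial_s\gamma(\eta)\|_{L^2}\to0$ as $\eta\to0$ and then $\rho\to0$ (using that $\gamma$ is $d_X$-continuous with $\gamma(0)=z^-$). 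Hence, after shrinking $\eta$ and $\rho$, each tail has $\mathfrak{L}_{\mathcal{K}}$-length $<\ve/4$, and the concatenation $\tilde\gamma$ of the two tails with the regularised central part (reparametrised onto $[0,1]$) satisfies all the requirements and $\mathfrak{L}_{\mathcal{K}}(\tilde\gamma)<\mathfrak{L}_{\mathcal{K}}(\gamma)+\ve$.

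The only genuinely delicate point is the bound on $\int_\R W(\gamma(t)*_s\phi_\rho)-\int_\R W(\gamma(t))$: a crude Lipschitz estimate fails because $\gamma(t)$ need not be uniformly bounded and $\partial_s\gamma(t)$ need not be integrable near $s=\pm\infty$. One splits $\R$ into a large bounded window $\{|s|\le M\}$, where $C^2$-regularity of $W$ together with $H^1\hookrightarrow L^\infty$ gives a Lipschitz constant for $W$ along the range of $\gamma(t)$ controlled by $1+\mathcal{K}(\gamma(t))$, and its complement, where $|\nabla W(x)|\lesssim|x-a^\pm|$ turns the bound into an $L^2$ one, exploiting that $\gamma(t)-a^\pm$ and $(\gamma(t)*_s\phi_\rho)-a^\pm$ have $L^2(\{|s|>M\})$-norms bounded uniformly in $t$ (compactness of $\gamma([0,1])$ in $(X,d_X)$) and that $\|\gamma(t)*_s\phi_\rho-\gamma(t)\|_{L^2}\le\rho\|\partial_s\gamma(t)\|_{L^2}$; the semiconvexity of $W$ from $\mathbf{(A1)}$, and if needed the algebraic decay at $\pm\infty$ from $\mathbf{(A4)}$ (through the funnel estimate of Lemma \ref{projection}, which can only decrease the energy), are what close this estimate. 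A secondary technical issue is to order the parameter choices consistently ($\eta$ and the reparametrisations first, then $\rho$), which is precisely why the preliminary trimming and the strict positivity $c_\eta>0$ are established at the outset.
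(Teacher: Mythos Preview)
Your overall architecture---reduce to a finite-length central piece, mollify in the $s$-variable, reconnect the endpoints by straight segments---is exactly the paper's. Two points deserve tightening.

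\textbf{The convolution estimate.} Your splitting into $\{|s|\le M\}$ and its complement is fragile: on the bounded window the Lipschitz constant of $W$ along the range of $\gamma(t,\cdot)$ depends on $\|\gamma(t,\cdot)\|_{L^\infty}$, which via $H^1\hookrightarrow L^\infty$ brings in powers of $\mathcal K(\gamma(t))$ that are not obviously integrable against $|\dot\gamma|$; on the complement you need $\gamma(t,s)$ \emph{pointwise} close to $a^\pm$ uniformly in $t$, and compactness of $\gamma([0,1])$ in $L^2$ only gives uniformly small $L^2$-tails. Invoking the funnel lemma here is circular, since Lemma~\ref{s0} requires joint continuity in $(t,s)$, which is precisely what you are constructing. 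The paper bypasses all of this with a single use of the semiconvexity in $\mathbf{(A1)}$: writing $\bar v=v*\rho$ and using $W(v(t))\ge W(\bar v(s))+\nabla W(\bar v(s))\cdot(v(t)-\bar v(s))-\tfrac{|\lambda|}{2}|v(t)-\bar v(s)|^2$, one multiplies by $\rho(t-s)$ and integrates in $(t,s)$; the gradient term vanishes by definition of $\bar v$, and the quadratic remainder is bounded by $4\delta^2|\lambda|\int|v'|^2$. This yields the clean inequality
\[
\mathcal W(v*\rho)\le (1+8\delta^2|\lambda|)\,\mathcal W(v)+8\delta^2|\lambda|\,d_K(a^-,a^+),
\]
hence $\mathfrak L_{\mathcal K}(\gamma_*)\le\sqrt{1+8\delta^2|\lambda|}\,\mathfrak L_{\mathcal K}(\gamma)+\delta\sqrt{8|\lambda|d_K(a^-,a^+)}\int|\dot\gamma|$, with no case analysis in $s$ at all. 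You correctly identified semiconvexity as the closing ingredient, but this is the way to deploy it.

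\textbf{The tail segments.} Your bound ``mean of $\mathcal K$ along the segment is bounded'' is a bound by $C_\eta$ depending on $\mathcal K(\gamma(\eta))$, and you cannot then shrink $\eta$ further without controlling this. The paper's observation is that either $\mathcal K\circ\gamma$ is bounded below by some $c>0$ near an endpoint, in which case the curve already has finite $d_X$-length there and no replacement is needed, or (by lower semicontinuity and positivity on $(0,1)$) there is a sequence $t_n\to 0$ with $\mathcal K(\gamma(t_n))\to 0$; one then chooses $\eta=t_n$ along this sequence. With that choice, the $\lambda$-convexity bound on the segment gives $\mathcal K\le\mathcal K(\gamma(\eta))+\sqrt{C}\,d_X(z^-,\gamma(\eta))$ along it, so the tail $\mathfrak L_{\mathcal K}$-length is at most $[\mathcal K(\gamma(\eta))+\sqrt{C}\,d_X(z^-,\gamma(\eta))]\,d_X(z^-,\gamma(\eta))\to 0$, and the parameter-order tension you flag disappears.
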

\begin{proof}
First, let us prove that there exists a sequence of curves $\gamma_n$ with finite length in $X$ (i.e. $\int|\dot{\gamma_n}|(t)\d t<\infty$) and $\limsup_n \mathfrak{L}_{\mathcal{K}}(\gamma_n)\leq \mathfrak{L}_{\mathcal{K}}(\gamma)$. Of course, if $\inf_t \mathcal{K}(\gamma(t))>0$ then one can simply take $\gamma_n=\gamma$, because then $\int|\dot{\gamma}|(t)\d t<\infty$. Also, we can assume (up to removing cycles where $\gamma$ takes many times the value $z^+$ or $z^-$) that $\mathcal{K}(\gamma(t))>0$ for every $t\neq 0,1$ since $\mathcal K$ only vanishes at the two boundary data $z^\pm$.  Using the semicontinuity of $\mathcal K$ this means that, if 
$\inf_t \mathcal{K}(\gamma(t))=0$, then there exists either a sequence $t_n^-\to 0$ or a sequence $t_n^+\to 1$ (or both) with $\mathcal{K}(\gamma(t_n^\pm))\to 0$. Define the curve $\gamma_n$ by replacing $\gamma$ on $[0,t_n^-]$ with a constant-speed segment joining $\gamma(0)$ and $\gamma(t_n^-)$, i.e. $\gamma_n(t)=(1-t/t_n^-)\gamma(0)+t/t_n^-\gamma(t_n^-)$, and do a similar construction for $t_n^+\to 1$ (only for those among these two sequences which are actually present). Note that, by continuity of the curve $\gamma$, we have $d(\gamma(t_n^-),\gamma(0))\to 0$ (same for $t_n^+\to 1$). This construction provides a curve with finite length. The cost $\mathfrak{L}_{\mathcal{K}}(\gamma_n)$ can only increase, compared to $\mathfrak{L}_{\mathcal{K}}(\gamma)$, in what concerns the intervals $[0,t_n^-]$ and $[t_n^+,1]$. Note that the functional $\mathcal{W}=\mathcal{K}^2/2$ is not convex (because $W$ is not convex), but is $\lambda$-convex for a negative $\lambda$ given by the lower bound of $\nabla^2W$. This means that for some constant $C>0$,
\begin{equation}\label{estimateWconv}
\mathcal W(\gamma_n(t))\leq \left(1-\frac{t}{t_n^-}\right) \mathcal W(\gamma(0))+\frac{t}{t_n^-} \mathcal W(\gamma(t_n^-))+Cd(\gamma(0),\gamma(t_n^-))^2\quad \text{on }[0,t_n^-]
\end{equation}
and, using $\mathcal W(\gamma(0))=0$ and the subadditivity of the square root, one gets $\mathcal K(\gamma_n(t))\leq \mathcal K(\gamma(t_n^-))+\sqrt{C}\, d(\gamma(0),\gamma(t_n^-))$. Using $|\dot{\gamma_n}|(t)=d(\gamma(0),\gamma(t_n^-))/t_n^-$ on $[0,t_n^-]$ and performing the same estimates on $[t_n^+,1]$, one gets 
$$
\mathfrak{L}_{\mathcal{K}}(\gamma_n)\leq \mathfrak{L}_{\mathcal{K}}(\gamma)+\big[\mathcal K(\gamma(t_n^-))+\sqrt{C}\, d(\gamma(0),\gamma(t_n^-))\big]d(\gamma(0),\gamma(t_n^-))+\big[\mathcal K(\gamma(t_n^+))+\sqrt{C}\, d(\gamma(1),\gamma(t_n^+))\big]d(\gamma(1),\gamma(t_n^+)).$$
Since we have $\mathcal K(\gamma(t_n^-)),\,\mathcal K(\gamma(t_n^+)),\, d(\gamma(0),\gamma(t_n^-)),\, d(\gamma(1),\gamma(t_n^+))\to 0$, we obtain
$\limsup_n \mathfrak{L}_{\mathcal{K}}(\gamma_n)\leq \mathfrak{L}_{\mathcal{K}}(\gamma)$.

Up to replacing $\gamma$ with one of these curves $\gamma_n$, we can now assume that $\gamma$ has finite length. Then we apply a convolution, i.e. we replace each $\gamma(t,\cdot)$ with $\gamma(t,\cdot)*\rho$, where $\rho$ is a standard mollifier with unit mass and support contained in $[-\delta,\delta]$ with $\delta>0$. We call $\gamma_*$ the new curve we obtain in this way. The convolution reduces the metric derivative in $L^2$, but could increase the value of $\mathcal W$, and also change the initial and final data. Let us look at how much $\mathcal W$ can increase. We claim that we have, for every function ${v}\in H^1_{loc}(\R,\R^n)$, the following inequalities
\begin{equation}\label{estimateWrho}
\mathcal W({v}*\rho)\leq \mathcal W({v})+8\delta^2|\lambda|\int\frac12 |{v}'|^2\leq  \mathcal W({v})+8\delta^2|\lambda|(\mathcal W({v})+d_K(a^-,a^+)).
\end{equation}
Once we have this inequality, it is clear that we have $\mathfrak{L}_{\mathcal{K}}(\gamma_*)\leq  \sqrt{1+8\delta^2|\lambda|}\, \mathfrak{L}_{\mathcal{K}}(\gamma)+\delta\sqrt{8|\lambda|c}\int|\dot{\gamma}|(t)\d t$ and that this last quantity can be made as close to $\mathfrak{L}_{\mathcal{K}}(\gamma)$ as we want by choosing $\delta$ small. Yet, we still need to modify $\gamma_*$ since $\gamma_*(0)$ and $\gamma_*(1)$ are not equal to $z^\pm$ but to $z^\pm*\rho$. Note that, using \eqref{estimateWrho} and $\mathcal W(z^\pm)=0$, we have $\mathcal W(z^\pm*\rho)\leq 8c\delta^2$.
 In order to modify the initial and final data, we consider a curve connecting $z^\pm$ to $z^\pm*\rho$ via a constant-speed segment and, using the same estimate as we did above in \eqref{estimateWconv}, this connection has an $\mathfrak{L}_{\mathcal{K}}$ length which is at most equal to 
$$d(z^\pm,z^\pm*\rho)(\mathcal K(z^\pm*\rho)+\sqrt{C}d(z^\pm,z^\pm*\rho))\leq C'd(z^\pm,z^\pm*\rho)(\delta+d(z^\pm,z^\pm*\rho)),$$
a quantity which tends to $0$ as $\delta\to 0$. We can then build, and reparametrize on $[0,1]$, a curve which uses this connection from $z^-$ to $z^-*\rho$, then uses $\gamma_*$ from $z^-*\rho$ to $z^+*\rho$,  and then the connection from $z^+*\rho$ to $z^+$.

We are just left with proving \eqref{estimateWrho}. Note that the $H^1$ part of the energy $\mathcal W$ decreases by convolution, so we just look at the integral of $W$. Writing for simplicity $\bar {v}$ instead of ${v}*\rho$, we have
$$W({v}(t))\geq W(\bar {v}(s))+\nabla W(\bar {v}(s))\cdot ({v}(t)-\bar {v}(s))-\frac{|\lambda|}{2}|{v}(t)-\bar {v}(s)|^2.$$
We multiply times $\rho(t-s)$ and integrate in $\d s\d t$, thus getting
\begin{eqnarray*}
\int_\R  W({v}(t))\d t&=&\int_\R  \int_\R W({v}(t))\rho(t-s) \d s\d t\\
			&\geq&\int_\R \int_\R \left(W(\bar {v}(s))+\nabla W(\bar {v}(s))\cdot ({v}(t)-\bar {v}(s))-\frac{|\lambda|}{2}|{v}(t)-\bar {v}(s)|^2\right)\rho(t-s)\d s\d t\\
			&=&\int_\R  W(\bar {v}(s))\d s-\int_\R \int_\R \frac{|\lambda|}{2}|{v}(t)-\bar {v}(s)|^2\rho(t-s)\d s\d t,
\end{eqnarray*}
where the term with $\nabla W(\bar {v}(s))$ has disappeared since $\int  ({v}(t)-\bar {v}(s))\rho(t-s) \d t=0$, and the term with $W(\bar {v}(s))$ has been first integrated w.r.t. $t$. Then consider that $\rho(t-s)>0$ implies $|t-s|<\delta$, and we can write
$$|{v}(t)-\bar {v}(s)|^2\leq\left| \int_\R |{v}(t)-{v}(t')|\rho(s-t')\d t'\right|^2\leq \int_\R |{v}(t)-{v}(t')|^2\rho(s-t')\d t'.$$
From $|t-s|<\delta$ and $|s-t'|<\delta$ we deduce $|t-t'|<2\delta$, and hence			
$$ |{v}(t)-{v}(t')|^2\leq |t-t'|\int_{[t,t']}|{v}'|^2\d s'\leq 2\delta \int_{t-2\delta}^{t+2\delta}|{v}'|^2\d s'.$$
This allows to obtain, for $|t-s|<\delta$, the bound
$$|{v}(t)-\bar {v}(s)|^2\leq 2\delta \int_{t-2\delta}^{t+2\delta}|{v}'|^2\d s'.$$
We then obtain
\begin{align*}
\int_\R  W({v}(t))\d t&\geq \int_\R  W(\bar {v}(s))\d s-|\lambda|\delta \int_\R \int_\R \rho(t-s) \left(\int_{t-2\delta}^{t+2\delta}|{v}'(s')|^2\d s'\right)\d t\d s\\
&=\int_\R W(\bar {v}(s))\d s-|\lambda|\delta \int_\R \int_{-2\delta}^{2\delta}|{v}'(t+s')|^2\d s'\d t\\
&=\int_\R  W(\bar {v}(s))\d s-4\delta^2|\lambda|\int_\R |{v}'(s')|^2\d s'.
\end{align*}
We conclude by observing that $\mathcal W({v})=\int (\frac 12 |{v}'|^2+W({v})) -d_K(a^+,a^-)$, so that $\int\frac 12 |{v}'|^2\leq \mathcal W({v})+d_K(a^+,a^-)$.
\end{proof}

\begin{proof}[Proof of Theorem \ref{ABGthm}]

\textsc{$\bullet$ Existence of a $\mathfrak{E}_{\mathcal{W}}$-minimizing curve between $z^-$ and $z^+$.} Theorem \ref{heteroclinic}, applied to our metric space $(X,d_X)$ endowed with the potential $\mathcal{W}=\mathcal{K}^2/2$ and the two zeros $z^-$ and $z^+$, provides a curve $\gamma\in\Lip(\R,X)$ such that
\[
\forall\sigma\in AC_{ploc}(\R,X),\, \sigma:z^-\mapsto z^+\Longrightarrow \mathfrak{E}_{\mathcal{W}}(\gamma)\leq \mathfrak{E}_{\mathcal{W}}(\sigma).
\]
Let us check the only non trivial assumption in applying Theorem \ref{heteroclinic}, that is $\mathbf{(H3)}$. Actually, it is enough to prove \eqref{compactness}. Given a curve $\gamma:t\mapsto \gamma(t,\cdot)\in X$ in $AC_{ploc}([0,1],X)$, that can be assumed to have finite $\mathfrak{L}_{\mathcal{K}}$-length, we first apply Lemma \ref{smoothing} in order to replace it with a new curve which is a continuous function of its two arguments and with a cost $\mathfrak{L}_{\mathcal{K}}$ which is at most slightly larger than the original one. We still denote by $\gamma$ this new curve. Then Lemma \ref{s0} provides two instants $s^-<s^+$ with $|\gamma(t,s^\pm)-a^\pm|<\varepsilon_0$ for a.e. $t$. Moreover, by Lemma \ref{projection} and an obvious variant of the same lemma backward in time (where $a^+$ is replaced by $a^-$), $\mathcal{K}(\gamma(t))$ is reduced when projecting $\gamma(t)$ onto the funnel
\[
\mathcal{C}:=\{{v}\in X\;:\; \text{for a.e. $s\in\R$ s.t. }\pm(s-s^\pm)\geq 0,\, |{v}(s)-a^\pm|\leq E(s)\}.
\]
This projection $\tilde{\gamma}(t)=\tilde{\gamma}(t,\cdot)$ writes as follows: for all $t\in[0,1]$ and $s\in\R$,
\[
\tilde{\gamma}(t,s)=
\begin{dcases}
a^++E(s)\, \frac{\gamma(t,s)-a^+}{|\gamma(t,s)-a^+|}&\text{if }s>s^+\text{ and }|\gamma(t,s)-a^+|>E(s),\\
a^-+E(s)\, \frac{\gamma(t,s)-a^-}{|\gamma(t,s)-a^-|}&\text{if }s<s^-\text{ and }|\gamma(t,s)-a^-|>E(s),\\
\gamma(t,s)&\text{otherwise.}
\end{dcases}
\]
It is also clear that projecting on convex sets in the target space (one for each $s$) reduces the $L^2$-metric derivative. Thus one has also $|\dot{\tilde{\gamma}}|(t)\leq |\dot{\gamma}|(t)$ for a.e. $t\in[0,1]$, and we have proved
\[
\mathfrak{L}_{\mathcal{K}}(\tilde{\gamma})\leq \mathfrak{L}_{\mathcal{K}}(\gamma).
\]
Since $\mathcal{C}$ is a subset of $X$ such that its intersections with the sublevel sets of $\mathcal K$ are compact, $\mathbf{(H3)}$ is satisfied.
\medskip

\textsc{$\bullet$ Improvement in the boundary conditions and $L^\infty$-bound.}  It is clear, by \eqref{energy_length} and by construction of $\gamma$ as a minimizer of $\mathfrak{L}_{\mathcal{K}}$, that $u\in L^2_{loc}(\R^2,\R^n)$, given by ${u}(x_1,x_2)=\gamma(x_2)(x_1)$ for a.e. $x_1$, $x_2$, minimizes $\mathcal{E}$ under the constraints detailed in Theorem \ref{ABGthm}, i.e. \eqref{BC}. 

So far, we only know that ${u}$ satisfies the boundary conditions \eqref{BC} in a very weak sense. By construction and by use of the equation, one can improve it a little bit. First of all, one has ${u}(\cdot,x_2)=\gamma(x_2)\in \mathcal{C}$ for a.e. $x_2$ which implies convergence when $x_1\to\pm\infty$, uniform in  $x_2$ (with a rate given by the function $E$). Concerning the limit when $x_2\to\pm\infty$, up to now, we have only proved $L^2$-convergence of ${u}(\cdot,x_2)$ onto $z^\pm$. Yet, using the result presented in the Appendix (Corollary \ref{Wbounded}), we can infer that $\|\partial_2 {u}(\cdot,x_2)\|_{L^2}$ and $\mathcal W({u}(\cdot,x_2))$ are uniformly bounded in $x_2$. In particular, the derivative w.r.t. $x_2$ of ${u}(\cdot,x_2)$ is bounded in $L^2(\R)$, which turns the $L^2$ convergence into unform convergence. Moreover, from 
$$d_K({u}(x_1,x_2),a^\pm)\leq \mathfrak{E}_{W}({u}(\cdot,x_2))= d_K(a^-,a^+)+2\mathcal W({u}(\cdot,x_2))\leq C,$$
we also infer that ${u}(x_1,x_2)$ is bounded independently of $(x_1,x_2)$. 
\medskip

\textsc{$\bullet$ Euler-Lagrange equation and improvement in the regularity.} For the moment, it is not clear that ${u}\in L^\infty\cap H^1_{loc}(\R^2,\R^n)$ solves the Euler-Lagrange equation $-\Delta u+\nabla W(u)=0$ associated to the energy $\mathcal{E}$, due to the symmetry constraint $u_1(-x_1,x_2)=-u_1(x_1,x_2)$. One can only say that for all $\psi\in \mathcal{C}_c^2(\R^2,\R^n)$ such that $\psi_1(-x_1,x_2)=-\psi_1(x_1,x_2)$, one has
\[
\int_{\R^2}-{u}\cdot\Delta\psi + \nabla W({u})\cdot\psi=0.
\]
Given any function $\varphi\in \mathcal{C}_c^2(\R^2,\R^n)$, define its projection on the symmetry constraint by
\[
P\varphi=\left(\frac{\varphi_1(\cdot,\cdot)-\varphi_1(-\cdot,\cdot)}{2},\varphi_2,\dots,\varphi_n\right).
\]
Thus, for all $\varphi\in \mathcal{C}_c^2(\R^2,\R^n)$, one has
\begin{align*}
0=\int_{\R^2}-{u}\cdot\Delta P\varphi + \nabla W({u})\cdot P\varphi=\int_{\R^2}-P{u}\cdot\Delta \varphi + P\nabla W({u})\cdot \varphi,
\end{align*}
which means that ${u}$ is a distributional solution of the equation $P(-\Delta {u}+\nabla W({u}))=0$, that is
\[
\begin{cases}
-\Delta {u}_1+D_1 W({u})\text{ is even,}\\
-\Delta {u}_2+D_2 W({u})=0.
\end{cases}
\]
Yet, by our symmetry assumptions on $u$ and $W$, it is also clear that $-\Delta {u}_1+D_1 W({u})$ is odd (in the distributional sense, i.e. in the duality with smooth functions). Thus one has proved 
$$\Delta {u}=\nabla W({u}).$$
This allows to obtain higher regularity for ${u}$: the right hand-side being bounded, ${u}$ will be locally $W^{2,p}$ for every $p$, hence $C^{1,\alpha}$. By a bootstrap argument, for $W\in C^2$, we get ${u}\in C^{2,\alpha}$ (in case $W\in C^\infty$, we also get ${u}\in C^\infty$).
\end{proof}

\section{Asymmetric case: Schatzman connections\label{sectionSchatzman}} In \cite{Schatzman:2002}, Michelle Schatzman generalized the above existence result to non symmetric potentials and non symmetric solutions of \eqref{Double_connections}. Thus the constants $c^-$ and $c^+$, appearing in \eqref{Double_connections}, are now unknown of the problem. We remind that the set of all minimizing heteroclinic connections is denoted by $\mathcal{Z}$, and that $\mathcal{Z}$ is translation invariant. In the Alama-Bronsard-Gui situation, this translation invariance was ruled out by the symmetry condition. Here, whatever $z\in\mathcal{Z}$, we define the set $\mathcal{C}(z)$ composed by $z$ and all its translations:
\[
\mathcal{C}(z)=\{z(\cdot-m)\;:\; m\in\R\}.
\]
We will need the following assumption:
\begin{description}
\item[(A5)]
the set $\{\mathcal{C}(z)\;:\; z\in\mathcal{Z}\}$ has exactly two elements;
\end{description}
these two elements $\mathcal{C}(z^-)$ and $\mathcal{C}(z^+)$ correspond to minimizers $z^+$, $z^-\in\mathcal{Z}$ of $\mathfrak{E}_{W}$ which cannot be deduced by translation one from another. Since any heteroclinic connection $z\in\mathcal{Z}$ is solution of the Euler-Lagrange equation,
\[
-z''+\nabla W(z)=0,
\]
we know that $z'$ is in the kernel of the linearized operator $A(z)$, defined on $L^2(\R,\R^n)$ by
\begin{equation}
\label{definitionSpectralA}
D(A(z))=H^2(\R,\R^n),\quad A(z){v}=-{v}''+(\nabla^2 W(z){v}^T)^T.
\end{equation}
It is clear that $A(z)$ is self-adjoint and, by the second order optimality conditions on $z$ (as a minimizer of $\mathfrak{E}_{W}$), that $A(z)$ is nonnegative. Indeed, for every $v\in L^2(\R,\R^n)$, one has
\[
(A(z)v\,;\, v)_{L^2}=\int_\R\left(|v'(s)|^2+D^2W(z(s))(v(s),v(s))\right)\d s,
\]
which is nonnegative since it is nothing but twice the second order variation of $\mathfrak{E}_{W}$ around the minimizer $z$ and under the perturbation $v$. In particular, the spectrum $\sigma(A(z))$ of $A(z)$ is included in $[0,+\infty)$. We will need the following spectral assumption:
\begin{description}
	\item[(Spec)]
	 when $z=z^+$ or $z=z^-$, the kernel of $A(z)$ is one-dimensional and $0$ is isolated in $\sigma(A(z))$.
\end{description}
In other words we assume two things: i) $0$ is in the descrete spectrum, which is the case in particular if the symmetric matrices $D^2W(a^\pm)$ are positive definite (this was assumed in \cite{Schatzman:2002}) ; this means that the essential spectrum of $A(z)$ is included in $[c,+\infty)$ for some $c>0$ ; ii) the multiplicity of the eigenvalue $0$ is exactly $1$ ; the eigenspace $\mathrm{ker}(A(z))$ is thus generated by $z'$.

This spectral condition (more precisely, the nondegeneracy of $D^2W(a^\pm)$ and the fact that $0$ is an eigenvalue of multiplicity one) is the key assumption of \cite{Schatzman:2002} in order to overcome the lack of compactness due to the translation invariance, and it is proved to be generic \cite[Theorem 4.3., Remark 4.4.]{Schatzman:2002}. 

By the max-min characterization of the descrete spectrum, $\mathbf{(Spec)}$ is equivalent to the following explicit estimate, which is what we actually use in the proof:
\begin{equation}
\label{maxMin}
\exists c_0>0,\, \forall v\in L^2(\R,\R^n),\,\forall z\in \{z^-,z^+\},\quad (v\,;\, z')_{L^2}=0\Longrightarrow (A(z)v\,;\, v)_{L^2}\ge c_0 \|v\|_{L^2}.
\end{equation}
The main result of this section is the following theorem (our assumptions are slightly more general but very close to that of \cite{Schatzman:2002}):
\begin{theorem}\label{Schatzman_thm}
Under Assumptions $\mathbf{(A1-5)}$, $(\mathbf{STI})$ and $\mathbf{(Spec)}$, there exists a solution ${u}\in C^2(\R^2,\R^n)$ to the system \eqref{Double_connections} (where $c^-$, $c^+$ are free parameters) which globally minimizes the energy $\mathcal{E}$ under the constraints:
\begin{equation}
\label{BC_Schatzman}
\begin{dcases}
\int_\R ({u}(x_1,x_2)-z^+(x_1))^2\d x_1<+\infty\quad \mathrm{for\ a.e.\ }x_2\in\R;\\
\inf\left\{\int_\R ({u}(x_1,x_2)-z^\pm(x_1-c))^2\d x_1\;:\; c\in\R\right\}\to 0\quad\mathrm{when\ }x_2\to\pm\infty.
\end{dcases}
\end{equation}
\end{theorem}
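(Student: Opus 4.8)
The plan is to run the strategy of Theorem~\ref{ABGthm} in a \emph{quotient} functional space, replacing the symmetry constraint by the spectral nondegeneracy \eqref{maxMin} wherever the translation invariance of $\mathfrak{E}_{W}$ has to be killed, and then to lift the resulting weighted geodesic to a genuine solution of \eqref{Double_connections}. Concretely, I would set $V:=z^{+}+L^{2}(\R,\R^{n})$, an affine space over $L^{2}$ on which the translations $\tau_{m}v:=v(\cdot-m)$ act, and take for $X$ the quotient $V/{\sim}$ with the distance $d_{X}([v_{1}],[v_{2}]):=\inf_{m\in\R}\|v_{1}-\tau_{m}v_{2}\|_{L^{2}}$; using the behaviour of $z^{\pm}$ at $\pm\infty$ one checks that $\tau_{m}v\in V$ for every $v\in V$, that $d_{X}$ separates points, and that $(X,d_{X})$ is a complete length space, so $\mathbf{(H1)}$ holds. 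The weight $\mathcal{K}([v]):=\sqrt{\mathfrak{E}_{W}(v)-d_{K}(a^{-},a^{+})}$ if $v\in H^{1}_{loc}$, $+\infty$ otherwise, is well defined by translation invariance, is lower semicontinuous for $d_{X}$, and by $\mathbf{(A5)}$ its zero set is exactly $\Sigma=\{[z^{-}],[z^{+}]\}$, which is finite; hence $\mathbf{(H2)}$ holds and the strict triangular inequality required by Theorem~\ref{heteroclinic} is vacuous. Finally $d_{\mathcal{K}}([z^{-}],[z^{+}])<\infty$, e.g. by taking the segment $t\mapsto(1-t)z^{-}+tz^{+}$, whose $\mathcal{K}$-length is finite thanks to $\mathbf{(A4)}$ and the decay of $z^{\pm}$.

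The core of the argument is the verification of $\mathbf{(H3)}$, carried out exactly as in the proof of Theorem~\ref{ABGthm}: a competitor curve is first regularized by the obvious analogue of Lemma~\ref{smoothing}, then Lemma~\ref{s0} produces slices $s^{-}<s^{+}$ with $|\gamma(t,s^{\pm})-a^{\pm}|<\varepsilon_0$ for all $t$, and finally Lemma~\ref{projection} is applied to project each $\gamma(t,\cdot)$ onto the funnel, decreasing both $\mathcal{K}$ and the metric derivative; the modified curves lie in a set whose intersection with every sublevel $\{\mathcal{K}\le\ell\}$ is $L^{2}$-compact (bounded in $H^{1}$ since $\mathfrak{E}_{W}\le d_{K}(a^{-},a^{+})+2\ell^{2}$, with uniformly small $L^{2}$-tails controlled by the funnel profile $E$). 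The new point is that Lemmas~\ref{stability} and~\ref{s0} must be established \emph{up to translation}. The substitute for Lemma~\ref{stability} is that $\mathcal{K}([v_{n}])\to0$ forces $d_{X}([v_{n}],[z^{-}])\to0$ or $d_{X}([v_{n}],[z^{+}])\to0$: translate each $v_{n}$ so that $d_{K}(v_{n}(0),a^{-})=d_{K}(v_{n}(0),a^{+})$, extract a locally uniform limit which, by $\mathbf{(STI)}$ and the funnel lemma, still lies in $\mathcal{S}(a^{-},a^{+})$ and hence minimizes $\mathfrak{E}_{W}$, so it equals some $z^{\pm}(\cdot-c)$ by $\mathbf{(A5)}$. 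In the corresponding variant of Lemma~\ref{s0} one must moreover check that the translation parameter $c(t)$ of $\gamma(t)$ stays bounded along the (finite-length, after smoothing) curve, which follows from the bound $|\dot c(t)|\lesssim|\dot\gamma|(t)$ valid in a tubular neighbourhood of the orbit of $z^{\pm}$. Theorem~\ref{heteroclinic} then yields $\gamma\in\Lip(\R,X)$ minimizing $\mathfrak{E}_{\mathcal{W}}$ with $\gamma(\pm\infty)=[z^{\pm}]$.

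Since the translation action is free, the Lipschitz curve $\gamma$ lifts to $\tilde\gamma\in\Lip_{loc}(\R,V)$, and $u(x_{1},x_{2}):=\tilde\gamma(x_{2})(x_{1})$ minimizes $\mathcal{E}$ under the constraints \eqref{BC_Schatzman} by \eqref{energy_length} and Lemma~\ref{l2curve}; in particular $u$ is a local minimizer, so it solves $-\Delta u+\nabla W(u)=0$ weakly (here, contrary to the Alama--Bronsard--Gui case, there is no symmetry constraint to undo, so the Euler--Lagrange equation is immediate). Corollary~\ref{Wbounded} of Appendix~\ref{AppB} then gives a bound on $\mathcal{W}(\tilde\gamma(x_{2}))$ and on $\|\partial_{2}u(\cdot,x_{2})\|_{L^{2}}$ uniform in $x_{2}$; hence $u\in L^{\infty}\cap H^{1}_{loc}(\R^{2})$ and, by elliptic bootstrap, $u\in C^{2,\alpha}$. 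The funnel membership $\tilde\gamma(x_{2})\in\mathcal{C}$ gives $u(x_{1},x_{2})\to a^{\pm}$ as $x_{1}\to\pm\infty$, uniformly in $x_{2}$, and the uniform $H^{1}$-bound upgrades the $L^{2}$-convergence of $x_{2}\mapsto\tilde\gamma(x_{2})$ into $L^{\infty}$-convergence towards the orbit of $z^{\pm}$.

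The main obstacle, and the one point where $\mathbf{(Spec)}$ is genuinely used, is to promote this convergence from ``up to translation'' to a \emph{true} limit $z^{\pm}(\cdot-c^{\pm})$, as required by \eqref{Double_connections}. Here I would use that $\tilde\gamma$ solves the second-order equation $\ddot{\tilde\gamma}=\nabla\mathcal{W}(\tilde\gamma)$ in $L^{2}$ together with the equipartition identity $\tfrac12\|\dot{\tilde\gamma}\|_{L^{2}}^{2}=\mathcal{W}(\tilde\gamma)$. Once $\tilde\gamma(x_{2})$ has entered a small tubular neighbourhood of the orbit of $z^{+}$ one writes $\tilde\gamma(x_{2})=\tau_{c(x_{2})}z^{+}+w(x_{2})$ with $w(x_{2})\perp(z^{+})'$ and $\|w(x_{2})\|_{L^{2}}\to0$; the estimate \eqref{maxMin} yields $\mathcal{W}(\tilde\gamma(x_{2}))\gtrsim\|w(x_{2})\|_{H^{1}}^{2}$ (absorbing the cubic remainder thanks to the $L^{\infty}$-bound), and a standard Gronwall/energy argument for this instanton-type equation, exactly as in \cite{Schatzman:2002}, forces $\|w(x_{2})\|_{L^{2}}$, hence $\|\dot{\tilde\gamma}(x_{2})\|_{L^{2}}=\sqrt{2\mathcal{W}(\tilde\gamma(x_{2}))}$, to decay exponentially. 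Consequently $\int^{+\infty}\|\dot{\tilde\gamma}(x_{2})\|_{L^{2}}\,\d x_{2}<\infty$, so $\tilde\gamma(x_{2})$ converges in $L^{2}$ — hence, by the $H^{1}$-bound, uniformly — to some $z^{+}(\cdot-c^{+})$; the same reasoning as $x_{2}\to-\infty$ produces $c^{-}$, which closes the proof.
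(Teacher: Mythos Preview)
Your outline is essentially sound, but it deviates from the paper's proof in two non-trivial respects, and the second of these is precisely where the paper introduces its main new metric idea for the asymmetric case.

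\textbf{Choice of the quotient space.} You take $X$ to be the full quotient of $V=z^{+}+L^{2}$ by the translation action, with $d_{X}([v_{1}],[v_{2}])=\inf_{m}\|v_{1}-\tau_{m}v_{2}\|_{L^{2}}$. The paper instead takes an almost trivial quotient: it only identifies points \emph{within} each of the two orbits $\mathcal{C}(z^{-})$ and $\mathcal{C}(z^{+})$, and keeps the plain $L^{2}$ metric elsewhere (see the definition of $d_{X}$ just after Theorem~\ref{Schatzman_thm}). The advantage is that away from $\Sigma$ the curve $\gamma$ is literally a curve in $L^{2}$, so there is no lifting step, and the metric derivative is exactly $\|\partial_{t}\gamma\|_{L^{2}}$ (Lemma~\ref{lemmaPropX}). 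In your setting you must argue that the geodesic in the full quotient admits a horizontal lift $\tilde\gamma$; this is true but not as automatic as ``the action is free'', and it is exactly the content of Lemma~\ref{translation} in the paper (the minimizer satisfies $(\partial_{t}\gamma,\partial_{s}\gamma)_{L^{2}}=0$, which is the horizontality condition). So the paper's minimal quotient is a cleaner bookkeeping device that makes the $\mathbf{(H3)}$ verification and the energy identity \eqref{energy_length} immediate.

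\textbf{Existence of $c^{\pm}$.} This is the substantive difference. You propose to extract $c^{\pm}$ by writing the second-order ODE $\ddot{\tilde\gamma}=\nabla\mathcal{W}(\tilde\gamma)$, decomposing $\tilde\gamma=\tau_{c}z^{+}+w$ with $w\perp (z^{+})'$, and invoking a Gronwall/instanton-stability argument ``exactly as in \cite{Schatzman:2002}'' to force exponential decay of $\|w\|_{L^{2}}$, hence $\int\|\dot{\tilde\gamma}\|_{L^{2}}\d x_{2}<\infty$. This would work, but it re-imports the Schatzman PDE machinery that the present paper is meant to bypass. The paper obtains $c^{\pm}$ by a purely metric estimate: near $\Sigma$ one has (Lemma~\ref{translationspeed})
\[
|m'(t)|\leq C\,\mathcal{K}(\gamma(t))\,|\dot\gamma|(t),
\]
which is strictly stronger than the Lipschitz bound $|m'|\lesssim|\dot\gamma|$ you use for the boundedness of $c(t)$ in your version of Lemma~\ref{s0}. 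The extra factor $\mathcal{K}(\gamma(t))$ is what makes $m'$ integrable on $\R$, since then $\int_{\R}|m'|\leq C\,\mathfrak{L}_{\mathcal{K}}(\gamma)<\infty$, and hence $m(x_{2})\to c^{\pm}$. The proof of this improved bound (Lemma~\ref{translationspeed}) is where $\mathbf{(Spec)}$ enters in the paper: it is used via Lemma~\ref{stability_general}(2.b), $\mathcal{K}(v)\geq\alpha\|v-z\|_{H^{1}}$, combined with the orthogonality $(\partial_{t}\gamma,\partial_{s}\gamma)_{L^{2}}=0$ satisfied by the minimizer. In short, your argument and the paper's both convert $\mathbf{(Spec)}$ into control of the translation parameter, but the paper does it through a one-line weighted-length bound rather than through exponential decay of the PDE, which is more in keeping with the metric philosophy of the article and avoids any appeal to \cite{Schatzman:2002}.
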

The rest of this section is dedicated to the proof of the above theorem, and all its assumptions are thus assumed. We will apply Theorem \ref{heteroclinic} in the following setting. In the functional space $z^++L^2(\R,\R^n)$, consider the equivalence relation
\[
v_1\sim v_2\Longleftrightarrow (v_1=v_2)\text{ or }(v_1,v_2\in\mathcal{C}(z^+))\text{ or }(v_1,v_2\in\mathcal{C}(z^-)).
\]
 We consider the metric space $X$ composed of all equivalence classes in $z^++L^2(\R,\R^n)$, endowed with the metric
\[
d_X(v_1,v_2)=\min\bigg\{d_{L^2}(v_1,v_2)\, ;\, d_{L^2}(v_1,\mathcal{C}(z^-))+d_{L^2}(v_2,\mathcal{C}(z^-))\,;\,d_{L^2}(v_1,\mathcal{C}(z^+))+d_{L^2}(v_2,\mathcal{C}(z^+))\bigg\},
\]
where $d_{L^2}$ stands for the $L^2$-distance, $d_{L^2}(v_1,v_2)=\|v_1-v_2\|_{L^2(\R,\R^n)}$. Note that we do not identify all functions with their translations, which is convenient because this means that far from $\mathcal{C}(z^\pm)$ we are exactly considering the $L^2$-metric. We only identify $z^+$ with its own translations, and the same for $z^-$. Since $\mathfrak{E}_{W}$ is translation invariant, the following definition of the weight function $\mathcal{K}:X\to \R^+\cup\{+\infty\}$ makes sense: for very $[{v}]\in X$ with ${v}\in z^++L^2(\R,\R^n)$,
\[
\mathcal{K}([{v}])=
\begin{cases}
\sqrt{\mathfrak{E}_{W}({v})-d_K(a^-,a^+)}&\text{if }{v}\in H^1_{loc}(\R,\R^n),\\
+\infty&\text{otherwise.}
\end{cases}
\]
In the sequel, for the sake of simplicity of notations, we will frequently omit the distinction between ${v}$ and $[{v}]$. The proof of the following statement involves rather standard tools and corresponds to Lemma \ref{propX}.
\begin{lemma}\label{lemmaPropX}
The weighted metric space $(X,d_X,\mathcal{K})$ enjoys the following elementary properties:
\begin{itemize}
\item
$(X,d_X)$ is complete and is a length space;
\item
$\mathcal{K}({v})$ vanishes only when ${v}$ is a symmetric heteroclinic connection, i.e.
\[
\mathcal{K}({v})=0\Longleftrightarrow {v}\in\Sigma:=\{\mathcal{C}(z^-),\mathcal{C}(z^+)\};
\]
\item
$\mathcal{K}$ is l.s.c. on $(X,d_X)$;
\item
the metric derivative in $X$ coincides with the metric derivative in $L^2$; more precisely, for each curve $\gamma\in AC_{ploc}(I,X)$ parametrized by $\gamma(t)=(s\mapsto \gamma(t,s))$, one has
\[
|\dot{\gamma}|(t)=\|\partial_t\gamma(t,\cdot)\|_{L^2(\R)}\quad\text{for a.e. $t$ such that } \gamma(t)\notin\Sigma.
\]
\end{itemize}
\end{lemma}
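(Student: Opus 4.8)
The four properties are verified one by one, and most of the work is a transcription of the analogous facts established for the symmetric setting in Lemma \ref{propX}, with the extra care required by the quotient structure of $X$. The plan is as follows.

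\emph{Completeness and length-space property.} The space $z^++L^2(\R,\R^n)$ is a complete metric space for the $L^2$-distance, and $d_X$ is the quotient pseudo-metric associated to the equivalence relation $\sim$ that collapses each of the two closed sets $\mathcal{C}(z^\pm)$ to a point. First I would check that $d_X$ is genuinely a metric, i.e. that $d_X(v_1,v_2)=0$ forces $v_1\sim v_2$: this follows because $\mathcal{C}(z^+)$ and $\mathcal{C}(z^-)$ are closed in $z^++L^2(\R,\R^n)$ and are at positive mutual $L^2$-distance (were $d_{L^2}(\mathcal{C}(z^-),\mathcal{C}(z^+))=0$ one could produce, via translations, a sequence along which $\mathfrak{E}_{W}$ stays minimal and which $L^2_{loc}$-subconverges, after recentering, to a heteroclinic connection lying in both classes, contradicting $\mathbf{(A5)}$). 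For completeness, a $d_X$-Cauchy sequence $([v_n])_n$ that stays far from $\Sigma$ is eventually an $L^2$-Cauchy sequence, hence converges; a sequence approaching one of the classes, say $\mathcal{C}(z^+)$, has $d_{L^2}(v_n,\mathcal{C}(z^+))\to 0$ and then $[v_n]\to[z^+]$ in $d_X$. For the length-space property: off $\Sigma$ the metric $d_X$ agrees locally with the $L^2$-distance, which is a length distance realized by segments; near a class one connects $[v]$ to $[z^\pm]$ by first moving in $L^2$ to the nearest translate of $z^\pm$ and noting that the length of that segment is $d_{L^2}(v,\mathcal{C}(z^\pm))$, up to arbitrarily small error, so that $d_X=\gd_{d_X}$.

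\emph{Zero set of $\mathcal{K}$, lower semicontinuity, and the metric derivative.} By definition $\mathcal{K}([v])=0$ iff $v\in H^1_{loc}$ and $\mathfrak{E}_{W}(v)=d_K(a^-,a^+)$, i.e. iff $v$ is a minimizing heteroclinic connection; by $\mathbf{(A5)}$ this means $v\in\mathcal{C}(z^-)\cup\mathcal{C}(z^+)$, i.e. $[v]\in\Sigma$. For lower semicontinuity: the functional $v\mapsto\mathfrak{E}_{W}(v)$ is l.s.c. for $L^2_{loc}$-convergence (the Dirichlet part is convex and l.s.c., the potential part is handled by Fatou after adding $c_0+c_1|u|^2$ to make $W$ nonnegative, as in the proof of Corollary \ref{pde}), hence $\mathcal{K}^2$ and $\mathcal{K}$ are l.s.c. on $z^++L^2$; since $d_X$-convergence of $[v_n]\to[v]$ means (after choosing representatives, possibly translating them) $L^2$-convergence of the representatives, and $\mathfrak{E}_{W}$ is translation invariant, l.s.c. passes to the quotient. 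Finally, the identification of the metric derivative: for $\gamma\in AC_{ploc}(I,X)$ and a.e.\ $t$ with $\gamma(t)\notin\Sigma$, on a neighborhood of such $t$ the curve stays away from $\Sigma$, where $d_X$ coincides locally with $d_{L^2}$, so $|\dot\gamma|(t)$ computed in $X$ equals the metric derivative computed in $L^2$, which by Lemma \ref{l2curve} equals $\|\partial_t\gamma(t,\cdot)\|_{L^2(\R)}$.

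\emph{Main obstacle.} The only genuinely delicate point is the one forcing $d_{L^2}(\mathcal{C}(z^-),\mathcal{C}(z^+))>0$ and, more generally, ensuring that $d_X$ separates points and that $d_X$-convergence near $\Sigma$ can be read off representatives; this is where Assumption $\mathbf{(A5)}$ (two distinct classes, hence only finitely many "directions" to collapse) is used in an essential way, together with the compactness-after-recentering of minimizing sequences of $\mathfrak{E}_{W}$, which is standard for heteroclinic connections (a minimizing sequence has bounded energy, hence bounded $H^1_{loc}$ norm and, via the non-integrability condition $\mathbf{(A3)}$, cannot escape to infinity without paying infinite energy, so it recenters to a minimizer). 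Everything else is a routine adaptation of Lemma \ref{propX}.
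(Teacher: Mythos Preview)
Your proposal is correct and follows essentially the approach the paper intends: the paper does not give a detailed proof of this lemma, stating only that it ``involves rather standard tools and corresponds to Lemma \ref{propX}'', and your plan fills in exactly those standard details (quotient metric from a closed equivalence relation, l.s.c.\ of $\mathfrak{E}_W$ for $L^2_{loc}$-convergence passing to the quotient via translation invariance, identification of $d_X$ with $d_{L^2}$ away from $\Sigma$ to recover the metric derivative via Lemma \ref{l2curve}). One minor simplification: to obtain $d_{L^2}(\mathcal{C}(z^-),\mathcal{C}(z^+))>0$ you do not need a recentering/compactness argument; if $\|z^-(\cdot-m_n)-z^+\|_{L^2}\to 0$ then the coercivity estimate $\|z-z(\cdot-m)\|_{L^2}\sim |a^+-a^-|\sqrt{|m|}$ (already used in the paper just before \eqref{uniformBound}) forces $(m_n)$ bounded, hence subconverging to some $m$ with $z^-(\cdot-m)=z^+$, contradicting $\mathbf{(A5)}$ directly.
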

We need a first estimate which, given a curve $\gamma$ on $X$, gives the best way of reducing the $\mathfrak{L}_{\mathcal{K}}$-length by translating each of the $\gamma(t)$:
\begin{lemma}\label{translation}
Let $\gamma\in AC_{loc}(I,X)$ be a curve parametrized by $\gamma(t)=(s\mapsto \gamma(t,s))$ such that $\mathfrak{L}_{\mathcal{K}}(\gamma)<\infty$ and for a.e. $t\in I$, $\gamma(t)\notin \Sigma$. Let $m\in W^{1,1}_{loc}(I,\R)$ be defined via $m(0)=0$ and
\[
m'(t)=\frac{(\partial_t\gamma(t,\cdot),\partial_s\gamma(t,\cdot))_{L^2(\R)}}{\|\partial_s\gamma(t,\cdot)\|_{L^2(\R)}}\quad\text{a.e.}
\]
Then one has $\mathfrak{L}_{\mathcal{K}}(\tilde{\gamma})\leq \mathfrak{L}_{\mathcal{K}}(\gamma)$, where $\tilde{\gamma}$ is defined by $\tilde{\gamma}(t,s)=\gamma(t,s-m(t))$ for all $t\in I$ and $s\in\R$.
\end{lemma}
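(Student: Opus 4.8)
The plan is to exploit two translation-invariance features. First, the functional $\mathfrak{E}_{W}$, hence the weight $\mathcal{K}$, is invariant under $v(\cdot)\mapsto v(\cdot-m)$, so that $\mathcal{K}(\tilde{\gamma}(t))=\mathcal{K}(\gamma(t))$ for every $t$, and the set $\Sigma$ is translation-stable, so $\tilde{\gamma}(t)\notin\Sigma$ for a.e.\ $t$. Second, the $L^2$ inner product and norm are translation invariant, so translating each slice of the curve can only decrease its $L^2$-metric derivative once we subtract off the optimal translation speed. Granting the pointwise bound $\|\partial_t\tilde\gamma(t,\cdot)\|_{L^2(\R)}\le\|\partial_t\gamma(t,\cdot)\|_{L^2(\R)}$, and using Lemma~\ref{lemmaPropX} (which identifies $|\dot{\gamma}|(t)=\|\partial_t\gamma(t,\cdot)\|_{L^2(\R)}$ and $|\dot{\tilde\gamma}|(t)=\|\partial_t\tilde\gamma(t,\cdot)\|_{L^2(\R)}$ for a.e.\ $t$, since $\gamma(t),\tilde\gamma(t)\notin\Sigma$ a.e.), the conclusion follows at once:
\[
\mathfrak{L}_{\mathcal{K}}(\tilde{\gamma})=\int_I\mathcal{K}(\gamma(t))\,\|\partial_t\tilde\gamma(t,\cdot)\|_{L^2(\R)}\,\d t\le\int_I\mathcal{K}(\gamma(t))\,\|\partial_t\gamma(t,\cdot)\|_{L^2(\R)}\,\d t=\mathfrak{L}_{\mathcal{K}}(\gamma).
\]

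First I would record the facts needed to make the statement meaningful. Since $\mathfrak{L}_{\mathcal{K}}(\gamma)<\infty$ one has $\mathfrak{E}_{W}(\gamma(t))<\infty$ for a.e.\ $t$, hence $\partial_s\gamma(t,\cdot)\in L^2(\R,\R^n)$; moreover $\partial_t\gamma(t,\cdot)\in L^2(\R,\R^n)$ with $\|\partial_t\gamma(t,\cdot)\|_{L^2(\R)}=|\dot\gamma|(t)\in L^1_{loc}(I)$ (Lemma~\ref{lemmaPropX} and $\gamma\in AC_{loc}$). Cauchy--Schwarz gives $|m'(t)|\,\|\partial_s\gamma(t,\cdot)\|_{L^2(\R)}\le|\dot\gamma|(t)$, and since $\|\partial_s\gamma(t,\cdot)\|_{L^2(\R)}$ stays bounded below on compact subintervals of $I$ (this is where $\gamma(t)\notin\Sigma$ for a.e.\ $t$ is used), we get $m'\in L^1_{loc}(I)$, so $m\in W^{1,1}_{loc}(I,\R)$ is well defined and continuous. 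The translate $\tilde\gamma(t)=\gamma(t)(\cdot-m(t))$ still lies in $z^++L^2(\R,\R^n)$: writing $\gamma(t)=z^++w$ with $w\in L^2$, the translate of $w$ is again in $L^2$, while $\|z^+(\cdot-m(t))-z^+\|_{L^2(\R)}\le|m(t)|\,\|(z^+)'\|_{L^2(\R)}<\infty$ by Minkowski's integral inequality, using $(z^+)'\in L^2(\R)$ since $z^+\in\mathcal{Z}$. Continuity of $t\mapsto\tilde\gamma(t)$ into $X$ then follows from continuity of $\gamma$, continuity of $m$, and strong $L^2$-continuity of translations; combined with the bound $|\dot{\tilde\gamma}|(t)\le 2|\dot\gamma|(t)$ obtained below, this gives $\tilde\gamma\in AC_{loc}(I,X)$.

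The heart of the proof is a chain rule: for a.e.\ $t$,
\[
\partial_t\tilde\gamma(t,\cdot)=\big(\partial_t\gamma(t,\cdot)-m'(t)\,\partial_s\gamma(t,\cdot)\big)(\cdot-m(t))\qquad\text{in }L^2(\R,\R^n).
\]
I would derive this by writing the difference quotient $h^{-1}\big(\tilde\gamma(t+h)-\tilde\gamma(t)\big)$ as $h^{-1}\big(\gamma(t+h,\cdot-m(t+h))-\gamma(t,\cdot-m(t+h))\big)+h^{-1}\big(\gamma(t,\cdot-m(t+h))-\gamma(t,\cdot-m(t))\big)$ and letting $h\to0$ in $L^2$: the first term converges to $(\partial_t\gamma(t,\cdot))(\cdot-m(t))$ by absolute continuity of $\tau\mapsto\gamma(\tau)\in L^2$ together with continuity of $m$, and the second to $-m'(t)(\partial_s\gamma(t,\cdot))(\cdot-m(t))$ by absolute continuity of $m$ and the $L^2$-differentiability in $s$ of $\gamma(t,\cdot)$ (whose $s$-derivative is $\partial_s\gamma(t,\cdot)\in L^2$). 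Taking $L^2$-norms and using their translation invariance,
\[
|\dot{\tilde\gamma}|(t)^2=\big\|\partial_t\gamma(t,\cdot)-m'(t)\,\partial_s\gamma(t,\cdot)\big\|_{L^2(\R)}^2 .
\]
The defining relation for $m'(t)$ is precisely the optimality condition making $m'(t)\,\partial_s\gamma(t,\cdot)$ the $L^2$-orthogonal projection of $\partial_t\gamma(t,\cdot)$ onto $\mathbb{R}\,\partial_s\gamma(t,\cdot)$, so minimizing the quadratic $\mu\mapsto\|\partial_t\gamma(t,\cdot)-\mu\,\partial_s\gamma(t,\cdot)\|_{L^2(\R)}^2$ yields
\[
|\dot{\tilde\gamma}|(t)^2=\|\partial_t\gamma(t,\cdot)\|_{L^2(\R)}^2-\frac{\big(\partial_t\gamma(t,\cdot),\partial_s\gamma(t,\cdot)\big)_{L^2(\R)}^2}{\|\partial_s\gamma(t,\cdot)\|_{L^2(\R)}^2}\le\|\partial_t\gamma(t,\cdot)\|_{L^2(\R)}^2=|\dot\gamma|(t)^2 ,
\]
and the same Cauchy--Schwarz bound gives $\|m'(t)\,\partial_s\gamma(t,\cdot)\|_{L^2(\R)}\le|\dot\gamma|(t)$, whence $|\dot{\tilde\gamma}|(t)\le 2|\dot\gamma|(t)$. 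Inserting $|\dot{\tilde\gamma}|\le|\dot\gamma|$ into the first display completes the argument.

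The main obstacle is the chain rule above: since $\gamma$ is only locally absolutely continuous valued in $L^2$ and $m$ is only $W^{1,1}_{loc}$, one cannot differentiate $t\mapsto\gamma(t,\cdot-m(t))$ formally and must instead argue through $L^2$ difference quotients, simultaneously controlling the increment in the first argument and the $s$-translation whose amount $m(t+h)$ itself varies. Everything else — translation invariance of $\mathcal{K}$ and of the $L^2$ geometry, the membership $\tilde\gamma(t)\in z^++L^2$, and the final one-line projection inequality — is routine once this point is settled.
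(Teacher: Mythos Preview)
Your approach is essentially the same as the paper's: both compute the metric derivative of $\tilde\gamma$ via the chain rule $\partial_t\tilde\gamma=(\partial_t\gamma-m'\partial_s\gamma)(\cdot-m(t))$, observe that $\mathcal K$ is translation invariant, and note that the stated formula for $m'$ is precisely the minimizer of $\mu\mapsto\|\partial_t\gamma-\mu\,\partial_s\gamma\|_{L^2}$. You supply more detail than the paper (membership $\tilde\gamma(t)\in z^++L^2$, the difference-quotient derivation of the chain rule), which is welcome since the paper simply writes down $|\dot{\tilde\gamma}|(t)=\|\partial_t\gamma-m'\partial_s\gamma\|_{L^2}$ without justification.

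One small point: your bound $|m'(t)|\,\|\partial_s\gamma(t,\cdot)\|_{L^2}\le|\dot\gamma|(t)$ and your orthogonal-projection reading of the formula both correspond to the denominator $\|\partial_s\gamma\|_{L^2}^{2}$ rather than $\|\partial_s\gamma\|_{L^2}$ as printed in the statement; the paper's own proof likewise treats $m'$ as the minimizer of $\|\partial_t\gamma-\mu\,\partial_s\gamma\|_{L^2}$, so this is almost certainly a typo in the statement and your interpretation is the intended one. With that reading you do need a lower bound on $\|\partial_s\gamma(t,\cdot)\|_{L^2}$ over compact subintervals to get $m'\in L^1_{loc}$; your attribution of this to ``$\gamma(t)\notin\Sigma$'' is not quite right (translates of $z^\pm$ still have $\|\partial_s\gamma\|_{L^2}>0$), but the bound itself is correct: $v\mapsto\|v'\|_{L^2}$ is l.s.c.\ on $L^2$ and strictly positive on $z^++L^2$ (no such $v$ can be constant), so $t\mapsto\|\partial_s\gamma(t,\cdot)\|_{L^2}$ is l.s.c.\ and positive, hence bounded below on compacts. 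The paper instead reads the formula as written (denominator $\|\partial_s\gamma\|_{L^2}$), for which Cauchy--Schwarz gives $|m'|\le|\dot\gamma|$ directly and no lower bound is needed --- but then the projection/optimality interpretation would fail, confirming the typo.
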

\begin{remark}
The curve $\gamma$ is such that $\tilde{\gamma}=\gamma$ if and only if $(\partial_t\gamma,\partial_s\gamma)_{L^2(\R)}=0$ a.e.
\end{remark}
\begin{proof}
Let us first justify that $m$ is well defined (i.e. an antiderivative of the right-hand side exists). Since $\gamma\in AC_{loc}(I,X)$, one has $\partial_t\gamma(t,\cdot)\in L^2(\R,\R^n)$ for a.e. $t\in I$. Moreover, as $\mathfrak{L}_{\mathcal{K}}(\gamma)=\int \mathcal{K}(\gamma(t))|\dot{\gamma}|(t)\d t<\infty$, and with our convention $+\infty\times 0=+\infty$, we know that $\mathcal{K}(\gamma(t))<\infty$ and, in particular, $\partial_s\gamma(t,\cdot)\in L^2(\R,\R^n)$ for a.e. $t\in I$. Due to the constraint $\gamma(t,\cdot)-z^+\in L^2(\R,\R^n)$, we also know that $\|\partial_s\gamma(t,\cdot)\|_{L^2}>0$ for a.e. $t\in I$. Moreover, one has the estimate 
\[
\frac{(\partial_t\gamma(t,\cdot),\partial_s\gamma(t,\cdot))_{L^2}}{\|\partial_s\gamma(t,\cdot)\|_{L^2}}\leq \|\partial_t\gamma(t,\cdot)\|_{L^2}= |\dot{\gamma}|(t)\in L^1_{loc}(I),
\]
so that $t\mapsto m(t)$ is well defined and unique on $I$.

Assume now that $m$ is an arbitrary function in  $W^{1,1}_{loc}(I,\R)$. It is clear that one reduces $\mathfrak{L}_{\mathcal{K}}(\gamma)$ by replacing $\gamma$ by $\tilde{\gamma}$ if $m$ is chosen in such a way that $|\dot{\tilde{\gamma}}|(t)$ is minimal since $\mathcal{K}$ is invariant by translation. Yet,
\[
|\dot{\tilde{\gamma}}|(t)=\|\partial_t\gamma(t,s-m(t))-\partial_s\gamma(t,s-m(t))m'(t)\|_{L^2},
\]
which is minimal exactly when $m'(t)$ is given by the claimed formula.
\end{proof}
Due to the translation invariance of $\mathcal{K}$ and without symmetry conditions, we need new tools to avoid oscillations. For all ${v}\in X$, we introduce the set $M({v})$ of optimal translation parameters $m$ in projecting ${v}$ onto $\mathcal{C}(z^-)\cup\mathcal{C}(z^+)=\{z^\pm(\cdot-m)\;:\;m\in \R\}$:
\[
M({v}):=\left\{m\in\R\;:\; \|{v}-z^\pm(\cdot-m)\|_{L^2}=d_{L^2}({v},\mathcal{C}(z^+)\cup\mathcal{C}(z^-))\right\},
\]
where $z^\pm$ is either equal to $z^+$ if $\|{v}-z^+(\cdot-m)\|_{L^2}<\|{v}-z^-(\cdot-m)\|_{L^2}$ or equal to $z^-$ otherwise. The fact that $M({v})$ is not empty follows from the lower semicontinuity of $m\mapsto\|{v}-z^\pm(\cdot-m)\|_{L^2}$ and the following coercivity property:
\[
\|{v}-z^\pm(\cdot-m)\|_{L^2}\geq\|z^\pm-z^\pm(\cdot-m)\|_{L^2}-\|z^\pm-{v}\|_{L^2}\underset{|m|\to\infty}{\sim}|a^+-a^-|\sqrt{|m|}.
\]
Note that the preceding estimate also shows that $M(v)$ is uniformly bounded over $d_{L^2}$-bounded subsets of $X$:
\begin{equation}
\label{uniformBound}
\forall R>0,\quad \sup\{|m|\;:\; m\in M({v})\text{ with }{v}\in X\text{ s.t. }d_{L^2}({v},\{z^-,z^+\})\leq R\}<+\infty.
\end{equation}
On $d_X$-bounded subsets of $X$, we know at least that the diameter of $M({v})$ is bounded. More precisely, if $A\subset X$ is  $d_X$-bounded, we cannot say that the diameter of $\bigcup_{v\in A}M(v)$ is finite, but we can say that $\sup\{\mathrm{diam}(M(v))\,:\,v\in A\}$ is finite, i.e.
\begin{equation}
\label{diameter}
\forall R>0,\quad\sup\left\{|m_1-m_2|\;:\; m_1,m_2\in M({v})\text{, where }{v}\in X\text{ with }d_X({v},\Sigma)\leq R\right\}<\infty.
\end{equation}
Indeed, one has the estimate
\[
\|z^\pm(\cdot-m_1)-z^\pm(\cdot-m_2)\|_{L^2}\leq\|z^\pm(\cdot-m_1)-{v}\|_{L^2}+\|{v}-z^\pm(\cdot-m_2)\|_{L^2}\leq 2 R,
\]
thus yielding a bound on $|m_1-m_2|$ since the first term is equivalent to $|a^+-a^-||m_1-m_2|^{1/2}$ as $|m_1-m_2|\to\infty$. We need the following lemma:
\begin{lemma}\label{stability_general}
For all $R>0$ and $\varepsilon>0$, there exist $\delta>0$ and $\alpha>0$ with the following properties:
\begin{enumerate}
\item
for every ${v}\in X$ with $d_X({v},\Sigma)\leq\delta$, $M({v})$ is reduced to a single point $m({v})$ and the map ${v}\mapsto m({v})$ is Lipschitz continuous in $L^2$; namely, there exists a constant $C>0$ such that for every ${v}_1,\, {v}_2\in X$ with $d_X({v}_1,\Sigma)\le\delta$ and $d_X({v}_2,\Sigma)\leq\delta$, one has $|m({v}_1)-m({v}_2)|\le C\|{v}_1-{v}_2\|_{L^2}$;
\item 
for every ${v}\in X$ with $\mathcal{K}({v})\leq\delta$ and $d_X({v},\Sigma)\leq R$, one has
\begin{enumerate}
\item
$\|{v}-z\|_{L^\infty}\leq\varepsilon$\ ,
\item
$\mathcal{K}({v})\geq\alpha \|{v}-z\|_{H^1(\R)}$\ ,
\end{enumerate}
\end{enumerate}
where $z\in \mathcal{C}(z^-)\cup\mathcal{C}(z^+)$ is such that $\|{v}-z\|_{L^2}$ is minimal, i.e. $z=z^\pm(\cdot-m)$ with $m\in M({v})$.
\end{lemma}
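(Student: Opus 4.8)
\emph{Overall plan.} I would establish the three assertions separately, obtaining (2a) at the end as a corollary of (2b). Throughout, $\hat z_v$ denotes an element of $\mathcal{C}(z^-)\cup\mathcal{C}(z^+)$ realizing $d_X(v,\Sigma)=d_{L^2}(v,\mathcal{C}(z^-)\cup\mathcal{C}(z^+))$ (such an element exists, as observed before the statement); recall also that $d_{L^2}(\mathcal{C}(z^+),\mathcal{C}(z^-))>0$ — a short consequence of $z^\pm\in\mathcal{S}(a^-,a^+)$ together with $z^+\notin\mathcal{C}(z^-)$ — so that for $v$ close to $\Sigma$ the closest orbit is unambiguous.

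\emph{Assertion (1).} For $v$ with $d_X(v,\Sigma)=\|v-\hat z_v\|_{L^2}\le\delta$, say with $\hat z_v\in\mathcal{C}(z^+)$ (the case of $\mathcal{C}(z^-)$ being identical), I would study the smooth function $\phi_v(m):=\tfrac12\|v-z^+(\cdot-m)\|_{L^2}^2$, whose set of minimizers over $m\in\R$ is $M(v)$. Using $(z^+)'\in H^1(\R,\R^n)$ — which holds because $(z^+)''=\nabla W(z^+)$ decays at $\pm\infty$ — one computes $\phi_v''(m)=\|(z^+)'\|_{L^2}^2-\big(v-z^+(\cdot-m)\,;\,(z^+)''(\cdot-m)\big)_{L^2}\ge\|(z^+)'\|_{L^2}^2-C\|v-z^+(\cdot-m)\|_{L^2}$. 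On the set $\{m:\|v-z^+(\cdot-m)\|_{L^2}\le2\delta\}$, which contains all of $M(v)$ since $\min\phi_v\le\tfrac12\delta^2$, this is $\ge\tfrac12\|(z^+)'\|_{L^2}^2>0$ for $\delta$ small, so $\phi_v$ is uniformly convex there and $M(v)=\{m(v)\}$ is a singleton. Lipschitz dependence then comes from a quantitative implicit function theorem applied to $G(v,m):=\big(v-z^+(\cdot-m)\,;\,(z^+)'(\cdot-m)\big)_{L^2}=0$: near $m(v)$ one has $\partial_mG=\phi_v''\ge\tfrac12\|(z^+)'\|_{L^2}^2$, while $G$ is affine in $v$ with $|G(v_1,m)-G(v_2,m)|\le\|(z^+)'\|_{L^2}\|v_1-v_2\|_{L^2}$, giving $|m(v_1)-m(v_2)|\le 2\|v_1-v_2\|_{L^2}/\|(z^+)'\|_{L^2}$ when $v_1,v_2$ are closest to the same orbit; in the opposite case $\|v_1-v_2\|_{L^2}$ is bounded below, so the inequality holds trivially after enlarging the constant, using that $M$ is bounded on $d_X$-bounded sets.

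\emph{Assertion (2), compactness core.} The crux is the qualitative statement: if $\mathcal{K}(v_n)\to0$ and $d_X(v_n,\Sigma)\le R$, then $\|v_n-\hat z_n\|_{H^1(\R)}\to0$. I would first translate each $v_n$ so that $\hat z_n$ becomes a fixed profile $z^\sigma$, $\sigma\in\{-,+\}$ constant along a subsequence; then $w_n:=v_n(\cdot-m_n)$ satisfies $\|w_n-z^\sigma\|_{L^2}\le R$, $0\in M(w_n)$, and $\mathfrak{E}_W(w_n)\to d_K(a^-,a^+)$, so $w_n-z^\sigma$ is bounded in $H^1(\R)$. Since $z^\sigma(s)\to a^\pm$, the $L^2$ bound yields $s_n^\pm\in[\pm S,\pm2S]$ with $|w_n(s_n^\pm)-a^\pm|<\varepsilon_0$ for $S$ large, uniformly in $n$; Lemma \ref{projection} applied forward at $s_n^+$ and its backward analogue at $s_n^-$ produces curves $\bar w_n$ with $\mathfrak{E}_W(\bar w_n)\le\mathfrak{E}_W(w_n)$, equal to $w_n$ on $[-S,S]$, and lying in a fixed funnel (because $E$ is decreasing). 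A diagonal extraction then gives $w_n\to\bar v$ locally uniformly with $\bar v$ a minimizer of $\mathfrak{E}_W$ over $\mathcal{S}(a^-,a^+)$, hence $\bar v\in\mathcal{C}(z^-)\cup\mathcal{C}(z^+)$ by $\mathbf{(A5)}$. The next step is to upgrade this to strong $H^1(\R)$ convergence: lower semicontinuity and $\mathfrak{E}_W(w_n)\to\mathfrak{E}_W(\bar v)$ force $\limsup_n\mathfrak{E}_W(w_n,\R\setminus[-T,T])\to0$ as $T\to\infty$ (no energy escapes to infinity); bounding the $d_K$-variation of $w_n$ on $[T,\infty)$ by this tail energy and invoking the equivalence of $d_K$ with the Euclidean distance near $a^+$ shows $\sup_{s\ge T}|w_n(s)-a^+|$ is uniformly small, and combining this sup bound with the $L^{p_0}$ estimate $\int_T^\infty|w_n-a^+|^{p_0}\lesssim\int_T^\infty W(w_n)$ coming from $\mathbf{(A4)}$ gives a \emph{uniform} $L^2$ tail bound for $w_n-a^+$ — this is exactly where $p_0<6$ is used. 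With local uniform convergence this yields $w_n\to\bar v$ in $L^2(\R)$, and then $\int|w_n'|^2\to\int|\bar v'|^2$ promotes it to $H^1(\R)$. Finally $0\in M(w_n)$ gives $\|w_n-z^\sigma\|_{L^2}\le\|w_n-\bar v\|_{L^2}\to0$, so $\bar v=z^\sigma$ and $\|w_n-z^\sigma\|_{H^1}\to0$. I expect this compactness argument — and within it the passage from local to global $L^2$ convergence, i.e.\ the exclusion of mass escaping to infinity — to be the main obstacle of the whole lemma.

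\emph{Assertion (2), quantitative step, and (2a).} For (2b), write $v=z+w$ with $z=\hat z_v=z^\pm(\cdot-m)$, $m\in M(v)$, and $w=v-z\in H^1(\R)$; optimality of $m$ gives the orthogonality $(w\,;\,z')_{L^2}=0$. By the compactness core, once $\delta$ is small $\|w\|_{L^\infty}$ is as small as we wish, so $v$ and $z$ stay in a fixed bounded set on which $\nabla^2W$ is uniformly continuous; a second-order Taylor expansion of $\mathfrak{E}_W$ at the critical point $z$ is then legitimate and yields $\mathcal{K}(v)^2=2\big(\mathfrak{E}_W(v)-\mathfrak{E}_W(z)\big)=\big(A(z)w\,;\,w\big)_{L^2}+r$ with $|r|\le\omega(\|w\|_{L^\infty})\,\|w\|_{L^2}^2$. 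By $\mathbf{(Spec)}$ in the form \eqref{maxMin} and the boundedness of $\nabla^2W$ along $z$, one has $\big(A(z)w\,;\,w\big)_{L^2}\ge c\,\|w\|_{H^1(\R)}^2$ for a constant $c>0$ depending only on $c_0$ and $W$; choosing $\delta$ small enough that $\omega(\|w\|_{L^\infty})\le c/2$ on the relevant range, $\mathcal{K}(v)^2\ge\tfrac c2\|w\|_{H^1}^2$, i.e.\ (2b) holds with $\alpha=\sqrt{c/2}$. Then (2a) is immediate: $\|v-z\|_{L^\infty}\le C_{\mathrm{Sob}}\|v-z\|_{H^1}\le (C_{\mathrm{Sob}}/\alpha)\,\mathcal{K}(v)\le(C_{\mathrm{Sob}}/\alpha)\,\delta$, which is $\le\varepsilon$ after one last shrinking of $\delta$; of course one also takes $\delta$ below the threshold of assertion (1).
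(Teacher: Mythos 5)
Your overall architecture matches the paper's: strict convexity of $m\mapsto\|v-z^\pm(\cdot-m)\|_{L^2}^2$ near its minimum for (1), a compactness argument through the funnel Lemma \ref{projection} for the qualitative closeness to $\mathcal{C}(z^-)\cup\mathcal{C}(z^+)$, and a second-order Taylor expansion combined with \eqref{maxMin} and the orthogonality \eqref{optim} for (2b). However, there is a genuine gap in your ``compactness core'', precisely at the step you identify as the crux. You claim that the uniform smallness of $\sup_{s\ge T}|w_n(s)-a^+|$ together with the tail bound $\int_T^\infty|w_n-a^+|^{p_0}\lesssim\int_T^\infty W(w_n)$ yields a uniform $L^2$ tail bound for $w_n-a^+$. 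This interpolation goes the wrong way when $p_0>2$: one has $\int|f|^2\le\|f\|_\infty^{2-p_0}\int|f|^{p_0}$ only for $p_0\le 2$, and for $p_0\in(2,6)$ a function can be uniformly small and small in $L^{p_0}$ on $[T,\infty)$ while having arbitrarily large $L^2$ norm there (spread a small constant over a long interval). The funnel bound $|{\cdot}-a^\pm|\le E$ with $E\in L^2$ (which is where $p_0<6$ actually enters) holds for the \emph{projected} curves $\bar w_n$, not for $w_n$ itself, and the projection only decreases the distance to $a^\pm$, so it cannot be transferred back. Since your identification $\bar v=z^\sigma$ and your final conclusion $\|w_n-z^\sigma\|_{H^1}\to0$ both rest on the global $L^2$ convergence $\|w_n-\bar v\|_{L^2}\to0$, the argument breaks here. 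The paper circumvents this entirely: it never proves global $L^2$ convergence of $v_n$, but upgrades local uniform convergence to \emph{global uniform} convergence via the $d_K$-variation of the tails (which you also have), and identifies the limit as the nearest profile $z^+$ through the inequality $\|z-z^+\|_{L^2([-S,S])}^2\le 2R\,\|z-z^+\|_{L^2(\R\setminus[-S,S])}$, obtained by splitting the optimality $\|v_n-z^+\|_{L^2(\R)}\le\|v_n-z\|_{L^2(\R)}$ over $[-S,S]$ and its complement; this needs only local $L^2$ convergence plus the a priori bound $\|v_n-z^+\|_{L^2}\le R$. Your proof can be repaired by substituting this identification and observing that the Taylor-expansion step needs only $L^\infty$ smallness of $v-z$, not $H^1$ convergence.

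A secondary point on assertion (1): uniform convexity of $\phi_v$ on the sublevel set $\{m:\|v-z^+(\cdot-m)\|_{L^2}\le2\delta\}$ does not by itself give uniqueness of the minimizer, because that set need not be an interval; two global minimizers could sit in different components. The paper closes this by showing each minimizer is the unique one in a $\theta$-neighbourhood, so two distinct minimizers would be $\theta$-separated, and then $\|z(\cdot-m_1)-z(\cdot-m_2)\|_{L^2}\le2\sqrt{\delta}$ contradicts the positive infimum of $\|z-z(\cdot-m)\|_{L^2}$ over $|m|\ge\theta$ (using injectivity of $z$). Relatedly, your dispatch of the case where $v_1,v_2$ project onto different orbits (``the inequality holds trivially after enlarging the constant'') presumes a uniform bound on $|m(v_1)-m(v_2)|$ over $\{d_X(\cdot,\Sigma)\le\delta\}$, which does not hold, since $d_X(v,\Sigma)\le\delta$ places no bound on $m(v)$ itself; the Lipschitz property is genuinely only local in $L^2$, which is how the paper states and uses it in its Step 3.
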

\begin{remark}\label{Ksmall}
Imposing $d_X({v},\Sigma)\leq R$ is just a way of restricting to bounded subsets of $X$. Note that 
\[
d_X({v},\Sigma)=d_{L^2}({v},\mathcal{C}(z^-)\cup\mathcal{C}(z^+)).
\]
As a consequence of Lemma \ref{stability_general}, if we know that ${v}$ belongs to a bounded subset of $X$ and that $\mathcal{K}({v})$ is small, then ${v}$ is  $H^1$-close to its $L^2$-projection onto $\mathcal{C}(z^-)\cup\mathcal{C}(z^+)$. In particular, by the first implication of the lemma, we also know that $M({v})$ has a single point.
\end{remark}
\begin{proof}
\textsc{$\bullet$ Step 1. Uniqueness of $m$.} This is a consequence of \cite[Lemma 2.1.]{Schatzman:2002}. We give an alternate proof in our particular case. Let us pick a point $m_0$ in $M({v})$. By definition, $m_0$ minimizes $F(m):=\|{v}-z(\cdot-m)\|^2_{L^2}$ (we will write $F({v},m)$ in case we need to distinguish the dependence on ${v}$; $F'$ and $F''$ will denote anyway the derivatives w.r.t. $m$), where $z:=z^-$ if $d_{L^2}({v},\mathcal{C}(z^-))\leq d_{L^2}({v},\mathcal{C}(z^+))$ and $z=z^+$ otherwise. Compute the first and second derivatives of $F$:
\[
\begin{cases}
F'(m)=2(z'(\cdot-m)\,,\, ({v}-z(\cdot-m)))_{L^2}\ ,\\
F''(m)=2\|z'\|_{L^2}^2-2(z''(\cdot-m)\,,\,{v}-z(\cdot-m))_{L^2}\ .
\end{cases}
\]
In particular, by optimality, one has 
\begin{equation}
\label{optim}
F'(m_0)=2(z'(\cdot-m_0)\,,\, ({v}-z(\cdot-m_0)))_{L^2}=0.
\end{equation}
Let us set $\lambda:=\|z''\|_{L^2}\|z'\|_{L^2}^{-1}>0$. The Cauchy-Schwarz and Young inequalities yield
\begin{equation}\label{convexest}
|F'(m)|\leq 2\|z'\|_{L^2}\sqrt{F(m)}\quad\text{and}\quad F''(m)\geq 2\|z'\|_{L^2}^2-\lambda^{-2}\|z''\|^2_{L^2}-\lambda^{2}F(m)=\|z'\|_{L^2}^2-\lambda^2F(m).
\end{equation}
We now prove that, for $m$ close to $m_0$, $F(m)$ is small so that $F$ is strictly convex. First, by construction,
\[
F(m_0)=\inf F=d_{L^2}({v},\mathcal{C}(z^-)\cup\mathcal{C}(z^+))\leq\delta.
\]
Now, given $\theta>0$, we estimate $F^\ast:=\sup\{F(m)\;:\; |m-m_0|\leq\theta\}$ using the Mean Value theorem and \eqref{convexest}:
\[
F^\ast\leq F(m_0)+\theta\sup\{|F'(m)|\;:\;|m-m_0|\leq \theta\}\leq \delta+2\theta\|z'\|_{L^2}\sqrt{F^\ast}\leq  \delta+2\theta^2\|z'\|_{L^2}^2+\frac12 F^\ast,
\]
where the last inequality is just an application of a Young inequality. We deduce, $F^\ast\leq 4\theta^2\|z'\|_{L^2}^2+2\delta$. If $\theta$ and $\delta$ are small enough (where $\theta$ and $\delta$ depend on $z\in\{z^-,z^+\}$ only), this implies that $F^\ast<\lambda^{-2}\|z'\|_{L^2}^2/2$. In particular, one has $F''(m)\geq \|z'\|_{L^2}^2/2$ whenever $|m-m_0|\leq\theta$. Thus $F$ is strictly convex around $m_0$ so that $m_0$ is an isolated minimizer of $F$. Since $M({v})$ is bounded, it has a finite number of points and the minimal distance between two of those points is greater than $\theta$. Now, if $m_1,m_2$ are two distinct points in $M({v})$, one has
\[
\|z(\cdot-m_1)-z(\cdot-m_2)\|_{L^2}\leq \|z(\cdot-m_1)-{v}\|_{L^2}+\|{v}-z(\cdot-m_2)\|_{L^2}=\sqrt{F(m_1)}+\sqrt{F(m_2)}\leq 2\sqrt{\delta}.
\]
Up to reducing $\delta$ again if needed (with a bound depending on $z^\pm$ only), we get a contradiction because of the estimate
\[
\|z(\cdot-m_1)-z(\cdot-m_2)\|_{L^2}\geq \inf\{\|z-z(\cdot-m)\|_{L^2}\;:\; |m|\geq\theta\}>0.
\]
Indeed, the existence of a minimizer $m$ in the last infimum follows from both semicontinuity and coercivity of $\|z-z(\cdot-m)\|_{L^2}$, and it is clear that $\|z-z(\cdot-m)\|_{L^2}$ cannot vanish since $z$ is injective, by optimality ($z$ minimizes $\mathfrak{E}_{W}$).
\medskip

\textsc{$\bullet$ Step 2. Continuity of the map ${v}\mapsto m({v})$.} We prove that the map ${v}\mapsto m({v})$ is continuous for the $L^2$-topology on the set of functions ${v}\in X$ such that $d_X({v},\Sigma)\le \delta$. Let $({v}_n)_n\subset X$ be a sequence and ${v}_0\in X$ such that $\|{v}_n-{v}_0\|_{L^2}\to 0$, $d_X({v}_0,\Sigma)\le \delta$ and for each $n$, $d_X({v}_n,\Sigma)\le \delta$. Since $({v}_n)_n$ is $L^2$-bounded, from \eqref{uniformBound}, we learn that $m({v}_n)$ is bounded. But, by semicontinuity of the $L^2$-norm, any converging subsequence of $(m({v}_n))_n$ converges to a minimizer of the problem $\min_m \|{v}_0-z^\pm (\cdot-m)\|_{L^2}$. Thus, by uniqueness of the optimal value $m({v}_0)\in M({v}_0)$, we deduce that $(m({v}_n))_n$ converges to $m({v}_0)$.
\medskip

\textsc{$\bullet$ Step 3. Lipschitz behavior of the map ${v}\mapsto m({v})$.} It is enough to find a constant $C$ such that, for ${v}_0,{v}\in X$ with $d_X({v}_0,\Sigma)\le\delta$, $d_X({v},\Sigma)\le\delta$ and for $\|{v}-{v}_0\|_{L^2}$ sufficiently small (with a constant that may depend on ${v}_0$), we have $|m({v})-m({v}_0)|\leq C\|{v}-{v}_0\|_{L^2}$ (i.e., the Lipschitz behavior may be proven locally). We shall see that we can take $C= 4/\|z'\|_{L^2} $, where $z$ is chosen between $z^-$ and $z^+$ as the $L^2$-closest to ${v}_0$ (note that the functions $z$ associated in this way to ${v}$ and ${v}_0$ are the same if $\delta$ is small). By continuity, we infer that, for $\|{v}-{v}_0\|_{L^2}$ small enough, we have $|m({v})-m({v}_0)|<\theta$ (where $\theta$ is the value used above), so that $F''({v}_0,m)$ is bounded from below by $\|z'\|_{L^2}^2/2$ for $m\in [m({v}),m({v}_0)]$. Then we use
$$\frac{\|z'\|_{L^2}^2}{2}|m({v})-m({v}_0)|\leq |F'({v}_0,m({v}))-F'({v}_0,m({v}_0))|= |F'({v}_0,m({v}))-F'({v},m({v}))|\leq 2\|z'\|_{L^2}\|{v}-{v}_0\|_{L^2},$$
where the first inequality comes from the lower bound on $F''({v}_0,m)$, the next equality comes from $0=F'({v},m({v}))=F'({v}_0,m({v}_0))$ and the last inequality from the formula for $F'$. This implies $|m({v})-m({v}_0)|\leq C\|{v}-{v}_0\|_{L^2}$ with $C=\frac{4}{\|z'\|_{L^2}}$, and proves the claim.

\medskip
\textsc{$\bullet$ Step 4. $L^\infty$ estimate in the second implication: proof of $\mathrm{(2.a.)}$.} We use the same steps that in the proof of Lemma \ref{stability}: we take a sequence $({v}_n)\subset X$ with $\mathcal{K}({v}_n)\to 0$, $d_X({v}_n,\Sigma)\leq R$, and we try to prove that $\|{v}_n-z_n\|_{L^\infty}$ tends to $0$ as $n\to\infty$, $z_n$ being an $L^2$-projection of ${v}_n$ onto $\mathcal{C}(z^-)\cup\mathcal{C}(z^+)$. Without loss of generality, one can assume that
\[
\|{v}_n-z_n\|_{L^\infty}\underset{n\to\infty}{\longrightarrow}\limsup\limits_{n\to\infty}\|{v}_n-z_n\|_{L^\infty},
\]
so that we are free to extract a subsequence whenever needed. By definition, one has $z_n=z^\pm(\cdot-m_n)$ with $m_n\in M({v}_n)$. Up to replacing ${v}_n$ and $z_n$ by ${v}_n(\cdot+m_n)$ and $z_n(\cdot+m_n)$ respectively, one can assume that $m_n=0$ and, for the sake of simplicity, we also assume that $z_n=z^+$. Then, thanks to \eqref{diameter}, we know that the sets $M({v}_n)$ are all included in a fixed compact set:
\[
\exists S_1>0,\, \forall n\geq 0,\, M({v}_n)\subset [-S_1,S_1].
\]
The rest of the proof is similar to that of Lemma \ref{stability}. There are only few little changes. First of all, Claim \ref{centered} has to be replaced by
\begin{claim}\label{stab}
There exist $\delta_0,\varepsilon_1>0$ and $S_0\geq 0$ depending on $W$, $S_1$, $a^-$ and $a^+$ only, such that for all ${v}\in X$ with $\mathcal{K}({v})\leq\delta_0$, $d_X({v},\Sigma)\leq R$, $M({v})\subset [-S_1,S_1]$ and $s\geq S_0$, one has
\[
|{v}(s)-a^-|>\varepsilon_1\quad\text{and}\quad |{v}(-s)-a^+|>\varepsilon_1.
\]
\end{claim}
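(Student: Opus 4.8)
The plan is to argue by contradiction, in the spirit of Claim~\ref{centered}, the role of the (now unavailable) symmetry $v(t)=\mathcal R_n(v(-t))$ being played by the two standing constraints $d_X(v,\Sigma)\le R$ and $M(v)\subset[-S_1,S_1]$. Suppose no admissible triple $(\delta_0,\varepsilon_1,S_0)$ exists. Then there is a sequence $v_n\in X$ with $\mathcal K(v_n)\to 0$, $d_X(v_n,\Sigma)\le R$, $M(v_n)\subset[-S_1,S_1]$, and reals $s_n\to+\infty$ such that, after extracting a subsequence so that the same alternative always holds, either $|v_n(s_n)-a^-|\to 0$ or $|v_n(-s_n)-a^+|\to 0$. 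I only treat the first alternative, the second being identical after exchanging the roles of $+\infty$ and $-\infty$ (and of $a^-$ and $a^+$). One first records that each $v_n$ is a genuine connection in $\mathcal S(a^-,a^+)$: since $v_n-z^+\in L^2(\R,\R^n)$ and $\mathfrak{E}_{W}(v_n)<\infty$, finite energy together with $\mathbf{(A3)}$ (i.e. $W\ge k^2(d(\cdot,\Sigma))$, $\int_0^\infty k=+\infty$) forces $t\mapsto v_n(t)$ to have limits in $\Sigma$ at $\pm\infty$ — Young's inequality allows only finitely many transitions between the components of $\Sigma$, and between two of them the energy stays finite only if $v_n$ approaches a single zero — and these limits must be $a^\mp$, otherwise $v_n-z^+$ would not be square integrable near $\pm\infty$.

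The heart of the argument is an energy splitting at the point $s_n$. Because $v_n(s_n)\to a^-$ and $v_n(+\infty)=a^+$, Young's inequality and the continuity of the weighted distance $d_K$ on $\R^n$ give
\[
\mathfrak{E}_{W}\big(v_n,[s_n,+\infty)\big)\ \ge\ \mathfrak{L}_{K}\big(v_n|_{[s_n,+\infty)}\big)\ \ge\ d_K\big(v_n(s_n),a^+\big)\ \longrightarrow\ d_K(a^-,a^+)\quad\text{as }n\to\infty.
\]
Since $\mathfrak{E}_{W}(v_n)=d_K(a^-,a^+)+\mathcal K(v_n)^2\to d_K(a^-,a^+)$, subtracting yields $\mathfrak{E}_{W}(v_n,(-\infty,s_n])\to 0$. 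As $v_n(-\infty)=a^-$, the $d_K$-length of $v_n$ on $(-\infty,s_n]$ dominates $\sup_{t\le s_n}d_K(v_n(t),a^-)$, so $\varepsilon_n:=\sup_{t\le s_n}d_K(v_n(t),a^-)\to 0$; since $(\R^n,d_K)$ is proper (by $\mathbf{(A3)}$), the small $d_K$-balls about $a^-$ shrink to $\{a^-\}$ in the Euclidean metric, hence $\sup_{t\le s_n}|v_n(t)-a^-|\to 0$ as well. In other words, $v_n$ is uniformly close to the constant $a^-$ on the whole half-line $(-\infty,s_n]$, whose right endpoint tends to $+\infty$.

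Finally this is turned into a contradiction with $d_X(v_n,\Sigma)\le R$. Choose $m_n\in M(v_n)$, so $m_n\in[-S_1,S_1]$, and $z\in\{z^-,z^+\}$ realizing $d_X(v_n,\Sigma)=\|v_n-z(\cdot-m_n)\|_{L^2(\R)}$. Both $z^-$ and $z^+$ solve $z''=\nabla W(z)$ with $z(-\infty)=a^-$, so by uniqueness for this ODE (using $W\in C^2$) neither of them can meet $a^-$ at a finite time; consequently there is $\mu>0$, depending only on $z^\pm$ and $a^-$, with $|z^\pm(\tau)-a^-|\ge 3\mu$ for all $\tau\ge 0$. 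For $n$ large enough that $\varepsilon_n\le\mu$, and for $S_1\le t\le s_n$ (so $t-m_n\ge 0$), one gets $|v_n(t)-z(t-m_n)|\ge|z(t-m_n)-a^-|-\varepsilon_n\ge 2\mu$, whence
\[
d_X(v_n,\Sigma)^2\ =\ \|v_n-z(\cdot-m_n)\|_{L^2(\R)}^2\ \ge\ 4\mu^2\,(s_n-S_1)\ \longrightarrow\ +\infty,
\]
contradicting $d_X(v_n,\Sigma)\le R$. (In the second alternative, the same splitting gives $\mathfrak{E}_{W}(v_n,[-s_n,+\infty))\to 0$, hence $v_n$ uniformly close to $a^+$ on $[-s_n,+\infty)$, and the analogous $L^2$ estimate on $[-s_n,-S_1]$ yields the contradiction.)

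I expect the main obstacle to be the middle step: recognizing that the splitting forces $\mathfrak{E}_{W}(v_n,(-\infty,s_n])\to 0$ and then upgrading this $d_K$-smallness into genuine uniform closeness of $v_n$ to $a^-$ on an interval of diverging length — and, along the way, pinning down that it is precisely the combination of $d_X(v,\Sigma)\le R$ with $M(v)\subset[-S_1,S_1]$ that prevents the transition of $v_n$ from escaping to $+\infty$ (each constraint alone is insufficient).
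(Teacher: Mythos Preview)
Your argument is correct and follows the same strategy as the paper's: split the energy at the offending point $s_n$ to force $\mathfrak{E}_{W}(v_n,(-\infty,s_n])\to 0$, conclude that $v_n$ stays uniformly close to $a^-$ on a half-line whose right endpoint diverges, and contradict the bound $d_X(v_n,\Sigma)\le R$ via $M(v_n)\subset[-S_1,S_1]$ (the paper integrates over $[s_0/2,s_0]$ where you use $[S_1,s_n]$, but the mechanism is identical). One small point: your ODE-uniqueness claim that $z^\pm$ never meets $a^-$ at a finite time also needs $z^{\pm\prime}=0$ there, which follows from the equipartition $|z'|^2=2W(z)$ satisfied by minimizing connections.
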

\noindent Once Claim \ref{stab} is proved (it will be done in a while), since Claim \ref{claimOtherZero} is still valid in the unsymmetric case, the same proof as that of Lemma \ref{stability} shows that $({v}_n)_n$ converges uniformly to some zero $z\in\mathcal{C}(z^-)\cup\mathcal{C}(z^+)$ of $\mathcal{K}$. Note that without symmetry condition, we cannot assert $z=z^\pm$ (as we did in \eqref{dingdong}) but only $z=z^\pm(\cdot-m)$ with $m\in\R$. In order to prove that ${v}_n\to z^+$ uniformly, we need to prove $z=z^+$. Given an interval $I=[-S,S]$ with $S>0$, since $z_n=z^+$ is the projection of ${v}_n$ onto $\mathcal{C}(z^-)\cup\mathcal{C}(z^+)$, one has
\begin{eqnarray*}
\|{v}_n-z^+\|_{L^2(\R)}^2&\leq& \|{v}_n-z\|_{L^2(\R)}^2\leq \|{v}_n-z\|_{L^2(I)}^2+\big(\|{v}_n-z^+\|_{L^2(I^c)}+\|z^+-z\|_{L^2(I^c)}\big)^2\\
&\leq& \|{v}_n-z\|_{L^2(I)}^2+\|{v}_n-z^+\|_{L^2(I^c)}^2+\|z^+-z\|_{L^2(I^c)}^2+2\|z^+-z\|_{L^2(I^c)}\|{v}_n-z^+\|_{L^2(I^c)}.
\end{eqnarray*}
Substracting from both sides $\|{v}_n-z^+\|_{L^2(I^c)}^2$, and using $\|{v}_n-z^+\|_{L^2(\R)}=d_X({v}_n,\Sigma)\leq R$ and the fact that $\|z^+-z\|_{L^2(\R)}$ is independent of $n$ (it only depends on $m$), for all $S>1$ one gets
\[
\|{v}_n-z^+\|_{L^2([-S,S])}^2\leq  \|{v}_n-z\|_{L^2([-S,S])}^2+2R\|z^+-z\|_{L^2(\R\setminus [-S,S])}.
\]
We now pass to the limit as $n\to\infty$ and use the uniform convergence of ${v}_n$ to $z$, we obtain
\[
\|z-z^+\|_{L^2([-S,S])}^2\leq  2R\|z-z^+\|_{L^2(\R\setminus [-S,S])}.
\]
Now, this inequality, for fixed functions $z,z^+$ with $z-z^+\in L^2(\R,\R^n)$, cannot be true for large $S$, unless $z=z^+$. 
%From this we deduce that ${v}_n$ converges uniformly to $z^+$, which provides the desired contradiction and proves the statement $\mathrm{(2.a.)}$.
\medskip

It remains to prove Claim \ref{stab}. Assume that the conclusion of the claim fails: there exists ${v}\in X$ such that $\mathcal{K}({v})\le\delta_0$, $M({v})\subset [-S_1,S_1]$ and $s_0\geq S_0$, but (for instance) $|{v}(s_0)-a^-|\leq \varepsilon_1$. In particular, one has
\[
\mathfrak{E}_{W}({v})= \mathfrak{E}_{W}({v},(-\infty,s_0])+\mathfrak{E}_{W}({v},[s_0,+\infty))\geq \mathfrak{E}_{W}({v},(-\infty,s_0])+d_K(B(a^-,\varepsilon_1),a^+).
\]
Since $d_K$ is locally equivalent to the Euclidean distance, we know that $d_K(B(a^-,\varepsilon_1),a^+)$ tends to $d_K(a^-,a^+)$ as $\varepsilon_1\to 0$. Moreover, as $\mathcal{K}({v})^2=\mathfrak{E}_{W}({v})-d_K(a^-,a^+)<\delta_0^2$, we have proved that
\[
\mathfrak{E}_{W}({v},(-\infty,s_0]) \underset{\delta_0,\varepsilon_1\to 0}{\longrightarrow} 0.
\]
Since for all $s\leq s_0$, one has $d_K(a^-,{v}(s))\leq \mathfrak{E}_{W}({v},(-\infty,s_0])$, we know that ${v}(s)$ stays in a $d_K$-neighborhood (and so also an Euclidean-neighborhood) of $a^-$: let us say ${v}((-\infty,s_0])\subset B(a^-,r)$ with $r=r(\delta_0,\varepsilon_1)\to 0$ as $\delta_0,\varepsilon_1\to 0$. Now, for all $m\in M({v})\subset [-S_1,S_1]$, one has
\begin{equation*}
R^2\geq d_{L^2}({v},\mathcal{C}(z^-)\cup\mathcal{C}(z^+))^2=\|{v}-z^\pm(\cdot-m)\|_{L^2}^2\geq\int_{s_0/2}^{s_0}\mathrm{dist}(z^\pm(s-m),B(a^-,r))^2\d s.
\end{equation*}
Since $m$ lies on the bounded set $[-S_1,S_1]$, for small $r$ the quantity $\mathrm{dist}(z^\pm(s-m),B(a^-,r))^2$ is bounded from below by a positive constant on the interval $[S_0,+\infty)$. Hence the last term tends to $+\infty$ as $s_0\to \infty$. Thus there is a contradiction if $\delta_0,\varepsilon_1$ are chosen small enough and if $S_0$ is chosen large enough.

\medskip
\textsc{$\bullet$ Step 5. $H^1$ estimate in the second implication: proof of $\mathrm{(2.b.)}$.}
We apply a Taylor-Lagrange expansion to $W$ between the two points ${v}(s)$ and $z(s)$, with $s\in\R$: there exists $\xi(s)\in [{v}(s),z(s)]$ such that the following holds (note that there is no linear part since $z$ minimizes $\mathfrak{E}_{W}$),
\[
\mathcal{K}({v})^2=\mathfrak{E}_{W}({v})-\mathfrak{E}_{W}(z)=\frac 12\int_\R |{v}'(s)-z'(s)|^2\d s+\frac 12\int_\R D^2W(\xi(s))({v}(s)-z(s),{v}(s)-z(s))\d s.
\]
Applying the previous step to small values of $\varepsilon=\varepsilon_0$, one gets $\|{v}-z\|_{L^\infty}\leq \varepsilon_0$. In particular, ${v}$ and $\xi$ are bounded. Moreover, as $W$ is $\mathcal{C}^2$, one has $|\nabla^2W(z(s))-\nabla^2W(\xi(s))|\leq\eta$ for all $s$, whatever $\eta>0$ (up to chosing $\varepsilon_0$ small enough). In particular, $\nabla^2W(\xi(\cdot))$ is bounded and we get the estimate
\[
\frac 12\|{v}'-z'\|_{L^2}^2\leq \mathcal{K}^2({v})+C_0\|{v}-z\|_{L^2}^2.
\]
Thus it remains to prove that $\|{v}-z\|_{L^2}^2\leq C\mathcal{K}^2({v})$. First observe that
\[
\mathcal{K}^2({v})=\frac 12(A(z)({v}-z),({v}-z))_{L^2}+\frac 12\int_\R (D^2W(\xi)-D^2W(z))({v}-z,v-z)\d s,
\]
where $A(z)$ has been defined in \eqref{definitionSpectralA}. The last integral is controlled by $\eta \|{v}-z\|_{L^2}^2$, thus it is enough to prove that $(A(z)({v}-z),({v}-z))_{L^2}$ is larger that $c_0 \|{v}-z\|_{L^2}^2$ with $c_0>0$ (and then choose a small value of $\eta$ so that $\eta<c_0$) ; but this is a consequence of the spectral assumption $\mathbf{(Spec)}$ or, more precisely, \eqref{maxMin} since $z'$ is orthogonal to ${v}-z$ by \eqref{optim}.
\end{proof}
The following lemma claims that if $\gamma(t,\cdot)$ is an absolute continuous curve lying in a neighborhood of $\mathcal{C}(z^-)\cup\mathcal{C}(z^+)$, then the unique element $m(t)$ of $M(\gamma(t,\cdot))$ defines an absolutely continuous function of $t$, and provides an estimate on its variation. This will be useful to study the behavior of $m(t)$ as $t\to\pm\infty$, and thus the existence of the translation parameters $c^-,c^+$ in the system \eqref{Double_connections}, if $\gamma$ minimizes $\mathfrak{L}_{\mathcal{K}}$.
\begin{lemma}\label{translationspeed}
There exists $\delta>0$ (assumed to be small enough for the first implication in Lemma \ref{stability_general} to hold) such that the following property holds true. Let $\gamma\in AC_{loc}(I,X)$ be an injective curve parametrized by $\gamma(t)=(s\mapsto\gamma(t,s))$ such that $\mathfrak{L}_{\mathcal{K}}(\gamma)<+\infty$, $d_X(\gamma(t),\Sigma)\leq\delta$, and for a.e. $t\in I$, $(\partial_t\gamma,\partial_s\gamma)_{L^2}=0$. Then the function $m:I\to \R$ defined by $m(t):=m(\gamma(t))$ is absolutely continuous and there exists $C>0$ (independent of $t$) such that
\[
|m'(t)|\leq C|\dot{\gamma}|(t)\mathcal{K}(\gamma(t)).
\]
\end{lemma}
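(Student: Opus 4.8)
The strategy is to split according to the size of $\mathcal{K}(\gamma(t))$: where $\mathcal{K}(\gamma(t))$ is not small, the bound follows at once from the Lipschitz dependence of the projection parameter (Lemma~\ref{stability_general}(1)); where $\mathcal{K}(\gamma(t))$ is small, the extra factor $\mathcal{K}(\gamma(t))$ is produced by a differential computation exploiting the orthogonality $w(t)\perp z'(\cdot-m(t))$ and the hypothesis $(\partial_t\gamma,\partial_s\gamma)_{L^2}=0$. For the geometry, choose $\delta$ small enough that the $d_X$-balls of radius $\delta$ around $[z^-]$ and $[z^+]$ are disjoint; since $I$ is an interval and $\gamma$ is continuous, $\gamma$ stays within $d_X$-distance $\delta$ of a single component, say $\mathcal{C}(z^+)$, and we write $z:=z^+$. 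By injectivity $\gamma(t)\notin\Sigma$ except possibly at the endpoints of $I$, which we discard; then for each $t$ the value $m(t):=m(\gamma(t))$ is the unique minimiser of $m\mapsto F(\gamma(t),m)=\|\gamma(t,\cdot)-z(\cdot-m)\|_{L^2}^2$ by Lemma~\ref{stability_general}(1). By Lemma~\ref{lemmaPropX}, $t\mapsto\gamma(t,\cdot)$ is absolutely continuous into $L^2(\R,\R^n)$ on compact subintervals, with $\partial_t\gamma(t,\cdot)\in L^2$ and $|\dot\gamma|(t)=\|\partial_t\gamma(t,\cdot)\|_{L^2}$ a.e. Since $v\mapsto m(v)$ is $L^2$-Lipschitz, with constant $C_0$ say, on $\{d_X(\cdot,\Sigma)\le\delta\}$, the function $m$ is locally absolutely continuous on $I$ with $|m'(t)|\le C_0|\dot\gamma|(t)$ a.e.; this already yields the absolute continuity of $m$, and it remains to sharpen the pointwise bound.

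Set $\nu:=\|z'\|_{L^2(\R)}>0$, $\zeta(t,\cdot):=z'(\cdot-m(t))$ and $w(t,\cdot):=\gamma(t,\cdot)-z(\cdot-m(t))$; here $z',z''\in L^2(\R,\R^n)$ because, under $\mathbf{(Spec)}$, $\nabla^2W(a^\pm)$ is positive definite and hence $z$ tends to $a^\pm$ exponentially. Optimality of $m(t)$ is precisely $(w(t),\zeta(t))_{L^2}=0$, i.e. \eqref{optim}; moreover $t\mapsto\zeta(t)$ and $t\mapsto w(t)$ are locally absolutely continuous in $L^2$, with $\partial_t\zeta=-m'\,z''(\cdot-m)$ and $\partial_t w=\partial_t\gamma+m'\,\zeta$ a.e. Differentiating $(w,\zeta)_{L^2}\equiv0$ gives $(\partial_t w,\zeta)_{L^2}=m'\,(w,z''(\cdot-m))_{L^2}$; inserting $\partial_t\gamma=-m'\zeta+\partial_t w$ and $\partial_s\gamma=\zeta+\partial_s w$ into $(\partial_t\gamma,\partial_s\gamma)_{L^2}=0$ and using this, all terms rearrange into
\[
m'(t)\,\Big[\,\nu^2+(\zeta,\partial_s w)_{L^2}-(w,z''(\cdot-m))_{L^2}\,\Big]=(\partial_t w,\partial_s w)_{L^2}\qquad\text{for a.e. }t .
\]

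By Lemma~\ref{stability_general}(2.b) there are constants $\delta_1,\alpha>0$ such that $\mathcal{K}(v)\le\delta_1$ and $d_X(v,\Sigma)\le\delta$ force $\|v-z(\cdot-m(v))\|_{H^1}\le\alpha^{-1}\mathcal{K}(v)$. Hence, when $\mathcal{K}(\gamma(t))\le\delta_1$, both correction terms in the bracket are bounded by $(\nu+\|z''\|_{L^2})\,\alpha^{-1}\mathcal{K}(\gamma(t))$, so after possibly decreasing $\delta_1$ the bracket is $\ge\nu^2/2$; also $\|\partial_s w\|_{L^2}\le\alpha^{-1}\mathcal{K}(\gamma(t))$ while $\|\partial_t w\|_{L^2}\le|\dot\gamma|(t)+\nu|m'(t)|$. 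Dividing by the bracket, applying Cauchy--Schwarz on the right-hand side and absorbing the $|m'(t)|$-contribution into the left (licit after one more shrinking of $\delta_1$), one obtains $|m'(t)|\le C_1\,\mathcal{K}(\gamma(t))\,|\dot\gamma|(t)$ where $\mathcal{K}(\gamma(t))\le\delta_1$. Where $\mathcal{K}(\gamma(t))>\delta_1$, the crude bound of the first paragraph gives $|m'(t)|\le C_0|\dot\gamma|(t)\le(C_0/\delta_1)\,\mathcal{K}(\gamma(t))\,|\dot\gamma|(t)$, so the lemma holds with $C:=\max\{C_1,C_0/\delta_1\}$ (and then automatically $\int_I|m'|\le C\,\mathfrak{L}_{\mathcal{K}}(\gamma)<\infty$).

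The routine ingredients — the geometric reduction, the $L^2$-absolute continuity of $\gamma$, $\zeta$ and $w$, and the product rule applied to $(w,\zeta)_{L^2}$ — rely only on Lemmas~\ref{lemmaPropX} and~\ref{stability_general} and on the exponential decay of $z^\pm$. The step requiring the most care is the displayed identity and its exploitation: one must be sure that $\gamma$, $w$ and $\zeta$ are bona fide absolutely continuous $L^2$-valued curves, so that all the derivatives exist a.e. and the chain and product rules apply, and then carry out the bookkeeping that keeps the bracket bounded away from $0$ for small $\mathcal{K}(\gamma(t))$ — which is precisely where the hypothesis $(\partial_t\gamma,\partial_s\gamma)_{L^2}=0$ is essential.
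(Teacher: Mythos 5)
Your proof is correct and follows essentially the same route as the paper's: both differentiate the optimality condition $(\gamma(t,\cdot)-z(\cdot-m(t))\,,\,z'(\cdot-m(t)))_{L^2}=0$ in $t$, combine it with the hypothesis $(\partial_t\gamma,\partial_s\gamma)_{L^2}=0$, invoke the $H^1$ estimate of Lemma \ref{stability_general} (2.b) to bound the error terms by $\mathcal{K}(\gamma(t))$ and to keep the coefficient of $m'(t)$ bounded away from zero, absorb the $|m'(t)|$-contribution, and fall back on the Lipschitz bound for $v\mapsto m(v)$ where $\mathcal{K}(\gamma(t))$ is not small. The only cosmetic differences are that you obtain the absolute continuity of $m$ by composing the Lipschitz projection with the $L^2$-absolutely continuous curve (the paper uses the implicit function theorem on $G(t,m)=0$), and that you organize the differentiated identity via the product rule on $(w,\zeta)_{L^2}$ rather than the paper's decomposition $z'(\cdot-m)=(z'(\cdot-m)-\partial_s\gamma)+\partial_s\gamma$; both lead to the same estimate (and note that $z''\in L^2$ follows already from the decay of $z-a^\pm$ and the Euler--Lagrange equation, without needing positive definiteness of $\nabla^2W(a^\pm)$, which is not literally part of \textbf{(Spec)}).
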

\begin{proof}
We first set $z:=z^-$ if $d_{L^2}(\gamma(t),\mathcal{C}(z^-))\leq d_{L^2}(\gamma(t),\mathcal{C}(z^+))$ for all $t$ and $z:=z^+$ otherwise: note that $z$ is well defined in this way if $\delta$ is small enough since the $L^2$-distance between $\mathcal{C}(z^-)$ and $\mathcal{C}(z^+)$ is positive and $d_{L^2}(\gamma(t),\mathcal{C}(z^-)\cup\mathcal{C}(z^+))\le\delta$. We remind that $m(t)$ satisfies the optimality condition \eqref{optim}, which rewrites
\begin{equation}
\label{implicit}
0=G(t,m(t)):=\int_\R z'(s)\cdot (\gamma(t,s+m(t))-z(s))\d s.
\end{equation}
We claim that this is a characterization of $m(t)$ for $t$ close to any point $t_0\in I$. More precisely, there exists $\sigma>0$ such that for all $t\in (t_0-\sigma,t_0+\sigma)$, $m(t)$ is the only $m\in \R$ such that
\[
|m-m(t_0)|<\theta/2\quad\text{and}\quad G(t,m)=0,
\]
where $\theta=\theta(\delta,z^\pm)>0$ was introduced at the first step in the proof of Lemma \ref{stability_general}. Indeed, since the map $t\mapsto\gamma(t)$ is absolute continuous for the $L^2$-distance by Lemma \ref{lemmaPropX}, if $|t-t_0|<\sigma$ with $\sigma$ small enough, one has $d_{L^2}(\gamma(t),\gamma(t_0))<\delta$ and so
\[
\|z(\cdot-m(t))-z(\cdot-m(t_0))\|_{L^2}\leq d_{L^2}(\gamma(t),\gamma(t_0))+d_{L^2}(\gamma(t),\mathcal{C}(z))+d_{L^2}(\gamma(t_0),\mathcal{C}(z))\leq 3\delta.
\]
As before, this implies that $|m(t)-m(t_0)|<\theta/2$ (up to reducing $\delta$ if needed). Since $m\mapsto G(t,m)$ is strictly convex on the interval $(m(t_0)-\theta/2,m(t_0)+\theta/2)\subset(m(t)-\theta,m(t)+\theta)$, it is then clear that $m(t)$ is the only zero of $G(t,\cdot)$ on this interval. Now, by the implicit function theorem, we deduce that $t\to m(t)$ is absolutely continuous, and differentiating with respect to $t$ yields
\[
0=\int_\R z'(s)\cdot [\partial_t\gamma(t,s+m(t))+m'(t)\,\partial_s\gamma(t,s+m(t))]\d s=\int_\R z'(s-m(t))\cdot [\partial_t\gamma(t,s)+m'(t)\,\partial_s\gamma(t,s)]\d s.
\]
Equivalently, we have shown the identity
\begin{align*}
0=\int_\R (z'(\cdot-m(t))-\partial_s\gamma)\cdot (\partial_t\gamma+m'(t)\,\partial_s\gamma)\d s+\int_\R\partial_s\gamma\cdot (\partial_t\gamma+m'(t)\,\partial_s\gamma)\d s.
\end{align*}
Since $(\partial_t\gamma,\partial_s\gamma)_{L^2}=0$, we have the estimate
\[
|m'(t)|\,\|\partial_s\gamma(t,\cdot)\|^2_{L^2}\leq \|z'(\cdot-m(t))-\partial_s\gamma(t,\cdot)\|_{L^2}\left[\|\partial_t\gamma(t,\cdot)\|_{L^2}+|m'(t)|\, \|\partial_s\gamma(t,\cdot)\|_{L^2}\right].
\]
Now, for those points $t\in I$ for which $\mathcal{K}(\gamma(t))<\delta$, the $H^1$ estimate in Lemma \ref{stability_general} yields a positive constant $C$ ($=\alpha^{-1}$) such that $\|z'(\cdot-m(t))-\partial_s\gamma(t,\cdot)\|_{L^2}\leq C\mathcal{K}(\gamma(t))$. Thus
\begin{equation}
\label{estbelow}
|m'(t)|\, \|\partial_s\gamma(t,\cdot)\|_{L^2}\left[\|\partial_s\gamma(t,\cdot)\|_{L^2}-C\mathcal{K}(\gamma(t))\right]\leq C\mathcal{K}(\gamma(t))\, \|\partial_t\gamma(t,\cdot)\|_{L^2}.
\end{equation}
We need to prove that $\|\partial_s\gamma\|_{L^2}$ is bounded fromt below. We use
\[
\|\partial_s\gamma(t,\cdot)\|_{L^2}\geq \|z'(\cdot-m(t))\|_{L^2}-\|z'(\cdot-m(t))-\partial_s\gamma(t,\cdot)\|_{L^2}\geq \|z'\|_{L^2}-C\mathcal{K}(\gamma(t)).
\]
When $\mathcal{K}(\gamma(t))$ is small, this implies that $\|\partial_s\gamma(t,\cdot)\|_{L^2}$ is bounded from below. Thus, from \eqref{estbelow}, we deduce the existence of two constants $\delta_0>0$ and $C_0>0$ such that for those points $t$ for which $\mathcal{K}(\gamma(t))<\delta_0$, we have
\[
|m'(t)|\leq C_0 \mathcal{K}(\gamma(t))\, |\dot{\gamma}|(t).
\]
It remains to treat the case $\mathcal{K}(\gamma(t))\geq\delta_0$, but in this case it is enough to use the Lipschitz bounds on the projection proven in Lemma \ref{stability_general}.
\end{proof}
From the previous lemma, we also deduce $L^\infty$ bounds on $m(t)$ (with no assumption on the distance to $\mathcal{C}(z^-)\cup\mathcal{C}(z^+)$):
\begin{lemma}\label{translationbound}
Let $\gamma\in AC_{loc} ([0,1],X)$ be an injective curve, parametrized by $\gamma(t)=(s\mapsto\gamma(t,s))$. Assume that $\mathfrak{L}_{\mathcal{K}}(\gamma)<+\infty$, $\gamma(0)=\mathcal{C}(z^-)$, $\gamma(1)=\mathcal{C}(z^+)$, and $(\partial_t\gamma(t,\cdot),\partial_s\gamma(t,\cdot))_{L^2}=0$ for a.e. $t\in [0,1]$. Then there exists a bounded function $m:[0,1]\to \R$ such that for all $t\in [0,1]$, $m(t)\in M(\gamma(t))$.
\end{lemma}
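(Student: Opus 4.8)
The plan is to partition $[0,1]$ according to how close $\gamma(t)$ is to $\Sigma=\{\mathcal{C}(z^-),\mathcal{C}(z^+)\}$. I would fix $\delta>0$ small enough that both Lemma~\ref{stability_general}(1) and Lemma~\ref{translationspeed} apply, and moreover $\delta<\frac{1}{3}d_X(\mathcal{C}(z^-),\mathcal{C}(z^+))$, so that the balls $B^\pm:=B_{d_X}(\mathcal{C}(z^\pm),\delta)$ are disjoint. Set $U:=\{t\in[0,1]:d_X(\gamma(t),\Sigma)<\delta\}$ and $F:=[0,1]\setminus U$. Then $U$ is relatively open and contains $0$ and $1$; since $t\mapsto d_X(\gamma(t),\Sigma)$ is continuous, $F$ is compact, and $F\neq\emptyset$ (otherwise $[0,1]=\gamma^{-1}(B^-)\sqcup\gamma^{-1}(B^+)$ would be a nontrivial partition into open sets). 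Because $\gamma$ is injective with $\gamma(0),\gamma(1)\in\Sigma$, for $t\in(0,1)$ the point $\gamma(t)$ is a genuine element of $z^++L^2(\R,\R^n)$, so $M(\gamma(t))$ is a nonempty bounded subset of $\R$ there.

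First I would build a uniformly bounded selection on $F$, by showing $\sup_{t\in F}\operatorname{dist}(0,M(\gamma(t)))<+\infty$. If this failed, I would take $t_n\in F$ with $\operatorname{dist}(0,M(\gamma(t_n)))\to\infty$ and, using that $\gamma(F)$ is $d_X$-compact, extract $\gamma(t_n)\to v_\infty\in\gamma(F)$; then $d_X(v_\infty,\Sigma)\ge\delta$, so $v_\infty\notin\Sigma$ and, by Remark~\ref{Ksmall}, $d_{L^2}(v_\infty,\mathcal{C}(z^\pm))\ge\delta$. For large $n$ the last two terms in the minimum defining $d_X(\gamma(t_n),v_\infty)$ exceed $\delta>d_X(\gamma(t_n),v_\infty)$, hence $d_X(\gamma(t_n),v_\infty)=\|\gamma(t_n)-v_\infty\|_{L^2}\to0$. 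For $m\in M(\gamma(t_n))$ with associated profile $z\in\{z^-,z^+\}$,
\[
\|z-z(\cdot-m)\|_{L^2}\le\|z-v_\infty\|_{L^2}+\|v_\infty-\gamma(t_n)\|_{L^2}+d_{L^2}(\gamma(t_n),\mathcal{C}(z^-)\cup\mathcal{C}(z^+)),
\]
and the three terms are bounded uniformly in $n$ (the first because $z^--z^+\in L^2$ and $v_\infty\in z^++L^2$; the second because it tends to $0$; the third because it equals $d_X(\gamma(t_n),\Sigma)\le\operatorname{diam}_{d_X}\gamma([0,1])<\infty$). As $\|z-z(\cdot-m)\|_{L^2}\sim|a^+-a^-|\sqrt{|m|}$ when $|m|\to\infty$, this forces $|m|\le R_F$ for some $R_F$ independent of $n$ and of $m\in M(\gamma(t_n))$ --- a contradiction. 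Hence for $t\in F$ one may pick $m(t)\in M(\gamma(t))$ of least modulus, with $|m(t)|\le R_F$; where in addition $d_X(\gamma(t),\Sigma)\le\delta$, this $m(t)$ is the unique point of $M(\gamma(t))$ by Lemma~\ref{stability_general}(1).

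Next I would control $U$. Writing $U$ as a countable disjoint union of relatively open intervals, let $[0,t^-)$ and $(t^+,1]$ be the ones meeting $0$ and $1$ (distinct, since $F\neq\emptyset$); every other component $J=(c,d)$ has $c,d\in F$ with $d_X(\gamma(c),\Sigma)=d_X(\gamma(d),\Sigma)=\delta$. On each component $J$, intersected with $(0,1)$, the curve stays inside a single $B^\pm$ (connectedness and $\delta$-disjointness), so $\gamma|_J$ satisfies the hypotheses of Lemma~\ref{translationspeed} with one fixed profile $z\in\{z^-,z^+\}$; thus $t\mapsto m(t):=m(\gamma(t))$ is absolutely continuous on $J$ with $|m'(t)|\le C\,|\dot\gamma|(t)\,\mathcal{K}(\gamma(t))$, whence $\operatorname{Var}_J(m)\le C\,\mathfrak{L}_{\mathcal{K}}(\gamma|_J)$. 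The $J$'s being disjoint, $\sum_J\mathfrak{L}_{\mathcal{K}}(\gamma|_J)\le\mathfrak{L}_{\mathcal{K}}(\gamma)<\infty$. For an interior component the endpoint value $m(c)$ is the unique point of $M(\gamma(c))$, which is $\le R_F$ by the previous step and also the one-sided limit of $m$ from inside $J$; so $|m|\le R_F+C\,\mathfrak{L}_{\mathcal{K}}(\gamma)$ on $J$, and the same estimate holds on $[0,t^-)$ and $(t^+,1]$ using $t^-,t^+\in F$ (the finite total variation also gives finite limits of $m$ at $0$ and $1$). Setting $m(0)=m(1)=0$ (consistent with $m(t)\in M(\gamma(t))$ for suitable representatives $z^\mp$ of $\gamma(0),\gamma(1)$), one obtains $|m(t)|\le R_F+C\,\mathfrak{L}_{\mathcal{K}}(\gamma)$ for every $t\in[0,1]$, with $m(t)\in M(\gamma(t))$ throughout.

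The main obstacle is the first step: on $F$ the curve is controlled only for the quotient metric $d_X$, and the argument rests on the fact that $d_X$-convergence to a point off $\Sigma$ coincides with $L^2$-convergence, after which the coercivity of $m\mapsto\|z-z(\cdot-m)\|_{L^2}$ yields the uniform bound; near $\Sigma$ the translation parameter is instead pinned by the quantitative estimate of Lemma~\ref{translationspeed}, whose only input is $\mathfrak{L}_{\mathcal{K}}(\gamma)<\infty$.
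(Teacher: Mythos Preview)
Your argument is correct and follows the same two–region scheme as the paper: away from $\Sigma$ you bound $M(\gamma(t))$ through the coercivity of $m\mapsto\|z-z(\cdot-m)\|_{L^2}$, and near $\Sigma$ you control the oscillation of $m$ by Lemma~\ref{translationspeed}. The only substantive difference is how the ``far'' region is handled. The paper observes that, by injectivity and lower semicontinuity, $\mathcal{K}_J:=\inf_{t\in J}\mathcal{K}(\gamma(t))>0$ on every compact $J\subset(0,1)$, whence $\int_J|\dot\gamma|=\int_J\|\partial_t\gamma\|_{L^2}\le \mathfrak{L}_{\mathcal{K}}(\gamma)/\mathcal{K}_J<\infty$; this gives an $L^2$-diameter bound on $\gamma(J)$ in one line, and the coercivity estimate $\|z^\pm-z^\pm(\cdot-m)\|_{L^2}\le 2\,d_{L^2}(\gamma(t),z^\pm)$ then bounds $m(t)$ directly. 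Your route via $d_X$-compactness of $\gamma(F)$ and the observation that $d_X$-convergence to a point off $\Sigma$ forces $L^2$-convergence is valid but more elaborate; it recovers the same $L^2$ control by contradiction rather than by the straightforward length estimate. In short: same strategy, slightly heavier machinery on your side for the interior bound.
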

\begin{proof}
Since $\gamma$ is injective, one has $\mathcal{K}(\gamma(t))>0$ for all $t\in (0,1)$. By lower semicontinuity, one has also $ \mathcal{K}_J:=\inf_{t\in J} \mathcal{K}(\gamma(t))>0$ for every compact interval $J\subset (0,1)$. In particular, as $\mathfrak{L}_{K}(\gamma_{|J})\geq  \mathcal{K}_J L_1 (\gamma_{|J})$, $\gamma$ is bounded for the $L^2(\R)$ distance over $J$. Once again, the following estimate,
\[
\|z^\pm-z^\pm(\cdot-m(t))\|_{L^2}\leq d_{L^2}(z^\pm,\gamma(t))+d_{L^2}(\gamma(t),z^\pm(\cdot-m(t)))\leq 2d_{L^2}(\gamma(t),z^\pm)\leq C,
\]
provides a bound on $m(t)$ for $t\in J$. Since $\gamma$ tends to $\mathcal{C}(z^\pm)$ on the boundary of $[0,1]$, it is clear that $m(\cdot)$ is actually bounded up to the boundary thanks to Lemma \ref{translationspeed}, which provides a bound on the total variation of $m(\cdot)$ for $\gamma(t)$ close to $\mathcal{C}(z^-)\cup\mathcal{C}(z^+)$.
\end{proof}
As in the previous part, the last ingredient in the proof of Theorem \ref{Schatzman_thm} is the following lemma, which is the twin brother of Lemma \ref{s0}:
\begin{lemma}\label{s0twin}
Let $\gamma\in AC_{loc} ([0,1],X)$ be an injective curve, parametrized by $\gamma(t)=(s\mapsto\gamma(t,s))$. Assume that $\mathfrak{L}_{\mathcal{K}}(\gamma)<\infty$, $\gamma(0)=\mathcal{C}(z^-)$, $\gamma(1)=\mathcal{C}(z^+)$, $(\partial_t\gamma(t,\cdot),\partial_s\gamma(t,\cdot))_{L^2}=0$ for a.e. $t\in [0,1]$, and assume that $(t,s)\mapsto\gamma(t,s)$ is continuous. Then, for all $\varepsilon_0>0$, there exist $s^-,s^+\in \R$ such that $s^-< s^+$ and for almost all $t\in [0,1]$, $|\gamma(t,s^\pm)-a^\pm|<\varepsilon_0$.
\end{lemma}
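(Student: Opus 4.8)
The plan is to follow the proof of Lemma \ref{s0} almost verbatim, the one genuinely new point being the control of the translation parameter along the curve; without it, the ``good'' levels $s^\pm$ would have to be chosen $t$-dependently. First I would observe that $t\mapsto d_X(\gamma(t),\Sigma)$ is bounded on $[0,1]$ (it is continuous on compact subintervals of $(0,1)$ and tends to $0$ at $t=0,1$), say $d_X(\gamma(t),\Sigma)\leq R$ for all $t$; since $\gamma$ is injective, $\gamma(t)\in\Sigma$ only at $t\in\{0,1\}$, so by Lemma \ref{lemmaPropX} one has $|\dot{\gamma}|(t)=\|\partial_t\gamma(t,\cdot)\|_{L^2(\R)}$ for a.e. $t$. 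Moreover, $\gamma$ being injective and satisfying $(\partial_t\gamma,\partial_s\gamma)_{L^2}=0$ a.e., Lemma \ref{translationbound} provides a bounded function $m:[0,1]\to\R$ with $m(t)\in M(\gamma(t))$, and together with \eqref{diameter} every element of $M(\gamma(t))$ lies in a fixed interval $[-S_1,S_1]$.

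Next I would apply Lemma \ref{stability_general} with this $R$ and $\varepsilon=\varepsilon_0/3$, obtaining $\delta>0$ such that $\mathcal{K}(v)\leq\delta$ and $d_X(v,\Sigma)\leq R$ imply $\|v-z^\pm(\cdot-m)\|_{L^\infty(\R)}\leq\varepsilon_0/3$ for $m\in M(v)$. Then, exactly as in Lemma \ref{s0}, Cauchy--Schwarz gives $\|\partial_t\gamma(t,\cdot)\|_{L^2(\R)}\geq S^{-1/2}\|\partial_t\gamma(t,\cdot)\|_{L^1([S,2S])}$ and Fubini yields $\tfrac1S\int_S^{2S}\int_0^1\mathcal{K}(\gamma(t))\,|\partial_t\gamma(t,s)|\,\d t\,\d s\leq S^{-1/2}\mathfrak{L}_{\mathcal{K}}(\gamma)$. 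I would pick $S$ so large that $S^{-1/2}\mathfrak{L}_{\mathcal{K}}(\gamma)<\delta\varepsilon_0/3$ and, crucially, that $|z^\pm(\sigma)-a^+|<\varepsilon_0/3$ whenever $\sigma>S-S_1$ (this is where boundedness of the centers $m(t)$ is used, so that a single $S$ suffices). The mean estimate then produces $s^+\in[S,2S]$ with $\int_0^1\mathcal{K}(\gamma(t))\,|\partial_t\gamma(t,s^+)|\,\d t<\delta\varepsilon_0/3$, and a symmetric choice gives $s^-\in[-2S,-S]$, so $s^-<s^+$.

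Finally, I would split $[0,1]$ via $I:=\{t:\mathcal{K}(\gamma(t))>\delta\}$, which is open and contained in $(0,1)$ because $\mathcal{K}\circ\gamma$ is l.s.c. and vanishes at $t=0,1$. For $t\notin I$, Lemma \ref{stability_general}(2.a) gives $|\gamma(t,s^+)-z^\pm(s^+-m(t))|\leq\varepsilon_0/3$, while $s^+-m(t)>S-S_1$ gives $|z^\pm(s^+-m(t))-a^+|<\varepsilon_0/3$, so $|\gamma(t,s^+)-a^+|<2\varepsilon_0/3$. For $t\in I$, since $\mathcal{K}(\gamma(t))>\delta$ we get $\int_I|\partial_t\gamma(t,s^+)|\,\d t<\varepsilon_0/3$, and then, using the continuity of $t\mapsto\gamma(t,s^+)$ together with the fact that the endpoints of each connected component of $I$ lie outside $I$ (where the previous bound holds), integrating the metric derivative across such a component yields $|\gamma(t,s^+)-a^+|<\varepsilon_0$ for a.e. $t$; the argument for $s^-$ is identical, using $z^\pm(s)\to a^-$ as $s\to-\infty$. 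The main obstacle, and the whole reason Lemmas \ref{translationspeed}--\ref{translationbound} and the orthogonality hypothesis enter, is precisely this control of the centers: guaranteeing that $m(t)$ does not escape to infinity as $t$ varies, so that one fixed pair $s^\pm$ works uniformly in $t$ --- a difficulty entirely absent in the symmetric twin Lemma \ref{s0}.
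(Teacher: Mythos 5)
Your proposal is correct and follows essentially the same route as the paper's proof: the Fubini/Cauchy--Schwarz mean estimate to locate $s^\pm\in[S,2S]$, Lemma \ref{stability_general} with $\varepsilon=\varepsilon_0/3$ on the set where $\mathcal{K}(\gamma(t))\leq\delta$, Lemma \ref{translationbound} to bound the centers $m(t)$ uniformly so that a single $S$ works, and the total-variation-plus-continuity argument on the open set $I=\{\mathcal{K}\circ\gamma>\delta\}$. Your closing remark correctly identifies the boundedness of $m(t)$ as the one genuinely new ingredient compared with the symmetric Lemma \ref{s0}.
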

\begin{proof}
The proof is similar to that of Lemma \ref{s0}. We recall the proof of the existence of $s^+$ (the proof of the existence of $s^-$ works the same). First, for all $S>0$, we have the estimate
\begin{equation}\label{meanLK}
\frac 1S\int_{S}^{2S}\int_0^1 \mathcal{K}(\gamma(t))|\partial_t\gamma(t,s)|\d t\d s\leq S^{-1/2}\mathfrak{L}_{\mathcal{K}}(\gamma).
\end{equation}
Since $\mathfrak{L}_{\mathcal{K}}(\gamma)$ is finite, the curve $t\mapsto \gamma(t)$ is bounded in $X$: there is a constant $R>0$ with $d_X(\gamma(t),\Sigma)\leq R$. Thus we can apply Lemma \ref{stability_general} to $\varepsilon=\varepsilon_0/3$: one gets a constant $\delta>0$ such that $\mathcal{K}({v})\leq\delta$ implies that $M(v)$ is reduced to a single point and $\|{v}-z\|_{L^\infty}<\varepsilon_0/3$, where $z$ is the $L^2$-projection of $u$ onto $\mathcal{C}(z^-)\cup\mathcal{C}(z^+)$. Let $I\subset (0,1)$ be the (open) set of instants $t$ such that $\mathcal{K}(\gamma(t))>\delta$. For every $t\in [0,1]$, let $z^\pm(\cdot-m(t))$ be the projection of $\gamma(t)$ onto $\mathcal{C}(z^-)\cup\mathcal{C}(z^+)$, where $t\mapsto m(t)\in M(\gamma(t))$ is the bounded function provided by Lemma \ref{translationbound}. Thus, for all $t\in [0,1]\setminus I$, one has
\[
\|\gamma(t,\cdot)-z^\pm(\cdot-m(t))\|_{L^\infty}<\varepsilon_0/3.
\]
Since $t\mapsto m(t)$ is bounded, there exists $S>0$ large enough so that
\[
S^{-1/2}\mathfrak{L}_{\mathcal{K}}(\gamma)<\frac{\delta\varepsilon_0}{3}\quad \text{and}\quad\forall t\in [0,1],\, \forall s>S,\, |z^\pm (s-m(t))-a^+|<\frac{\varepsilon_0}{3}.
\]
Thus, by \eqref{meanLK}, there exists $s^+\in[S,2S]$ such that 
\begin{equation}
\label{good_s}
\int_0^1\mathcal{K}(\gamma(t))|\partial_t\gamma(t,s^+)|\d t< \frac{\delta\varepsilon_0}{3}.
\end{equation}
Now, for every $t\in [0,1]\setminus I$, one has the estimate
\[
|\gamma(t,s^+)-a^+|\leq |\gamma(t,s^+)-z^\pm(s^+-m(t))|+|z^\pm(s^+-m(t))-a^+|< \frac{2\varepsilon_0}{3}.
\]
For the points in $I$, \eqref{good_s} allows to estimate the total variation of $t\mapsto\gamma(t,s^+)$ on $I$ as follows,
\[
\int_I|\partial_t\gamma(t,s^+)|\d t\leq\frac{\varepsilon_0}{3}.
\]
Since $t\mapsto \gamma(t,s^+)$ is continuous and $|\gamma(t,s^+)-a^+|\leq \frac{2\varepsilon_0}{3}$ on the boundary of $I$, we have $|\gamma(t,s^+)-a^+|<\varepsilon_0$ which is what had to be proved.
\end{proof}
We are now able to prove the main result of the section:
\begin{proof}[Proof of Theorem \ref{Schatzman_thm}]
\textsc{$\bullet$ Step 1:  Existence of an $\mathfrak{E}_{\mathcal{W}}$-minimizing curve on $X$ between $\mathcal{C}(z^-)$ and $\mathcal{C}(z^+)$.} The proof of this first step is rigorously the same that the first step in the proof of Theorem \ref{ABGthm} except the fact that we use Lemma \ref{translation} and Lemma \ref{s0twin} instead of Lemma \ref{s0}, and that we use an avatar of Lemma \ref{smoothing} adapted to this non-symmetric context (the regularization process does not use the symmetry condition). Thus one gets the existence of a curve $\gamma\in\Lip(\R,X)$ such that
\[
\forall\sigma\in AC_{ploc}(\R,X),\, \sigma:z^-\mapsto z^+\Longrightarrow \mathfrak{E}_{\mathcal{W}}(\gamma)\leq \mathfrak{E}_{\mathcal{W}}(\sigma),
\]
where $\mathcal{W}=\mathcal{K}^2/2$. Moreover, the proof insures that such a curve lies on a funnel with exponential or polynomial decay: for all $t\in [0,1]$, one has
\begin{equation}
\label{funnelbnd}
\gamma(t)\in\mathcal{C}:=\{{v}:\R\to\R^n\text{ measurable s.t. for a.e. $s$ with }\pm(s-s^\pm)\geq 0,\, |{v}(s)-a^\pm|\leq E(s)\}\subset X,
\end{equation}
with $s^-<s^+$, and where the rate of convergence at infinity is given by the function $E(\cdot)$ (see Lemma \ref{projection}).

\medskip
\textsc{$\bullet$ Step 2: boundary conditions.} It is clear, by \eqref{energy_length} and by construction of $\gamma$, that ${u}\in L^2(\R^2,\R^n)$, defined by ${u}(x_1,x_2)=\gamma(x_2,x_1)$, minimizes $\mathcal{E}$ under the constraints detailed in Theorem \ref{Schatzman_thm}, i.e. \eqref{BC_Schatzman}. Moreover, by \eqref{funnelbnd}, we know that ${u}(x_1,x_2)$ converges to $a^\pm$ as $x_1\to\pm\infty$, uniformly in $x_2$. In order to prove that $\gamma$ solves \eqref{Double_connections}, we need to prove the existence of two parameters $c^-,c^+\in\R$ such that
\begin{equation}\label{toprove}
\begin{cases}
{u}(x_1,x_2)\to z^-(x_1-c^-)&\text{when }x_2\to -\infty,\text{ uniformly w.r.t. }x_1;\\
{u}(x_1,x_2)\to z^+(x_1-c^+)&\text{when }x_2\to +\infty,\text{ uniformly w.r.t. }x_1.
\end{cases}
\end{equation}
We first prove convergence in $L^2(\R)$ (in the variable $x_1$). Note that, by construction, we already know that for $x_2$ close to $\pm\infty$, ${u}(\cdot,x_2)$ is close to $\mathcal{C}(z^\pm)$ so that there is a unique point $m(x_2)$ in $M({u}(\cdot,x_2))$ (see Lemma \ref{stability_general}). Thus
\begin{equation}\label{firststep}
\lim\limits_{x_2\to\pm\infty}\|{u}(\cdot,x_2)-z^\pm(\cdot-m(x_2))\|_{L^2(\R)}=0.
\end{equation}
Moreover, since by optimality $\gamma$ is injective and for a.e. $t\in\R$, $(\partial_t\gamma\,;\,\partial_s\gamma)=0$ (otherwise, by Lemma \ref{translation} and its proof, one could strictly reduce $\mathfrak{L}_{\mathcal{K}}(\gamma)$ by translating each of the curves $\gamma(t,\cdot)$), we can apply Lemma \ref{translationspeed} which says that the function $x_2\mapsto m(x_2)$ is of bounded variations in a neighborhood of $\pm\infty$, let say for $|x_2|\geq T>0$, and that we have the estimate
\[
\|m'\|_{L^1(\R\setminus [-T,T])}\leq C \mathfrak{L}_{\mathcal{K}}(\gamma)<+\infty.
\]
In particular, $x_2\mapsto m(x_2)$ has a limit when $x_2\to\pm\infty$: there exist $c^\pm$ with 
\[
c^\pm=\lim\limits_{x_2\to\pm\infty}m(x_2).
\]
Together with \eqref{firststep} and with the continuity of the translations in $L^2$, this implies
\[
\lim\limits_{x_2\to\pm\infty}\|{u}(\cdot,x_2)-z^\pm(\cdot-c^\pm)\|_{L^2(\R)}=0.
\]
\textsc{$\bullet$ Step 3: Improvement of the convergence as $x_2\to\pm\infty$ and of the regularity.} As in the proof of Theorem \ref{ABGthm}, we can use the result presented in the Appendix (Corollary \ref{Wbounded}) in order to get uniform convergence as $x_2\to\pm\infty$ and boundedness for ${u}$. In particular, ${u}$ solves the Euler-Lagrange equation $-\Delta {u}+\nabla W({u})=0$ associated to the energy $\mathcal{E}$ (which makes sense since $u$ is bounded) and, by a boot-strap argument, we obtain $C^{2,\alpha}$ and possibly higher regularity.
\end{proof}

\appendix

\section{About the condition {\bf (H3a)}}

Condition {\bf (H3a)} was crucial in \cite{nous} and in finite-dimensional heteroclinic connection problems (see also \cite{FusGroNov} where this condition is cited). It was also introduced in \cite{BraButSan} for applications to weighted distances in Wasserstein spaces. In the present paper, the role of {\bf (H3a)} is less important, as it requires to be coupled with {\bf (H3b)}, which was not satisfied in the examples that we analyzed in Sections \ref{sectionAlama} and \ref{sectionSchatzman}.

However, we think that it is important to discuss this assumption as it seems that it has been neglected for long by specialists of heteroclinic connections (while it was considered natural in other communities). In particular, we want here to provide an example where existence of geodesics fails in a case where {\bf (H3a)} is not satisfied. We will consider an Euclidean space, say $\R^2$, endowed with a positive weight $K\geq 0$, with a finite number of wells, but which does not satisfy {\bf (H3a)}. 

\begin{theorem}
Given an arbitrary continuous and strictly positive function $g:[1,+\infty)\to (0,\infty)$ such that $\int_1^\infty g(s)ds<+\infty$ there exists a weight $K:\R^2\to\R_+$ such that
\begin{itemize}
\item $\Sigma=\{K=0\}$ is finite;
\item the weight $K$ coincides with $g(d(\cdot,\Sigma))$ on an unbounded set;
\item there exist two points $P_\pm\in\Sigma$ such that there is no curve connecting $P_+$ to $P_-$ minimizing the $K$-length.
\end{itemize}
\end{theorem}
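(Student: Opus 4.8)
The plan is to build $K$ by hand in the region where it is not constrained, exploiting the fact that the non-integrability hypothesis \textbf{(H3a)} is the only thing preventing long thin ``channels'' from escaping to infinity at finite $K$-length. The idea is to make a sequence of competitors from $P_+$ to $P_-$ which dip deeper and deeper into an unbounded region where $K$ decays fast enough to be integrable along the escaping direction, so that the infimum of $\mathfrak{L}_K$ is strictly smaller than the length of any curve staying in a bounded set, while no curve actually realizing this infimum exists. The hypothesis $\int_1^\infty g < +\infty$ is exactly what allows an excursion ``to infinity and back'' along a ray to cost a bounded, and in fact arbitrarily small (as the excursion is pushed farther), amount of $K$-length.

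Concretely, I would place $\Sigma$ to be a finite set containing $P_\pm$ together with one auxiliary point, say $P_0$, placed ``at the end of a corridor'': take $\Sigma = \{P_+, P_-, P_0\}$ with, e.g., $P_0 = (0,0)$, $P_\pm = (\pm 1, 1)$. On the half-line $L = \{(0,y) : y\ge 1\}$ and more precisely on a tube of fixed width around it, set $K$ equal to $g(d(\cdot,\Sigma))$; since along this half-line $d(\cdot,\Sigma) = d(\cdot, P_0) = y$ for $y$ large, the weight there is $g(y)$, which is strictly positive, so $P_0$ is genuinely the only zero reachable and the tube is a genuine ``corridor to infinity'' of finite $K$-length. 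Elsewhere, I would take $K$ bounded below by a positive constant away from neighborhoods of $P_\pm$, and force the ``short'' route between $P_+$ and $P_-$ (the one not going through the corridor) to have $K$-length at least some explicit $\ell_0 > 0$; this is arranged simply by making $K \ge c > 0$ on a ball separating $P_+$ from $P_-$ except along the corridor. Then one computes: a curve $\gamma_n$ that runs from $P_+$ down into the corridor, up to height $y = n$, back down, and out to $P_-$ has $\mathfrak{L}_K(\gamma_n) \le d_K(P_+,P_0) + d_K(P_0,P_-) + 2\int_{?}^{n} g + (\text{small fixed cost}),$ and by tuning the construction so that $d_K(P_+,P_0)+d_K(P_0,P_-) < \ell_0$ while the tail $\int_n^\infty g \to 0$, one gets $d_K(P_+,P_-) < \ell_0$. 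On the other hand, any curve achieving a length below $\ell_0$ is forced to pass through the corridor and hence through a neighborhood of $P_0$, and one shows it must in fact go all the way ``out'' — but a curve that truly connects $P_+$ to $P_-$ and picks up less $K$-length than the infimum would have to make an infinite excursion, which no continuous curve on a compact parameter interval can do while remaining rectifiable in the right sense; more carefully, one shows the infimum is not attained by checking that any candidate minimizer, being a curve of finite length, stays in a bounded set, hence realizes at least the (strictly larger) cost of the best bounded competitor — contradiction.

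The technical heart, and the step I expect to be the main obstacle, is the bookkeeping that simultaneously (i) makes $K$ globally lower semicontinuous (here continuous) and everywhere positive off the finite set $\Sigma$, (ii) makes the ``corridor cost'' $d_K(P_+, P_0) + d_K(P_0, P_-)$ genuinely small — which requires $K$ to be small not only along $L$ but also in the transition zones where the curve enters and leaves the corridor near $P_\pm$, so one should really let the corridor open up smoothly near its mouth and taper the weight there — and (iii) makes every bounded competitor cost at least a fixed $\ell_0$ strictly above the corridor infimum. A clean way to handle (ii) is to let $K$ near $P_\pm$ behave like the distance to $P_\pm$ (so $P_\pm$ are ``soft'' wells contributing negligibly), and to route the corridor so that it emanates from points very close to $P_\pm$; then $d_K(P_+, P_0)$ is bounded by the $K$-length of the straight segment from $P_+$ to the corridor mouth plus $\int g$ along the corridor, both controllable. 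Finally, the non-attainment itself: I would argue that if $\gamma$ were a minimizer, then by the representation $\mathfrak{L}_K = A_K$ and the fact that $\gamma$ has finite length for the metric $K\wedge 1$, $\gamma$ would be a rectifiable curve with image in a compact set $\bar Y$; on $\bar Y$ one has $\inf_{\bar Y \setminus (\text{corridor})} K > 0$, and since $\bar Y$ is bounded it only contains a bounded initial piece of the corridor, on which $K \ge g(\text{bounded height}) > 0$ as well, so $\inf_{\bar Y} K =: c_Y > 0$; then $\mathfrak{L}_K(\gamma) \ge c_Y \, d(P_+, P_-) > 0$, but a short computation shows that by pushing the corridor excursion to height $n$ with $n$ large (hence the needed width $c_Y$ small, by choosing $\bar Y$ appropriately — i.e. there is no uniform bounded set containing all good competitors) the infimum $d_K(P_+,P_-)$ is strictly below $c_Y\, d(P_+,P_-)$ for any fixed candidate $\gamma$, contradicting minimality. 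This last quantitative comparison, choosing the parameters so that ``no bounded set is good enough,'' is where the argument must be made airtight, and it is precisely the place where the failure of \textbf{(H3a)} — equivalently, the convergence of $\int_1^\infty g$ — is used.
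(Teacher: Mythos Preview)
Your intuition that the integrability of $g$ should allow competitors to escape to infinity at vanishing marginal cost is right, but the construction you sketch does not realize it. In your picture the corridor is a single half-line emanating from $P_0$, and the curves $\gamma_n$ go from $P_+$ into the corridor, up to height $n$, and back down to $P_-$. The $K$-length of the round trip along the corridor is then $2\int_1^n g$, which is \emph{increasing} in $n$, so your sequence $\mathfrak{L}_K(\gamma_n)$ does not approach anything below the triangle-inequality bound $d_K(P_+,P_0)+d_K(P_0,P_-)$. If that bound is itself attained (and in your setup nothing prevents it: $P_0$ sits at a finite point and geodesics from $P_\pm$ to $P_0$ can exist), then the concatenation through $P_0$ is a minimizer, and your counterexample collapses.

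The second gap is in the non-attainment argument. You write that a hypothetical minimizer $\gamma$ has image in a compact set $\bar Y$ and deduce $\mathfrak{L}_K(\gamma)\ge c_Y\, d(P_+,P_-)$ with $c_Y=\inf_{\bar Y}K>0$; but $P_\pm\in\bar Y$ and $K(P_\pm)=0$, so $c_Y=0$. Even patching this by excising neighborhoods of the wells, you would only get \emph{some} positive lower bound on $\mathfrak{L}_K(\gamma)$, not one that is strictly above $d_K(P_+,P_-)$; the comparison you need is exactly the hard part, and you have not supplied it. What is required is a lower bound on $\mathfrak{L}_K(\gamma)$ that depends on $\gamma$ (say, on how far it travels), is always strictly above the infimum, and tends to the infimum only as the curve escapes to infinity.

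The paper obtains precisely this by a calibration: it writes $K=|\nabla f|$ for an explicit $C^1$ function $f$ built from $G(t)=\int_0^t g$ and a bump $h(y)$, so that automatically $\mathfrak{L}_K(\gamma)\ge\big|\int(f\circ\gamma)'\big|$. Splitting any curve at its first crossing of the line $\{y=0\}$ at a point $(x_0,0)$ gives
\[
\mathfrak{L}_K(\gamma)\ge \big(f(x_0,0)-f(P_+)\big)+\big(f(x_0,0)-f(P_-)\big)=2\big(2G(\infty)-G(|x_0|)\big)>2G(\infty),
\]
while explicit competitors going out to $x=x_n\to\infty$ along $y=\pm1$ and crossing vertically have $K$-length tending to $2G(\infty)$. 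This one-shot potential-theoretic inequality is the missing ingredient: without a calibrated lower bound of this type, nothing in your argument rules out a finite minimizer sitting at some optimal compromise position.
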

\begin{proof}
First we extend the function $g$ of the statement to the whole $\R_+$, defining it on $[0,1]$ in such a way that $g(0)=0$ and $g>0$ on $(0,1)$ (take for instance $g(s)=sg(1)$). Then define $G(t)=\int_0^t g(s)\d s$ and $G(\infty):=\int_0^\infty g(s)\d s\in\R$. Also take a smooth function $h:\R\to [0,1]$ such that $h(0)=1$ and $h(\pm 1)=0$ (which implies $h'(0)=h'(\pm 1)=0$), and choose it so that $h'$ only vanishes at $-1,0,1$. Build a function $f:\R^2\to\R_+$ via 
$$f(x,y)=h(y)(2G(\infty)-G(|x|))+(1-h(y))G(|x|).$$
Note that we have $f\in C^1(\R^2)$ (indeed, $x\mapsto G(|x|)$ is $C^1$ despite the absolute value, because $g(0)=0$). Then take $K=|\nabla f|$. This means that we are defining
$$K(x,y)=\sqrt{|1-2h(y)|^2g(|x|)^2+4h'(y)^2(G(\infty)-G(|x|))^2}.$$
It is easy to see that $K(x,0)=g(|x|)$.

Look at $\Sigma=\{K=0\}$. From the strict inequality $G(\infty)-G(|x|)>0$, in order to have $K=0$ we need $h'(y)=0$, but this implies $y=-1,0,1$. For these values of $y$, we have $h(y)\neq 1/2$: thus, for $K$ to vanish, we also need $g(|x|)=0$, i.e. $x=0$, since $g$ was supposed to be strictly positive elsewhere. Then $\Sigma=\{(0,-1),(0,0), (0,1)\}$. In particular, on the line $\{y=0\}$, the distance to $\Sigma$ coincides with the distance to the origin, i.e. with $|x|$, and we have $K=g(d(\cdot,\Sigma))$ on such a line (considering that $g$ was originally only defined on $[1,\infty)$, the equality between $K$ and $g(d(\cdot,\Sigma))$ does not hold on the whole line, but on an unbounded part of it).

Consider now the two points $P_+:=(0,1)$ and $P_-:=(0,-1)$, which belong to $\Sigma$. 

Any curve $\gamma:[0,1]\to \R^2$ connecting $P_+$ and $P_-$ must cross the line $\{y=0\}$. Let us call $(x_0,0)$ a point where $\gamma$ crosses such a line, i.e. $\gamma(t_0)=(x_0,0)$ for $t_0\in [0,1]$. We can estimate the weighted length of $\gamma$ via
\begin{eqnarray*}
\int_0^1 K(\gamma(t))|\gamma'(t)|\d t
							&=&\int_0^{t_0} |\nabla f|(\gamma(t))|\gamma'(t)|\d t+\int_{t_0}^1 |\nabla f|(\gamma(t))|\gamma'(t)|\d t\\
						&\geq&\int_0^{t_0} \nabla f(\gamma(t))\cdot \gamma'(t)\d t-\int_{t_0}^1 \nabla f(\gamma(t))\cdot\gamma'(t)\d t\\
&=&2f(\gamma(t_0))-f(P_+)-f(P_-)=2f(x_0,0)-f(0,1)-f(0,-1)\\
&=&2(2G(\infty)-G(|x_0|))>2G(\infty).
\end{eqnarray*}
This shows that no curve joining these two points can have a weighted length less or equal than $2G(\infty)$. If we are able to construct a sequence of curves approaching this value, we have shown that the minimum of the weighted length does not exist. To do so, take a sequence $x_n\to\infty$ with $g(x_n)\to 0$ (which is possible because of the condition $\int_0^\infty g(s)\d s<+\infty$). Consider a curve $\gamma_n$ defined on the interval $[0,2x_n+2]$ (and possibly reparametrized on $[0,1]$) in the following way
$$\gamma_n(t)=\begin{cases} (t,1)&\mbox{ if }t\in [0,x_n],\\
						(x_n,1-t+x_n) &\mbox{ if }t\in [x_n,x_n+2],\\
						(2x_n+2-t,-1)&\mbox{ if }t\in [x_n+2,2x_n+2].\end{cases}$$

It is easy to see that 
$$\int_0^{2x_n+2} K(\gamma(t))|\gamma'(t)|\d t=2\int_0^{x_n} g(t)\d t+\int_{-1}^1\sqrt{|1-2h(y)|^2g(|x_n|)^2+4h'(y)^2(G(\infty)-G(|x_n|))^2}\d y.$$
Using
$$\sqrt{|1-2h(y)|^2g(|x_n|)^2+4h'(y)^2(G(\infty)-G(x_n))^2}\leq C(g(|x_n|)+G(\infty)-G(x_n))$$
it is easy to see
$$\int_0^{2x_n+2} K(\gamma(t))|\gamma'(t)|\d t=2G(x_n)+O(g(|x_n|)+G(\infty)-G(x_n))\underset{n\to\infty}{\longrightarrow} 2G(\infty),$$
which concludes the example.
\end{proof}
Note that in the example above the fact that $K$ vanishes on a finite number of points and the choice of connecting two wells is arbitrary, and is only made for the sake of consistency with the rest of the paper. Other examples could easily be built.

\section{Improved estimates for minimal action curves in Hilbert spaces}\label{AppB}

We consider here a general case, where we have a functional $\mathcal W:H\to[0,+\infty]$ defined on a Hilbert space $H$, and we consider curves $\gamma:\R\to H$ associated with an action 
$$E(\gamma):=\int_\R \left(\frac 12  |\gamma'(t)|^2+ \mathcal W(\gamma(t))\right)\d t.$$
We consider a curve $\gamma$ which is a local minimizer of $E$ (in the sense that $E(\gamma+\eta)\geq E(\gamma)$ for every compactly supported perturbation $\eta:\R\to H$, and such that $E(\gamma)<+\infty$. We do not consider the problem of minimizing $E$ among all competitors with fixed boundary conditions at $\pm \infty$ because we want to consider the case where $\gamma$ has no limits at $\pm\infty$. This framework includes that of our sections \ref{sectionAlama} and \ref{sectionSchatzman}, where the Hilbert space is $L^2(\R)$. The curves we provided had a limit at $\pm\infty$, but we prefer to ignore this fact. 

We say that the functional $\mathcal W$ is $\lambda$-convex if it satisfies for every $a,b\in H$ and $s\in (0,1)$ the inequality
$$\W((1-s)a+sb)\leq (1-s)\W(a)+s\W(b)-\frac{\lambda}{2}s(1-s)|a-b|^2.$$
Here $\lambda\in\R$ can be negative (for $\lambda\geq 0$ the function $\W$ would be convex). For smooth functions on the Euclidean space, $\lambda$-convexity of $\mathcal W$ coincides with the lower bound $\nabla^2\mathcal W\geq \lambda I$ (where $I$ is the identity matrix, and the inequality is to be intended in the sense of symmetric matrices, i.e. the difference is positive-semidefinite).

We are interested in the following fact.

\begin{theorem}
Suppose that the functional $\W$ is $\lambda$-convex and that $\gamma$ is a local minimizer with finite energy $E$. Then $\gamma''\in L^2(\R;H)$ and $\int |\gamma''(t)|^2\d t\leq C\int |\gamma'(t)|^2\d t$, for a constant $C$ only depending on $\lambda$.
\end{theorem}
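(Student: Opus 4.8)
The plan is to exploit $\lambda$-convexity to produce an inner variation/competitor comparison that yields an $L^2$ bound on $\gamma''$. First I would note that, since $\gamma$ is a local minimizer with finite energy, it solves the Euler--Lagrange equation $\gamma'' = \nabla \W(\gamma)$ in a weak sense; however, I do not want to rely on regularity of $\nabla\W$, so instead I would work directly with the variational inequality. The key idea is to compare $\gamma$ on an interval $[a,b]$ with the affine interpolation $\ell_{a,b}(t) = \frac{b-t}{b-a}\gamma(a) + \frac{t-a}{b-a}\gamma(b)$ (which agrees with $\gamma$ at the endpoints, so is an admissible competitor once glued back). By minimality, $\int_a^b \left(\frac12|\gamma'|^2 + \W(\gamma)\right)\d t \le \int_a^b\left(\frac12 |\ell_{a,b}'|^2 + \W(\ell_{a,b})\right)\d t$. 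The first term on the right is $\frac{|\gamma(b)-\gamma(a)|^2}{2(b-a)} \le \frac12\int_a^b |\gamma'|^2$ by Jensen, and the $\lambda$-convexity inequality controls $\W(\ell_{a,b})$ pointwise by the affine interpolation of $\W(\gamma(a)), \W(\gamma(b))$ plus a correction $-\frac{\lambda}{2}s(1-s)|\gamma(b)-\gamma(a)|^2$; the correction term is again bounded by $C|b-a|\int_a^b|\gamma'|^2$. This gives a Caccioppoli-type inequality relating local oscillation of $\gamma$ to the energy.

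The real mechanism for the second derivative bound, though, is a \emph{difference-quotient} argument applied to the Euler--Lagrange relation. For $h>0$ set $\gamma_h(t) := \frac{\gamma(t+h) - 2\gamma(t) + \gamma(t-h)}{h^2}$. Testing the (weak) equation $\gamma'' = \nabla\W(\gamma)$ against the second difference of $\gamma$ itself, one gets
\[
\int_\R |\tau_h\gamma'|^2\,\d t \;=\; -\int_\R \big(\nabla\W(\gamma(t+h)) - \nabla\W(\gamma(t))\big)\cdot\big(\gamma(t+h)-\gamma(t)\big)\,\d t,
\]
where $\tau_h\gamma'(t) = \frac{\gamma'(t+h)-\gamma'(t)}{h}$ (or the corresponding discrete object; the point is monotonicity). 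The $\lambda$-convexity of $\W$ gives monotonicity of $\nabla\W$ up to $\lambda$: $(\nabla\W(x) - \nabla\W(y))\cdot(x-y) \ge \lambda|x-y|^2$. Plugging this in yields
\[
\int_\R |\tau_h\gamma'|^2\,\d t \;\le\; -\lambda \int_\R |\gamma(t+h)-\gamma(t)|^2\,\d t \;\le\; |\lambda|\, h^2 \int_\R |\gamma'(t)|^2\,\d t,
\]
after dividing appropriately; taking $h\to 0$ shows $\gamma'\in H^1(\R;H)$, i.e. $\gamma''\in L^2$, with $\int|\gamma''|^2 \le |\lambda|\int|\gamma'|^2$ (so $C = |\lambda|$, or a universal multiple thereof depending on how the bookkeeping of difference quotients is organized). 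To make the test-function step legitimate without assuming $\gamma$ is already $H^2$, I would either regularize $\W$ (using the global lower bound on $\nabla^2\W$, exactly as the paper does elsewhere) and pass to the limit, or phrase everything through the monotone-operator inequality for difference quotients, which only needs $\gamma\in H^1_{loc}$ and the distributional equation.

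I expect the main obstacle to be the rigorous justification that one may test the Euler--Lagrange equation against $\gamma''$ (or its difference quotients) when a priori $\gamma$ is only known to be in $H^1_{loc}$ with finite energy and no boundary behavior at $\pm\infty$. The finite-energy hypothesis $\int|\gamma'|^2 < \infty$ is what makes the global (not just local) $L^2$ bound possible, and controlling the boundary/tail terms that arise in the integration by parts over $\R$ requires knowing $\gamma'(t)\to 0$ along a sequence $t\to\pm\infty$ — which follows from $\gamma'\in L^2$ once one also controls $\gamma''$ locally, so there is a mild bootstrap to set up carefully. Localizing with cutoffs $\chi_R$ and then letting $R\to\infty$, using the Caccioppoli estimate from the first paragraph to absorb the cutoff-derivative terms, is the clean way to handle this; the $\lambda$-convexity is used twice, once to get local control and once to get the monotonicity sign in the difference-quotient estimate.
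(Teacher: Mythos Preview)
Your first paragraph is exactly how the paper begins: compare $\gamma$ on $[t-h,t+h]$ with the affine segment joining the endpoints, and bound $\W$ on the segment via $\lambda$-convexity. But you abandon this too early, calling it only a ``Caccioppoli-type'' bound, and pivot to an Euler--Lagrange/monotonicity argument that the paper explicitly avoids. The paper's Remark before the proof says precisely that the route through $\gamma''=\nabla\W(\gamma)$ and the Hessian (or monotonicity) bound ``requires some regularity and is not easy to perform when $\W$ is not smooth,'' and that regularizing $\W$ ``could require some compactness in $H$, which is not always available in infinite dimension.'' Your proposed fixes (regularize $\W$, or work with ``the distributional equation'') run into exactly these objections: for a general l.s.c.\ $\lambda$-convex $\W:H\to[0,+\infty]$, neither $\nabla\W$ nor even a measurable subgradient selection along $\gamma$ is available a priori, so the identity $\int|\tau_h\gamma'|^2=-\int(\nabla\W(\gamma(\cdot+h))-\nabla\W(\gamma))\cdot(\gamma(\cdot+h)-\gamma)$ cannot be justified, and the approximation/limit argument you sketch has no compactness to feed on.

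What you are missing is that the competitor comparison itself already yields the $H^2$ bound, with two extra ingredients. First, after writing the inequality from minimality on $[t-h,t+h]$, use the parallelogram identity
\[
\frac{|A|^2+|B|^2}{2h}-\frac{|A+B|^2}{4h}=\frac{|A-B|^2}{4h},\qquad A=\gamma(t+h)-\gamma(t),\ B=\gamma(t)-\gamma(t-h),
\]
to isolate the second difference $|\gamma(t+h)-2\gamma(t)+\gamma(t-h)|^2/h^4$ on the left. Second, integrate the resulting inequality over $t\in\R$: the $\W$-terms on the right are $\W(\gamma(t+h))+\W(\gamma(t-h))-2\fint_{t-h}^{t+h}\W(\gamma)$, whose $t$-integral vanishes by translation invariance and Fubini, so no pointwise information on $\W$ beyond $\lambda$-convexity is needed. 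What remains is $\int|\gamma(t+h)-2\gamma(t)+\gamma(t-h)|^2/h^4\,\d t\le C\int|\gamma'|^2$, and letting $h\to 0$ gives $\gamma''\in L^2$. This is the paper's proof; your monotonicity argument, when it can be justified, does give the sharper constant $C=|\lambda|$, but that is exactly the trade-off the paper accepts.
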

\begin{remark}
The idea behind this result is very easy: from the optimality of $\gamma$ we write the Euler-Lagrange equation $\gamma''=\nabla \W(\gamma)$, we multiply it by $\gamma''$ and obtain
$$|\gamma''(t)|^2=\nabla \W(\gamma)\cdot \gamma''=(\W(\gamma(t))''-\nabla^2\W(\gamma(t))(\gamma'(t),\gamma'(t))\leq (\W(\gamma(t))''+|\lambda||\gamma'(t)|^2.$$
Then we integrate times a cut-off function $\eta(t)$ with $\eta=1$ on $[-M,M]$ and $\eta=0$ out of $[-M-1,M+1]$, and use $\int (W\circ\gamma)''\eta=\int (W\circ\gamma) \eta''$. Letting $M\to \infty$ we obtain the required result. 

Unfortunately, this argument requires some regularity and is not easy to perform when $\W$ is not smooth. One could obtain this for general $\lambda$-convex functionals $\W$ by approximation (an interesting fact is that $\lambda$-convexity guarantees uniqueness of the minimizer of $E$ on sufficiently small intervals, which allow to obtain results on any local minimizer of the limit problem by approximation, if it can be written as a limit of minimizers of smooth problems). Yet, such an approximation procedure could require some compactness in $H$, which is not always available in infinite dimension. This is the reason why we will give a different proof, with a non-optimal constant $C$.
\end{remark}
\begin{proof}
The local optimality of $\gamma$ can be used in the following way: take $t\in \R$ and $h>0$, and replace $\gamma(s)$ by the curve 
$$\tilde\gamma(s):=\begin{cases}\gamma(s)&\mbox{ if }|s-t|>h,\\
				(\frac 12 -\frac{s-t}{2h})\gamma(t-h) +(\frac 12+\frac{s-t}{2h})\gamma(t+h)&\mbox{ if }|s-t|\leq h,\end{cases}$$
which essentially means replacing $\gamma$ with the segment joining $\gamma(t-h)$ to $\gamma(t+h)$ on $[t-h,t+h]$. We first obtain, by Jensen's inequality on the kinetic part of the action,
$$
\int_{t-h}^{t+h}\W(\gamma(s))\d s+\frac{|\gamma(t+h)-\gamma(t)|^2}{2h}+\frac{|\gamma(t)-\gamma(t-h)|^2}{2h}
\leq \int_{t-h}^{t+h}\left(\W(\gamma(s))+\frac 12 |\gamma'(s)|^2\right)\d s,
$$
and then, by optimality of $\gamma$ and by $\lambda$-convexity of $\mathcal{W}$,
\begin{multline*}
\int_{t-h}^{t+h}\W(\gamma(s))\d s+\frac{|\gamma(t+h)-\gamma(t)|^2}{2h}+\frac{|\gamma(t)-\gamma(t-h)|^2}{2h}\leq\int_{t-h}^{t+h}\left(\W(\tilde\gamma(s))+\frac 12 |\tilde\gamma'(s)|^2\right)\d s\\
=\int_0^1\left( \W((1-r)\gamma(t-h)+r\gamma(t+h))+\frac{|\gamma(t+h)-\gamma(t-h)|^2}{8h^2}\right)2h\d r\\
\leq h(\W(\gamma(t-h))+\W(\gamma(t+h)))+\frac{|\lambda|\, h}{6} |\gamma(t+h)-\gamma(t-h)|^2+\frac{|\gamma(t+h)-\gamma(t-h)|^2}{4h}.
\end{multline*}
Using the (parallelogram) identity 
$$\frac{|A|^2+|B|^2}{2h}-\frac{|A+B|^2}{4h}=\frac{|A-B|^2}{4h}$$
applied to the vectors $A=\gamma(t+h)-\gamma(t)$ and $B=\gamma(t)-\gamma(t-h)$, we can re-arrange the terms above and divide by $h^3$, thus obtaining
$$\frac{|\gamma(t+h)-2\gamma(t)+\gamma(t-h)|^2}{4h^4}\leq
 \frac{\W(\gamma(t+h))-2\fint_{t-h}^{t+h}W(\gamma(s))\d s+\W(\gamma(t-h))}{h^2}+C\frac{|\gamma(t+h)-\gamma(t-h)|^2}{h^2}.
$$
We now integrate this over $t\in \R$. Using
$$\int_\R\W(\gamma(t+h))\d t=\int_\R\W(\gamma(t-h))\d t=\int_\R\fint_{t-h}^{t+h}W(\gamma(s))\d s\d t$$
we see that the integral of the first term on the right hand side vanishes. Using
$$\frac{|\gamma(t+h)-\gamma(t-h)|^2}{h^2}=4\left|\fint_{t-h}^{t+h}\gamma'(s)\d s\right|^2\leq 4 \fint_{t-h}^{t+h}|\gamma'(s)|^2\d s,$$
we see that the integral of the second term is smaller than $C\int |\gamma'(t)|^2\d t$. Hence we have got
$$\frac 14 \int_\R\left|\frac{\gamma(t+h)-2\gamma(t)+\gamma(t-h)}{h^2}\right|^2\d t\leq C\int_\R |\gamma'(t)|^2\d t.$$
We then obtain the result by letting $h\to 0$, since we have
$$\frac{\gamma(\cdot+h)-2\gamma(\cdot)+\gamma(\cdot-h)}{h^2}\deb \gamma''.\qedhere$$

\end{proof}
We also have the following consequence
\begin{corollary}\label{Wbounded}
Suppose that the functional $\W$ is $\lambda$-convex and that $\gamma$ is a local minimizer with finite energy of $E$. Then $|\gamma'|$ and $\W\circ\gamma$ are bounded and absolutely continuous functions. \end{corollary}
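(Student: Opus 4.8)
\textbf{Plan for Corollary \ref{Wbounded}.} The plan is to deduce the two assertions from the preceding Theorem, which gives $\gamma''\in L^2(\R;H)$. First I would establish boundedness and absolute continuity of $|\gamma'|$. Since $\gamma'\in L^2(\R;H)$ (the energy is finite) and $\gamma''\in L^2(\R;H)$, the curve $\gamma'$ belongs to $H^1(\R;H)$, and a one-dimensional Sobolev embedding argument (valid for Hilbert-space-valued functions, since $\frac{d}{dt}|\gamma'(t)|^2 = 2\langle \gamma'(t),\gamma''(t)\rangle$ is integrable) shows that $t\mapsto|\gamma'(t)|^2$ is absolutely continuous with derivative in $L^1$, hence has a limit as $t\to\pm\infty$; this limit must be $0$ because $|\gamma'|^2\in L^1(\R)$. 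In particular $|\gamma'|$ is bounded, and absolutely continuous as the composition of the absolutely continuous $|\gamma'|^2$ with the square root (the square root is Lipschitz away from $0$, and near the zeros one uses $\big||\gamma'(t)|-|\gamma'(s)|\big|\le|\gamma'(t)-\gamma'(s)|\le\int_s^t|\gamma''|$ directly).

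Next I would handle $\W\circ\gamma$. The key point is the equipartition-type identity: since $\gamma$ is a local minimizer with finite energy, along $\gamma$ one has the conservation law $\frac12|\gamma'(t)|^2-\W(\gamma(t))=\text{const}$, and the finiteness of $E(\gamma)$ together with the fact that $|\gamma'|^2\to 0$ forces the constant to be $0$, so that $\W(\gamma(t))=\frac12|\gamma'(t)|^2$ for a.e. $t$. (More carefully: one obtains this by the same inner-variation / reparametrization argument already used implicitly in Section \ref{sectionHeteroclinic}, testing optimality against rescalings $t\mapsto\gamma(\varphi(t))$; alternatively, since we only need it a.e., one can test against perturbations of the form used in the proof of the Theorem.) Once $\W\circ\gamma=\frac12|\gamma'|^2$ a.e., the boundedness and absolute continuity of $\W\circ\gamma$ follow immediately from the corresponding properties of $|\gamma'|^2$ established in the first paragraph.

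\textbf{Main obstacle.} The delicate point is justifying the conservation law $\frac12|\gamma'|^2=\W\circ\gamma$ without assuming any regularity or differentiability of $\W$: the classical Noether/equipartition argument uses the Euler--Lagrange equation $\gamma''=\nabla\W(\gamma)$, which is exactly what the Remark before the Theorem warns is problematic in the non-smooth infinite-dimensional setting. I would circumvent this by working at the level of the action directly: for a reparametrization $\gamma_\varepsilon(t):=\gamma(\psi_\varepsilon(t))$ with $\psi_\varepsilon$ a compactly supported perturbation of the identity, one computes $\frac{d}{d\varepsilon}\big|_{\varepsilon=0}E(\gamma_\varepsilon)=0$, which yields $\int \big(\frac12|\gamma'(t)|^2-\W(\gamma(t))\big)\phi'(t)\,dt=0$ for all $\phi\in C_c^\infty(\R)$, i.e. $\frac12|\gamma'|^2-\W\circ\gamma$ is a.e. equal to a constant; the constant is $0$ by integrability. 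This inner variation only uses that $\W\circ\gamma\in L^1_{loc}$ and $\gamma'\in L^2$, which we have, so no smoothness of $\W$ is needed. The remaining steps are routine one-dimensional Sobolev-space bookkeeping for Hilbert-valued maps.
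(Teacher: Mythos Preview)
Your proposal is correct and follows essentially the same route as the paper: use the preceding Theorem to get $\gamma''\in L^2$, deduce that $|\gamma'|^2$ (hence $|\gamma'|$) is bounded and absolutely continuous via one-dimensional Sobolev bookkeeping, and then invoke equipartition of the energy to transfer these properties to $\W\circ\gamma$. The only difference is that the paper simply states ``using the equipartition of the energy which is true for local minimizers'' without justification, whereas you supply the inner-variation argument to derive the conservation law $\tfrac12|\gamma'|^2-\W\circ\gamma=\text{const}$ without assuming $\W$ is differentiable; this is a genuine improvement in rigor over the paper's version, since the Remark preceding the Theorem explicitly warns that the Euler--Lagrange equation is not directly available.
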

\begin{proof}
We just proved the $L^2$-integrability of $|\gamma''|$ and, a fortiori, of $(|\gamma'|)'$. This shows that $|\gamma'|$ is locally $H^1$, and we have 
$$\left||\gamma'(t)|^2-|\gamma'(t_0)|^2\right|\leq \int_{[t_0,t]} 2|\gamma'(s)| \left|(|\gamma'|)'(s)\right|\d s\leq 2\|\gamma'\|_{L^2}\|\gamma''\|_{L^2}.$$
As a consequence, $|\gamma'|$ is bounded (we can also choose a sequence of points $t_0\to\infty$ such that $|\gamma'(t_0)|\to 0$ if we want a more explicit estimate). Then, using the equipartition of the energy which is true for local minimizers, we also have the same result for $\W\circ \gamma$. 
\end{proof}


\begin{thebibliography}{99}

%\bibitem{Ali } {\sc N. D. Alikakos}, A New Proof for the Existence of an Equivariant
%Entire Solution Connecting the Minima of the
%Potential for the System $\Delta u -W_u(u)= 0$ {\it Comm. Partial Diff. Eqs} 37 No. 12 (2012) pp. 2093--2115

%\bibitem{Monteil:2016} {\sc Monteil, Antonin and Santambrogio, Filippo}, Metric methods for heteroclinic connections, {\it Mathematical Methods in the Applied Sciences}, 2016.
	
\bibitem{Alama:1997}
	{\sc S. Alama, L. Bronsard, and C. Gui}, {Stationary layered solutions in for an Allen--Cahn system with multiple well potential}, {\it Calculus of Variations and Partial Differential Equations}, 5(4):359--390, 1997.

\bibitem{Alessio2012}
{\sc F. Alessio},
{Periodic and heteroclinic type solutions for systems of Allen-Cahn equations},
{\it Rend. Sem. Mat. Univ. Politec. Torino},
70(1):1--9, 2012.

\bibitem{AleMon} {\sc F. Alessio and P. Montecchiari}, {Multiplicity of layered solutions 
for an Allen-Cahn system with symmetric double well potential}, {\it Jour. Diff. Eqns.}, 257(12):4572--4599,2014.

\bibitem{Ali2012} {\sc N.D. Alikakos}, {A new proof for the existence of an equivariant entire solution connecting the minima of the potential for the system $\Delta u- W_u(u) = 0$}, {\it Comm. Partial Diff. Eqns.}, 37(12):2093--2115, 2012.

\bibitem{Ali Fus} {\sc N. D. Alikakos and G. Fusco}, {On the connection problem for potentials with several global minima}, {\it Indiana Univ. Math. J.}, 57(4):1871--1906, 2008.

\bibitem{Ali Fus2015} {\sc N. D. Alikakos and G. Fusco}, {A maximum principle for systems with variational structure and an application to standing waves}, {\it
J. Eur. Math. Soc.}, 17(7):1547--1567, 2015.

\bibitem{AmbrosioCabre:2000}
{\sc L. Ambrosio and X. Cabr{\'e}},
{Entire solutions of semilinear elliptic equations in $\mathbb{R}^3$ and a conjecture of {D}e {G}iorgi},
{\em Journal of the American Mathematical Society}, 13(4):725--739, 2000.

\bibitem{AmbTil} {\sc L.~Ambrosio and P.~Tilli}, {\it Topics on analysis in metric spaces}. Oxford Lecture Series in Mathematics and its Applications (25). Oxford University Press, Oxford, 2004.

\bibitem{AntSmy} {\sc P. Antonopoulos and P. Smyrnelis}, On minimizers of the Hamiltonian system $u''=\nabla W(u)$ and on the existence of heteroclinic, homoclinic and periodic orbits, {\it Indiana Univ. Math. J.}, to appear.

\bibitem{Braides} {\sc A. Braides}, {\it Approximation of free-discontinuity problems}. Lecture Notes in Mathematics, 1694, Springer-Verlag, Berlin, 1998.

\bibitem{BraButSan} {\sc A. Brancolini, G. Buttazzo, and F. Santambrogio},
{Path functionals over Wasserstein spaces},
{\it Journal of the European Mathematical Society}, 8(3):415--434, 2006.

\bibitem{Rab2016} {\sc J. Byeon, P. Montecchiari, and P. H. Rabinowitz},
{A double well potential system},
{\it Analysis and PDEs},
9(7):1737--1772, 2016.


\bibitem{de1979convergence}
{\sc E. De~Giorgi and S. Spagnolo},
{Convergence problems for functionals and operators},
{\em Ennio De Giorgi}, page 487, 1979.

\bibitem{fusco2017layered}
{\sc G. Fusco},
{Layered solutions to the vector Allen-Cahn equation in $\R^2$. Minimizers and heteroclinic connections},
{\em Communications on Pure \& Applied Analysis},
16(5):1807--1841, 2017.

\bibitem{FusGroNov} {\sc G. Fusco, G.F. Gronchi, and M. Novaga}, {On the existence of heteroclinic connections}, preprint.

\bibitem{Ghoussoub:1998}
{\sc N. Ghoussoub and C. Gui},
{On a conjecture of {D}e {G}iorgi and some related problems},
{\em Mathematische Annalen}, 311(3):481--491, 1998.
  
  \bibitem{ghoussoub2003giorgi}
{\sc N. Ghoussoub and C. Gui},
{On {D}e {G}iorgi's conjecture in dimensions {4} and {5}},
{\em Annals of mathematics}, 157(1):313--334, 2003.


\bibitem{nous} {\sc A. Monteil and F. Santambrogio}, {Metric methods for heteroclinic connections}, {\it Math. Meth. Appli. Sci.}, online first, 2016.

\bibitem{Rab2010} {\sc P. H. Rabinowitz} Spatially heteroclinic solutions for a semilinear elliptic P.D.E. {\it 
ESAIM: Control, Optimisation and Calculus of Variations}, 8:915--931, 2010.
  
  \bibitem{Savin:2009}
{\sc O. Savin},
{Regularity of flat level sets in phase transitions},
{\em Annals of Mathematics}, pages 41--78, 2009.

\bibitem{Schatzman:2002} {\sc M. Schatzman}, {Asymmetric heteroclinic double layers}, {\it ESAIM: Control, Optimisation and Calculus of Variations}, 8(2):965--1005, 2002.

\bibitem{Sourdis} {\sc C. Sourdis}, {The heteroclinic connection problem for general double-well potentials}, preprint available at {\tt http://arxiv.org/abs/1311.2856}.

\bibitem{SteRocky}  {\sc P. Sternberg},
{Vector-Valued Local Minimizers of Nonconvex Variational Problems},
{\it Rocky Mountain J. Math.},
21(2):799--807, 1991.

%\bibitem{federer:2014} {\sc H. Federer}, {\it Geometric measure theory}, Springer, 2014.

\bibitem{ZunSte} {\sc A. Zuniga, P. Sternberg},
{On the heteroclinic connection problem for multi-well gradient systems},
{\it Jour. Diff. Eqns.}, 261(7):3987--4007, 2016.

\end{thebibliography}
\end{document}